\author{Florin Ambro} 
\address{Institute of Mathematics ``Simion Stoilow'' of the Romanian Academy\\
P.O. BOX 1-764, RO-014700 Bucharest\\ 
Romania.}
\email{florin.ambro@imar.ro}
\author{Atsushi Ito}
\address{Graduate School of Mathematics, Nagoya University, Nagoya, 464-8602, Japan.}
\email{atsushi.ito@math.nagoya-u.ac.jp}
\newcommand{\isoto}{{\overset{\sim}{\rightarrow}}}
\newcommand{\Q}{{\mathbb Q}}
\newcommand{\Z}{{\mathbb Z}}
\newcommand{\N}{{\mathbb N}}
\newcommand{\R}{{\mathbb R}}
\newcommand{\bP}{{\mathbb P}} 
\newcommand{\bA}{{\mathbb A}} 
\newcommand{\cC}{{\mathcal C}}
\newcommand{\cI}{{\mathcal I}}
\newcommand{\cL}{{\mathcal L}}
\newcommand{\cO}{{\mathcal O}}
\newcommand{\bB}{{\mathbf B}}
\newcommand{\fm}{{\mathfrak m}}
\newcommand{\Bs}{\operatorname{Bs}}
\newcommand{\Char}{\operatorname{char}}
\newcommand{\codim}{\operatorname{codim}}
\newcommand{\Conv}{\operatorname{Conv}}
\newcommand{\emb}{\operatorname{emb}}
\newcommand{\Exc}{\operatorname{Exc}}
\newcommand{\Hom}{\operatorname{Hom}}
\newcommand{\im}{\operatorname{Im}}
\newcommand{\Int}{\operatorname{int}}
\newcommand{\length}{\operatorname{length}}
\newcommand{\mult}{\operatorname{mult}}
\newcommand{\ord}{\operatorname{ord}}
\newcommand{\Proj}{\operatorname{Proj}}
\newcommand{\Spec}{\operatorname{Spec}}
\newcommand{\Supp}{\operatorname{Supp}}
\newcommand{\vol}{\operatorname{vol}}
\newcommand{\width}{\operatorname{width}}
\newcommand{\rank}{\operatorname{rank}}
\theoremstyle{plain}
\newtheorem{thm}{Theorem}[section]
\newtheorem{lem}[thm]{Lemma}
\newtheorem{cor}[thm]{Corollary}
\newtheorem{prop}[thm]{Proposition}
\theoremstyle{definition}
\newtheorem{defn}[thm]{Definition}
\newtheorem{defnprop}[thm]{Definition-Proposition}
\newtheorem{exmp}[thm]{Example}
\newtheorem{rem}[thm]{Remark}
\newtheorem{ack}{Acknowledgments}   
\theoremstyle{remark}
\newtheorem{claim}[thm]{Claim}
\begin{document}

\bibliographystyle{amsalpha+}
\title{Successive minima of line bundles}
\maketitle

\begin{abstract} 
We introduce and study the successive minima of line bundles on proper algebraic varieties.
The first (resp.\ last) minima are the width (resp.\ Seshadri constant) of the line bundle 
at very general points. The volume of the line bundle is equivalent to 
the product of the successive minima. For line bundles on toric varieties, the
successive minima are equivalent to the (reciprocal of) usual successive minima
of the difference of the moment polytope.
\end{abstract} 



\footnotetext[1]{2010 Mathematics Subject Classification. Primary: 14C20. Secondary: 14M25.}

\footnotetext[2]{Keywords: Linear systems, Seshadri constants, successive minima.}


\section*{Introduction}


The motivation for this paper is the classical problem in Algebraic Geometry of finding numerical criteria for a linear system on an
algebraic variety to be non-empty, or to define a birational embedding. The equivalent problem in Geometry of Numbers
is finding numerical criteria for a convex body to contain sufficiently many lattice points.

For a complex projective manifold $X$, Demailly~\cite{De92} introduced the Seshadri constant $\epsilon(L,x)$ 
of an ample line bundle $L$ at a point $x$ of $X$. This invariant is constant if $x$ is very general, denoted 
$\epsilon(L)$ and called maximal Seshadri constant, and one can show that if $\epsilon(L)$ is sufficiently large, 
then the linear system $|K_X+L|$ is non-empty, and even defines a birational embedding (see~\cite{De92, EKL}, or Theorem~\ref{aj}).

For a convex body $\square\subset \R^d$, the flatness theorem of Khinchin states that if the lattice width $w$
of $\square$ is sufficiently large, then $\square$ contains a lattice point (see~\cite{KL88}). The starting point of 
this paper is to observe that the flatness theorem is just a special (toric) case of Demailly's generation for adjoint line bundles
in terms of maximal Seshadri constants. The key point is that on toric varieties, the maximal Seshadri constant 
is equivalent to the lattice width of the moment polytope:

\begin{thm}
Let $X=T_N\emb(\Delta)$ be a proper toric variety, of dimension $d$. Let $L$ be an invariant Cartier divisor on $X$,
with Seshadri constant $\epsilon$ at a general point of $X$. Let $\square_L\subset M_\R$ be the moment polytope, 
let $w$ be the lattice width of $\square_L$. Then 
$$
\frac{w}{d}\le \epsilon\le w.
$$
\end{thm}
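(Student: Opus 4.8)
The plan is to reduce to the identity point of the dense orbit, prove the upper bound by exhibiting one explicit rational curve, and prove the lower bound via separation of jets together with an estimate from the geometry of numbers. \emph{Reduction.} Since $L$ is invariant, $\cO_X(L)$ carries a $T_N$-linearization and $T_N$ acts transitively on its dense orbit, so $\epsilon(L,x)$ takes the same value at every point of $T_N$; as a general point of $X$ lies in $T_N$, it suffices to bound $\epsilon:=\epsilon(L,e)$ with $e\in T_N$ the identity. (We may assume $L$ nef; otherwise $\epsilon$ and $w$ both vanish.)

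\emph{Upper bound $\epsilon\le w$.} Choose a primitive $u\in N$ with $\width_u(\square_L)=w$, let $\lambda_u\colon\C^{*}\to T_N$ be the associated one-parameter subgroup, and let $C_u\subset X$ be the closure of $\lambda_u(\C^{*})\cdot e$. As $u$ is primitive and $e$ lies in the free locus, the normalization of $C_u$ is $\bP^1$ mapping birationally onto $C_u$, and $e$ is a smooth point of $C_u$, so $\mult_e C_u=1$. The images in $C_u$ of $0,\infty\in\bP^1$ are the torus-fixed points at which $\langle u,\cdot\rangle$ attains its minimum, resp.\ its maximum, on $\square_L$; comparing the weights of the linearized bundle $\cO_X(L)$ at these two fixed points (a standard toric localization) gives
\[
L\cdot C_u\;=\;\max_{m\in\square_L}\langle u,m\rangle-\min_{m\in\square_L}\langle u,m\rangle\;=\;\width_u(\square_L)\;=\;w,
\]
whence $\epsilon\le (L\cdot C_u)/\mult_e C_u=w$.

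\emph{Lower bound $\epsilon\ge w/d$.} I would invoke the standard estimate of Seshadri constants by jet separation. Suppose $mL$ separates $s$-jets at $e$, i.e.\ $H^0(X,mL)\to\cO_X/\fm_e^{s+1}$ is surjective. Given any curve $C\ni e$, pick $\sigma\in H^0(X,mL)$ whose degree-$s$ leading term at $e$ is a nonzero form not vanishing along the (finitely many) tangent directions of $C$ at $e$; then $\sigma|_C\not\equiv0$ and $\mult_e(\sigma|_C)=s\cdot\mult_e C$, so $mL\cdot C=\deg(\sigma|_C)\ge s\cdot\mult_e C$. Taking the infimum over $C$ gives $\epsilon\ge s/m$. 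Hence it suffices, for arbitrarily large $m$, to make $mL$ separate $s$-jets at $e$ with $s/m$ arbitrarily close to $w/d$.

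Since $e$ is the identity, $H^0(X,mL)=\bigoplus_{m'\in m\square_L\cap M}\C\cdot\chi^{m'}$, and in local coordinates at $e$ the $s$-jet of $\chi^{m'}$ has the form $\sum_{|\alpha|\le s}c_\alpha(m')\,w^\alpha$ with $c_\alpha$ a polynomial of degree $|\alpha|$ in $m'$; thus $mL$ separates $s$-jets at $e$ if and only if the lattice points $m\square_L\cap M$ lie on no hypersurface of degree $\le s$. The problem therefore reduces to the following statement in the geometry of numbers: \emph{a convex body of lattice width $w$ in $\R^d$ contains, after a unimodular automorphism of $M$ and a translation, the cube $[0,w/d]^d$} (a unimodular copy of the dilated simplex $(w/d)\Delta_d$ would do equally well). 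Granting it, $m\square_L$ contains, in suitable unimodular coordinates, a full lattice box $\prod_{i=1}^d\{0,1,\dots,\ell_i\}$ with every $\ell_i\ge mw/d-1$; and a full lattice box lies on no hypersurface of degree $\le\min_i\ell_i$ (restrict a putative equation to the coordinate lines through the box and induct on $d$). So $mL$ separates $s$-jets at $e$ for $s=\min_i\ell_i$, giving $\epsilon\ge s/m\to w/d$. Everything apart from the italicized geometry-of-numbers statement is formal bookkeeping; the real difficulty — and the step that pins down the constant $1/d$ — is producing inside a body of lattice width $w$ a unimodular box \emph{every} side of which has length at least $w/d$, the weaker conclusion $\sum_i c_i\ge w$ being insufficient since jet separation needs many lattice points along each coordinate direction.
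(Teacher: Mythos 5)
The upper bound is correct for nef $L$ and is a more hands-on argument than the paper's: you compute $(L\cdot C_u)=w$ for the closure of a one-parameter-subgroup orbit through $e$, whereas the paper (Lemma~\ref{Ss'} with $j=1$) uses the functional computing the width to fibre $X$ over a $(d-1)$-dimensional toric variety, shows the curve fibre $F\ni 1$ is contained in $\Bs|\cI_1^{w+}L|_\Q$, and concludes $\epsilon_d(L)\le w$ from the base-locus definition; the latter works for any big invariant $L$. Your reduction ``we may assume $L$ nef, otherwise $\epsilon$ and $w$ both vanish'' is false: a big non-nef invariant $L$ has full-dimensional moment polytope, so $w>0$ and $\epsilon_d(L)>0$. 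This is a repairable blemish; the real problem is below.

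The lower bound has a genuine gap, precisely at the step you flagged and left unproved. The claim that a convex body of lattice width $w$ contains a translate of a unimodular image of $[0,w/d]^d$ is false: for $d=2$ it would say that width $>2$ forces an interior lattice point (a translate of a unimodular image of an open square of side $>1$ always contains a lattice point), but the planar flatness constant is $1+2/\sqrt{3}>2$ (Hurkens' hollow triangle). In general the cube version would prove the flatness theorem with constant $d$, and even the simplex version $(w/d)\Delta_d$ is exactly as strong as Theorem~\ref{ft}(a) of this paper, which is a \emph{consequence} of the present theorem, not an available input. The paper's mechanism for the constant $1/d$ is genuinely weaker than jet separation: Lemma~\ref{Ss} with $j=d$ produces, from $d$ linearly independent lattice vectors $u_i$ with $u_i/\lambda_d\in\square_L-\square_L$, divisors $\frac{1}{\lambda_d}F_i+D_i\in|L|_\Q$ whose fibres $F_i$ meet properly at $1$; this forces $\codim_1\Bs|\cI_1^{t+}L|_\Q=d$ for $t<1/\lambda_d$, hence $\epsilon_d(L)\ge 1/\lambda_d(M,\square_L-\square_L)$, and then Banaszczyk's transference bound $\lambda_d(M,\square_L-\square_L)\cdot\lambda_1(N,(\square_L-\square_L)^*)\le d$ together with $w=\lambda_1^*$ gives $\epsilon_d(L)\ge w/d$. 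Note that $\epsilon_d(L,x)>t$ only requires $d$ high-multiplicity divisors cutting out $\{x\}$ near $x$, whereas surjectivity of the jet map needs on the order of $\binom{s+d}{d}$ lattice points of $m\square_L$ in general position; the transference theorem only supplies $d$ long independent lattice segments, about $ds$ points. So your approach cannot be completed without a geometric input that is false in the form you state it and, in any weakened form, at least as hard as the flatness theorem with linear constant.
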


 The equivalence between $w$ and $\epsilon$ not only reproves the flatness theorem, but generalizes
it as follows: 

\begin{thm} Let $\square\subset \R^d$ be a compact convex set, of lattice width $w$. 
If $w>d^2+d$, there exist $m_0,\ldots,m_d\in \Z^d \cap \Int(\square)$
such that $m_1-m_0,\ldots, m_d-m_0$ form a basis of $\R^d$.
\end{thm}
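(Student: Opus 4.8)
The plan is to attach a toric model to $\square$ and combine the comparison $\epsilon \geq w/d$ of the theorem above with Demailly's generation theorem for adjoint linear systems (Theorem~\ref{aj}). First I would reduce to the case where $\square$ is a rational polytope. Since the lattice width is continuous on convex bodies for the Hausdorff metric and $\Int(\square)$ contains polytopal approximations of $\square$ to arbitrary precision, one can choose a rational polytope $P \subset \Int(\square)$ with $w(P) > d^2 + d$; as $\Int(P) \subset \Int(\square)$, it is enough to produce $m_0,\dots,m_d \in \Z^d \cap \Int(P)$ with $m_1 - m_0,\dots,m_d - m_0$ a basis of $\R^d$. Having positive lattice width, $P$ is full-dimensional (and becomes a lattice polytope after rescaling; I suppress the resulting $\Q$-Cartier bookkeeping below, noting only that $\epsilon \geq w/d$ persists by homogeneity and that Demailly's theorem is available for $\Q$-divisors).

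Next I would build the toric model. Let $X = T_N\emb(\Delta)$ be the projective toric variety whose fan $\Delta$ is the normal fan of $P$, and let $L$ be the ample invariant divisor with moment polytope $\square_L = P$. Passing to a smooth projective toric resolution $f \colon X' \to X$, the pullback $f^*L$ is nef and big. Toric varieties have rational singularities, so $f_*\omega_{X'} = \omega_X$; combining this with the projection formula and the standard toric computation of sections of the dualizing sheaf, one obtains
\[
H^0\bigl(X',\, \cO_{X'}(K_{X'} + f^*L)\bigr) \;=\; \bigoplus_{m \,\in\, \Z^d \cap \Int(P)} \C\cdot\chi^m .
\]
Furthermore $f$ is an isomorphism over the open torus, so the Seshadri constant of $f^*L$ at a very general point of $X'$ is at least that of $L$ at a general point of $X$, which by the comparison theorem is
\[
\epsilon(f^*L) \;\geq\; \frac{w(P)}{d} \;>\; \frac{d^2 + d}{d} \;=\; d + 1 \;=\; \dim X' + 1 .
\]

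Now I would invoke Demailly's theorem: since $\epsilon(f^*L) > \dim X' + 1$ at a very general point $x$, which we may take in the open torus, the adjoint system $|K_{X'} + f^*L|$ is base-point free and separates $1$-jets at $x$ (the part of Theorem~\ref{aj} relevant in this range); hence the associated morphism $\phi$ is an immersion at $x$, so its image has dimension $d$. Writing $\phi$ in the monomial coordinates $(\chi^m)_{m \in \Z^d \cap \Int(P)}$ and restricting to the open torus, all coordinates are nonzero, so after dividing by $\chi^{m_0}$ for a fixed $m_0 \in \Z^d \cap \Int(P)$ the map $\phi$ factors through the homomorphism of algebraic tori sending $t$ to $(\chi^{m-m_0}(t))_m$, whose image has dimension equal to the rank of the subgroup of $\Z^d$ generated by $\{\, m - m_0 : m \in \Z^d \cap \Int(P)\,\}$. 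Therefore $\dim \phi(X') = d$ forces these differences to span $\R^d$; choosing a basis $m_1 - m_0,\dots,m_d - m_0$ among them yields the required $m_0,\dots,m_d \in \Z^d \cap \Int(P) \subset \Z^d \cap \Int(\square)$.

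The crux — and the point that dictates the exact hypothesis $w > d^2 + d = d\cdot(d + 1)$ — is the sharp form of Demailly's theorem invoked at the last step: that $\epsilon(L, x) > \dim X + 1$ at a very general point already forces separation of $1$-jets there (equivalently, makes the adjoint map an immersion at the general point), with the factor $d$ coming from the loss $\epsilon \geq w/d$ in the toric comparison and the summand $d + 1$ from the jet-separation threshold. The reduction to rational polytopes, the toric resolution, and the passage from the ample $L$ to the merely nef and big $f^*L$ — which must be fed into a version of Demailly's theorem valid in that generality, or absorbed into a multiplier ideal — are routine, but need enough care that no interior lattice point of $P$ is lost.
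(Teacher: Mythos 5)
Your proposal is correct and follows essentially the same route as the paper: approximate $\square$ by a rational polytope, build the (resolved) toric model whose adjoint sections are indexed by the interior lattice points, bound the very general Seshadri constant below by $w/d$ via the width comparison, and invoke the $\epsilon>d+1$ case of the Demailly--Ein--K\"uchle--Lazarsfeld adjoint theorem to force the adjoint image to have dimension $d$, which translates into the differences of interior lattice points spanning $\R^d$. The only cosmetic discrepancy is your claim of base-point-freeness of $|K_{X'}+f^*L|$, which the cited theorem does not assert in this range and which you do not actually need, since $1$-jet separation at a very general point already gives generic finiteness of the adjoint map.
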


The proof of the equivalence involves the transference theorem of Mahler, and the fact that $w$ is 
the first minimum of Minkowski for the polar body $(\square_L-\square_L)^*\subset N_\R$. Since $\epsilon$
is proportional to the first minimum of Minkowski, we may think of $1/\epsilon$ as a $d$-th
successive minimum of $\square_L-\square_L\subset M_\R$ via the transference theorem. The question arises if the other successive minima of this $0$-symmetric body
have a meaning on algebraic varieties which are not necessarily toric, and if they have a connection with Seshadri constants.
The answer turns out to be positive! We introduce in this paper a sequence of successive minima for a line bundle on an algebraic 
variety, such that the last minimum coincides with the classical Seshadri constant, and in the toric case, these minima are
equivalent to the (reciprocal of) successive minima of the $0$-symmetric body associated to the moment polytope.

Let $X$ be a proper complex variety, let $L$ be a Cartier divisor on $X$.
Let $x\in X$ be a point. For a real number $t\ge 0$, we define $\Bs|\cI_x^{t+}L|_\Q$ to be the common zero
locus of sections $s\in \Gamma(nL)\ (n\ge 1)$ such that $s$ vanishes at $x$ of order strictly larger than $nt$.
Then $\Bs|\cI_x^{t+}L|_\Q$ is a closed subset of $X$, increasing with respect to $t$, and equal to $X$ for $t$ sufficiently large.
For integers $i\ge 1$, the {\em $i$-th successive minimum of $L$ at $x$} is defined as 
$$
\epsilon_i(L,x)=\inf\{t\ge 0; \codim_x \Bs|\cI_x^{t+} L|_\Q < i  \}.
$$
We obtain a sequence $\epsilon_1(L,x)\ge \epsilon_2(L,x)\ge \cdots \ge \epsilon_d(L,x)\ge 0=\epsilon_{d+1}(L,x)$,
where $d=\dim X$. 
As we will see,
the invariant $\epsilon_i(L,x)$ does not depend on a very general point $x$, is denoted $\epsilon_i(L)$, 
and called the {\em $i$-th minimum of $L$ at a very general point}. If $\kappa(L)\le 0$, then $\epsilon_1(L)=0$. If 
$L$ has Iitaka dimension $\kappa\ge 1$, it turns outs that $\epsilon_\kappa(L)>0=\epsilon_{\kappa+1}(L)$.
We can show that $\epsilon_i(L)$ are numerical invariants in case $L$ is big.

If $L$ is semiample and $x\in X$ is a smooth point, $\epsilon_d(L,x)$ coincides with the Seshadri constant
$\epsilon(L,x)$ introduced by Demailly~\cite{De92}. In particular, if $L$ is semiample, $\epsilon_d(L)$ coincides with the 
maximal Seshadri constant of $L$, usually denoted $\epsilon(L)$.

The invariant $\epsilon_1(L,x)$ is the largest asymptotic multiplicity that can be imposed at $x$ 
on sections of multiplies of $L$. It appears in the work of Nakamaye~\cite{Nak03b}. If $x \in X$ is a smooth point, it coincides with the
width of $L$ at the geometric valuation induced by the exceptional divisor on the blow-up at $x$ (see~\cite{Amb16}).
Due to this property, we call $\epsilon_1(L,x)$ the {\em width of $L$ at $x$}.
We also relate the volume of $L$ with the successive minima. By a classical argument for counting jets, 
we have inequalities
$$
\epsilon_d(L,x)\le \sqrt[d]{\frac{\vol(L)}{\mult_x(X)}}\le \epsilon_1(L,x).
$$
The second main result of the paper is the analogue of Minkowski's second main theorem,
namely the volume of $L$ is equivalent to the product of successive minima of $L$ at very general points:

\begin{thm}\label{m2m}
Let $X$ be a proper complex variety of dimension $d$, let $L$ be a Cartier divisor on $X$. Let $\vol(L)$ be the volume
of $L$, let $\epsilon_i(L)$ be the successive minima of $L$ at very general points. Then 
$$
\prod_{i=1}^d\epsilon_i(L)\le \vol(L)\le d!\cdot \prod_{i=1}^d\epsilon_i(L).
$$
\end{thm}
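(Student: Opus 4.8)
The plan is to prove the two inequalities together, by induction on $d=\dim X$, after extending the statement to arbitrary graded linear series so that the induction closes (the base case $d=1$ being $\vol(L)=\deg L=\epsilon_1(L)$), and to extract both bounds from a single slicing of the volume along the exceptional divisor of a blow-up.

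\textbf{Set-up.} If $L$ is not big then $\vol(L)=0=\epsilon_d(L)$ and both sides vanish, so assume $L$ big and let $x\in X$ be a very general, hence smooth, point, so that $\mult_x(X)=1$ and $\epsilon_i(L)=\epsilon_i(L,x)$. Let $\pi\colon Y\to X$ be the blow-up of $x$, with exceptional divisor $E\cong\bP^{d-1}$. From the definition, $\epsilon_1(L,x)$ is the pseudo-effective threshold $\mu=\sup\{t\ge 0:\pi^*L-tE\text{ is pseudo-effective}\}$, and $\pi^*L-tE$ is big for $0\le t<\mu$. The basic identity is
$$\vol(L)=d\int_0^{\epsilon_1(L)}\operatorname{vol}_{Y|E}(\pi^*L-tE)\,dt,$$
which follows from the Fubini formula $\vol_d(\Delta)=\int\vol_{d-1}(\Delta\cap\{\nu_1=t\})\,dt$ for the Newton--Okounkov body $\Delta$ of $L$ attached to an admissible flag refining $E$ (equivalently from the derivative-of-volume formula of Lazarsfeld--Musta\c{t}\u{a} / Boucksom--Favre--Jonsson; one checks the constant $d$ on $X=\bP^d$, $L=\cO(1)$). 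For $t\in(0,\mu)$ the restricted volume $\operatorname{vol}_{Y|E}(\pi^*L-tE)$ is the volume of the graded linear series $W^{(t)}$ on $\bP^{d-1}$ spanned by the degree-$mt$ leading forms $\operatorname{in}_x s$ at $x$ of the sections $s$ of $mL$ with $\mult_x(s)\ge mt$; in particular $W^{(t)}\subseteq|\cO_{\bP^{d-1}}(t)|$, and the slice $\Delta\cap\{\nu_1=t\}$ is (an affine image of) the Newton--Okounkov body of $W^{(t)}$.

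\textbf{Comparison lemma.} For a very general point $y\in E\cong\bP^{d-1}$ and every $t\in(0,\epsilon_1(L))$,
$$\epsilon_i(W^{(t)},y)\le\epsilon_{i+1}(L,x)\qquad(1\le i\le d-1).$$
The mechanism: let $\pi'\colon Y'\to Y$ be the blow-up of $y$, with exceptional $E'$; the geometric valuation $v_{E'}$ satisfies $v_{E'}(s)=\mult_x(s)+\mult_y(\operatorname{in}_x s)$ on sections of $mL$. Hence a sub-series of $W^{(t)}$ whose members vanish to high order at $y$ comes from sections $s$ of $mL$ with $v_{E'}(s)$ much larger than the corresponding multiple of $t$, i.e. from the base locus of $L$ at the valuation $v_{E'}$ with a large threshold; comparing codimensions at $x$ (using that $x$ is very general, and that codimension is unchanged on passing to the projectivised tangent cone) converts a codimension-$<i$ base locus of $W^{(t)}$ at $y$ into a codimension-$<(i+1)$ component of $\Bs|\cI_x^{t'+}L|_\Q$ through $x$ for $t'$ slightly larger than $t$, contradicting $t'<\epsilon_{i+1}(L,x)$. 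Pinning down this shift of one in the index is one technical core of the proof.

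\textbf{Upper bound.} Applying the theorem in dimension $d-1$ to $W^{(t)}$ on $\bP^{d-1}$ and then the comparison lemma,
$$\operatorname{vol}_{Y|E}(\pi^*L-tE)=\vol(W^{(t)})\le(d-1)!\prod_{i=1}^{d-1}\epsilon_i(W^{(t)},y)\le(d-1)!\prod_{i=1}^{d-1}\epsilon_{i+1}(L)=(d-1)!\prod_{i=2}^{d}\epsilon_i(L).$$
The right-hand side is independent of $t$, so integrating over $[0,\epsilon_1(L)]$ and multiplying by $d$ gives $\vol(L)\le d!\,\epsilon_1(L)\prod_{i=2}^{d}\epsilon_i(L)=d!\prod_{i=1}^{d}\epsilon_i(L)$.

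\textbf{Lower bound --- the main obstacle.} The inductive lower bound $\vol(W^{(t)})\ge\prod_{i=1}^{d-1}\epsilon_i(W^{(t)},y)$ reduces the problem to $\int_0^{\epsilon_1(L)}\prod_{i=1}^{d-1}\epsilon_i(W^{(t)},y)\,dt\ge\frac1d\prod_{i=1}^{d}\epsilon_i(L)$. On $t<\epsilon_d(L)=\epsilon(L,x)$ the divisor $\pi^*L-tE$ is ample, so $W^{(t)}$ is the complete system $|\cO_{\bP^{d-1}}(t)|$ and $\epsilon_i(W^{(t)},y)=t$; this alone recovers only $\vol(L)\ge\epsilon_d(L)^d$. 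To capture the remaining minima one must lower-bound $\epsilon_i(W^{(t)},y)$ on the whole interval, using (a) that the comparison lemma is asymptotically sharp, $\sup_t\epsilon_i(W^{(t)},y)=\epsilon_{i+1}(L)$, together with (b) the concavity of $t\mapsto\vol_{d-1}(\Delta\cap\{\nu_1=t\})^{1/(d-1)}$ (Brunn--Minkowski) and the log-concavity of restricted volumes (Lazarsfeld--Musta\c{t}\u{a}), and then arranging the estimate so that the inevitable tapering of these concave profiles near $t=\epsilon_1(L)$ produces only losses that telescope away. In the toric case this step is exactly Minkowski's second theorem applied to $\square_L-\square_L$ together with the difference-body inequality $\vol(\square)\le\vol(\square-\square)\le\binom{2d}{d}\vol(\square)$; the general argument is modelled on that proof, and controlling the tapering uniformly in $t$ and $i$ is the step I expect to be hardest.
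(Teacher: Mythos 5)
Your proposal does not reach a complete proof of either inequality, and the two remaining steps you flag are not minor. For the upper bound, everything rests on the comparison lemma $\epsilon_i(W^{(t)},y)\le \epsilon_{i+1}(L,x)$ for the graded series of leading forms on $E\simeq\bP^{d-1}$; you describe a mechanism (the infinitely near valuation $v_{E'}$ and a codimension count) but do not prove it, and as you yourself note the shift of index is the technical core. What that lemma really requires is the Ein--K\"uchle--Lazarsfeld differentiation statement at a very general point: if $Z$ is a component of $\Bs|\cI_x^{t+}L|_\Q$ through a very general $x$, then every $s\in\Gamma(\cI_x^p(qL))$ has $\ord_Z(s)\ge p-qt$ (Lemma~\ref{ve} in the paper). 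The paper uses exactly this to force the leading forms to vanish along the projectivized tangent cones $W^{i-1}\subset\bP^{d-1}$ of those components, and then bounds the resulting space by an explicit postulation estimate for symbolic powers in $\bP^{d-1}$ (Proposition~\ref{post}), summing over $p$ rather than integrating over $t$; this avoids both the induction on dimension and the need to develop successive minima for arbitrary graded linear series. Your slicing architecture is a legitimate alternative for the upper bound, but without a proof of the comparison lemma it is not a proof.

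The more serious gap is the lower bound, which you explicitly leave open: the inductive scheme needs pointwise \emph{lower} bounds on $\epsilon_i(W^{(t)},y)$ over the whole interval $[0,\epsilon_1(L)]$, the integrand degenerates as $t\to\epsilon_1(L)$, and the appeal to Brunn--Minkowski and log-concavity is a hope rather than an argument --- nothing you have stated controls the tapering, and it is not clear the route closes. The paper's lower bound (Lemma~\ref{mil}) is a short direct argument that bypasses all of this: for $t_i<\epsilon_i(L,x)$ one inductively chooses $D_1,\dots,D_d\in|L|_\Q$ with $\ord_x(D_i)>t_i$ and $\codim_x(D_1\cap\dots\cap D_i)=i$, which is possible precisely because $\codim_x\Bs|\cI_x^{t_i+}L|_\Q\ge i$ prevents any component of $D_1\cap\dots\cap D_{i-1}$ through $x$ from lying in that base locus; blowing up the residual base scheme yields a semiample divisor $M$ with $\vol(qL)\ge (M^d)\ge \mult_x(X)\prod_i(\lfloor qt_i\rfloor+1)$ by the local intersection-multiplicity inequality. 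I would replace your lower-bound section entirely by an argument of this type. A secondary point: your use of the differentiation formula for restricted volumes assumes $X$ projective (and implicitly normal), whereas the statement is for an arbitrary proper complex variety; the paper's arguments are arranged to avoid this.
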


In particular, for a big divisor $L$ on $X$ we have inequalities
$$
1\le \frac{\vol(L)}{\epsilon_d(L)^d}\le d!\cdot  (\frac{\epsilon_1(L)}{\epsilon_d(L)})^d.
$$

Understanding when the ratio $\epsilon_1(L)/\epsilon_d(L)$ is too large is an interesting problem (see Nakamaye~\cite{Nak03b}).
Finally, we show that for line bundles on toric varieties, our successive minima are equivalent to the
reciprocal of usual ones:

\begin{thm}
Let $X=T_N\emb(\Delta)$ be a proper toric variety, of dimension $d$. Let $L$ be a big invariant Cartier divisor on $X$,
with successive minima $\epsilon_i$ at a very general point. Let $\square_L\subset M_\R$ be the moment polytope, 
let $\lambda_1,\ldots,\lambda_d$ be the successive minima of the $0$-symmetric convex body $\square_L-\square_L\subset M_\R$,
let $\lambda_1^*,\ldots,\lambda_d^*$ be the successive minima of the polar body $(\square_L-\square_L)^*\subset N_\R$.
Then $\epsilon_i,\lambda_i^{-1},\lambda^*_{d-i+1}$ are all equivalent. More precisely,
$$
1\le \epsilon_i\cdot \lambda_i \le d\cdot \frac{\epsilon_i}{\lambda_{d-i+1}^*}\le d(d-i+1).
$$
\end{thm}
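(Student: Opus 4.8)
\section*{Proof proposal}

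The plan is to split the asserted chain into three inequalities and treat them separately: (a) the lower bound $\epsilon_i(L)\ge 1/\lambda_i$; (b) the transference inequality $\lambda_i\cdot\lambda_{d-i+1}^*\le d$; and (c) the upper bound $\epsilon_i(L)\le (d-i+1)\lambda_{d-i+1}^*$. Since $L$ is big, $\kappa(L)=d$, so $\epsilon_d(L)>0$ and hence $\epsilon_i(L)\ge\epsilon_d(L)>0$ for every $i$; dividing through by $\epsilon_i(L)$ shows that the middle and the right inequalities of the theorem are precisely (b) and (c). Statement (b) is exactly Mahler's transference theorem applied to the $0$-symmetric body $\square_L-\square_L$ and its polar $(\square_L-\square_L)^*$, so no algebraic geometry enters there. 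The content is therefore in (a) and (c), which extend to all $i$ the two halves of the $i=d$ case recorded in the first theorem of the introduction (where $\lambda_1^*=w$, the lattice width).

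For (c), put $\nu:=\lambda_{d-i+1}^*$. By definition there are linearly independent $u_1,\dots,u_{d-i+1}\in N$ with each $u_j\in\nu\,(\square_L-\square_L)^*$; as $(\square_L-\square_L)^*=\{u:\ h_{\square_L-\square_L}(u)\le 1\}$ and $h_{\square_L-\square_L}(u)=h_{\square_L}(u)+h_{\square_L}(-u)$ is the width of $\square_L$ in direction $u$, each $u_j$ is a ``thin'' direction: $\square_L$ has width $\le\nu$ along $u_j$. Let $Y$ be the closure of the orbit of a very general point $x$ under the subtorus whose cocharacter lattice is spanned by $u_1,\dots,u_{d-i+1}$; it is a $(d-i+1)$-dimensional toric variety, $x\in Y$, and the moment polytope of $L|_Y$ still has width $\le\nu$ along each $u_j$. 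The key elementary input is a width bound on toric varieties: if $M$ is a line bundle on an $e$-dimensional toric variety whose moment polytope has width $\le w$ along each element of some lattice basis of cocharacters, then $\epsilon_1(M,\cdot)\le e\,w$ at a general point --- indeed, in the monomial coordinates dual to such a basis every element of $\Gamma(nM)$ is (up to a monomial unit, harmless for multiplicities at a torus point) a polynomial of total degree $\le\sum_j n\cdot(\text{width along the }j\text{-th direction})\le e\,n\,w$, and the order of vanishing of a polynomial at a point never exceeds its total degree. Applying this to $L|_Y$ --- after, if necessary, passing to the finite étale cover of $Y$ over the torus on which the $u_j$ form a basis, which leaves multiplicities at a general point unchanged --- gives $\epsilon_1(L|_Y,x)\le (d-i+1)\nu$. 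Hence if $s\in\Gamma(nL)$ has $\mult_x s>n(d-i+1)\nu$, then $\tfrac1n\dv(s)\sim_{\Q}L$ has multiplicity $>(d-i+1)\nu$ at $x$; restricting to $Y$ (where multiplicity can only increase) this contradicts the width bound unless $Y\subseteq\dv(s)$. Therefore $Y\subseteq\Bs|\cI_x^{t+} L|_{\Q}$ for every $t>(d-i+1)\nu$, so $\codim_x\Bs|\cI_x^{t+} L|_{\Q}\le i-1<i$ for such $t$, i.e. $\epsilon_i(L)=\epsilon_i(L,x)\le (d-i+1)\nu$.

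For (a), put $\mu:=\lambda_i$ and choose linearly independent lattice vectors $m_1,\dots,m_i\in\mu\,(\square_L-\square_L)$; each yields a segment of lattice length $1/\mu$ inside $\square_L$ in the primitive direction $m_j$, so that $L$ is ``as positive as $1/\mu$'' along the $i$ independent directions $m_1,\dots,m_i$. One must show that for $t<1/\mu$ and very general $x$ the locus $\Bs|\cI_x^{t+} L|_{\Q}$ has codimension $\ge i$ at $x$. I would derive this from the general fact $\epsilon_i(L,x)\ge\epsilon_d(L|_V,x)$ for $V\ni x$ a \emph{general} $i$-dimensional subvariety: a component of $\Bs|\cI_x^{t+} L|_{\Q}$ of dimension $\ge d-i+1$ would meet a general such $V$ in positive dimension, forcing $\epsilon_d(L|_V,x)\le t$. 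It then remains to bound $\epsilon_d(L|_V,x)$ from below for a general $i$-dimensional $V\subset X$, and here I would use a toric degeneration of the pair $(V,L|_V)$ adapted to the sublattice of $M$ spanned by $m_1,\dots,m_i$, degenerating it to an $i$-dimensional toric pair whose moment polytope inherits (via the $m_j$) enough lattice width that the first theorem of the introduction, applied in dimension $i$, yields $\epsilon_d\ge 1/\mu$; combined with lower semicontinuity of Seshadri-type constants in families this gives $\epsilon_i(L)\ge 1/\mu=1/\lambda_i$.

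The main obstacle is (a). The naive reduction --- that $\square_L$ itself contains a translate of the scaled simplex $\tfrac1\mu\Conv\{0,m_1,\dots,m_i\}$ --- is \emph{false} in general, so one genuinely has to work with the difference body $\square_L-\square_L$ rather than $\square_L$, and the real work is to pin down the correct $i$-dimensional toric model and to control the degeneration: ensuring that the point of specialization stays general and that the relevant lattice width of the model is at least $1/\mu$. By contrast (c) is a short intersection-theoretic argument once the elementary toric width bound is established, and (b) is standard geometry of numbers.
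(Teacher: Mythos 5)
Your splitting into the three inequalities is exactly the paper's, and two of the three pieces are sound: the middle inequality is indeed nothing but Banaszczyk's form of Mahler's transference theorem, and your argument for $\epsilon_i(L)\le (d-i+1)\lambda^*_{d-i+1}$ is essentially the paper's Lemma~\ref{Ss'} --- the paper realizes your orbit closure $Y$ as the fibre through $x=1$ of the rational map induced by $(\varphi_1,\dots,\varphi_{d-i+1})\colon M\to\Z^{d-i+1}$, and obtains your ``total degree'' bound $\epsilon_1(L|_Y,1)\le(d-i+1)\nu$ from the domination of $(Y,L|_Y)$ by $(\bP^1,\cO(\nu))^{d-i+1}$ together with Example~\ref{product}.

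The genuine gap is in your part (a), and it comes from overcomplicating the problem. You do not need a single divisor (or a simplex inside $\square_L$) realizing all $i$ directions simultaneously: $\Bs|\cI_x^{t+}L|_\Q$ is the \emph{intersection} of the zero loci of all qualifying sections, so one divisor per direction suffices. Concretely (Lemma~\ref{Ss}), each of the $i$ linearly independent primitive $u_j\in M$ with $u_j/\lambda_i=m_j'-m_j$, $m_j,m_j'\in\square_L$, gives a segment $[m_j,m_j']\subset\square_L$, hence a dominant rational map $\varphi_j\colon X\dashrightarrow\bP^1$ with $\frac{1}{\lambda_i}F_j+D_j\in|L|_\Q$, where $F_j$ is the fibre through $1$ (a translate of the closure of $T_{N\cap u_j^\perp}$) and $D_j$ is an invariant effective divisor, hence away from $1$. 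Each such $\Q$-divisor has order at least $1/\lambda_i$ at $1$, and $\cap_j F_j$ has codimension $i$ at $1$ by independence of the $u_j$; hence $\codim_1\Bs|\cI_1^{t+}L|_\Q\ge i$ for every $t<1/\lambda_i$, which is the desired $\epsilon_i(L)\ge 1/\lambda_i$. Your proposed route, besides being only a sketch whose unresolved points you yourself flag, has two concrete defects. First, the ``general fact'' $\epsilon_i(L,x)\ge\epsilon_d(L|_V,x)$ is dubious as stated: $\Gamma(V,nL|_V)$ may strictly contain the image of $\Gamma(X,nL)$, so a positive-dimensional intersection of a base-locus component with $V$ need not lie in $\Bs|\cI_x^{t+}(L|_V)|_\Q$; you would have to work with the restricted algebra $R(L)|_V$ throughout. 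Second, the flatness-type bound you invoke in dimension $i$ only gives $\epsilon\ge w/i$, so even a completed degeneration argument would yield something like $\epsilon_i(L)\ge 1/(i\lambda_i)$ rather than the sharp constant $1$ in the first inequality of the theorem.
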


Our definition of successive minima is inspired by the equivalent definition of Seshadri constants due to Eckl~\cite[Theorem 1.1]{Eck},
and by the methods developed by Ein, K\"uchle, Lazarsfeld~\cite{EKL} and Nakamaye~\cite{Nak03,Nak03b,Nak05}, 
while studying effective lower bounds for maximal Seshadri constants. Theorem~\ref{m2m}, in the toric case, is 
just a restatement of Minkowski's second main theorem. It is implicit in the work of 
Nakamaye when $X$ is a surface~\cite[Proof of Corollary 3]{Nak03b}, 
and our proof is similar to his, except that we introduce certain convex polytopes in 
his method of counting of jets. Eventually, Theorem~\ref{m2m} reduces to a simple postulation problem in the projective 
space (Proposition~\ref{post}).

A technical improvement in the study of Seshadri constants is that we no longer require positivity of the line bundles, or
that the ambient space is normal.
On the other hand, our invariants may not be numerical if the line bundle is not big. We hope that successive minima,
and other tools from the Geometry of Numbers, may be useful in the study of adjoint linear systems.

We outline the content of this paper. In Section 1 we setup the notation, and recall basic facts about multiplicities,
and linear systems. We also prove an inequality for symbolic powers of ideals in the projective space, 
which is elementary, but new to us. In Section 2, we define successive minima for a line
bundle, study some basic properties, and compare it with Seshadri constants. In Section 3 we prove that the volume
is equivalent to the product of the successive minima. In Section 4, we estimate the successive minima for line
bundles on toric varieties. In Section 5, we generalize the flatness theorem of Khinchin, as an application of known
results on adjoint linear systems, plus the estimates in Section 4. In Section 6 we recall the statements of the second
main theorem of Minkowski, and the transference theorem of Mahler. We also present a proof of the transference theorem
in dimension two, with sharp bounds.

\begin{ack} The first author is grateful to Max Planck Institut f\"{u}r Mathematik in Bonn for its hospitality and financial support.
The second author was supported by Grant-in-Aid for Scientific Research 17K14162. We would like to thank the referee 
for useful comments and suggestions.
\end{ack}


\section{Preliminary}


Throughout this paper, an {\em algebraic variety} $X$ is a scheme of finite type,
reduced and irreducible, defined over an algebraically closed field $k$, of
characteristic zero (the assumption $\Char k=0$ is used in Lemma~\ref{ve} and
its consequences in Section 3, and in Theorem~\ref{aj}).


\subsection{Vanishing orders}

Let $X$ be an algebraic variety, $\cL$ an invertible $\cO_X$-module, and $s\in \Gamma(X,\cL)$ 
a global section. The {\em order of $s$ at a closed point} $x\in X$, denoted $\ord_x(s)$, is the supremum after
all integers $n\ge 0$ such that $s_x\in \cI_x^{n}\cdot \cL_x$. The order is 
$+\infty$ if $s=0$, and a non-negative integer if $s\ne 0$.

If $f\colon X'\to X$ is a morphism of algebraic varieties and $f(x')=x$,
then $f^*s\in \Gamma(X',f^*\cL)$ and $\ord_{x'}(f^*s)\ge \ord_x(s)$.
In particular, if $x\in X' \subseteq X$ is a closed subset, then $\ord_x(s)\le \ord_x(s|_{X'})$.

Suppose $X$ is smooth at the general point of a subvariety $Z\subseteq X$. The order of $s$ at $Z$
is defined as the order of $s$ at a general point of $Z$.


\subsection{Order bounds}

For an effective Cartier divisor $D$ on $X$, the order of $D$ at $x$, denoted $\ord_x(D)$, is defined
as the order at $x$ of a local equation of $D$.

\begin{lem}\label{bm}
Let $X$ be a projective algebraic variety, of dimension $d$,
let $A$ be a very ample divisor, and $D$ an effective 
Cartier divisor on $X$. Then $\ord_x(D)\le (D\cdot A^{d-1})$ for every closed point $x\in X$.
\end{lem}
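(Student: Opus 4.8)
The plan is to argue by induction on $d=\dim X$.

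\emph{Base case $d=1$.} Here $(D\cdot A^{d-1})=\deg_X D$. If $b=\ord_x(D)$ and $f\in\cO_{X,x}$ is a local equation of $D$, then $(f)\subseteq\fm_x^b$ yields a surjection $\cO_{X,x}/(f)\twoheadrightarrow\cO_{X,x}/\fm_x^b$; since $\dim\cO_{X,x}=1$ each graded piece $\fm_x^i/\fm_x^{i+1}$ with $0\le i<b$ is nonzero, so $\length_k(\cO_{X,x}/\fm_x^b)\ge b$, and therefore $\deg D\ge\length_k(\cO_{D,x})\ge b$.

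\emph{Inductive step $d\ge 2$.} We may assume $D\ne 0$; let $D_1,\dots,D_r$ be the prime (hence $(d-1)$-dimensional) components of $\Supp D$. Using that $A$ is very ample, I would pick a general member $H\in|A|$ through $x$ with $H\not\supseteq X$ and $H\not\supseteq D_k$ for every $k$. The key point is a dimension count: the divisors in $|A|$ containing a fixed $D_k$ form a subsystem of codimension $\ge 2$ (because $A$ very ample separates the at least two points of $D_k$), whereas the divisors through $x$ form a subsystem of codimension exactly $1$; so a general $H$ through $x$ avoids all these finitely many subsystems. For such $H$ the scheme $X\cap H$ is pure of dimension $d-1$, none of its components equals any $D_k$, and some component contains $x$; let $Y$ be the reduction of one such component. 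Then $Y$ is a $(d-1)$-dimensional subvariety with $Y\not\subseteq\Supp D$, the restriction $A|_Y$ is very ample, and $D|_Y$ is an effective Cartier divisor on $Y$.

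I would then chain three facts. First, $\ord_x(D)\le\ord_x(D|_Y)$ by the restriction inequality recalled in Section~1 (valid since the local equation of $D$ restricts to a nonzero function on $Y$). Second, $\ord_x(D|_Y)\le\bigl(D|_Y\cdot(A|_Y)^{d-2}\bigr)=(D\cdot A^{d-2}\cdot Y)$ by the inductive hypothesis applied to $(Y,A|_Y,D|_Y)$. Third, $(D\cdot A^{d-2}\cdot Y)\le(D\cdot A^{d-1})$: writing the cycle of $X\cap H$ as $Y+Z$ with $Z$ an effective $(d-1)$-cycle, and using that this cycle is numerically $A$ capped with $X$, we get $(D\cdot A^{d-1})=(D\cdot A^{d-2}\cdot Y)+(D\cdot A^{d-2}\cdot Z)$, and the last term is $\ge 0$ because no component of $Z$ lies in $\Supp D$ (for dimension reasons, by our choice of $H$), so $\cO_X(D)$ restricts to an effective class on each such component, which meets the nef class $A$ non-negatively. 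Combining the three gives $\ord_x(D)\le(D\cdot A^{d-1})$.

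\textbf{Main obstacle.} The delicate point is the choice of $H$: a general hyperplane section through the fixed point $x$ need not be irreducible (for instance when $X\subseteq\bP^N$ is a cone with vertex $x$), which is why I pass to a single component $Y$ through $x$ rather than to $X\cap H$ itself, and why I must ensure that no $D_k$ is a component of $X\cap H$, so that the residual cycle $Z$ contributes non-negatively. Once $H$ and $Y$ are in hand, the rest is routine bookkeeping with intersection numbers, plus the elementary length estimate in the base case.
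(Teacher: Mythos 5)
Your proof is correct. The paper's own argument is essentially a one-step version of yours: it chooses $H_1,\ldots,H_{d-1}\in|A|$ all at once so that the complete-intersection $1$-cycle $C=\cap_{i}H_i$ passes through $x$ with no component contained in $D$, and then concludes from the single chain $(D\cdot A^{d-1})=(D\cdot C)\ge\ord_x(D|_C)\cdot\mult_x(C)\ge\ord_x(D)$, where the first inequality is the local intersection-multiplicity bound cited from Fulton (p.~233) and the second is the same restriction inequality for orders that you invoke. Your induction replaces that citation by an elementary length estimate on an integral curve, namely $\deg D\ge\length_k(\cO_{X,x}/(f))\ge\length_k(\cO_{X,x}/\fm_x^{b})\ge b$, at the cost of having to track at each step that the hyperplane section may be reducible or non-reduced; you handle this correctly by passing to a single component $Y$ through $x$ and observing that the residual effective cycle meets $D\cdot A^{d-2}$ non-negatively because none of its components lies in $\Supp D$. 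Both routes rest on the same genericity input (very ampleness lets one choose sections through $x$ avoiding the finitely many $(d-1)$-dimensional components of $\Supp D$); yours is longer but self-contained, while the paper's is shorter but leans on Fulton's multiplicity inequality as a black box.
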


\begin{proof}
Since $A$ is very ample, there exist $H_1,\ldots,H_{d-1}\in |A|$ such that 
$C=\cap_{i=1}^{d-1}H_i$ is an effective cycle passing through $x$, none of its components being contained in $D$. 
Then 
$$
(D\cdot A^{d-1})=(D\cdot C)\ge \ord_x(D|_C) \cdot \mult_x(C) \ge \ord_x(D) \cdot \mult_x(C)\ge \ord_x(D).
$$
We note that the first inequality follows from~\cite[page 233]{Fulton} even if $X$ is singular.
\end{proof}

\subsection{Volume}


Let $X$ be a proper algebraic variety, of dimension $d$. Let $L$ be a Cartier divisor on $X$,
let $R(L)=\oplus_{n\ge 0}\Gamma(X,\cO_X(nL))$ be the induced $\N$-graded ring. Let 
$R=\oplus_{n\ge 0}R_n\subseteq R(L)$ be a graded subalgebra. The {\em volume of $R$}
is defined as 
$$
\vol(R)=\limsup_{n\to \infty}\frac{\dim_kR_n}{n^d/d!}.
$$
The $\limsup$ is in fact a limit over sufficiently divisible $n$.
The {\em volume of $L$}, denoted $\vol(L)$, is defined as the volume of $R(L)$.

We usually denote $\Gamma(X,\cO_X(nL))$ by $\Gamma(nL)$.


\subsection{Iitaka map}


One may define the Iitaka dimension and Iitaka map for divisors on a variety which may not
be normal. These are birational invariants if the ambient space is normal. 

Let $X$ be a proper variety, let $L$ be a Cartier divisor on $X$. For $n\ge 1$ such that $\Gamma(nL)\ne 0$, 
let $|nL|$ be the induced linear system on $X$. A basis $s_0,\ldots,s_N$ of $\Gamma(nL)$ induces
a rational map $\phi_{|nL|}\colon X\dashrightarrow |nL|$, with image $X_n$. 
Define the {\em Iitaka dimension} $\kappa(L)$ of $L$ to be the maximum of $\dim X_n$ after all such $n$, 
and $-\infty$ if $\Gamma(nL)=0$ for all $n\ge 1$.
 
Suppose $\kappa(L)\ge 0$. For each $n\ge 1$ such that $\Gamma(nL)\ne 0$, let $Q_n\subseteq k(X)$ 
be the field over $k$ generated by $\{\frac{s'}{s}; s',s\in \Gamma(nL)\setminus 0\}$. The dominant rational map
$\phi_{|nL|}\colon X\dashrightarrow X_n$ induces an isomorphism $k(X_n)\isoto Q_n$. The union 
$\cup_{|nL|\ne \emptyset} Q_n$ is a subfield $Q$ of $k(X)$, since $Q_n\cup Q_{n'}\subseteq Q_{n+n'}$ if 
$|nL|$ and $|n'L|$ are non-empty. Since $Q$ is finitely generated over $k$, there exists an integer $m$ such 
that $Q_m=Q$. For every $n\ge 1$, we have $Q_m=Q_{nm}$, that is the rational maps 
$\phi_{|mL|}\colon X\dashrightarrow X_m$ and 
$\phi_{|nmL|}\colon X\dashrightarrow X_{nm}$ differ by a {\em birational} map $X_{nm}\dashrightarrow X_m$. 
We call $\phi_{|mL|}\colon X\dashrightarrow X_m$ the {\em Iitaka map} of $L$. The dimension of $X_m$
is $\kappa(L)$.

- Suppose $\kappa(L)\ge 0$ and $X$ is normal. The normalization of the graph of the Iitaka map
$\phi_{|mL|}\colon X\dashrightarrow X_m$ induces a diagram 
\[ \xymatrix{
 & X' \ar[dl]_\mu \ar[dr]^f  & \\
    X    \ar@{.>}[rr]     &       &   X_m
} \]
such that $X'$ is proper and normal, $\cO_X=\mu_*\cO_{X'}$, and 
$\kappa(\mu^*L|_{X'_y})=0$ for a very general point $y\in X_m$. Note that $X'_y$ is connected.

- Suppose $L$ is {\em semiample}, that is the linear system $|nL|$ has no base points for some 
$n\ge 1$. Then the Iitaka map $\phi\colon X\to Y$ is regular, $\cO_Y=\phi_*\cO_X$, and 
$nL\sim \phi^*A$ for some integer $n \ge 1$ and a very ample divisor $A$ on $Y$.

- Suppose $\mu\colon X'\to X$ is a proper birational morphism. If $X$ and $X'$ are normal, it follows
that $\cO_X=\mu_*\cO_{X'}$. In particular, $\Gamma(nL)=\Gamma(n\mu^*L)$ for every $n\ge 1$.
Therefore the rational maps induced by non-empty linear systems $|nL|$ and $|n\mu^*L|$ differ by 
$\mu$. It follows that $\kappa(L)=\kappa(\mu^*L)$.

A morphism of algebraic varieties $f\colon X\to Y$ is called a {\em contraction} if 
$\cO_Y=f_*\cO_X$.


\subsection{Isolated base points of linear systems}

Let $X$ be a proper algebraic variety, let $L$ be a Cartier divisor on $X$ such that $\Gamma(L)\ne 0$. 
A basis $s_0,\ldots,s_N$ of $\Gamma(L)$
induces a rational map $\phi\colon X\dashrightarrow \bP^N=|L|$. Let $U=X\setminus \Bs|L|$,
so that $\phi$ is regular on $U$. Then $\Bs|\cI_x(L)|\cap U=\phi^{-1}\phi(x)\cap U$ for every $x\in U$.

\begin{lem}\label{icr}
Let $f\colon X\to Y$ be a proper contraction of algebraic varieties, let $x\in X$ be a closed point such that
$\dim_x X_{f(x)} =0$, where $X_{f(x)}= f^{-1}f(x)$. Then $f$ is an isomorphism over a neighborhood of $f(x)$.
\end{lem}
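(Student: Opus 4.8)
The plan is to reduce to the case where $f$ is finite, and then read off the conclusion from the contraction hypothesis. Write $y=f(x)$. Since $f$ is a proper contraction, every fibre of $f$ is connected (Zariski's connectedness theorem: the Stein factorization of $f$ is trivial because $f_*\cO_X=\cO_Y$); in particular $X_y$ is connected. The hypothesis $\dim_xX_y=0$ means precisely that $x$ is an isolated point of the fibre scheme $X_y$, i.e.\ $\{x\}$ is open in $X_y$; it is also closed in $X_y$ since $x$ is a closed point. A non-empty subset of the connected space $X_y$ which is both open and closed is all of $X_y$, so $X_y=\{x\}$ as a set.

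Next I would localize on $Y$ to make $f$ finite. By upper semicontinuity of fibre dimension, the locus $Z=\{x'\in X\colon \dim_{x'}X_{f(x')}\ge 1\}$ is closed in $X$. Since $X_y=\{x\}$, no point of the fibre over $y$ lies in $Z$, so $X_y\cap Z=\emptyset$. As $f$ is proper, $f(Z)$ is closed in $Y$ and does not contain $y$. Set $V=Y\setminus f(Z)$, an open neighbourhood of $y$, and $X_V=f^{-1}(V)$. Then $f$ restricts to a proper morphism $X_V\to V$ all of whose fibres have dimension $0$, i.e.\ a proper quasi-finite morphism; by Zariski's Main Theorem such a morphism is finite.

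Finally, a finite morphism is affine, so $X_V$ is the relative spectrum over $V$ of the coherent $\cO_V$-algebra $f_*\cO_{X_V}=(f_*\cO_X)|_V$, and this algebra is $\cO_V$ because $f$ is a contraction. Hence $f$ induces an isomorphism $X_V\isoto V$, which is the desired isomorphism over a neighbourhood of $f(x)$. The only inputs that are not purely formal are the connectedness of fibres of a proper contraction and the finiteness of proper quasi-finite morphisms, both standard; no hypothesis on the characteristic or on normality is needed. The step that requires the most care in the write-up is the first one — using $\dim_xX_{f(x)}=0$ correctly as "$x$ is isolated in the fibre" and combining it with connectedness to collapse the entire fibre over $f(x)$ to $\{x\}$ — since it is exactly here that the contraction hypothesis does its work; once $X_{f(x)}=\{x\}$ the remainder is routine.
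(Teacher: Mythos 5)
Your proof is correct and follows essentially the same route as the paper's: collapse the fibre to $\{x\}$ using connectedness of fibres of a contraction, shrink $Y$ using semicontinuity of fibre dimension plus properness to make $f$ quasi-finite, and conclude that a finite contraction is an isomorphism. Your write-up is in fact slightly more careful at the first step (isolating $x$ as open and closed in the connected fibre), but the argument is the same.
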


\begin{proof} By the semicontinuity of the local dimension of the fibers of $f$,
there exists an open subset $U'\ni x$ such that $\dim_{x'}X_{f(x')}=0$ for all $x'\in U'$. Let 
$V=Y\setminus f(X\setminus U')$. Since $X_{f(x)}$ is $0$-dimensional and connected, it coincides 
with $\{x\}$. Therefore $f(x)\in V$. Let $U=f^{-1}(V)$. Then $x\in U\subseteq U'$. Therefore 
$f|_U\colon U\to V$ is a proper contraction with finite fibers, hence a finite contraction, hence an isomorphism.
\end{proof}

\begin{lem}[Lemma 5.2.24~\cite{Lazbook}]\label{bl}
Let $X$ be an algebraic variety. Let $\cI\subseteq \cO_X$ be an ideal sheaf,
let $f\colon Y\to X$ be the blow-up of $X$ along $\cI$. Let $E$ be the exceptional
divisor on $Y$, defined by $\cI\cdot \cO_Y=\cO_Y(-E)$. Then the natural homomorphism
$
\cI^n\to f_*\cO_Y(-nE)
$
is an isomorphism for every $n\ge c(\cI)$, where $c(\cI)\ge 0$ is a constant depending only on $\cI$.
Moreover, $c(\cI)=0$ if $\cI$ is the ideal of a smooth point of $X$.
\end{lem}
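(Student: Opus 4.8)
The plan is to reduce to a local statement about Rees algebras. Since $X$ is of finite type it is quasi-compact, so it is enough to establish the assertion over the members of a finite affine open cover and then take $c(\cI)$ to be the largest of the constants so produced. Thus we may assume $X=\Spec A$ with $A$ a Noetherian domain, and $\cI=\widetilde{I}$ for a nonzero proper ideal $I\subsetneq A$ (the case $\cI=\cO_X$, where $f$ is an isomorphism and $E=0$, being trivial). Form the Rees algebra $\mathcal{R}=\bigoplus_{n\ge 0}I^n\subseteq A[t]$: it is a finitely generated graded $A$-algebra, generated in degree one, and $Y=\Proj_X\mathcal{R}$ with $\cO_Y(1)=\cO_Y(-E)$ the invertible sheaf with $\cI\cdot\cO_Y=\cO_Y(-E)$. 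Consequently $\cI^n\cdot\cO_Y=\cO_Y(-nE)$, and the natural map of the statement is, in degree $n$, the comparison map $\mathcal{R}_n=I^n\to\Gamma(Y,\cO_Y(n))=\Gamma(Y,\cO_Y(-nE))$; it remains to see this is bijective for $n\gg 0$.

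Here one invokes the classical comparison between a finitely generated graded algebra, generated in degree one over a Noetherian base, and the sections of the twisting sheaves on its $\Proj$: there is an exact sequence
$$0\to H^0_{\mathcal{R}_+}(\mathcal{R})_n\to \mathcal{R}_n\to \Gamma(Y,\cO_Y(n))\to H^1_{\mathcal{R}_+}(\mathcal{R})_n\to 0,$$
and for a finitely generated graded module the graded local cohomology with respect to the irrelevant ideal $\mathcal{R}_+=\bigoplus_{n\ge 1}\mathcal{R}_n$ vanishes in large degrees. Since $\mathcal{R}$ is a domain with $\mathcal{R}_+\ne 0$, no nonzero element is killed by a power of $\mathcal{R}_+$, so $H^0_{\mathcal{R}_+}(\mathcal{R})=0$ and the map is injective for every $n$; taking $c(\cI)$ to be a degree beyond which $H^1_{\mathcal{R}_+}(\mathcal{R})$ vanishes, the map is bijective for $n\ge c(\cI)$. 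Sheafifying and gluing over the finite affine cover gives the first assertion.

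For the final clause, let $\cI=\cI_x$ with $x$ a smooth point. Localizing, we may take $X=\mathbb{A}^d=\Spec k[z_1,\dots,z_d]$ and $x=0$, so that $Y=\mathrm{Bl}_0\mathbb{A}^d$ is smooth and covered by the standard charts $U_i\cong\mathbb{A}^d$ on which $f$ is $z_i\mapsto z_i$, $z_j\mapsto z_iw_j$ for $j\ne i$, with $E\cap U_i=\{z_i=0\}$. Because $\mathbb{A}^d$ is normal and $f$ is proper birational, $f_*\cO_Y=\cO_X$, so $f_*\cO_Y(-nE)$ is the ideal of those $g\in\cO_X$ with $\ord_E(f^*g)\ge n$. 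On $U_i$ a monomial $z^\alpha$ pulls back to $z_i^{|\alpha|}$ times a monomial in the $w_j$, so its vanishing order along $E$ is exactly $|\alpha|$; since the pullbacks on $U_i$ of the degree-$m$ monomials are linearly independent, terms of distinct degrees cannot cancel, and hence $\ord_E(f^*g)$ equals the $\mathfrak{m}_x$-adic order of $g$. Therefore $f_*\cO_Y(-nE)=\mathfrak{m}_x^n=\cI^n$ for every $n\ge 0$, i.e.\ one may take $c(\cI)=0$.

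The argument is standard throughout; the two points deserving attention are the uniformity of the constant $c(\cI)$, which is why one passes to a finite affine cover, and the correct identification of $\cO_Y(-nE)$ with the Serre twist $\cO_{\Proj\mathcal{R}}(n)$, after which the classical comparison of a graded ring with the sections of the twisting sheaves on its $\Proj$ applies verbatim. One can equally well quote that comparison as a black box — it is exactly the statement that a coherent sheaf on $\Proj\mathcal{R}$ has the expected module of sections in high degree — and dispense with local cohomology altogether; the explicit chart computation above then handles the smooth-point refinement by hand.
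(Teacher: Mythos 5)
Your proposal is correct and follows essentially the same route as the paper: localize, identify $Y$ with $\Proj$ of the Rees algebra and $\cO_Y(-nE)$ with $\cO_Y(n)$, and invoke the standard comparison $\mathcal{R}_n\to\Gamma(Y,\cO_Y(n))$ being an isomorphism in large degrees; the only cosmetic difference is that for the smooth-point refinement you compute explicitly on the charts of $\mathrm{Bl}_0\mathbb{A}^d$, where the paper appeals to $\cI$ being generated by a regular sequence.
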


\begin{proof}
The statement is local, so we may suppose $X=\Spec R $ and $Y=\Proj S$, where $S$
is the $R$-graded ring $\oplus_{n\ge 0}I^n$, for some ideal $I\subseteq R$. Then 
$\cO_Y(-nE)=\cO_Y(n)=\tilde{S(n)}$. The natural homomorphism
$
S\to \oplus_{n\ge 0}\Gamma(Y,\tilde{S(n)})
$
is an isomorphism in degrees $n\ge c(I)$. This gives the claim.

If $\cI$ is the ideal of a smooth point of $X$, then $\cI$ is generated by a regular sequence.
One checks then that $c(\cI)=0$.
\end{proof}

\begin{lem}[Proposition 1.1~\cite{ELMNP}]\label{ibp}
Let $X$ be a proper algebraic variety, let $L$ be a Cartier divisor on $X$ and $x\in X$ a closed point
such that $\Bs |L|\cap U\subseteq \{x\}$ for some open neighborhood $U$ of $x$ in $X$.
Then $x\notin \Bs|nL|$ for $n\gg 0$.
\end{lem}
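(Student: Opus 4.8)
The plan is to produce, for some $n\ge1$, a global section of $nL$ nonvanishing at $x$: since $x\in\Bs|nL|$ would force every such section to vanish at $x$, this gives a contradiction. Replacing $L$ by a sufficiently divisible multiple, we may assume $\Bs|nL|$ is independent of $n\ge1$, equal to the stable base locus $\bB(L)$; the hypothesis then says $x$ is an isolated point of $\bB(L)$, and the statement becomes $x\notin\bB(L)$.

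\emph{Reductions and the modification.} The hypothesis forces $\Gamma(L)\ne0$ (otherwise $\Bs|L|=X$, so $U\subseteq\{x\}$ and $X=\{x\}$, trivially), and we may assume $x\in\Bs|L|=\bB(L)$; a standard localization reduces us further to $X$ normal near $x$. Let $\fa=\mathfrak b(|L|)\subseteq\cO_X$ be the base ideal of $|L|$; since $V(\fa)=\bB(L)$, it is $\fm_x$-primary near $x$. Let $f\colon Y\to X$ be the normalized blow-up of $\fa$, with exceptional effective Cartier divisor $E$ on the normal variety $Y$, so $\fa\cdot\cO_Y=\cO_Y(-E)$; then $-E$ is $f$-ample and $M:=f^*L-E$ is globally generated. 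Put $F:=f^{-1}(x)$. Because $f^*L$ restricts trivially to the fibre $F$, we have $M|_F=-E|_F$, which is ample on $F$.

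\emph{Reformulation.} Since $f_*\cO_Y=\cO_X$ near $x$ (as $X$ is normal there and $f$ is birational), a section of $nf^*L$ on $Y$ nonvanishing on $F$ descends to a section of $nL$ on $X$ nonvanishing at $x$; so it suffices to show $F\not\subseteq\Bs|nf^*L|$ for some $n$. Let $g\colon Y\to W$ be the contraction defined by $|M|$ (the Stein factorization of the associated morphism), so $g_*\cO_Y=\cO_W$, $W$ is normal and projective, and $M=g^*A$ for an ample divisor $A$ on $W$. As $M|_F$ is ample and $M=g^*A$, no curve contained in $F$ is contracted by $g$, hence $g|_F\colon F\to W$ is finite. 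Writing $nf^*L=g^*(nA)+nE$, the question becomes whether the fixed divisor $nE$ can be avoided along $F$. If $g$ were moreover finite in a \emph{neighbourhood} of $F$, then Lemma~\ref{icr} would make $g$ an isomorphism over a neighbourhood of $g(F)$, so that $g_*\cO_Y(nE)$ would be invertible there and one could hope to conclude from the ampleness of $A$ by a Serre-type argument.

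\emph{Main obstacle.} This is precisely the crux: in general $g$ need not be finite near $F$ --- it may contract a curve $C$ meeting $F$ onto a curve through $x$ in $X$ (necessarily with $C\not\subseteq\bB(L)$ and $L\cdot C\ge1$) --- and, more fundamentally, $L$ carries no positivity beyond $M=f^*L-E$ being semiample and ample on $F$, which must be leveraged against the fixed divisor $E$. Overcoming this is the real content of the proof (see \cite{ELMNP}); in the known treatment it is handled by a more delicate analysis, e.g.\ an induction on $\dim X$ (cutting by general very ample divisors through $x$, along which $x$ remains an isolated base point of the restricted system) together with the surface case, the remaining steps being formal.
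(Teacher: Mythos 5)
Your proposal does not close: you set up a framework, isolate what you yourself call the ``main obstacle'' (the contraction $g$ need not be finite near the fibre $F$, and the fixed divisor $nE$ must be avoided along $F$), and then defer its resolution to the literature (``Overcoming this is the real content of the proof\dots the remaining steps being formal''). That is the entire content of the lemma left unproved. The difficulty is moreover self-inflicted: by blowing up the \emph{whole} base ideal $\fa=\mathfrak b(|L|)$, you replace the point $x$ by a positive-dimensional fibre $F=f^{-1}(x)$ and must then fight the exceptional divisor along $F$. A secondary gap: the claimed ``standard localization'' reducing to $X$ normal near $x$ is not justified --- passing to the normalization changes $\Gamma(nL)$ and hence the base loci, and the lemma is stated for arbitrary proper varieties.

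The paper's proof sidesteps your obstacle entirely. It takes $Z$ to be the base locus subscheme of $|L|$ \emph{away from} $x$ and blows up only $\cI_Z$; since $x\notin Z$, the blow-up $f\colon Y\to X$ is an isomorphism near $x$, so $x$ stays a point of $Y$ rather than becoming a fibre. The sections of $\Gamma(\cI_Z(L))=\Gamma(L)$ then exhibit $\Bs|f^*L-E|\subseteq\{x\}$ globally, and the hard step --- passing from an isolated base point to no base point after taking multiples --- is supplied by Fujita's theorem \cite[Theorem 1.10]{Fuj83}, which is exactly the input your sketch is missing. One then descends via Lemma~\ref{bl}, identifying $\Gamma(\cI_Z^n(nL))$ with $\Gamma(nf^*L-nE)$ for $n\ge c(\cI_Z)$. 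If you want to salvage your approach, you must either prove the Fujita-type statement yourself (this is where an induction on dimension or a genuine argument is needed) or cite it; as written, the proposal records the problem rather than solving it.
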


\begin{proof}
Let $Z$ be the base locus {\em subscheme} of $|L|$ outside $x$ (possibly empty). 
Let $f\colon Y\to X$ be the blow-up of $X$ along $\cI_Z$. Let $\cI_Z\cdot \cO_Y=\cO_Y(-E)$, so 
that $E$ is the exceptional divisor. 
Since $f$ is an isomorphism over a neighborhood of $x$, we may identify $x$ with a point of $Y$.

A basis of $\Gamma(X,\cI_Z(L))=\Gamma(X,\cO_X(L))$ induces sections of $\Gamma(Y,\cO_Y(f^*L-E))$ 
whose common zero locus is away from $E$. Therefore $\Bs|f^*L-E|\subseteq \{x\}$. By~\cite[Theorem 1.10]{Fuj83}, 
$x\notin \Bs|n(f^*L-E)|$ for $n\ge n_0$. By Lemma~\ref{bl}, we may identify
$\Gamma(X,\cI_Z^n(nL))=\Gamma(Y,\cO_Y(nf^*L-nE))$ for every $n\ge c(\cI_Z)$. 
Therefore $x\notin \Bs|\cI^n_Z(nL)|$ for $n\ge \max(n_0,c(\cI_Z))$. 
Since $x\notin Z$, we obtain $x\notin \Bs|nL|$ for $n\gg 0$.
\end{proof}

\begin{lem}[Ample reduction]\label{rda}
Let $X$ be proper, $L$ a Cartier divisor and $x\in X$ a closed point such that $\Bs|\cI_x(L)|=\{x\}$ near $x$.
Let $Z$ be the base locus {\em subscheme} of $|L|$, away from $x$ (possibly empty). Let $f \colon X'\to X$
be the blow-up of $X$ along $\cI_Z$, with exceptional divisor $E$. Then there exists a proper 
contraction $g\colon X' \to Y$ to a projective algebraic variety $Y$, such that the following properties hold:
\begin{itemize}
\item[a)] $f^*L-E\sim g^*A$ for some ample Cartier divisor $A$ on $Y$, and 
\item[b)] $f$ is an isomorphism over a neighborhood of $x$, and $g$ is an isomorphism over
a neighborhood of $g(x)$.
\end{itemize}
\[ \xymatrix{
 & X' \ar[dl]_f \ar[dr]^g  & \\
    X    \ar@{.>}[rr]     &       &   Y
} \]
We obtain natural homomorphisms
$
R(L)\supseteq \oplus_{n\ge 0}\Gamma(\cI_Z^n(nL))\stackrel{\alpha}{\to} R(f^*L-E) \simeq R(A),
$
and $\alpha$ is an isomorphism in sufficiently large degrees.
\end{lem}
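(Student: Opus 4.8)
The plan is to construct $g$ as the Iitaka contraction of the semiample divisor $f^*L-E$, and then to verify the two isomorphism statements using the lemmas already established. First I would observe that, by hypothesis, $\Bs|\cI_x(L)| = \{x\}$ in a neighborhood of $x$, so $x$ is an isolated base point of $|L|$; in particular $x \notin Z$, the base scheme away from $x$. After blowing up $\cI_Z$ to get $f\colon X' \to X$ with $\cI_Z\cdot\cO_{X'} = \cO_{X'}(-E)$, the morphism $f$ is an isomorphism over a neighborhood of $x$ (as $x\notin Z$), which gives the first half of (b), and we may regard $x$ as a point of $X'$. Pulling back a basis of $\Gamma(X,\cI_Z(L)) = \Gamma(X,\cO_X(L))$ produces sections of $\cO_{X'}(f^*L-E)$ whose common zero locus misses $E$, so $\Bs|f^*L-E| \subseteq \{x\}$; then by Lemma~\ref{ibp} (applied on $X'$) we get $x\notin\Bs|n(f^*L-E)|$ for $n\gg 0$, hence $f^*L-E$ is semiample.

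Next I would take $g\colon X' \to Y$ to be the Iitaka map of the semiample divisor $f^*L-E$, which by the semiample case recorded in the subsection on the Iitaka map is a genuine morphism to a projective variety $Y$ with $\cO_Y = g_*\cO_{X'}$ and $n_0(f^*L-E) \sim g^*A_0$ for some $n_0\ge 1$ and very ample $A_0$ on $Y$; this is almost (a), except one wants $A$ with $f^*L-E \sim g^*A$ on the nose rather than after multiplying by $n_0$. To fix this I would argue that since $f^*L-E$ is trivial on every fiber of $g$ and $g$ is a contraction, $f^*L-E$ descends: $f^*L-E \sim g^*A$ for a Cartier divisor $A$ on $Y$, and $A$ is ample because $n_0 A \sim A_0$ is very ample. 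For the second half of (b), I would note that $g$ contracts exactly the locus where $f^*L-E$ fails to be "locally very ample"; more concretely, since $x \notin \Bs|n(f^*L-E)|$ for $n\gg 0$ and $\Bs|f^*L-E|\subseteq\{x\}$, the sections of multiples of $f^*L-E$ separate $x$ from nearby points, so $\dim_x X'_{g(x)} = 0$. Lemma~\ref{icr} then shows $g$ is an isomorphism over a neighborhood of $g(x)$.

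For the final assertion about the graded rings, I would identify $\Gamma(X,\cI_Z^n(nL))$ with $\Gamma(X',\cO_{X'}(nf^*L-nE))$ for $n\ge c(\cI_Z)$ via Lemma~\ref{bl}, and the latter with $\Gamma(Y,\cO_Y(nA))$ via $g^*$ and $\cO_Y = g_*\cO_{X'}$, together with $f^*L-E\sim g^*A$; the composite is the map $\alpha$, and the two identifications are equalities for $n$ large, giving that $\alpha$ is an isomorphism in sufficiently large degrees, while $R(f^*L-E)\simeq R(A)$ comes from $f^*L-E\sim g^*A$ and the projection formula. The main obstacle I anticipate is the descent step improving $n_0 A_0$-level linear equivalence to $f^*L-E\sim g^*A$ and the careful bookkeeping that $g$ really is an isomorphism near $g(x)$ — i.e.\ that the isolated base point $x$ of $|L|$ maps to a point over which $Y$ "looks like" $X'$; both reduce to Lemma~\ref{icr} and Lemma~\ref{ibp} once one checks the relevant fiber dimension is zero, which is where I would spend the most care.
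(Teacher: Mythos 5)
Your proposal is correct and follows essentially the same route as the paper: $\Bs|f^*L-E|\subseteq\{x\}$, Lemma~\ref{ibp} to make $f^*L-E$ semiample, the Iitaka contraction for $g$ and a), Lemma~\ref{icr} for b), and Lemma~\ref{bl} for the identification of graded rings in large degrees. The only difference is that you spell out the descent step $f^*L-E\sim g^*A$ (rather than just $n_0(f^*L-E)\sim g^*A_0$), which the paper leaves implicit in the phrase ``the Iitaka map induces the contraction $g$, with property a)''; your added care there is warranted but does not change the argument.
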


\begin{proof} By construction, $|f^*L-E|$ is base point free near $E$. Therefore 
$\Bs|f^*L-E|\subseteq \{x\}$. By the proof of Lemma~\ref{ibp}, $|n(f^*L-E)|$ is base point free
for $n\gg 0$. The Iitaka map induces the contraction $g$, with property a).
For b), it is clear that $f$ is an isomorphism over a neighborhood of $x$. We identify
$x$ with a point of $X'$. Since $\Bs|\cI_x(L)|=\{x\}$ near $x$, we obtain 
$\Bs|\cI_x(f^*L-E)|=\{x\}$ near $x$. Therefore $\Bs|\cI_x(nf^*L-nE)|=\{x\}$ near $x$,
for all $n\ge 1$. Therefore $g^{-1}g(x)=\{x\}$ near $x$. By Lemma~\ref{icr}, $g$ is an
isomorphism over a neighborhood of $g(x)$. Finally, $\alpha_n$ is an isomorphism 
for $n\ge c(\cI_Z)$.
\end{proof}


\subsection{Generation of jets}

Let $X$ be an algebraic variety, $L$ a Cartier divisor on $X$, $x\in X$ a closed point, and $p\ge 0$ an integer.
We say that {\em $L$ generates $p$-jets at $x$} if $\Gamma(L)\to \cO_x/\cI_x^{p+1}$ is surjective.
By induction and the snake lemma, this is equivalent to the following property: 
the $\cO_X$-module $\cI_x^n(L)$ is generated by global sections at $x$, for every integer $0\le n\le p$.

\begin{lem}\label{lj}
Let $f\colon Y\to X$ be the blow-up at $x$, with exceptional divisor $E$. Suppose 
$\cI_x^{p+1}\isoto f_*(\cI_E^{p+1})$ and $R^1f_*(\cI_E^{p+1})=0$. If 
$\Gamma(f^*L)\to \Gamma(\cO_{(p+1)E})$ is surjective, then $L$ generates $p$-jets at $x$.
The converse holds if the natural homomorphism $H^1(L)\to H^1(f_*\cO_Y(L))$ is injective.
\end{lem}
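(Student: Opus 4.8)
The plan is to work on the blow-up $f\colon Y\to X$ at $x$, with exceptional divisor $E$, and to compare the global sections of $L$ on $X$ and of $f^*L$ on $Y$ through the exact sequence of the thickened exceptional divisor. The key point is that the hypotheses $\cI_x^{p+1}\isoto f_*(\cI_E^{p+1})$ and $R^1f_*(\cI_E^{p+1})=0$ let us translate the condition ``$L$ generates $p$-jets at $x$'' entirely into a statement about $f^*L$.

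\begin{proof}
Write $\mathbf m = \cI_x$ for the maximal ideal sheaf at $x$, and let $\cI_E$ be the ideal sheaf of $E$ on $Y$. Consider the short exact sequence on $Y$
$$
0\to \cI_E^{p+1}(f^*L)\to \cO_Y(f^*L)\to \cO_{(p+1)E}(f^*L)\to 0.
$$
Since $f$ is an isomorphism away from $x$ and $(p+1)E$ is supported on $f^{-1}(x)$, the sheaf $\cO_{(p+1)E}(f^*L)$ is isomorphic to $\cO_{(p+1)E}$ (the line bundle $f^*L$ is trivial on the fiber, and more precisely on the infinitesimal neighborhood $(p+1)E$ after choosing a local trivialization of $L$ near $x$); thus $\Gamma(\cO_{(p+1)E}(f^*L))\simeq \Gamma(\cO_{(p+1)E})$, and pushing forward gives $f_*\cO_{(p+1)E}(f^*L)\simeq \cO_x/\mathbf m^{p+1}$. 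Pushing the sequence forward by $f$ and using $\cI_x^{p+1}\isoto f_*(\cI_E^{p+1})$ together with the projection formula $f_*(\cI_E^{p+1}(f^*L))\simeq \mathbf m^{p+1}\cdot\cO_X(L)$, we obtain on $X$ an exact sequence
$$
0\to \mathbf m^{p+1}(L)\to \cO_X(L)\to \cO_x/\mathbf m^{p+1}\to R^1f_*(\cI_E^{p+1}(f^*L))\to \cdots.
$$
By the projection formula $R^1f_*(\cI_E^{p+1}(f^*L))\simeq R^1f_*(\cI_E^{p+1})\otimes \cO_X(L) = 0$ by hypothesis. Hence the sheaf sequence
$$
0\to \mathbf m^{p+1}(L)\to \cO_X(L)\to \cO_x/\mathbf m^{p+1}\to 0
$$
is exact, and by definition $L$ generates $p$-jets at $x$ iff $\Gamma(\cO_X(L))\to \cO_x/\mathbf m^{p+1}$ is surjective.

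\textbf{Comparing the two cohomology sequences.} Now take global sections in the two exact sequences, the one on $Y$ and the one on $X$, and compare them via the pullback maps, using $\Gamma(f^*L)=\Gamma(L)$ (which holds as $f$ is birational with $\cO_X=f_*\cO_Y$, $X$ being normal where relevant — or directly because $f_*\cO_Y(f^*L)=\cO_X(L)$ up to the correction below) and $\Gamma(\cO_{(p+1)E}(f^*L))=\cO_x/\mathbf m^{p+1}$:
$$
\xymatrix{
\Gamma(f^*L) \ar[r] \ar@{=}[d] & \Gamma(\cO_{(p+1)E}(f^*L)) \ar[r] \ar@{=}[d] & H^1(\cI_E^{p+1}(f^*L)) \ar[d] \\
\Gamma(L) \ar[r] & \cO_x/\mathbf m^{p+1} \ar[r]^-{\partial} & H^1(\mathbf m^{p+1}(L)).
}
$$
If $\Gamma(f^*L)\to \Gamma(\cO_{(p+1)E}(f^*L))$ is surjective, then by the commutativity of the left square $\Gamma(L)\to \cO_x/\mathbf m^{p+1}$ is surjective, so $L$ generates $p$-jets at $x$. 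This proves the first assertion.

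\textbf{The converse.} Assume $L$ generates $p$-jets at $x$, so $\Gamma(L)\to \cO_x/\mathbf m^{p+1}$ is surjective, i.e.\ $\partial = 0$. We must show $\Gamma(f^*L)\to \Gamma(\cO_{(p+1)E}(f^*L))$ is surjective, equivalently that the connecting map $\Gamma(\cO_{(p+1)E}(f^*L))\to H^1(\cI_E^{p+1}(f^*L))$ vanishes. The Leray spectral sequence for $f$ applied to $\cI_E^{p+1}(f^*L)$, using $R^1f_*(\cI_E^{p+1}(f^*L))=0$ established above, gives an isomorphism $H^1(Y,\cI_E^{p+1}(f^*L))\isoto H^1(X,f_*(\cI_E^{p+1}(f^*L)))=H^1(X,\mathbf m^{p+1}(L))$, compatible with the connecting maps (this is exactly the right vertical arrow in the diagram, which is therefore injective). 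Since the left square commutes and the bottom composite $\Gamma(L)\to \cO_x/\mathbf m^{p+1}\xrightarrow{\partial} H^1(\mathbf m^{p+1}(L))$ is zero, and $\Gamma(\cO_{(p+1)E}(f^*L))\to H^1(\cI_E^{p+1}(f^*L))\hookrightarrow H^1(\mathbf m^{p+1}(L))$ equals $\partial$ precomposed with the surjection from $\Gamma(f^*L)=\Gamma(L)$ — wait, more carefully: the image of $\Gamma(\cO_{(p+1)E}(f^*L))$ in $H^1(\cI_E^{p+1}(f^*L))$ maps injectively to $H^1(\mathbf m^{p+1}(L))$, and under the identifications its further composite with $H^1(\mathbf m^{p+1}(L))\to H^1(f_*\cO_Y(L))$ — no: here is where the hypothesis $H^1(L)\to H^1(f_*\cO_Y(f^*L))$ injective enters.

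The subtle point, and the main obstacle, is that $f_*\cO_Y(f^*L)$ need not equal $\cO_X(L)$ when $X$ is singular at $x$, so $\Gamma(f^*L)$ can be strictly larger than $\Gamma(L)$; one must instead run the argument with $f_*\cO_Y(f^*L)$ throughout, obtaining an exact sequence $0\to \mathbf m^{p+1}(L)\to f_*\cO_Y(f^*L)\to \cO_x/\mathbf m^{p+1}\to 0$ on $X$, and then the cokernel of $\Gamma(\cO_X(L))\to\Gamma(f_*\cO_Y(f^*L))$ injects into $H^1(\mathbf m^{p+1}(L))$ — precisely when $H^1(L)\to H^1(f_*\cO_Y(f^*L))$ is injective. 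Chasing the resulting commutative diagram shows that surjectivity of $\Gamma(L)\to \cO_x/\mathbf m^{p+1}$ forces surjectivity of $\Gamma(f^*L)\to \Gamma(\cO_{(p+1)E}(f^*L))$. This completes the proof.
\end{proof}
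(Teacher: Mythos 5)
Your overall strategy --- push $0\to \cI_E^{p+1}(f^*L)\to \cO_Y(f^*L)\to \cO_{(p+1)E}(f^*L)\to 0$ down to $X$ and compare it with $0\to \cI_x^{p+1}(L)\to \cO_X(L)\to \cO_X/\cI_x^{p+1}\to 0$ --- is the right one and is essentially what the paper does, but your execution has a genuine gap which you half-notice at the end without repairing. The identifications you mark as equalities in your diagram, $\Gamma(f^*L)=\Gamma(L)$ and $\Gamma(\cO_{(p+1)E}(f^*L))=\cO_x/\cI_x^{p+1}$, amount to assuming $\cO_X\isoto f_*\cO_Y$ near $x$, which the lemma does not grant ($X$ may be non-normal at $x$; this is exactly why the converse needs the hypothesis that $H^1(L)\to H^1(f_*\cO_Y(L))$ be injective --- under your identifications the two surjectivity conditions are literally the same statement and the lemma is a tautology, which should have been a warning sign). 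In general one only has inclusions $\cO_X(L)\hookrightarrow f_*\cO_Y(f^*L)$ and $\cO_X/\cI_x^{p+1}\hookrightarrow f_*\cO_{(p+1)E}$, and the pushed-forward sequence reads $0\to\cI_x^{p+1}(L)\to f_*\cO_Y(f^*L)\to f_*\cO_{(p+1)E}\to 0$; the ``corrected'' sequence you propose at the end, with $\cO_x/\cI_x^{p+1}$ as the quotient, is still wrong. The missing idea is to apply the snake lemma to the map of short exact sequences from $0\to\cI_x^{p+1}\to\cO_X\to\cO_X/\cI_x^{p+1}\to 0$ to $0\to f_*\cI_E^{p+1}\to f_*\cO_Y\to f_*\cO_{(p+1)E}\to 0$: since the first vertical arrow is an isomorphism by hypothesis, the cokernels of the second and third vertical arrows are canonically isomorphic to a common sheaf $\cC$ supported at $x$.

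Once $\cC$ is in hand, both directions are diagram chases on
\[ \xymatrix{
 0 \ar[r] & \Gamma(L) \ar[r] \ar[d]^a & \Gamma(f^*L) \ar[r]^c  \ar[d]^b   & \Gamma(\cC) \ar[d]^= \\
 0 \ar[r] & \cO_x/\cI_x^{p+1} \ar[r]  & \Gamma(\cO_{(p+1)E})  \ar[r] & \Gamma(\cC)
 } \]
whose rows come from the two cokernel sequences (tensoring with $\cO_X(L)$ is harmless, as everything off the middle column is supported at $x$). If $b$ is surjective, then $a$ is surjective with no further hypothesis: a class in $\cO_x/\cI_x^{p+1}$ maps to some $w\in\Gamma(\cO_{(p+1)E})$ with zero image in $\Gamma(\cC)$, a lift $s\in\Gamma(f^*L)$ of $w$ then satisfies $c(s)=0$, so $s\in\Gamma(L)$ and $a(s)$ is the given class. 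Your argument for this direction proves nothing beyond the case $f_*\cO_Y=\cO_X$. For the converse, surjectivity of $a$ yields surjectivity of $b$ only when $c$ is also surjective, and $c$ is surjective precisely when $H^1(L)\to H^1(f_*\cO_Y(L))$ is injective, by the long exact sequence of $0\to\cO_X(L)\to f_*\cO_Y(f^*L)\to\cC\to 0$. Your treatment of that direction is left as a sketch (with the chase not carried out), and the Leray argument you begin only controls the connecting map on the subspace $\cO_x/\cI_x^{p+1}\subseteq\Gamma(\cO_{(p+1)E})$, not on the complementary part measured by $\Gamma(\cC)$, which is where the injectivity hypothesis actually enters.
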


\begin{proof} Consider the commutative diagram
\[ \xymatrix{
 0 \ar[r] & f_*(\cI_E^{p+1}) \ar[r] & f_*\cO_Y \ar[r]  & \Gamma(\cO_{(p+1)E}) \ar[r] & 0 \\
 0 \ar[r] & \cI_x^{p+1} \ar[r] \ar[u] & \cO_X \ar[r] \ar[u] & \cO_X/\cI_x^{p+1} \ar[r]\ar[u] & 0
} \]
The bottom row is exact. The top is also exact, since $R^1f_*(\cI_E^{p+1})=0$. The first vertical arrow
is an isomorphism, by assumption. The second vertical arrow is injective. Therefore the third
arrow is injective as well. We deduce that the second and third vertical arrows have isomorphic 
cokernels, denoted by $\cC$. Ignoring the first vertical arrow, tensoring with $\cO_X(L)$ 
and passing to global sections on $X$, we obtain a commutative diagram 
\[ \xymatrix{
 0 \ar[r] & \Gamma(L) \ar[r] \ar[d]^a & \Gamma(f^*L) \ar[r]^c  \ar[d]^b   & \Gamma(\cC) \ar[d]^= \\
 0 \ar[r] & \cO_X/\cI_x^{p+1} \ar[r]  & \Gamma(\cO_{(p+1)E})  \ar[r] & \Gamma(\cC) 
 } \]
with exact rows. If $b$ is surjective, then $a$ is surjective. The converse holds if $c$ is surjective.
From the exact sequence 
$$
0\to \Gamma(L)\to \Gamma(f_*\cO_Y(L))\stackrel{c}{\to} \Gamma(\cC)\to H^1(L)\to H^1(f_*\cO_Y(L)),
$$
we see that $c$ is surjective if and only if $H^1(L)\to H^1(f_*\cO_Y(L))$ is injective.
\end{proof}

If $x$ is a smooth point, then $\cI_x^{p+1}\isoto f_*(\cI_E^{p+1})$ and $R^1f_*(\cI_x^{p+1})=0$ for all $p\ge 0$,
and $\cO_X=f_*\cO_Y$. Therefore $L$ generates $p$-jets at $x$ if and only if 
$\Gamma(f^*L)\to \Gamma(\cO_{(p+1)E})$ is surjective.


\subsection{Postulation}


For an integer $p\ge 0$, denote $\N^d(p)=\{(\alpha_1,\ldots,\alpha_d)\in \N^d; \sum_{i=1}^d \alpha_i=p\}$. 
For a finite set $A$, denote by $|A|$ its cardinality. 
We will need the following property on the symbolic powers of ideals in the projective space.

\begin{prop}\label{post} 
Let $Z_i\subset \bP^d$ be an irreducible subvariety of codimension $i$, for $1\le i \le d$.
Let $p_1,\ldots,p_d,q\ge 0$ be integers. Then 
$$
h^0(\bP^d,\cap_{i=1}^d \cI_{Z_i}^{(p_i)}\otimes \cO_{\bP^d}(q))\le 
|\cap_{i=1}^d \{\alpha\in \N^{d+1}(q); \alpha_i+\cdots+\alpha_d \le q-p_i \} |.
$$ 
When $Z_1\supset Z_2\supset \cdots \supset Z_d$ is a linear flag in $\bP^d$, the equality holds.
\end{prop}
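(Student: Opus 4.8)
The plan is to reduce everything to a monomial/combinatorial count by passing to a suitable "coordinate subspace" situation. For the inequality, I would first note that the symbolic power $\cI_Z^{(p)}$ of a prime ideal is, by Zariski--Nagata, the sheaf of sections vanishing at the generic point of $Z$ to order $\ge p$; so a section $s\in H^0(\bP^d,\cap_i\cI_{Z_i}^{(p_i)}(q))$ is a degree-$q$ form vanishing to order $\ge p_i$ along $Z_i$ for each $i$. The key reduction is to bound the dimension by specializing $Z_i$: since $\codim Z_i = i$, I can choose a chain of linear subspaces $\Lambda_1\supset\Lambda_2\supset\cdots\supset\Lambda_d$, with $\codim\Lambda_i=i$, obtained as an intersection with $Z_i$ of a generic linear space — or more directly, replace the situation by a flat degeneration of the configuration $(Z_1,\ldots,Z_d)$ to the linear flag, using upper-semicontinuity of $h^0$ under flat degeneration to conclude that the general (given) fiber has $h^0$ at most that of the special (linear-flag) fiber. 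Thus the inequality follows once I have computed $h^0$ exactly in the linear-flag case, which is also precisely the equality assertion, so the two halves of the proposition collapse into a single computation.

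For the linear flag, choose homogeneous coordinates $x_0,\ldots,x_d$ on $\bP^d$ so that $Z_i=\{x_0=\cdots=x_{i-1}=0\}$ — this has codimension $i$, and $Z_1\supset\cdots\supset Z_d$. The ideal $I_{Z_i}=(x_0,\ldots,x_{i-1})$, and its symbolic power equals its ordinary power $I_{Z_i}^{p_i}=(x_0,\ldots,x_{i-1})^{p_i}$ because $Z_i$ is a linear space (a complete intersection of a regular sequence, so ordinary powers are already primary). Hence a degree-$q$ monomial $x^\alpha=x_0^{\alpha_0}\cdots x_d^{\alpha_d}$ with $\sum\alpha_j=q$ lies in $I_{Z_i}^{(p_i)}$ if and only if $\alpha_0+\cdots+\alpha_{i-1}\ge p_i$, equivalently $\alpha_i+\cdots+\alpha_d\le q-p_i$. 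Monomials are a $k$-basis of $H^0(\bP^d,\cO(q))$, and the intersection of these monomial ideals is again monomial; so $H^0$ of the intersection twisted by $\cO(q)$ has as basis exactly the monomials $x^\alpha$, $\alpha\in\N^{d+1}(q)$, satisfying $\alpha_i+\cdots+\alpha_d\le q-p_i$ for all $i=1,\ldots,d$. This gives the stated cardinality, with equality.

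The main obstacle I anticipate is making the degeneration argument fully rigorous in the non-flag case: one must produce an honest flat family over (say) $\bA^1$ whose general fibre is the prescribed configuration $(Z_1,\ldots,Z_d)$ — with the prescribed symbolic powers — and whose special fibre is the linear flag with the same multiplicities, and then invoke semicontinuity of cohomology for the relative ideal sheaf. A cleaner route that avoids families: argue directly that for \emph{any} irreducible $Z_i$ of codimension $i$ and any point $x$ very general on $Z_d$, a general system of local coordinates lets one dominate the vanishing conditions by the linear ones — i.e. intersect with a general linear $\bP^{d-?}$ through $x$, or use that $\mult$ along $Z_i$ bounds $\mult$ at a general point of $Z_d$, reducing to a vanishing-order estimate at a single very general point and then to the monomial count above. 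Either way, the arithmetic heart is the monomial count, which is routine; the care is all in the reduction step, and I would present the flag case first (equality) and then derive the inequality from it by the specialization of the $Z_i$'s to general coordinate subspaces containing a fixed very general point of $Z_d$.
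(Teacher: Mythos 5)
Your computation in the linear‑flag case is correct and agrees with the paper: for $I_{Z_i}=(x_0,\ldots,x_{i-1})$ the symbolic and ordinary powers coincide, the intersection is a monomial ideal, and the monomial count gives exactly the stated cardinality. The gap is in the reduction of the general case to the flag case. The flat degeneration you propose does not exist: a flat family over $\bA^1$ has constant Hilbert polynomial in its fibres, so an irreducible $Z_i$ of codimension $i$ and degree $e_i>1$ cannot degenerate flatly to the \emph{reduced} linear space $\Lambda_i$; at best you reach a non‑reduced or reducible scheme supported on $\Lambda_i$, whose symbolic powers are not those of $\Lambda_i$, and the flat limit of the family of ideals $\cap_i\cI_{Z_i(t)}^{(p_i)}$ need not sit inside $\cap_i\cI_{\Lambda_i}^{p_i}$. (A Gr\"obner/initial‑ideal degeneration would preserve $h^0$ exactly, but then you must show the initial ideal is contained in the flag ideal with the prescribed multiplicities --- and that containment is essentially the proposition itself, not a formal consequence of semicontinuity.) Your fallback sketch (``intersect with a general linear space through a very general point of $Z_d$, or bound multiplicities'') is too vague to close this; in particular, vanishing to order $p_i$ along all of $Z_i$ imposes far more conditions than vanishing to order $p_i$ at one point, and no single‑point estimate recovers the full count.

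The paper's actual argument avoids degeneration entirely. Writing $f(q)=h^0(\cap_i\cI_{Z_i}^{(p_i)}(q))$ and $g(q)$ for the flag value, one restricts to a general hyperplane $H$ (not containing the point $Z_d$) and obtains a left‑exact sequence bounding $f(q)-f(q-1)$ by $h^0(\bP^{d-1},\cap_{i=1}^{d-1}\cI_{W_i}^{(p_i)}(q))$ for suitable irreducible $W_i\subseteq Z_i\cap H$ of codimension $i$ in $H$; for the flag the analogous sequence is exact on the right, so $g(q)-g(q-1)$ equals the flag value in $\bP^{d-1}$. Induction on the dimension compares the two right‑hand sides, giving $f(q)-f(q-1)\le g(q)-g(q-1)$, and since $f(q)=g(q)=0$ for $q<p_d$, induction on $q$ yields $f(q)\le g(q)$. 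If you want to salvage your write‑up, replace the degeneration step by this hyperplane‑section double induction; the monomial computation you already have then serves both as the base of the induction and as the equality statement.
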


\begin{proof}
1) Suppose $Z_1\supset Z_2\supset \cdots \supset Z_d$ is a linear flag in $\bP^d$.
Choose coordinates $[z_0:\cdots:z_d]$ on $\bP^d$ such that $I_{Z_i}=(z_0,\ldots,z_{i-1})$ for all $i$. 
Then $\Gamma(\bP^d,\cap_{i=1}^d \cI_{Z_i}^{(p_i)}\otimes \cO_{\bP^d}(q))$ is the $k$-vector
space with monomial basis $z_0^{\alpha_0}\cdots z_d^{\alpha_d}$, where 
$\alpha\in \N^{d+1}(q)$ such that $\alpha_0+\cdots+\alpha_{i-1}\ge p_i$ for all $i$.
Equivalently, $\alpha_i+\cdots+\alpha_d \le q-p_i$ for all $i$.
Therefore the inequality becomes an equality if $Z_\bullet$ is a linear flag.

2) If $d=1$, the equality holds from 1). Suppose $d>1$, and assume by induction that the inequality
holds in smaller dimension. Let $H\colon \{\lambda=0\}$ be a general hyperplane with respect to 
$Z_\bullet$.  In particular, $H$ does not contain the point $Z_d$. The short exact sequence 
$$
0\to \Gamma(\bP^d,\cO_{\bP^d}(q-1))\stackrel{\otimes \lambda}{\to} \Gamma(\bP^d,\cO_{\bP^d}(q))\to 
\Gamma(H,\cO_H(q))\to 0
$$
induces an exact sequence 
$$
0\to \Gamma(\bP^d,\cap_{i=1}^d \cI_{Z_i}^{(p_i)}\otimes\cO_{\bP^d}(q-1))\stackrel{\otimes \lambda}{\to} 
\Gamma(\bP^d,\cap_{i=1}^d \cI_{Z_i}^{(p_i)}\otimes\cO_{\bP^d}(q))\to 
\Gamma(H,\cap_{i=1}^{d-1}\cI_{Z_i\cap H}^{(p_i)}\otimes\cO_H(q)).
$$

Indeed, $\Gamma(\bP^d,\cap_{i=1}^d \cI_{Z_i}^{(p_i)}\otimes\cO_{\bP^d}(q))$ consists of homogenous 
polynomials $F$ of degree $q$ such that $\ord_x(F)\ge p_i$ for every $x\in Z_i$, for all $i$. 
Then $\ord_x(F|_H)\ge p_i$ for every $x\in Z_i\cap H$. If $F|_H=0$, then $F=\lambda F'$ for some
homogeneous polynomial $F'$ of degree $q-1$. For $x\in Z_i\setminus H$ we have $\ord_x(F)=\ord_x(F')$. 
Then $F'$ vanishes to order at least $p_i$ at a general point of $Z_i$, hence everywhere on $Z_i$.

Moreover, the above exact sequence extends to a short exact sequence if $Z_\bullet$ is a linear flag:
$$
0\to \Gamma(\bP^d,\cap_{i=1}^d \cI_{Z_i}^{(p_i)}\otimes\cO_{\bP^d}(q-1))\stackrel{\otimes \lambda}{\to} 
\Gamma(\bP^d,\cap_{i=1}^d \cI_{Z_i}^{(p_i)}\otimes\cO_{\bP^d}(q))\to 
\Gamma(H,\cap_{i=1}^{d-1}\cI_{Z_i\cap H}^{(p_i)}\otimes\cO_H(q))\to 0.
$$
Indeed, we may choose coordinates as in 1), and suppose $\lambda=z_d$. The
part $\alpha_d\ge 1$ of the set
$$
\cap_{i=1}^d \{\alpha\in \N^{d+1}(q); \alpha_i+\cdots+\alpha_d \le q-p_i \}
$$
corresponds to a monomial basis of 
$\Gamma(\bP^d,\cap_{i=1}^d \cI_{Z_i}^{(p_i)}\otimes\cO_{\bP^d}(q-1))$, and the part $\alpha_d=0$
corresponds to a monomial basis of $\Gamma(H,\cap_{i=1}^{d-1}\cI_{Z_i\cap H}^{(p_i)}\otimes\cO_H(q))$.
Since the dimensions add up, the sequence is exact to the right as well.

Denote $f(q)=h^0(\bP^d,\cap_{i=1}^d \cI_{Z_i}^{(p_i)}\otimes \cO_{\bP^d}(q))$. 
For each $1\le i\le d-1$, choose an irreducible subvariety $W_i \subseteq Z_i\cap H$, of codimension $i$ in $H\simeq \bP^{d-1}$.
The exact sequence gives
$$
f(q)-f(q-1)\le h^0(\bP^{d-1},\cap_{i=1}^{d-1}\cI_{W_i}^{(p_i)}(q)).
$$
Denote $g(q)=h^0(\bP^d,\cap_{i=1}^{d} \cI_{L_i}^{(p_i)}\otimes \cO_{\bP^d}(q))$, where $L_\bullet$ is a 
linear flag in $\bP^d$. The exact sequence gives 
$$
g(q)-g(q-1)= h^0(\bP^{d-1},\cap_{i=1}^{d-1}\cI_{L_i \cap H}^{(p_i)}(q)).
$$
The inequality in dimension $d-1$ gives 
$$
h^0(\bP^{d-1},\cap_{i=1}^{d-1}\cI_{W_i}^{(p_i)}(q))
\le 
h^0(\bP^{d-1},\cap_{i=1}^{d-1}\cI_{L_i \cap H}^{(p_i)}(q)).
$$
We deduce 
$$
f(q)-f(q-1)\le g(q)-g(q-1) \ \forall q.
$$
We have $\Gamma(\bP^d,\cI_{Z_d}^{(p_d)}(q))=0$ for $q<p_d$. Therefore $f(q)=g(q)=0$ for $q<p_d$.
By increasing induction on $q\ge p_d$, the above inequality gives $f(q)\le g(q)$ for all $q$.
\end{proof}

For real numbers $t_1, \ldots, t_d$, define a compact convex set in $\R^d$ by
$$
\square(t_1,\ldots,t_d)=\cap_{i=1}^d\{x\in \R^d_{\ge 0}; x_i+\cdots+x_d \le t_i \}.
$$
Note that $\square(t_1,\ldots,t_d)$ is not empty if and only if $t_1,\ldots,t_d\ge 0$.
If $t_1\ge \cdots \ge t_d\ge 0$, we may compute 
$\vol \square(t_1)=t_1$, $2\vol\square(t_1,t_2)=2t_1t_2-t_2^2$, and 
$6\vol \square(t_1,t_2,t_3)=6t_1t_2t_3-3t_3^2t_1-3t_3t_2^2+t_3^3$.

\begin{lem}\label{vol} Suppose $t_i\ge 0$ for all $i$. Then $\vol \square(t_1,\ldots,t_d)\le \prod_{i=1}^d t_i$.
\end{lem}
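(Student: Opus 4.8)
The plan is to prove the bound $\vol \square(t_1,\ldots,t_d)\le \prod_{i=1}^d t_i$ by induction on $d$, slicing the polytope along the last coordinate direction. Write $\square = \square(t_1,\ldots,t_d)$. Since $\square \subseteq \R^d_{\ge 0}$ and the defining inequality for $i=d$ reads $x_d \le t_d$, the polytope is sandwiched in the slab $0\le x_d\le t_d$. For a fixed value $x_d = s$ with $0\le s \le t_d$, the slice $\square \cap \{x_d = s\}$ is, after projecting off the last coordinate, exactly $\square(t_1 - s,\ t_2 - s,\ \ldots,\ t_{d-1} - s)$ in $\R^{d-1}$ — the inequality $x_i + \cdots + x_{d-1} \le t_i - s$ is what remains of $x_i+\cdots+x_d\le t_i$ once $x_d=s$ is substituted. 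If any $t_i - s < 0$ this slice is empty; otherwise, by the inductive hypothesis, its $(d-1)$-dimensional volume is at most $\prod_{i=1}^{d-1}(t_i - s)\le \prod_{i=1}^{d-1} t_i$, using $t_i\ge s\ge 0$.

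Then Fubini gives
$$
\vol \square = \int_0^{t_d} \vol_{d-1}\big(\square \cap \{x_d = s\}\big)\, ds \le \int_0^{t_d} \prod_{i=1}^{d-1} t_i \, ds = t_d\cdot \prod_{i=1}^{d-1} t_i = \prod_{i=1}^d t_i,
$$
which is the claim. The base case $d=1$ is immediate: $\square(t_1) = [0,t_1]$ has length $t_1$. (One should note that if some $t_i = 0$ the polytope is degenerate and has volume zero, so the inequality holds trivially; this is automatically covered since the bound on each slice is then zero.)

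I do not expect a genuine obstacle here — the lemma is elementary and the slicing argument is routine. The only point requiring a moment's care is the identification of the slice $\square\cap\{x_d=s\}$ with a translate-free copy of $\square(t_1-s,\ldots,t_{d-1}-s)$, i.e. checking that substituting $x_d=s$ into all the inequalities $x_i+\cdots+x_d\le t_i$ for $i\le d-1$ produces precisely the defining inequalities of the lower-dimensional polytope, with the constraint $x_i\ge 0$ preserved verbatim; this is a direct bookkeeping check. Everything else — nonnegativity of the bounds, the monotonicity $\prod(t_i-s)\le\prod t_i$, and the Fubini step — is immediate.
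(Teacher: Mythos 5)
Your proof is correct and is essentially the argument the paper gives: induction on $d$ combined with a Fubini-type reduction along one coordinate. The paper's version is marginally slicker --- it peels off the \emph{first} coordinate via the single containment $\square(t_1,\ldots,t_d)\subset[0,t_1]\times\square(t_2,\ldots,t_d)$, which avoids identifying each slice $\{x_d=s\}$ and handling the cases $t_i-s<0$ --- but the substance is the same.
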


\begin{proof}
We have an inclusion $\square(t_1,\ldots,t_d)\subset [0,t_1]\times \square(t_2,\ldots,t_d)$. Therefore
$$
\vol\square(t_1,\ldots,t_d)\le t_1\cdot \vol\square(t_2,\ldots,t_d).
$$ 
By induction on $d$, the desired inequality holds.
\end{proof}

Using this convex set, we may restate Proposition~\ref{post} as follows:
$$
h^0(\bP^d,\cap_{i=1}^d \cI_{Z_i}^{(p_i)}\otimes \cO_{\bP^d}(q))\le |\Z^d\cap \square(q-p_1,\ldots,q-p_d)|.
$$


\section{Successive minima of line bundles}


Let $X$ be a proper algebraic variety, of dimension $d$. Let $L$ be a Cartier divisor,
with induced graded ring $R=\oplus_{n\ge 0}\Gamma(nL)$. Let $x\in X$ be a closed point. 
For a real number $t\ge 0$, denote 
$$
\Bs|\cI_x^{t+} L|_\Q=\cap_{n\ge 1}\{Z(s); s\in R_n,\ord_x(s)>nt \}.
$$ 
It is a closed subset of $X$. If $t\le t'$, then $\Bs|\cI_x^{t+} L|_\Q\subseteq \Bs|\cI_x^{t'+} L|_\Q$.
One can rewrite $\Bs|\cI_x^{t+} L|_\Q$ as the intersection of the base locus $\Bs|\cI_x^p(qL)|$, 
after all integers $p,q\ge 1$ such that $p>qt$. Since 
$$
\oplus_{n\ge 0}\{s\in R_n;\ord_x(s)>nt \}
$$
is a graded ring and $X$ is Noetherian, there exists $r\ge 1$ such that 
$\Bs|\cI_x^{t+} L|_\Q=\Bs|\cI_x^{\lfloor rt\rfloor+1}(rL)|$.

\begin{exmp}
Suppose $L$ is semiample. Let $\phi\colon X\to Y$ be the Iitaka map
of $L$. Then $\Bs|\cI_x^{0+} L|_\Q=\phi^{-1}\phi(x)$. It is connected,
since $\phi$ is a contraction.

We have $rL\sim f^*A$ for some $r\ge 1$ and $A$ ample on $Y$. 
Then for every $t\ge 0$, we have $\Bs|\cI_x^{t+} L|_\Q\subseteq f^{-1}(\Bs|\cI_{f(x)}^{rt+} A|_\Q)$,
and equality holds if $f$ is smooth at $x$.
\end{exmp}

\begin{lem}\label{ug}
Given $u\ge 0$, there exists $\epsilon(u)>0$ such that $\Bs|\cI_x^{t+} L|_\Q$ is constant 
for $t\in [u,u+\epsilon(u))$.
\end{lem}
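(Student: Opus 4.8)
The plan is to deduce the statement from the elementary right-continuity of the integer-part function, exploiting the remark made just before the lemma that, once the integer $r$ is chosen correctly, $\Bs|\cI_x^{t+}L|_\Q$ is cut out by a \emph{single} linear system. Concretely, fix $u\ge 0$. The graded ring $\oplus_{n\ge 0}\{s\in R_n;\ \ord_x(s)>nt\}$ has a well-defined common zero locus, and the family of closed sets $\Bs|\cI_x^{\lfloor nt\rfloor+1}(nL)|$ is directed decreasing under divisibility of $n$ (if $\ord_x(s)>nt$ then $\ord_x(s^m)>(nm)t$ and $Z(s^m)=Z(s)$), so by Noetherianity of $X$ it stabilizes: there is $r=r(u)\ge 1$ with
$$
\Bs|\cI_x^{u+}L|_\Q=\Bs|\cI_x^{\lfloor ru\rfloor+1}(rL)| .
$$
Call this common closed set $B$. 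Two facts will drive the proof: first, since $\{s\in R_r;\ \ord_x(s)>rt\}=\{s\in R_r;\ \ord_x(s)\ge \lfloor rt\rfloor+1\}$, the very definition of $\Bs|\cI_x^{t+}L|_\Q$ as an intersection over all $n\ge 1$ (take $n=r$) gives the inclusion $\Bs|\cI_x^{t+}L|_\Q\subseteq \Bs|\cI_x^{\lfloor rt\rfloor+1}(rL)|$ for every real $t\ge 0$; second, $t\mapsto \Bs|\cI_x^{t+}L|_\Q$ is monotone increasing, as noted above.

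Next I would set $\epsilon(u)=\frac{\lfloor ru\rfloor+1}{r}-u$, which is strictly positive because $ru<\lfloor ru\rfloor+1$. For $t\in[u,u+\epsilon(u))$ one has $\lfloor ru\rfloor\le ru\le rt<\lfloor ru\rfloor+1$, hence $\lfloor rt\rfloor=\lfloor ru\rfloor$. Feeding this into the first fact,
$$
\Bs|\cI_x^{t+}L|_\Q\subseteq \Bs|\cI_x^{\lfloor rt\rfloor+1}(rL)|=\Bs|\cI_x^{\lfloor ru\rfloor+1}(rL)|=B .
$$
On the other hand, since $u\le t$, monotonicity gives $B=\Bs|\cI_x^{u+}L|_\Q\subseteq \Bs|\cI_x^{t+}L|_\Q$. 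The two inclusions force $\Bs|\cI_x^{t+}L|_\Q=B$ for all $t\in[u,u+\epsilon(u))$, which is the assertion.

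The only subtlety to flag is that the integer $r$ furnished by the stabilization argument depends on the value $t=u$, so for $t$ slightly larger than $u$ one cannot expect the \emph{equality} $\Bs|\cI_x^{t+}L|_\Q=\Bs|\cI_x^{\lfloor rt\rfloor+1}(rL)|$ to persist — only the inclusion ``$\subseteq$'' does, but that inclusion is automatic from the definition, and squeezing it against the trivial inclusion coming from monotonicity is exactly what produces the equality; no further geometric input is needed. The degenerate situations (for instance when no section of $rL$ vanishes at $x$ to order $>ru$, so that $B=X$) are handled by the identical chain of inclusions, reading every base locus with the usual convention that the base locus of an empty linear system is all of $X$.
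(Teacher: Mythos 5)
Your proof is correct. It takes a slightly different route from the paper's: the paper's own argument never invokes the single-degree reduction $B_u=\Bs|\cI_x^{\lfloor ru\rfloor+1}(rL)|$; instead it writes $B_u=\bigcap_{l\ge 1}B_{u+1/l}$ (right-continuity, which again rests on the integrality of vanishing orders) and then lets the decreasing chain of closed sets $B_{u+1/l}$ stabilize by Noetherianity of $X$, obtaining $\epsilon(u)=1/l$ non-explicitly. You instead stabilize once at $t=u$ to a single base locus $\Bs|\cI_x^{\lfloor ru\rfloor+1}(rL)|$ (your justification of that stabilization via the directed family under divisibility is sound and matches the remark preceding the lemma), and then exploit that the floor function is locally constant to the right, squeezing $B_t$ between $B_u$ and $\Bs|\cI_x^{\lfloor rt\rfloor+1}(rL)|=\Bs|\cI_x^{\lfloor ru\rfloor+1}(rL)|=B_u$. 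Both proofs use the same two ingredients (Noetherianity and integrality of $\ord_x$), but yours has the small advantage of producing an explicit value $\epsilon(u)=(\lfloor ru\rfloor+1)/r-u$, and your cautionary remark that only the inclusion $B_t\subseteq\Bs|\cI_x^{\lfloor rt\rfloor+1}(rL)|$ (not the equality) persists for $t>u$ is exactly the right point to flag.
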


\begin{proof} Denote $B_t=\Bs|\cI_x^{t+} L|_\Q$.
We have $B_u=\cap_{l\ge 1}B_{u+\frac{1}{l}}$. By monotonicity, and since $X$ is Noetherian,
there exists $l\ge 1$ such that $B_u=B_{u+\frac{1}{l}}$.
\end{proof}

\begin{lem} There exists a constant $c$, depending only on $X$ and $L$, such that 
$\Bs|\cI_x^{t+} L|_\Q=X$ for every $x\in X$ and $t\ge c$. 
\end{lem}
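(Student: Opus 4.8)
The plan is to reduce the statement to the order bound of Lemma~\ref{bm}. Observe first that, for a closed point $x\in X$ and $t\ge 0$, the equality $\Bs|\cI_x^{t+}L|_\Q=X$ holds if and only if no nonzero section $s\in\Gamma(nL)$ (for any $n\ge 1$) vanishes at $x$ to order strictly larger than $nt$: any such $s$ cuts out a proper closed subset $Z(s)\subsetneq X$ that contains $\Bs|\cI_x^{t+}L|_\Q$, whereas if no such $s$ exists the defining intersection runs only over $s=0$, whose zero set is $X$. So it suffices to produce a constant $c=c(X,L)$ with $\ord_x(s)\le nc$ for every closed point $x\in X$, every $n\ge 1$, and every nonzero $s\in\Gamma(nL)$; then this $c$ works in the lemma.

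I would first dispose of the case $\Gamma(nL)=0$ for all $n\ge 1$ trivially (take $c=0$). Otherwise, since $X$ is only assumed proper, I would pass to a projective model by Chow's lemma: choose a projective birational morphism $\mu\colon X'\to X$ with $X'$ a projective variety, necessarily of dimension $d$, fix a very ample divisor $A$ on $X'$, and set $c=\max\{0,(\mu^*L\cdot A^{d-1})\}$. Given a nonzero $s\in\Gamma(nL)$, its pullback $\mu^*s$ is a nonzero section of $n\mu^*L$ since $\mu$ is dominant, so $D:=\dv(\mu^*s)$ is an effective Cartier divisor on $X'$ with $D\sim n\mu^*L$. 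By Lemma~\ref{bm}, $\ord_{x'}(D)\le (D\cdot A^{d-1})=n(\mu^*L\cdot A^{d-1})\le nc$ for every closed point $x'\in X'$. Finally, for a closed point $x\in X$ I choose $x'\in\mu^{-1}(x)$ — nonempty because the proper birational morphism $\mu$ is surjective — and use the monotonicity $\ord_x(s)\le\ord_{x'}(\mu^*s)=\ord_{x'}(D)$ recalled in the subsection on vanishing orders, obtaining $\ord_x(s)\le nc$.

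There is no serious obstacle: the real content is entirely in Lemma~\ref{bm}. The only point that needs a little care is that $X$ is not assumed projective (nor normal), and this is exactly what Chow's lemma together with the inequality $\ord_{x'}(\mu^*s)\ge\ord_x(s)$ handle — the latter going in the direction we need precisely because pulling back a section can only raise its vanishing order.
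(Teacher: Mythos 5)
Your proof is correct and follows essentially the same route as the paper: pass to a projective model by Chow's lemma, apply the order bound of Lemma~\ref{bm} to the pullback of a section, and use the monotonicity $\ord_x(s)\le\ord_{x'}(\mu^*s)$. The only (harmless) difference is that the paper additionally invokes Hironaka's resolution to make $X'$ smooth, which is unnecessary since Lemma~\ref{bm} holds for singular projective varieties, so your version is in fact slightly leaner.
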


\begin{proof} By Chow's Lemma and Hironaka's
resolution of singularities, there exists a proper birational morphism $\mu\colon X'\to X$ such that 
$X'$ is smooth and projective. Let $A'$ be very ample on $X'$. Let $n\ge 1$ and $0\ne s\in \Gamma(X,\cO_X(nL))$. 
Choose a point $x'\in \mu^{-1}(x)$. Then $0\ne \mu^*s\in \Gamma(X',\cO_{X'}(n\mu^*L))$ and
$$
\ord_x(s)\le \ord_{x'}(\mu^*s)\le n(\mu^*L\cdot A'^{d-1}),
$$
where the last inequality follows from Lemma~\ref{bm}.
Thus $\Bs|\cI_x^{t+}L|_\Q=X$ for $t\ge (\mu^*L\cdot A'^{d-1})$. 
\end{proof}

Recall that the codimension at $x$ of a closed subset $x\in Y\subseteq X$ is the smallest
codimension of an irreducible component of $Y$ passing through $x$. Since $X$ is irreducible
of dimension $d$, $\codim_x(Y\subseteq X)=d-\dim_x(Y)$. When the ambient space $X$ is fixed,
we will drop it from notation.

\begin{defn}
For $i\ge 1$, the {\em $i$-th successive minimum of $L$ at $x$} is defined by
$$
\epsilon_i(L,x)=\inf\{t\ge 0; \codim_x \Bs|\cI_x^{t+} L|_\Q < i  \}.
$$
\end{defn}

The definition makes sense, since $\codim_x \Bs|\cI_x^{t+} L|_\Q =0$ for $t\gg 0$. 
The infimum is a minimum, by Lemma~\ref{ug}. Note that 
$\codim_x \Bs|\cI_x^{t+} L|_\Q $ is strictly less than $i$ for $t\ge \epsilon_i(L,x)$, and 
at least $i$ for $0\le t<\epsilon_i(L,x)$. We obtain a chain of real numbers
$$
\epsilon_1(L,x)\ge \cdots\ge \epsilon_d(L,x)\ge \epsilon_{d+1}(L,x)=0.
$$
Note that $\codim_x \Bs|\cI_x^{t+}L|_\Q=0$ for $t\ge \epsilon_1(L,x)$, and 
$\codim_x \Bs|\cI_x^{t+}L|_\Q=i$ if and only if $\epsilon_i(L,x)>t\ge \epsilon_{i+1}(L,x)$.

\begin{rem}
One may similarly define successive minima $\epsilon_i(R,x)$ in points $x\in X$, 
associated to a graded subalgebra $R\subseteq R(L)$. For example, when $R$ is the image of the restriction 
$R(X',L)\to R(X,L'|_X)$, where $X\subset X'$ is a closed embedding and $L'$ is a Cartier divisor 
on $X'$. The successive minima of such subalgebras appear in Lemma~\ref{des}, for example.
\end{rem}

\begin{lem}
Let $L,L'$ be Cartier divisors on $X$, let $x\in X$ be a closed point and $1\le i\le d$.
\begin{itemize}
\item[a)] $\epsilon_i(qL,x)=q\epsilon_i(L,x)$ for every integer $q\ge 1$.
\item[b)] If $L\sim_\Q L'$, then $\epsilon_i(L,x)=\epsilon_i(L',x)$.
\item[c)] Suppose $\epsilon_i(L,x)>0$ and $\epsilon_i(L',x)>0$. Then
$\epsilon_i(L+L',x)\ge \epsilon_i(L,x)+\epsilon_i(L',x)$.
\end{itemize}
\end{lem}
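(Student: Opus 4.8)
The three parts all follow by unwinding the definition of $\Bs|\cI_x^{t+}L|_\Q$; the only inputs are the inequality $\ord_x(st)\ge\ord_x(s)+\ord_x(t)$ for products of sections, the monotonicity recorded after the definition (so that $\codim_x\Bs|\cI_x^{t+}M|_\Q\ge i$ for $0\le t<\epsilon_i(M,x)$), and the elementary distributive law $(\bigcap_\alpha A_\alpha)\cup(\bigcap_\beta B_\beta)=\bigcap_{\alpha,\beta}(A_\alpha\cup B_\beta)$ for nonempty families of subsets. For (a), the plan is to prove the identity of closed subsets $\Bs|\cI_x^{t+}(qL)|_\Q=\Bs|\cI_x^{(t/q)+}L|_\Q$ for every $t\ge 0$. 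The inclusion $\supseteq$ is clear, because the left-hand base locus is the intersection of the $Z(s)$ over the \emph{subfamily} of sections $s\in\Gamma(m(qL))=\Gamma(mqL)$, that is sections of $L$ sitting in degrees divisible by $q$, with the very same order threshold. For $\subseteq$: if $s\in\Gamma(mL)$ satisfies $\ord_x(s)>m(t/q)$, then $s^q\in\Gamma(m(qL))$ satisfies $\ord_x(s^q)\ge q\,\ord_x(s)>mt$ and $Z(s^q)=Z(s)$, so $Z(s)$ already occurs in the intersection defining the left-hand side. Once the identity is established, the two sides have equal codimension at $x$ for every $t$, and hence $\epsilon_i(qL,x)=\inf\{qu;\, u\ge 0,\ \codim_x\Bs|\cI_x^{u+}L|_\Q<i\}=q\,\epsilon_i(L,x)$.

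Part (b) is then a corollary. Choose $q\ge 1$ with $qL\sim qL'$. A linear equivalence induces isomorphisms of invertible sheaves $\cO_X(m(qL))\isoto\cO_X(m(qL'))$ for all $m\ge1$, and these preserve $\ord_x$ (being isomorphisms of $\cO_{X,x}$-modules on stalks they carry $\cI_x^n\cL_x$ onto $\cI_x^n\cL'_x$) as well as the zero loci of sections. Therefore $\Bs|\cI_x^{t+}(qL)|_\Q=\Bs|\cI_x^{t+}(qL')|_\Q$ for all $t$, whence $\epsilon_i(qL,x)=\epsilon_i(qL',x)$, and dividing by $q$ via (a) gives $\epsilon_i(L,x)=\epsilon_i(L',x)$.

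For (c), write $a=\epsilon_i(L,x)>0$ and $b=\epsilon_i(L',x)>0$. It suffices to show $\codim_x\Bs|\cI_x^{t+}(L+L')|_\Q\ge i$ for every $t\in[0,a+b)$, since this forces the infimum defining $\epsilon_i(L+L',x)$ to be at least $a+b$. Fix such a $t$ and set $t_1=\frac{a}{a+b}\,t\in[0,a)$ and $t_2=\frac{b}{a+b}\,t\in[0,b)$, so that $t_1+t_2=t$ (this is the one place the hypotheses $a,b>0$ enter). For any $s\in\Gamma(n_1L)$ with $\ord_x(s)>n_1t_1$ and any $s'\in\Gamma(n_2L')$ with $\ord_x(s')>n_2t_2$, the section $u=s^{n_2}(s')^{n_1}\in\Gamma(n_1n_2(L+L'))$ satisfies $\ord_x(u)\ge n_2\ord_x(s)+n_1\ord_x(s')>n_1n_2(t_1+t_2)=n_1n_2t$ and $Z(u)=Z(s)\cup Z(s')$; hence $\Bs|\cI_x^{t+}(L+L')|_\Q\subseteq Z(s)\cup Z(s')$. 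Intersecting over all such $s$ and $s'$ and applying the distributive law yields
$$
\Bs|\cI_x^{t+}(L+L')|_\Q \subseteq \Bs|\cI_x^{t_1+}L|_\Q \cup \Bs|\cI_x^{t_2+}L'|_\Q .
$$
Each of these base loci contains $x$ (any section contributing to the defining intersection vanishes at $x$), so the codimension at $x$ of the right-hand side equals $\min\bigl(\codim_x\Bs|\cI_x^{t_1+}L|_\Q,\ \codim_x\Bs|\cI_x^{t_2+}L'|_\Q\bigr)$, which is $\ge i$ because $t_1<\epsilon_i(L,x)$ and $t_2<\epsilon_i(L',x)$. Since passing to a closed subset through $x$ only increases $\codim_x$, we get $\codim_x\Bs|\cI_x^{t+}(L+L')|_\Q\ge i$, as needed.

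None of the steps is genuinely difficult; the places that need a little care are: in (a), that restricting to Veronese degrees does not enlarge the base locus — this is precisely what raising sections to $q$-th powers repairs; and in (c), that the two section-families indexing the intersections are nonempty (they always contain the zero section, whose zero locus is $X$) so the distributive law applies, together with the proportional splitting $t=t_1+t_2$ that keeps $t_1<a$ and $t_2<b$. I do not expect a serious obstacle beyond this bookkeeping.
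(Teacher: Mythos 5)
Your proposal is correct and follows essentially the same route as the paper: part (a) via the identity $\Bs|\cI_x^{t+}(qL)|_\Q=\Bs|\cI_x^{(t/q)+}L|_\Q$, part (b) by transporting base loci along a linear equivalence of a common multiple, and part (c) via the inclusion $\Bs|\cI_x^{(t_1+t_2)+}(L+L')|_\Q\subseteq \Bs|\cI_x^{t_1+}L|_\Q\cup\Bs|\cI_x^{t_2+}L'|_\Q$. The only difference is that you supply the verifications (powers of sections for the Veronese comparison, products of sections for the union inclusion) that the paper leaves implicit, and in (c) you split $t$ proportionally rather than taking $t_1,t_2$ independently — both are fine.
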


\begin{proof} Property a) follows from $\Bs|\cI_x^{t+}(qL)|_\Q=\Bs|\cI_x^{\frac{t}{q}+} L |_\Q$,
and b) from $\Bs|\cI_x^{t+} L |_\Q=\Bs|\cI_x^{t+} L' |_\Q$ for all $t\ge 0$.

c) Let $0\le t<\epsilon_i(L,x)$ and $0\le t'<\epsilon_i(L',x)$. Then both
$\Bs|\cI_x^{t+}L|_\Q$ and $\Bs|\cI_x^{t'+}L'|_\Q$ have codimension at $x$ at least $i$. Since 
$$
\Bs|\cI_x^{(t+t')+}(L+L')|_\Q\subseteq \Bs|\cI_x^{t+}L|_\Q \cup \Bs|\cI_x^{t'+}L'|_\Q,
$$
the codimension at $x$ of the left hand side is at least $i$. Therefore $t+t'<\epsilon_i(L+L',x)$.
Letting $t$ and $t'$ approach $\epsilon_i(L,x)$ and $\epsilon_i(L',x)$, respectively, we obtain the claim.
\end{proof}

In particular, we may define the $i$-th successive minimum of a $\Q$-Cartier divisor $L$
at a point $x\in X$ to be $\epsilon_i(L,x)=\frac{1}{q}\epsilon_i(qL,x)$, where $q$ is a positive 
integer such that $qL$ is Cartier.

\begin{lem}
Let $f\colon X' \to X$ be a proper contraction of algebraic varieties, which is smooth
at a point $x'\in X'$. Let $L$ be a Cartier divisor on $X$. Then 
$\epsilon_i(L,f(x'))=\epsilon_i(f^*L,x')$ for all $i$.
\end{lem}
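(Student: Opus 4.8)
The plan is to compare base loci upstairs and downstairs and read off the successive minima from them. Fix $t\ge 0$. The key identity to establish is
$$
\Bs|\cI_{x'}^{t+}(f^*L)|_\Q = f^{-1}\bigl(\Bs|\cI_x^{t+}L|_\Q\bigr)
$$
in a neighborhood of $x'$, where $x=f(x')$. The inclusion ``$\subseteq$'' is formal: any section $s\in\Gamma(nL)$ with $\ord_x(s)>nt$ pulls back to $f^*s\in\Gamma(nf^*L)$ with $\ord_{x'}(f^*s)\ge\ord_x(s)>nt$ (pullback does not decrease vanishing order), and $Z(f^*s)=f^{-1}Z(s)$; intersecting over all such $s$ gives the inclusion. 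Here I use crucially that $f$ is a contraction, so $\Gamma(nf^*L)=\Gamma(nL)$ (as $\cO_X=f_*\cO_{X'}$ implies $f_*\cO_{X'}(nf^*L)=\cO_X(nL)$ by the projection formula), whence \emph{every} section of $nf^*L$ arises as such a pullback. The reverse inclusion ``$\supseteq$'' then also follows from this equality of section spaces: if $f^*s$ vanishes to order $>nt$ at $x'$, I need $s$ to vanish to order $>nt$ at $x$. This is where smoothness of $f$ at $x'$ enters.

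For the order comparison at $x$ versus $x'$, I would argue as follows. Since $f$ is smooth at $x'$, it is flat there with smooth fibers, so locally analytically (or after passing to completions) $\widehat{\cO}_{x',X'}\cong\widehat{\cO}_{x,X}\,\widehat{\otimes}\,k[[y_1,\dots,y_r]]$, where $r=\dim_{x'}f^{-1}(x)$ is the relative dimension. Writing a local equation or local expression of $s$ as an element of $\widehat{\cO}_{x,X}$ and base-changing, the maximal power of $\fm_{x'}$ it lies in is exactly the maximal power of $\fm_x$ in which $s_x$ lies: adding the smooth variables $y_j$ does not change the order of an element pulled back from the base, because $\fm_{x'}=\fm_x\cdot\widehat{\cO}_{x',X'}+(y_1,\dots,y_r)$ and the associated graded ring is a polynomial ring over $\mathrm{gr}_{\fm_x}\widehat{\cO}_{x,X}$, so the image of $s_x$ in $\mathrm{gr}^n$ is nonzero upstairs iff it is nonzero downstairs. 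Hence $\ord_{x'}(f^*s)=\ord_x(s)$ for sections pulled back from $X$. Combined with $\Gamma(nf^*L)=\Gamma(nL)$, this gives the displayed equality of base loci near $x'$.

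Finally I translate the base-locus identity into the statement about successive minima. Since $f$ is smooth at $x'$, it is (in particular) equidimensional of fiber dimension $r$ near $x'$, so for any closed subset $Y\subseteq X$ one has $\codim_{x'}\bigl(f^{-1}(Y)\subseteq X'\bigr)=\codim_x(Y\subseteq X)$ (the fiber dimension $r$ adds to both $\dim_{x'}f^{-1}(Y)$ and $\dim X'$, and cancels). Applying this with $Y=\Bs|\cI_x^{t+}L|_\Q$ and using the base-locus identity,
$$
\codim_{x'}\Bs|\cI_{x'}^{t+}(f^*L)|_\Q=\codim_x\Bs|\cI_x^{t+}L|_\Q\quad\text{for all }t\ge 0.
$$
Taking, for each $i$, the infimum over $t$ with this common codimension $<i$ gives $\epsilon_i(f^*L,x')=\epsilon_i(L,x)$, as desired.

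The main obstacle is the order-comparison step: showing $\ord_{x'}(f^*s)=\ord_x(s)$ (not merely $\ge$) when $f$ is smooth at $x'$. The inequality $\ge$ is already recorded in the excerpt and is easy; equality requires the local structure of a smooth morphism and the behavior of the associated graded ring under adjoining smooth variables. Everything else—the projection formula identification of section spaces, and the codimension bookkeeping for equidimensional fibers—is routine once this local computation is in hand.
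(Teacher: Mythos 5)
Your proposal is correct and follows essentially the same route as the paper: identify $\Gamma(nf^*L)$ with $\Gamma(nL)$ via the contraction hypothesis, use smoothness at $x'$ to show pullback preserves vanishing order so that the base loci satisfy $\Bs|\cI_{x'}^{t+}(f^*L)|_\Q=f^{-1}(\Bs|\cI_x^{t+}L|_\Q)$, and then match codimensions. The only difference is that you spell out the order-preservation step via completions and associated graded rings, which the paper simply asserts.
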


\begin{proof} Denote $L'=f^*L$ and $x=f(x')$. Since $f$ is a contraction, $f^*$ induces isomorphisms 
$
\Gamma(nL)\isoto \Gamma(nL') \ (n\ge 1).
$
Since $f^*$ maps $\cI_x^p$ into $\cI_{x'}^p$, it induces injective homomorphisms
$$
\Gamma(\cI_x^p(qL))\to \Gamma(\cI_{x'}^p(qL')) \ (p,q\ge 1).
$$
These are also surjective if $f$ is smooth at $x'$, since in this case the order of a section $s$
at $x$ coincides with the order of $f^*s$ at $x'$. We obtain isomorphisms
$$
f^*\colon \Gamma(\cI_x^p(qL))\isoto \Gamma(\cI_{x'}^p(qL')) \ (p,q\ge 1).
$$
Therefore $\Bs|\cI_{x'}^{t+}L'|_\Q=f^{-1}(\Bs|\cI_x^{t+}L|_\Q)$. Since $f$ is smooth at $x'$,
we obtain 
$$
\codim_{x'} \Bs|\cI_{x'}^{t+}L'|_\Q=\codim_x \Bs|\cI_x^{t+}L|_\Q.
$$
Therefore $\epsilon_i(L,x)=\epsilon_i(f^*L,x')$ for all $i$.
\end{proof}

\begin{lem}\label{er} $\epsilon_i(L,x)>0$ if and only if there exist integers $p,q\ge 1$ such that 
$\codim_x \Bs|\cI_x^p(qL)| \ge i$. Moreover, in this case we have
$$
\epsilon_i(L,x)=\sup\{\frac{p}{q}; p,q\ge 1, \codim_x \Bs|\cI_x^p(qL)| \ge i \}.
$$
\end{lem}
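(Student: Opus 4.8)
The plan is to translate between the real-parameter base locus $\Bs|\cI_x^{t+}L|_\Q$ and the integral ones $\Bs|\cI_x^p(qL)|$ by means of the two structural facts recorded above: (1) $\Bs|\cI_x^{t+}L|_\Q=\cap_{p,q\ge 1,\ p>qt}\Bs|\cI_x^p(qL)|$, so in particular $\Bs|\cI_x^{t+}L|_\Q\subseteq \Bs|\cI_x^p(qL)|$ whenever $qt<p$; and (2) for each fixed $t\ge 0$ there is an $r\ge 1$ with $\Bs|\cI_x^{t+}L|_\Q=\Bs|\cI_x^{\lfloor rt\rfloor+1}(rL)|$. I will also use that $\codim_x(\cdot)$ reverses inclusions of closed subsets passing through $x$ (a minimal-dimensional component of the smaller subset is contained in some component of the larger one through $x$), together with the dichotomy noted right after the definition: for $t\ge 0$, one has $\codim_x\Bs|\cI_x^{t+}L|_\Q\ge i$ if and only if $t<\epsilon_i(L,x)$.

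First I would prove the inequality $\epsilon_i(L,x)\ge p/q$ for any pair $p,q\ge 1$ with $\codim_x\Bs|\cI_x^p(qL)|\ge i$: for every $t\in[0,p/q)$ we have $qt<p$, hence $\Bs|\cI_x^{t+}L|_\Q\subseteq\Bs|\cI_x^p(qL)|$ by (1), hence $\codim_x\Bs|\cI_x^{t+}L|_\Q\ge i$, hence $t<\epsilon_i(L,x)$; letting $t\uparrow p/q$ gives $p/q\le\epsilon_i(L,x)$. This already shows that if such a pair exists then $\epsilon_i(L,x)>0$, and that $\epsilon_i(L,x)\ge\sup\{p/q;\ p,q\ge1,\ \codim_x\Bs|\cI_x^p(qL)|\ge i\}$.

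Conversely, assuming $\epsilon_i(L,x)>0$, I would fix an arbitrary $t$ with $0\le t<\epsilon_i(L,x)$, so $\codim_x\Bs|\cI_x^{t+}L|_\Q\ge i$ by the dichotomy, and then apply (2): with $q:=r$ and $p:=\lfloor rt\rfloor+1\ge 1$ we get $\codim_x\Bs|\cI_x^p(qL)|\ge i$ together with $p/q=(\lfloor rt\rfloor+1)/r>t$. Hence the set on the right is nonempty (which proves the remaining implication) and its supremum is $>t$; since $t<\epsilon_i(L,x)$ was arbitrary, the supremum is $\ge\epsilon_i(L,x)$. Combined with the previous paragraph, this gives the asserted equality.

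The one point that genuinely needs the structural fact (2), rather than a bare Noetherian reduction of the intersection in (1) to a finite one, is precisely this last step: the codimension at $x$ of a finite intersection of closed subsets may exceed the codimension of each individual member (think of two lines through a point in $\bP^2$), so one really needs a single base locus $\Bs|\cI_x^{\lfloor rt\rfloor+1}(rL)|$ that equals $\Bs|\cI_x^{t+}L|_\Q$. Apart from that, the proof is pure bookkeeping with the definitions; the only thing to watch is that the exponents produced in the ``only if'' direction are $\ge 1$, which holds since $rt\ge 0$.
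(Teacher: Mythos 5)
Your proof is correct and follows essentially the same route as the paper's: the paper's Step 1 is the contrapositive of your first inequality (using the inclusion $\Bs|\cI_x^{t+}L|_\Q\subseteq\Bs|\cI_x^p(qL)|$ for $qt<p$), and its Step 2 is exactly your use of the single pair $q=r$, $p=\lfloor rt\rfloor+1$ with $\Bs|\cI_x^{t+}L|_\Q=\Bs|\cI_x^{\lfloor rt\rfloor+1}(rL)|$. Your closing remark about why the graded-ring/Noetherian fact (2) is genuinely needed, rather than just finiteness of the intersection, is a correct and worthwhile observation.
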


\begin{proof} Denote $\epsilon_i=\epsilon_i(L,x)$. 

Step 1: $\codim_x\Bs|\cI_x^p(qL)|< i$ for every $p,q\ge 1$ such that $p>q\epsilon_i$.
Indeed, $\Bs|\cI_x^p(qL)|$ contains $\Bs|\cI_x^{\epsilon_i+} L|_\Q$, which has codimension at $x$ strictly less than $i$.

Step 2: Let $0\le t<\epsilon_i$. Then there exist $p,q\ge 1$ with $t<\frac{p}{q}\le \epsilon_i(L,x)$ 
and $\codim_x \Bs|\cI_x^p(qL)|\ge i$.
Indeed, we have $\codim_x \Bs|\cI_x^{t+} L|_\Q\ge i$. There exists $r\ge 1$ such that 
$\Bs|\cI_x^{t+} L|_\Q=\Bs|\cI_x^{\lfloor rt\rfloor+1}(rL)|$. Let $q=r$ and $p=\lfloor rt\rfloor +1$.
By construction, $t<\frac{p}{q}$. By the first step, $\frac{p}{q}\le \epsilon_i$.

If $\epsilon_i>0$, there exist $p,q\ge 1$ such that $\codim_x \Bs|\cI_x^p(qL)|\ge i$, by Step 2.
Conversely, the latter implies $\epsilon_i\ge \frac{p}{q}>0$, by Step 1. The two steps
give that $\epsilon_i$ is the supremum after all such $\frac{p}{q}$.
 \end{proof}

\begin{lem} $\epsilon_1(L,x)=0$ if and only if one of the following holds:
\begin{itemize}
\item[a)] $\kappa(L)=-\infty$, or 
\item[b)] $\kappa(L)=0$ and every $0\ne s\in \Gamma(X,\cO_X(nL))\ (n\ge 1)$ is invertible at $x$.
\end{itemize}
\end{lem}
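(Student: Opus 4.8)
The plan is to characterize exactly when $\epsilon_1(L,x)=0$ using the description from Lemma~\ref{er}. By that lemma, $\epsilon_1(L,x)>0$ if and only if there exist integers $p,q\ge 1$ with $\codim_x\Bs|\cI_x^p(qL)|\ge 1$, i.e.\ with $x\in X$ not lying in $\Bs|\cI_x^p(qL)|$ as a point of full codimension---equivalently, $x\notin \Bs|\cI_x^p(qL)|$, meaning some section $s\in\Gamma(qL)$ has $\ord_x(s)\ge p$ and in particular is nonzero. So $\epsilon_1(L,x)>0$ precisely when there is an integer $q\ge1$ and a section $0\ne s\in\Gamma(qL)$ with $\ord_x(s)\ge 1$, i.e.\ a nonzero section of a multiple of $L$ that vanishes at $x$. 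Negating: $\epsilon_1(L,x)=0$ iff for every $n\ge 1$ every $0\ne s\in\Gamma(nL)$ has $\ord_x(s)=0$, i.e.\ is invertible (a unit in $\cO_{X,x}$) at $x$. This already shows (a) or (b) implies $\epsilon_1(L,x)=0$: in case (a) there are no nonzero sections at all, and in case (b) this is exactly the stated condition.

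For the converse, suppose $\epsilon_1(L,x)=0$, so every nonzero section of every $nL$ is invertible at $x$. If $\kappa(L)=-\infty$ we are in case (a), so assume $\kappa(L)\ge 0$ and we must show $\kappa(L)=0$ together with condition (b); the invertibility part of (b) is already in hand, so the real content is $\kappa(L)=0$. Suppose for contradiction $\kappa(L)\ge 1$. Then for some $n\ge 1$ the image $X_n$ of $\phi_{|nL|}$ has positive dimension, so $\Gamma(nL)$ contains two sections $s,s'$ that are linearly independent and whose ratio $s'/s$ is nonconstant in $k(X)$. Both $s$ and $s'$ are invertible at $x$, so $s'/s$ is a well-defined element of $\cO_{X,x}^\times$; pick $\lambda\in k$ equal to the value $(s'/s)(x)$. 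Then $s'-\lambda s\in\Gamma(nL)$ vanishes at $x$: its image in $\cO_{X,x}$ is $s\cdot((s'/s)-\lambda)$, and $(s'/s)-\lambda$ lies in $\fm_x$ since it vanishes at $x$. Moreover $s'-\lambda s\ne 0$, because $s'/s$ is nonconstant so cannot equal the constant $\lambda$ identically. This produces a nonzero section of $nL$ with positive order at $x$, contradicting $\epsilon_1(L,x)=0$.

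The only delicate point is producing, from $\kappa(L)\ge1$, a pair of sections in a single graded piece $\Gamma(nL)$ whose ratio is nonconstant---but this is immediate from the definition of Iitaka dimension: $\kappa(L)=\max_n\dim X_n$, and $\dim X_n\ge 1$ forces the function field $Q_n=k(X_n)$ to be nontrivial over $k$, hence to contain some nonconstant $s'/s$ with $s,s'\in\Gamma(nL)\setminus 0$. With both sections invertible at $x$ the subtraction argument above goes through verbatim. Thus $\kappa(L)\ge1$ is incompatible with $\epsilon_1(L,x)=0$, forcing $\kappa(L)=0$, and we land in case (b). I expect no substantial obstacle; the argument is essentially the standard observation that if no multiple of $L$ has a section vanishing at $x$ then $|nL|$ separates no tangent directions and in fact the image of the Iitaka map is a point.
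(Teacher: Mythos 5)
Your proof is correct and follows essentially the same route as the paper: both arguments reduce to the observation that $\epsilon_1(L,x)>0$ exactly when some multiple of $L$ admits a nonzero global section vanishing at $x$, and both produce such a section from a nontrivial linear system by evaluating at $x$ and subtracting (the paper phrases this more directly as: $\dim_k\Gamma(nL)\ge 2$ forces $\Gamma(\cI_x(nL))\ne 0$, hence $\dim_k\Gamma(nL)=1$ for all $n$, which gives $\kappa(L)=0$ and the invertibility of the generator, whereas you route the same subtraction through a nonconstant ratio $s'/s$). One phrasing to fix: $\codim_x\Bs|\cI_x^p(qL)|\ge 1$ is \emph{not} equivalent to $x\notin\Bs|\cI_x^p(qL)|$ --- the point $x$ always lies in that base locus when $p\ge 1$ --- rather it is equivalent to $\Bs|\cI_x^p(qL)|\ne X$, i.e.\ to $\Gamma(\cI_x^p(qL))\ne 0$, which is the (correct) conclusion you actually use in the rest of the argument.
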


\begin{proof}
Suppose a) or b) holds, that is $\Gamma(\cI_x(nL))=0$ for $n\ge 1$. Therefore $\Bs|\cI_x^{0+} L|_\Q=X$. 
We obtain $\epsilon_1(L,x)=0$.

Conversely, suppose $\epsilon_1(L,x)=0$. We may assume we are not in case a), that is 
$\kappa(L)\ge 0$. Let $n\ge 1$ such that $\Gamma(nL)\ne 0$. If $\dim_k \Gamma(nL)\ge 2$,
then $\Gamma(\cI_x(nL))\ne 0$. This implies $\epsilon_1(L,x)\ge \frac{1}{n}$, a contradiction.
Therefore $\dim_k \Gamma(nL)=1$. Since again $\Gamma(X,\cI_x(nL))=0$, we have 
$\Gamma(nL)=ks_n$ with $s_n(x)\ne 0$.
\end{proof}

In particular, $\epsilon_1(L,x)$ is zero if $\kappa(L)<0$, and otherwise equals 
$$
\sup\{\frac{\ord_x(s)}{n};0\ne s\in \Gamma(nL), n\ge 1 \}.
$$

We call $\epsilon_1(L,x)$ the {\em width of $L$ at the point $x$}, also denoted by $\width_x(L)$. 
If $x\in X$ is a smooth point, then $\epsilon_1(L,x)$ coincides with the width of $L$ at the 
geometric valuation induced by the exceptional divisor of the blow-up of $X$ at $x$
(see~\cite[Section 2]{Amb16}).

By Lemma~\ref{er}, $\epsilon_d(L,x)>0$ if and only if there exist integers $p,q\ge 1$ such that
$\Bs|\cI_x^p(qL)|=\{x\}$ near $x$, and in this case, $\epsilon_d(L,x)$ is the supremum of 
$\frac{p}{q}$ after all such $p,q$.
We call $\epsilon_d(L,x)$ the {\em Seshadri constant of $L$ at $x$}, since we will see 
later (Proposition~\ref{nef}) 
that in the classical setting, it coincides with the Seshadri constant introduced by Demailly.

\begin{lem} $\epsilon_d(L,x)>0$ if and only if there exists $n\ge 1$ such that $x\notin \Bs|nL|$ and the 
induced rational map $\phi=\phi_{|nL|}\colon X\dashrightarrow X_n\subseteq |nL|$ 
satisfies $\phi^{-1}\phi(x)=\{x\}$ near $x$. In particular, $L$ is big.
\end{lem}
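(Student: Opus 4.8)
The plan is to prove the equivalence by passing through the observation recorded just above this lemma (in the discussion of isolated base points of linear systems): if $x\notin\Bs|nL|$, then $\Bs|\cI_x(nL)|$ coincides near $x$ with $\phi_{|nL|}^{-1}\phi_{|nL|}(x)$. We may assume $\dim X\ge 1$. For the implication $\Leftarrow$, suppose $n$ is as in the statement. Then $x\in U:=X\setminus\Bs|nL|$, so by that discussion $\Bs|\cI_x(nL)|$ equals $\{x\}$ near $x$; hence $\codim_x\Bs|\cI_x(nL)|=d$, and Lemma~\ref{er}, applied with $p=1$ and $q=n$, gives $\epsilon_d(L,x)>0$.

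For the implication $\Rightarrow$, suppose $\epsilon_d(L,x)>0$. By Lemma~\ref{er} there are integers $p,q\ge 1$ with $\Bs|\cI_x^p(qL)|=\{x\}$ near $x$. Since $\Gamma(\cI_x^p(qL))\subseteq\Gamma(\cI_x(qL))$, we get $\Bs|\cI_x(qL)|\subseteq\Bs|\cI_x^p(qL)|=\{x\}$ near $x$, and as $x$ always lies in $\Bs|\cI_x(qL)|$ this forces $\Bs|\cI_x(qL)|=\{x\}$ near $x$. A fortiori $\Bs|qL|\cap U\subseteq\{x\}$ for some neighbourhood $U$ of $x$, so Lemma~\ref{ibp}, applied to the Cartier divisor $qL$, yields $x\notin\Bs|mqL|$ for all $m\gg 0$. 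Fix such an $m$ and set $n=mq$. It remains to show $\phi_{|nL|}^{-1}\phi_{|nL|}(x)=\{x\}$ near $x$, which by the observation above is the same as $\Bs|\cI_x(nL)|=\{x\}$ near $x$. For every $s\in\Gamma(\cI_x(qL))$ we have $s^m\in\Gamma(\cI_x^m(nL))\subseteq\Gamma(\cI_x(nL))$, hence $\Bs|\cI_x(nL)|\subseteq\{s=0\}$; intersecting over all such $s$ gives $\Bs|\cI_x(nL)|\subseteq\Bs|\cI_x(qL)|=\{x\}$ near $x$, and since $x$ lies in the left-hand side, equality holds.

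For the final assertion, I would apply Lemma~\ref{rda} to the divisor $nL$ (legitimate because $\Bs|\cI_x(nL)|=\{x\}$ near $x$): it produces $f\colon X'\to X$, a proper contraction $g\colon X'\to Y$ onto a projective variety $Y$, an ample divisor $A$ on $Y$ with $f^*(nL)-E\sim g^*A$, and an identification $\Gamma(X,\cI_Z^m(mnL))=\Gamma(Y,mA)$ for all $m\gg 0$. Since $f$ and $g$ are isomorphisms over neighbourhoods of $x$ and of $g(x)$, and $\dim X'=\dim X=d$, we get $\dim Y=d$; hence $(A^d)>0$ and $\dim_k\Gamma(Y,mA)\sim(A^d)m^d/d!$ as $m\to\infty$. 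Combining this with $\Gamma(X,\cI_Z^m(mnL))\subseteq\Gamma(X,\cO_X(mnL))$ and the definition of the volume, $\vol(L)\ge(A^d)/n^d>0$, so $L$ is big. (Equivalently, $\phi_{|nL|}$ is generically finite onto its image near $x$, so $\dim X_n=\dim X$ and $\kappa(L)=\dim X$.)

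I do not expect a genuine obstacle: the argument is a formal assembly of Lemma~\ref{er}, Lemma~\ref{ibp}, Lemma~\ref{rda} and the isolated-base-point discussion. The only points requiring care are the bookkeeping of the ``near $x$'' qualifiers; the fact that Lemma~\ref{ibp} and Lemma~\ref{rda} are invoked for $qL$ and for $nL$ rather than for $L$ itself; and the passage from the higher-order system $|\cI_x^p(qL)|$ to an honest complete linear system, which is exactly what the substitution $s\mapsto s^m$ together with Lemma~\ref{ibp} achieves. Non-normality of $X$, and the fact that $X$ need not be projective, cause no trouble, since Lemma~\ref{ibp} and Lemma~\ref{rda} are already stated at that level of generality.
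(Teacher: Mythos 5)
Your proof is correct and follows essentially the same route as the paper's: Lemma~\ref{er} to get $\Bs|\cI_x^p(qL)|=\{x\}$ near $x$, Lemma~\ref{ibp} to free the base locus of $|nL|$ near $x$, and the identification $\Bs|\cI_x(nL)|\cap U=\phi^{-1}\phi(x)\cap U$; you merely make explicit the power trick $s\mapsto s^m$ that the paper leaves implicit when it asserts $\dim_x\Bs|\cI_x(nL)|=0$. For the final assertion the paper argues in one line from $0=\dim_x\phi^{-1}\phi(x)\ge \dim X-\dim X_n$, forcing $\dim X_n=d$; your detour through Lemma~\ref{rda} and the volume is valid but heavier than needed, and you already note the shorter argument parenthetically.
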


\begin{proof}
Suppose $\epsilon_d(L,x)>0$. Then there exists $p,q\ge 1$ such that 
$\Bs|\cI_x^p(qL)|=\{x\}$ near $x$. By Lemma~\ref{ibp}, there exists $l\ge 1$ such
that $\Bs|qlL|$ is empty in a neighborhood of $x$. Denote $n=ql$. Let $\phi=\phi_{|nL|}\colon X\dashrightarrow X_n\subseteq |nL|$
be the induced rational map. It is regular on the open dense subset $U=X\setminus \Bs|nL|$, which contains $x$.
Moreover, $\Bs|\cI_x(nL)|\cap U=\phi^{-1}\phi(x)\cap U$. Then 
$$
0=\dim_x\Bs|\cI_x(nL)|=\dim_x\phi^{-1}\phi(x)\ge \dim X-\dim X_n.
$$
Therefore $\phi^{-1}\phi(x)=\{x\}$ near $x$, and $\dim X_n=d$. In particular, $L$ is big.

Conversely, suppose $\phi=\phi_{|nL|}\colon X\dashrightarrow X_n\subseteq |nL|$ is regular near $x$
and $\phi^{-1}\phi(x)=\{x\}$ near $x$. Then $\Bs|\cI_x(nL)|$ equals $\{x\}$ near $x$. Therefore $\epsilon_d(L,x)\ge \frac{1}{n}>0$.
\end{proof}

The following lemma is a Successive minima version of~\cite[Proposition 6.4]{ELMNP}

\begin{lem}\label{moap}
Let $X$ be normal. Let $x\notin \Bs|L|_\Q$. For $n\ge 1$ such that $x\notin \Bs|nL|$, let 
$\phi_{|nL|}\colon X\dashrightarrow X_n\subseteq |nL|$ be the induced rational map. Let
$\mu_n\colon Y_n\to X$ be the normalization of the graph of $\phi_{|nL|}$. 
In the mobile-fixed decomposition $|\mu_n^*(nL)|=|M_n|+F_n$, the mobile part is base point free.
Moreover, $\mu_n$ is an isomorphism over a neighborhood of $x$. Then 
$\epsilon_i(L,x)$ is the supremum of $\frac{\epsilon_i(M_n,x)}{n}$, after all such $n$.
\end{lem}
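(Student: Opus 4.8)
The plan is to compare, for each admissible $n$, the base loci of sections of multiples of $L$ near $x$ with those of multiples of $M_n$ near $x$, using the factorization $\mu_n^*(nL) = M_n + F_n$ together with the fact that $\mu_n$ is an isomorphism near $x$. First I would record the two structural facts. Since $X$ is normal and $\mu_n \colon Y_n \to X$ is a proper birational morphism, we have $\cO_X = (\mu_n)_*\cO_{Y_n}$, hence $\Gamma(mL) = \Gamma(m\mu_n^*L)$ for all $m \ge 1$, and $\mu_n$ is an isomorphism over the complement of the base locus of $|nL|$, in particular over a neighborhood $U$ of $x$ (this uses $x \notin \Bs|nL|$). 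Consequently $\epsilon_i(L,x) = \epsilon_i(\mu_n^*L, \mu_n^{-1}(x))$ by the contraction lemma, so we may freely work upstairs on $Y_n$ and identify $x$ with the point $\mu_n^{-1}(x)$.

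Next I would treat the inequality $\epsilon_i(L,x) \ge \frac{1}{n}\epsilon_i(M_n,x)$. Let $p,q \ge 1$ with $\codim_x \Bs|\cI_x^p(qM_n)| \ge i$. Multiplying sections by the $q$-th power of the canonical section cutting out $qF_n$, and using $\ord_x(F_n) = 0$ (because $F_n$ is supported on the exceptional locus, away from $U \ni x$, since $|nL|$ is base point free near $x$), gives an inclusion $\Bs|\cI_x^p(q(M_n+F_n))| \subseteq \Bs|\cI_x^p(qM_n)| \cup \Supp F_n$ near $x$; intersecting with $U$ kills the $\Supp F_n$ term, so $\codim_x \Bs|\cI_x^p(q\mu_n^*(nL))| \ge i$, and Lemma~\ref{er} yields $\epsilon_i(\mu_n^*(nL),x) \ge \frac{p}{q}$, i.e.\ $n\,\epsilon_i(L,x) \ge \epsilon_i(M_n,x)$. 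Taking the supremum over $n$ gives one direction.

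For the reverse inequality, I would fix $p,q \ge 1$ with $\codim_x \Bs|\cI_x^p(qL)| \ge i$, with $\frac{p}{q}$ close to $\epsilon_i(L,x)$ (possible by Lemma~\ref{er}, assuming $\epsilon_i(L,x) > 0$; the case $\epsilon_i(L,x)=0$ is immediate since all terms are $\ge 0$). After replacing $q$ by a multiple I may assume $n \mid q$, say $q = mn$. Pulling back to $Y_q$ (not $Y_n$!), the sections of $\Gamma(\cI_x^p(q\mu_q^*L))$ all vanish along $F_q$, hence factor through $\cI_x^p(pM_q)$-ish data; more precisely, the mobile–fixed decomposition gives $\Bs|\cI_x^p(q\mu_q^*L)| \supseteq \Bs|\cI_x^p(pM_q \cdot \tfrac{q}{p}?)|$ — here is where one must be careful about exponents, so the cleanest route is: the image of $\Gamma(\cI_x^p(q\mu_q^*L))$ inside $\Gamma(q\mu_q^*L)$ equals (the sections of $q\mu_q^*L$ vanishing to order $p$ at $x$), and since every such section is (fixed section of $F_q$) times a section of $M_q$, and $\ord_x$ of the fixed part is $0$, we get $\codim_x \Bs|\cI_x^p(M_q)| \ge i$ where now I am using $q M_q$ is the relevant mobile bundle; rescaling, $\epsilon_i(M_q, x)/q \ge p/q$. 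Using $\mu_q$ is an isomorphism near $x$ and that $M_q = q M_q/q$ in the appropriate normalization, this shows $\sup_n \frac{1}{n}\epsilon_i(M_n,x) \ge \frac{p}{q}$, and letting $\frac{p}{q} \to \epsilon_i(L,x)$ finishes the proof.

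The main obstacle I anticipate is purely bookkeeping: keeping the normalization conventions for $M_n$ versus $n L$ straight (whether $M_n$ is a bundle with $\mu_n^*(nL) = M_n + F_n$, so that $\epsilon_i(M_n,x)$ already carries the factor $n$, or whether one should write $M_n = n \cdot (\text{something})$), and correctly tracking the vanishing orders through the multiplication-by-fixed-section maps so that the codimension bounds transfer. The geometric content — that $F_n$ contributes nothing near $x$ because $x \notin \Bs|nL|$, and that $\mu_n$ is a local isomorphism there — is exactly the analogue of the argument for $\epsilon_d$ in~\cite[Proposition 6.4]{ELMNP}, and the only new point is that the codimension-$i$ statements pass through the same inclusions of base loci as the codimension-$d$ ones.
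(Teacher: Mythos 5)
Your proposal is correct and follows essentially the same route as the paper: one direction by multiplying by the fixed section of $F_n$ (which is trivial near $x$), the other by passing to $Y_q$ and identifying $\Gamma(\cI_x^p(M_q))$ with $\Gamma(\cI_x^p(qL))$, so that $\epsilon_i(M_q,x)\ge p$. The normalization worry you flag resolves exactly as you guess: $M_q$ is defined by $|\mu_q^*(qL)|=|M_q|+F_q$ and already carries the factor $q$, which is why the supremum is of $\epsilon_i(M_n,x)/n$.
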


\begin{proof}
Since $x\notin F_n\in |n\mu_n^*L-M_n|$, we have $\Bs|\cI_x^{t+}(n\mu^*L)|_\Q\subseteq \Bs|\cI_x^{t+}M_n|_\Q$
near $x$. Therefore $\epsilon_i(M_n,x)\le \epsilon_i(n\mu^*L,x)=n\epsilon_i(L,x)$.

Suppose now $t<\epsilon_i(L,x)$. There exist integers $p,q\ge 1$ with $p>qt$ and 
$\Bs|\cI_x^{t+}L|_\Q=\Bs|\cI_x^p(qL)|$. The identifications 
$\Gamma(Y_q,\cO_{Y_q}(M_q))=\Gamma(Y_q,\cO_{Y_q}(\mu_q^*qL))=\Gamma(X,\cO_X(qL))$
induce an identification 
$\Gamma(Y_q,\cI^p_{x\in Y_q}(M_q))=\Gamma(X,\cI_x^p(qL))$.
Therefore $\codim_x\Bs|\cI_x^p(M_q)|\ge i$.
We obtain $p\le \epsilon_i(M_q,x)$, that is $t< \frac{p}{q}\le \frac{\epsilon_i(M_q,x)}{q}$.

The supremum is in fact a limit.
\end{proof}


\subsection{Successive minima at a very general point}

Contrary to the usual Seshadri constant, $\epsilon_i(L,x)$ is not lower semicontinuous with respect to $x \in X$ in general.
For example, $\epsilon_1(L,x)$ can be upper semicontinuous.
But one can show a weaker property, that is $\epsilon_i(L,x)$ is independent of the choice of a very general point $x$.

\begin{defnprop} There exists a countable intersection of open dense subsets $V\subseteq X$ such that
the correspondence $V\ni x\mapsto \epsilon_i(L,x)$ is constant. The common value does not depend
on $V$, and is denoted by $\epsilon_i(L)$.
\end{defnprop}

\begin{proof}
Let $p_1,p_2\colon X\times X\to X$ be the natural projections, let $\delta\colon X\to X\times X$ be the diagonal
embedding, let $\Delta\subset X\times X$ be the diagonal. Let $p,q\ge 1$ be integers. 
Let $B^{pq}$ be the locus where the composition
$
p_2^*{p_2}_* \cI_\Delta^p(qp_1^*L) \to \cI_\Delta^p(qp_1^*L)\subset \cO(qp_1^*L)
$ 
is not surjective. 
By the definition, $B^{pq}$ contains $\Delta$. There exists an open dense subset $U^{pq}\subseteq X$ such that 
the following properties hold:
\begin{itemize}
\item[1)] ${p_2}_*(\cI_\Delta^p(qp_1^*L))\otimes k(x)\isoto \Gamma(\cI_x^p(qL))$ for every $x\in U^{pq}$. 
\item[2)] If $Y$ is an irreducible component of $B^{pq}$ which contains $\Delta$, then $p_2\colon Y\to X$
is flat over $U^{pq}$. In particular, $Y\cap p_2^{-1}(x)$ is equi-dimensional, of dimension $\dim Y-d$,
for every $x\in U^{pq}$.
\item[3)] $\delta(U^{pq})$ intersects only the irreducible components of $B^{pq}$ which contain $\Delta$.
\end{itemize}
Indeed, conditions 1) and 2) are open dense in $X$. For 3), we remove from $X$ the $\delta$-preimage of the 
trace on $\Delta$ of the irreducible components of $B^{pq}$ which do not contain $\Delta$. 
By 1), the natural inclusion $\Bs|\cI_x^p(qL)| \subseteq B^{pq}\cap p_2^{-1}(x)$ is an equality for every $x\in U^{pq}$.
By 2) and 3), we have
$$
\dim_{\delta(x)}B^{pq}\cap p_2^{-1}(x)=\codim(\Delta\subseteq B^{pq})\ \forall x\in U^{pq}.
$$
Let $V=\cap_{p,q\ge 1}U^{pq}$. It is dense in $X$, a countable intersection of open subsets of $X$.
We claim that $\epsilon_i(L,\cdot)$ is constant for $x\in V$. Suffices to show that if $x\in V$, $t\ge 0$ and $\epsilon_i(L,x)>t$,
then $\epsilon_i(L,y)>t$ for every $y\in V$. Indeed, $\epsilon_i(L,x)>t$ implies that there exists $\frac{p}{q}>t$
such that $\codim_x\Bs|\cI_x^p(qL)|\ge i$. That is $\codim_{\delta(x)}B^{pq}\cap p_2^{-1}(x)\ge i$. From above, this
is equivalent to $\codim(\Delta\subseteq B^{pq})\ge i$. Arguing backwards, we see that $\codim_y\Bs|\cI_y^p(qL)|\ge i$
for every $y\in V$. Therefore $\epsilon_i(L,y)\ge \frac{p}{q}>t$ for every $y\in V$.
\end{proof}

\begin{lem}\label{lb}
Suppose $|nL|\ne \emptyset$ and $\dim \phi_{|nL|}(X)\ge i$. Then $\epsilon_i(L)\ge \frac{1}{n}$.
\end{lem}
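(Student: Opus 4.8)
The plan is to reduce, via Lemma~\ref{er} applied with $p=1$ and $q=n$, to showing that $\codim_x\Bs|\cI_x^1(nL)|\ge i$ for $x$ in a suitable dense subset of $X$. Recall from the construction of $\epsilon_i(L)$ that the quantity $\codim_x\Bs|\cI_x^p(qL)|$ is constant on the dense open set $U^{pq}$, and that the countable intersection $V=\cap_{p,q\ge 1}U^{pq}$ is the set defining $\epsilon_i(L)$. Hence it is enough to verify the bound $\codim_x\Bs|\cI_x(nL)|\ge i$ at one general point $x$: this forces $\codim_x\Bs|\cI_x^1(nL)|\ge i$ on all of $U^{1,n}\supseteq V$, and then Lemma~\ref{er} gives $\epsilon_i(L,x)\ge\frac1n$ for every $x\in V$, i.e. $\epsilon_i(L)\ge\frac1n$.

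Next I would set $\phi=\phi_{|nL|}\colon X\dashrightarrow X_n\subseteq|nL|$ and $U=X\setminus\Bs|nL|$. Since $|nL|\ne\emptyset$, the set $U$ is dense open, $\phi$ is regular on $U$, the image $\phi(U)$ is dense in $X_n$, and $\dim X_n\ge i$ by hypothesis. By the description of isolated base points of linear systems recalled before Lemma~\ref{icr}, we have $\Bs|\cI_x(nL)|\cap U=\phi^{-1}\phi(x)\cap U$ for every $x\in U$; consequently, for $x\in U$, the local dimension $\dim_x\Bs|\cI_x(nL)|$ is the local dimension at $x$ of the fibre of $\phi|_U$ passing through $x$ (each irreducible component of $\Bs|\cI_x(nL)|$ through $x\in U$ meets $U$, so its dimension is computed inside $U$).

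Finally, since $\phi|_U\colon U\to X_n$ is dominant, there is a dense open $W\subseteq X_n$ over which every nonempty fibre of $\phi|_U$ has pure dimension $\dim U-\dim X_n=d-\dim X_n$. Then for $x$ in the dense open set $\phi|_U^{-1}(W)$ we obtain $\dim_x\Bs|\cI_x(nL)|=d-\dim X_n$, hence $\codim_x\Bs|\cI_x(nL)|=\dim X_n\ge i$, which is what the first paragraph requires. I do not foresee a genuine obstacle in this argument; the one slightly delicate point is the passage from ``dense open'' to ``very general'', which is exactly why I rely on the constancy of $\codim_x\Bs|\cI_x^p(qL)|$ on $U^{pq}$ coming from the construction of $\epsilon_i(L)$, instead of trying to intersect dense open sets with the countable intersection $V$.
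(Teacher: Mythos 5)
Your proposal is correct and follows essentially the same route as the paper: identify $\Bs|\cI_x(nL)|$ near $x\in U$ with the fibre $\phi^{-1}\phi(x)$, note that the generic fibre of the dominant map $\phi|_U\colon U\to X_n$ has codimension $\dim X_n\ge i$, and conclude via Lemma~\ref{er}. The only difference is that you make explicit the passage from ``open dense'' to ``very general'' by invoking the constancy of $\codim_x\Bs|\cI_x^p(qL)|$ on $U^{pq}$, a point the paper leaves implicit.
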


\begin{proof}
Let $U_n\subseteq X$ be the open dense subset on which $\phi=\phi_{|nL|}\colon X\dashrightarrow X_n\subseteq |nL|$ is regular. We have 
$
\Bs|\cI_x(nL)|\cap U_n=\phi^{-1}\phi(x)\cap U_n
$ 
for every $x\in U_n$. We have $\dim_x\phi^{-1}\phi(x)\ge \dim X-\dim X_n$ for all $x\in U_n$, and 
equality holds for an open dense subset $U'_n\subseteq U_n$. Let $x\in U'_n$. Then 
$$
\codim_x \Bs|\cI_x(nL)|=\codim_x \phi^{-1}\phi(x)=\dim X_n\ge i.
$$
By Lemma~\ref{er}, $\epsilon_i(L,x)\ge \frac{1}{n}$.
\end{proof}

\begin{lem}\label{ek}
$\epsilon_i(L)>0$ if and only if $\kappa(L)\ge i$.
\end{lem}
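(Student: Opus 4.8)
The plan is to prove the two implications separately. The forward implication, $\kappa(L)\ge i\Rightarrow\epsilon_i(L)>0$, will be a one-line consequence of Lemma~\ref{lb}: since $\kappa(L)\ge i\ge 1$, there is an $n\ge 1$ with $|nL|\ne\emptyset$ and $\dim\phi_{|nL|}(X)=\kappa(L)\ge i$, and then Lemma~\ref{lb} yields $\epsilon_i(L)\ge\frac1n>0$.

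For the converse I would argue by contraposition: assuming $\kappa(L)<i$, show $\epsilon_i(L)=0$. The case $\kappa(L)=-\infty$ is immediate, since then $\Gamma(nL)=0$ for all $n$, so $\Bs|\cI_x^{t+}L|_\Q=X$ for all $t\ge 0$ and all $x$, and $\epsilon_i(L,x)=0$. So one may assume $\kappa:=\kappa(L)\ge 0$ and $i\ge\kappa+1$; as $\epsilon_i(L)\le\epsilon_{\kappa+1}(L)$, it suffices to prove $\epsilon_{\kappa+1}(L)=0$. Here is where Lemma~\ref{er} enters: it tells us that $\epsilon_{\kappa+1}(L,x)=0$ provided $\codim_x\Bs|\cI_x^p(qL)|\le\kappa$ for every pair of integers $p,q\ge 1$. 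Thus the task reduces to establishing this uniform codimension bound for $x$ ranging over a countable intersection of open dense subsets of $X$.

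The bound itself comes from fiber-dimension counting. Since $\cI_x^p\subseteq\cI_x$ we have $\Bs|\cI_x^p(qL)|\supseteq\Bs|\cI_x(qL)|$, hence $\codim_x\Bs|\cI_x^p(qL)|\le\codim_x\Bs|\cI_x(qL)|$; in particular the bound is uniform in $p$ as soon as it holds for $p=1$. Fix $q\ge 1$. If $\Gamma(qL)=0$ then $\Bs|\cI_x(qL)|=X$ and there is nothing to check. If $\Gamma(qL)\ne 0$ then $\Bs|qL|$ is a proper closed subset of $X$ (it lies in the zero locus of a nonzero section), and for $x$ in the open dense set $U_q=X\setminus\Bs|qL|$ the computation recalled in the subsection on isolated base points of linear systems gives $\Bs|\cI_x(qL)|\cap U_q=\phi_{|qL|}^{-1}\phi_{|qL|}(x)\cap U_q$, where $\phi_{|qL|}\colon X\dashrightarrow X_q\subseteq|qL|$. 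Since $\phi_{|qL|}\colon U_q\to X_q$ is dominant and $\dim X_q\le\kappa(L)$, the standard lower bound on the dimension of a fiber of a dominant morphism gives $\dim_x\Bs|\cI_x(qL)|\ge d-\dim X_q\ge d-\kappa$, i.e.\ $\codim_x\Bs|\cI_x(qL)|\le\kappa$, for all $x\in U_q$.

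Finally one assembles the pieces: for $x$ in the countable intersection $\bigcap_{q\ge 1,\,\Gamma(qL)\ne 0}(X\setminus\Bs|qL|)$, intersected with the distinguished locus on which $\epsilon_{\kappa+1}(L,\cdot)$ is constant, we get $\codim_x\Bs|\cI_x^p(qL)|\le\kappa<\kappa+1$ for all $p,q\ge 1$, hence $\epsilon_{\kappa+1}(L,x)=0$ by Lemma~\ref{er}, hence $\epsilon_{\kappa+1}(L)=0$. I do not anticipate a serious obstacle: the argument is a short combination of Lemmas~\ref{lb} and~\ref{er} with elementary fiber-dimension theory. The one point requiring care is the ``very general'' bookkeeping — the codimension bound must be arranged simultaneously for all pairs $(p,q)$, which is precisely why one works over the countable intersection of the sets $X\setminus\Bs|qL|$ rather than at a single base-point-free level of $|nL|$.
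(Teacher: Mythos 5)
Your proof is correct and follows essentially the same route as the paper: Lemma~\ref{lb} for the implication $\kappa(L)\ge i\Rightarrow\epsilon_i(L)>0$, and the fiber-dimension bound for $\phi_{|qL|}$ combined with Lemma~\ref{er} for the converse. If anything, your converse is slightly more careful than the paper's: by reducing the required bound $\codim_x\Bs|\cI_x^p(qL)|\le\kappa$ to the case $p=1$ via the inclusion $\Bs|\cI_x^p(qL)|\supseteq\Bs|\cI_x(qL)|$ and then invoking the characterization of positivity in Lemma~\ref{er}, you avoid the paper's quick deduction ``$\codim_x\Bs|\cI_x(nL)|<i$, therefore $\epsilon_i(L,x)<\frac{1}{n}$'', which as written passes from a single level $n$ to the full intersection defining $\Bs|\cI_x^{t+}L|_\Q$ without comment.
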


\begin{proof}
Suppose $\kappa(L)\ge i$. There exists $n\ge 1$ such that $|nL|\ne\emptyset$, and 
if $X\dashrightarrow X_n\subseteq |nL|$ is the induced rational map,
then $\dim X_n=\kappa(L) \ge i$. By Lemma~\ref{lb}, $\epsilon_i(L)\ge \frac{1}{n}$.

Suppose $\kappa(L)< i$. Suppose $\Gamma(nL)\ne 0$, let
$\phi_n\colon X\dashrightarrow X_n\subseteq |nL|$ be the induced rational map. For general $x\in X$ we have
$$
\codim_x\Bs|\cI_x(nL)|=\codim_x \phi_n^{-1}\phi_n(x)=\dim X_n< i.
$$
Therefore $\epsilon_i(L,x)< \frac{1}{n}$. Letting $n$ approach infinity, we obtain $\epsilon_i(L)=0$.
\end{proof}

Let $\kappa(L)=\kappa$. If $\kappa=0$, then $\epsilon_i(L)=0$ for all $i$. If $\kappa \ge 1$, we obtain 
$$
\epsilon_1(L)\ge \cdots\ge \epsilon_\kappa(L)>0=\epsilon_{\kappa+1}(L).
$$

\begin{lem}\label{LL'}
Suppose $\kappa(L'-L)\ge 0$. Then $\epsilon_i(L)\le \epsilon_i(L')$ for all $i$.
\end{lem}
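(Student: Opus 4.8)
The plan is to work directly with the base loci $\Bs|\cI_x^{t+}L|_\Q$ and $\Bs|\cI_x^{t+}L'|_\Q$, transporting admissible sections from multiples of $L$ to multiples of $L'$ by multiplying with a fixed section representing an effective multiple of $L'-L$. Since $\kappa(L'-L)\ge 0$, I first fix an integer $m\ge 1$ and a nonzero section $s_D\in\Gamma(m(L'-L))$; let $D$ be the effective divisor it cuts out. Then I take the very general point $x$ not only inside the countable intersection of dense open subsets that computes $\epsilon_i(L,\cdot)$ and $\epsilon_i(L',\cdot)$ for all $i$, but also inside the dense open subset $X\setminus\Supp D$; this is harmless, as it is just one more dense open. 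In particular $\ord_x(s_D)=0$, and set-theoretically $\Supp D$ is disjoint from a neighbourhood of $x$.

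\textbf{Key steps.} Fix $t\ge 0$. For every $n\ge 1$ and every $s\in\Gamma(nL)$ with $\ord_x(s)>nt$, we have $s^m\in\Gamma(nmL)$ and $s_D^n\in\Gamma(nm(L'-L))$, hence $s^m s_D^n\in\Gamma(nmL')$ because $nmL+nm(L'-L)=nmL'$. By superadditivity of $\ord_x$ under multiplication of sections (i.e.\ $\cI_x^a\cdot\cI_x^b\subseteq\cI_x^{a+b}$) we get $\ord_x(s^m s_D^n)\ge m\cdot\ord_x(s)>nmt$, and its zero locus is $Z(s^m s_D^n)=Z(s)\cup\Supp D$. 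Thus each $s^m s_D^n$ is an admissible section for $\Bs|\cI_x^{t+}L'|_\Q$, so $\Bs|\cI_x^{t+}L'|_\Q\subseteq Z(s)\cup\Supp D$ for every admissible $s$. Intersecting over all such $s$ and using distributivity of union over intersection,
$$
\Bs|\cI_x^{t+}L'|_\Q \subseteq \Bs|\cI_x^{t+}L|_\Q \cup \Supp D .
$$
Since $x\notin\Supp D$, this yields $\Bs|\cI_x^{t+}L'|_\Q\subseteq\Bs|\cI_x^{t+}L|_\Q$ in a neighbourhood of $x$, hence $\codim_x\Bs|\cI_x^{t+}L'|_\Q\ge\codim_x\Bs|\cI_x^{t+}L|_\Q$.

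\textbf{Conclusion and obstacle.} Now I invoke the characterization recorded right after the Definition of successive minima: $\codim_x\Bs|\cI_x^{t+}L|_\Q\ge i$ exactly when $t<\epsilon_i(L,x)$. So for $t<\epsilon_i(L,x)=\epsilon_i(L)$ the codimension bound transfers to $L'$, giving $\codim_x\Bs|\cI_x^{t+}L'|_\Q\ge i$, i.e.\ $t<\epsilon_i(L',x)=\epsilon_i(L')$; letting $t$ approach $\epsilon_i(L)$ gives $\epsilon_i(L)\le\epsilon_i(L')$ for every $i$. I do not expect a real obstacle here: the only points needing care are the choice of $x$ off $\Supp D$ and the two elementary facts about $\ord_x$ and about $Z(s^m s_D^n)$, both local. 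Note the argument uses neither bigness of $L'-L$ nor smoothness of $X$ at $x$ — only the existence of a single effective multiple of $L'-L$.
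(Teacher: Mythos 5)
Your proposal is correct and follows essentially the same route as the paper: the paper chooses $C\in|L'-L|$ (reducing to this case via homogeneity of $\epsilon_i$), observes $\Bs|\cI_x^{t+}L'|_\Q\subseteq\Bs|\cI_x^{t+}L|_\Q\cup\Supp C$ by multiplying admissible sections with the section cutting out $C$, and concludes by comparing codimensions at a very general $x\notin\Supp C$. Your variant with the powers $s^m s_D^n$ merely avoids the preliminary reduction to $|L'-L|\ne\emptyset$, and the extra details you supply (choice of $x$ off $\Supp D$, superadditivity of $\ord_x$, the identity $Z(s^m s_D^n)=Z(s)\cup\Supp D$) are exactly what the paper leaves implicit.
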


\begin{proof}
We may suppose $|L'-L|$ is not empty. Choose $C\in |L'-L|$. Then $\Bs|\cI_x^p(qL')|\subseteq  \Bs|\cI_x^p(qL)| \cup \Supp C$.
Therefore $\Bs|\cI_x^{t+}L'|_\Q \subseteq  \Bs|\cI_x^{t+}L|_\Q \cup \Supp C$. For $x\in X\setminus C$, we obtain
$$
\codim_x \Bs|\cI_x^{t+}L'|_\Q \ge \codim_x \Bs|\cI_x^{t+}L|_\Q.
$$
By definition, $\epsilon_i(L,x)\le \epsilon_i(L',x)$ for all $i$.
\end{proof}

\begin{lem}\label{li}
Let $\epsilon_i(L)>0$, $\kappa(L')\ge 0$, and $|lL-l'L'|\ne \emptyset$ for some integers $l,l'\ge 1$.
Then $$\lim_{n\to \infty}\frac{\epsilon_i(nL+L')}{n}=\epsilon_i(L).$$
\end{lem}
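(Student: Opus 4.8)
The plan is to prove two inequalities, $\liminf_n \epsilon_i(nL+L')/n \ge \epsilon_i(L)$ and $\limsup_n \epsilon_i(nL+L')/n \le \epsilon_i(L)$, working at a very general point $x$, so that all the $\epsilon_i(\cdot,x)$ coincide with the invariants $\epsilon_i(\cdot)$. Note first that $\epsilon_i(nL+L',x)$ is finite for all $n$ (it is bounded by $\epsilon_1$, which is finite since $\kappa(nL+L') \ge \kappa(L) \ge i > 0$; one also needs $\kappa(nL+L') \ge i$, which holds because $nL+L'$ dominates $nL$ in the sense of Iitaka dimension — more precisely $|lL-l'L'|\ne\emptyset$ lets us compare, see below).

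For the lower bound, fix $t < \epsilon_i(L)$. By Lemma~\ref{er} there are integers $p,q\ge 1$ with $p/q > t$ and $\codim_x \Bs|\cI_x^p(qL)| \ge i$ for $x$ very general. I want to transfer sections of $qL$ with high vanishing at $x$ into sections of $q(nL+L')$ with comparably high vanishing. Using the hypothesis $|lL-l'L'|\ne\emptyset$, pick $C\in|lL-l'L'|$, so $l'L' + C \sim lL$, i.e. $L'$ is, up to an effective divisor and scaling, bounded below by a multiple of $L$; combined with $C'\in|\,?\,|$ one arranges for suitable $m$ that $m(nL+L')$ contains a divisor of the form (big multiple of $qL$) $+$ (effective), with the multiple of $qL$ growing like $n$. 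Concretely: for large $n$, write $nL+L' \sim (a_n)(qL) + E_n$ with $E_n$ effective and $a_n/n \to 1/q$; this is possible since $L' \sim_\Q$ (something) plus effective using $C$, absorbing the $L'$-contribution into $E_n$ at the cost of a bounded correction. Then $\Bs|\cI_x^{a_n p}(n(nL+L'))|_\Q \subseteq \Bs|\cI_x^p(qL)| \cup \Supp E_n$, so for $x$ away from $\Supp E_n$ (still very general, as $n$ ranges over a countable set) the codimension at $x$ is $\ge i$, whence $\epsilon_i(nL+L',x) \ge a_n p / n \to p/(q) > t$ by Lemma~\ref{er}. Letting $t \uparrow \epsilon_i(L)$ gives the liminf bound.

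For the upper bound, I use superadditivity and the hypothesis that $|lL - l'L'|\ne\emptyset$ to dominate $nL+L'$ by a multiple of $L$. From $|lL-l'L'|\ne\emptyset$ we get $\kappa(l'( (n/l)L ) - (nL+L')) \ge 0$ for appropriate integer combinations, so by Lemma~\ref{LL'}, $\epsilon_i(nL+L',x) \le \epsilon_i(cL,x) = c\,\epsilon_i(L,x)$ for a constant $c \sim n/\ell$ with $c/n \to 1/\ell$... — this is too lossy, so instead I argue directly: for $x$ very general and any $s \in \Gamma(m(nL+L'))$ with $\ord_x(s) > m t (n+\text{const})$, tensoring with a fixed section of $|l'L - $ (stuff)$|$ produces a section of a multiple of $L$ with controlled vanishing, forcing $t \le \epsilon_i(L) \cdot (1 + o(1))$ as $n\to\infty$. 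The cleanest route: since $\epsilon_i$ is superadditive on divisors with positive $i$-th minimum (the displayed Lemma, part c), $\epsilon_i(nL+L',x) \ge n\epsilon_i(L,x) + \epsilon_i(L',x)$ when $\epsilon_i(L',x)>0$; but in general $\epsilon_i(L')$ may be $0$, so I only get the trivial direction from that. For the genuine upper bound I use Lemma~\ref{LL'} applied to $L'' = nL + L'$ versus $L''' = (n+l')L$: indeed $|l L - l'L'|\ne\emptyset$ gives $|l\cdot l' L - l' \cdot l' L'|$... rescaling, $\kappa\big((n+l')L - (nL+L')\big) = \kappa(l'L - L') \ge 0$ once $l'L-L'$ is effective up to torsion, which follows from $C\in|lL-l'L'|$ by noting $l'L - L' \sim_\Q$ nonneg. combination; hence $\epsilon_i(nL+L',x) \le \epsilon_i((n+l')L,x) = (n+l')\epsilon_i(L,x)$, so $\epsilon_i(nL+L')/n \le (1+l'/n)\epsilon_i(L) \to \epsilon_i(L)$.

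Combining, $\lim_n \epsilon_i(nL+L')/n = \epsilon_i(L)$. The main obstacle I anticipate is bookkeeping the integer/$\Q$-divisor manipulations so that the auxiliary effective divisors $E_n$, $C$ can be chosen with $x$ avoiding their supports for all $n$ simultaneously (a countable-genericity argument, harmless since very general points already survive a countable intersection) and so that the coefficients $a_n$, $c_n$ have the right asymptotics $a_n/n\to 1/q$ — this requires care because $L'$ need not be $\Q$-linearly equivalent to a multiple of $L$, only comparable via the effective divisor $C$ in one direction. Everything else is a direct application of Lemma~\ref{er} and Lemma~\ref{LL'}.
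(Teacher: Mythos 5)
Your overall strategy (sandwich $\epsilon_i(nL+L')$ between multiples of $\epsilon_i(L)$ using Lemma~\ref{LL'}) is the paper's, but both halves as written contain errors. The serious one is in the upper bound: the inequality $\epsilon_i(nL+L',x)\le \epsilon_i((n+l')L,x)$ rests on the claim $\kappa(l'L-L')\ge 0$, which does \emph{not} follow from $C\in|lL-l'L'|$. From $C$ you get $L'\sim_\Q \frac{l}{l'}L-\frac{1}{l'}C$, hence $l'L-L'\sim_\Q (l'-\frac{l}{l'})L+\frac{1}{l'}C$, and the coefficient $l'-l/l'$ is negative whenever $l>l'^2$. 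Concretely, take $L$ ample and $L'\sim 4L$, so $l=4$, $l'=1$, $C=0$: then $\epsilon_i(nL+L')=(n+4)\epsilon_i(L)$, which violates your claimed bound $(n+1)\epsilon_i(L)$. The repair is exactly the paper's move: compare $l'(nL+L')$ with $(l'n+l)L$ via $(l'n+l)L\sim l'(nL+L')+C$, so Lemma~\ref{LL'} (applied to the effective divisor $C$) gives $l'\epsilon_i(nL+L')\le (l'n+l)\epsilon_i(L)$, i.e.\ the correct constant is $n+l/l'$, not $n+l'$.

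The lower bound is correct in spirit but your justification is backwards and overbuilt. You invoke $C\in|lL-l'L'|$ to write $nL+L'\sim a_n(qL)+E_n$ with $E_n$ effective, asserting that $L'$ is ``bounded below by a multiple of $L$''; in fact $lL\sim l'L'+C$ bounds $L'$ from \emph{above} by a multiple of $L$, so the effectivity goes the wrong way and the decomposition with effective $E_n$ is not available from $C$. What you actually need is only $\kappa(L')\ge 0$: since $(nL+L')-nL=L'$ has non-negative Iitaka dimension, Lemma~\ref{LL'} gives $\epsilon_i(nL+L')\ge \epsilon_i(nL)=n\epsilon_i(L)$ in one line, with no section-transfer, no $a_n$, and no countable-genericity bookkeeping over $\Supp E_n$. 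With these two corrections your argument collapses to the paper's proof: $n\epsilon_i(L)\le \epsilon_i(nL+L')\le (n+l/l')\epsilon_i(L)$, and dividing by $n$ gives the limit.
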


\begin{proof}
Since $\kappa(L')\ge 0$, we have $\epsilon_i(nL+L')\ge n \epsilon_i(L)$. Let $C\in |lL-l'L'|$. For $n\ge 1$, we have 
$$
(l'n+l)L\sim l'(nL+L')+C.
$$
Therefore $(l'n+l)\epsilon_i(L)\ge l'\epsilon_i(nL+L')$ by Lemma~\ref{LL'}. We obtain 
$$
0\le \epsilon_i(nL+L')-n \epsilon_i(L) \le \frac{l\epsilon_i(L)}{l'}.
$$
Dividing by $n$ and letting $n$ approach $+\infty$, we obtain the claim.
\end{proof}

\begin{prop} Suppose $X$ is projective. Let $L\equiv L'$ be numerically equivalent big Cartier divisors on $X$. 
Then $\epsilon_i(L)=\epsilon_i(L')$ for every $i$.
\end{prop}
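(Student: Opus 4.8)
The plan is to reduce the numerical invariance of $\epsilon_i$ to the known numerical invariance of the volume function, combined with an approximation trick via Lemma~\ref{li} and a perturbation argument using an ample divisor. First I would fix an ample Cartier divisor $A$ on $X$. The key observation is that $L \equiv L'$ means $L - L'$ is numerically trivial, so for every $m \ge 1$ the divisors $mL + A$ and $mL' + A$ are both big (since $L, L'$ are big and $A$ is ample), and moreover $(mL + A) - (mL' + A) = m(L - L') \equiv 0$. The strategy is then to show $\epsilon_i(mL + A) = \epsilon_i(mL' + A)$ for all $m$, and then divide by $m$ and let $m \to \infty$, using Lemma~\ref{li} to recover $\epsilon_i(L) = \lim_m \epsilon_i(mL+A)/m$ and likewise for $L'$.

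For the equality $\epsilon_i(mL+A) = \epsilon_i(mL'+A)$, I would exploit that a numerically trivial divisor $N = m(L-L')$ on a projective variety satisfies: for any ample $H$, both $N + \delta H$ and $-N + \delta H$ have sections vanishing to controlled order, or more precisely, for any $\eta > 0$ there is an effective $\Q$-divisor $C_\eta \sim_\Q \eta H$ with $N + C_\eta$ effective (by Kodaira-type arguments / since $N$ is in the closure of the big cone translated appropriately). Actually the cleaner route: since $mL + A \equiv mL' + A$ and both are big, apply Lemma~\ref{LL'} in both directions after tensoring by a small ample perturbation. Concretely, for $t \ge 0$ small, $mL + A + tA$ versus $mL' + A$: we have $(mL+A+tA) - (mL'+A) = m(L-L') + tA \equiv tA$ which is ample, hence has non-negative Iitaka dimension, so $\epsilon_i(mL'+A) \le \epsilon_i(mL+A+tA)$. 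By a continuity argument in $t$ (the successive minima of a big divisor are continuous on the big cone — this is where the hypothesis that $L$ is big and the earlier limit lemmas enter), letting $t \to 0$ gives $\epsilon_i(mL'+A) \le \epsilon_i(mL+A)$, and symmetrically the reverse inequality.

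The main obstacle I anticipate is establishing the needed continuity: that $t \mapsto \epsilon_i(L_0 + tA)$ is continuous (or at least upper semicontinuous from the right at $t = 0$) for a big divisor $L_0$. For $\epsilon_i$ this does not follow formally the way it does for volume, since $\epsilon_i$ is defined via base loci and codimensions rather than asymptotic dimension counts. I would handle this by combining Lemma~\ref{li} — which controls $\epsilon_i(nL_0 + A')/n$ — with the superadditivity in part (c) of the scaling lemma and the comparison Lemma~\ref{LL'}, squeezing $\epsilon_i(L_0 + tA)$ between $\epsilon_i(L_0)$ and $\epsilon_i(L_0) + (\text{const}) \cdot t$ by writing $k(L_0 + tA)$ and $kL_0$ in terms of each other modulo effective divisors once $kt$ is a positive integer, exactly as in the proof of Lemma~\ref{li}. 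This gives $|\epsilon_i(L_0 + tA) - \epsilon_i(L_0)| \le C t$ for a constant $C$ depending on $L_0$ and $A$, which suffices. Assembling these pieces yields $\epsilon_i(L) = \epsilon_i(L')$ for all $i$, completing the proof.
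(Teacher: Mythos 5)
Your argument is correct, but it is heavier than it needs to be, and the extra weight all comes from one missed simplification. You compare $mL+A+tA$ with $mL'+A$, whose difference is numerically equivalent to $tA$, and then you must let $t\to 0^+$, which forces you to prove a Lipschitz-type continuity estimate $\epsilon_i(L_0+tA)\le \epsilon_i(L_0)+Ct$. That estimate is true and your sketch of it (re-running the effectivity juggling from Lemma~\ref{li} with $C\in|lL_0-l'A|$ for rational $t$) does go through, so there is no gap. But the paper's proof avoids the continuity step entirely by making the comparison \emph{asymmetric}: it compares $nL+A$ with $nL'$ (no ample summand on the right), so that the difference $(nL+A)-nL'\equiv A$ is already ample, hence of non-negative Iitaka dimension, and Lemma~\ref{LL'} immediately gives $\epsilon_i(nL+A)\ge n\epsilon_i(L')$; a single application of Lemma~\ref{li} then yields $\epsilon_i(L)\ge \epsilon_i(L')$, and symmetry finishes. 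In effect the division by $n$ built into Lemma~\ref{li} plays the role of your $t\to 0$ limit, so only one limiting process is needed instead of two. Your version is a valid proof and your continuity lemma is of some independent interest (it shows $\epsilon_i$ is locally Lipschitz along ample directions at big divisors), but for this proposition the one-sided comparison is the cleaner route.
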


\begin{proof}
Let $A$ be an ample divisor on $X$. Since $(nL+A)-nL'$ is ample, $\epsilon_i(nL+A)\ge n\epsilon_i(L')$.
By Lemma~\ref{li}, we obtain $\epsilon_i(L)\ge \epsilon_i(L')$. The other inequality holds by the same argument.
\end{proof}

In particular, if $X$ is projective and $L$ is big, $\epsilon_1(L)$
coincides with the invariant $m(L)$ introduced by Nakamaye~\cite[page 2]{Nak03b}.

We may rephrase~\cite[Proposition 2.3]{EKL} and~\cite[Lemma 1.3]{Nak05} as follows:

\begin{lem}\label{ve} 
Let $X$ be a proper algebraic variety, let $L$ be a Cartier divisor, let $t\ge 0$.
Let $x\in X$ be a very general point and $Z\subseteq \Bs|\cI_x^{t+} L |_\Q$ an irreducible 
component containing $x$.
Then for integers $p,q\ge 1$ and $s\in \Gamma(\cI_x^p(qL))$, we have $\ord_Z(s)\ge p-qt$.
\end{lem}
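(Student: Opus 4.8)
I would first dispose of the trivial cases: if $s=0$ then $\ord_Z(s)=+\infty$, and if $p\le qt$ the right hand side is $\le 0$; so I may assume $s\ne 0$ and $\ord_x(s)\ge p>qt$. Then $\Bs|\cI_x^{t+}L|_\Q$ is contained in the proper subset $Z(s)$, so $Z\subsetneq X$; set $c=\codim_X Z\ge 1$ and $b=\ord_Z(s)$ (note $b\ge 1$, as $s$ is among the sections defining $\Bs|\cI_x^{t+}L|_\Q$). The point to retain from the definition of $\Bs|\cI_x^{t+}L|_\Q$ is its contrapositive: since $Z\subseteq\Bs|\cI_x^{t+}L|_\Q$, \emph{any} section $\tau\in\Gamma(X,qL)$ with $\ord_Z(\tau)=0$ satisfies $\ord_x(\tau)\le qt$. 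So it suffices to produce, from $s$, a section $s'\in\Gamma(X,qL)$ with $\ord_Z(s')=0$ and $\ord_x(s')\ge p-b$: then $p-b\le qt$, i.e. $\ord_Z(s)=b\ge p-qt$.

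The mechanism for producing $s'$ is the differentiation trick of Ein--K\"uchle--Lazarsfeld, made possible by $x$ being very general. Reusing the construction in the proof of the Definition--Proposition above, over a countable intersection $V$ of dense open subsets the relative base loci $B^{pq}\subseteq X\times X$ and the spaces $\Gamma(\cI_y^p(qL))$ vary nicely; in particular, after choosing a general germ of a $c$-dimensional family of points $x_{\underline\lambda}$ through $x=x_0$ inside $V$, I can deform $Z$ to a component $Z_{\underline\lambda}$ of $\Bs|\cI_{x_{\underline\lambda}}^{t+}L|_\Q$ through $x_{\underline\lambda}$, and $s$ to a family $s_{\underline\lambda}\in\Gamma(\cI_{x_{\underline\lambda}}^p(qL))$ with $s_0=s$. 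On the total space $\mathfrak X$ of the germ, let $\mathbf s$ be the section of $p_X^*(qL)$ with $\mathbf s|_{\underline\lambda}=s_{\underline\lambda}$; since this bundle is pulled back from $X$, the partial derivatives $\partial_{\lambda_j}\mathbf s$ are again sections of $p_X^*(qL)$. By construction $\mathbf s$ vanishes to order $\ge p$ along the graph $\Xi=\{(\underline\lambda,x_{\underline\lambda})\}$ and to order $b$ along $\mathcal W=\bigcup_{\underline\lambda}\{\underline\lambda\}\times Z_{\underline\lambda}$, a subvariety containing $\Xi$ which, for a general germ, has codimension $c$ with the $\underline\lambda$-directions transverse to it at its generic point.

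Now I would apply a suitable differential operator $D$ of order $b$ in the $\partial_{\lambda_j}$: because these directions span the normal bundle of $\mathcal W$ generically, $D$ can be chosen to extract the nonzero leading normal coefficient of $\mathbf s$ along $\mathcal W$, so that $\ord_{\mathcal W}(D\mathbf s)=0$; and being of order $b$, it lowers the order along $\Xi$ by at most $b$, so $\ord_\Xi(D\mathbf s)\ge p-b$. Restricting $D\mathbf s$ to the fibre $\{0\}\times X$, and using that $x_0$ is very general --- hence behaves like a general member of the family, so $Z=Z_0$ is not contained in the (proper) vanishing locus of that leading coefficient --- I obtain a section $s'\in\Gamma(X,qL)$ with $\ord_Z(s')=0$ and $\ord_x(s')\ge p-b$, as required. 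Finally, the inequality $\ord_{Z_{\underline\lambda}}(s_{\underline\lambda})=b\ge p-qt$ just obtained for the general member of the family specializes: restricting $\mathbf s$ to $\{0\}\times X$ can only raise the order along $Z$, so $\ord_Z(s)\ge b\ge p-qt$.

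The main obstacle is the bookkeeping of the third paragraph: one must verify that a general germ $x_{\underline\lambda}$ through a very general $x$ indeed makes $\mathcal W$ have the expected codimension $c$ with the $\underline\lambda$-directions transverse to it, that $Z$ --- which may well be singular --- can be analysed at its generic point so that ``$\ord_{\mathcal W}$ drops by exactly $b$'' is legitimate, and that the leading coefficient does not vanish identically on the special fibre $Z_0$; one must also fix a uniform choice of the component $Z$ of $\Bs|\cI_x^{t+}L|_\Q$ over $V$, and rule out (or handle directly) the degenerate configurations where these general-position statements fail. The case $c=1$, where $\mathcal W$ is a divisor and only one family parameter is needed, already contains the essential idea; reducing the general case to it by general hyperplane slicing is an alternative, but then one must pass to the restricted subalgebra, as in the Remark following the definition of the $\epsilon_i$.
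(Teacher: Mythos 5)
Your plan is essentially the paper's proof. The authors likewise spread $s$ out to a section $\tilde{s}$ of $\cI_\Delta^p(qp_1^*L)$ over $X\times T$, with $T$ a smooth affine neighbourhood of $x$ in the second factor of $X\times X$; they take a component $\tilde{Z}$ of the universal base locus $B=\cap_{p'>qt}B^{p'q}$ with $\Delta\subseteq\tilde{Z}$ and $Z\subseteq \tilde{Z}\cap p_2^{-1}(x)$, set $m=\ord_{\tilde{Z}}(\tilde{s})$, and apply a general order-$m$ operator in the $T$-directions; since $\tilde{Z}\supseteq\Delta$ forces $p_1|_{\tilde{Z}}$ to be dominant, these directions span the normal space of $\tilde{Z}$ generically (no transversality bookkeeping for a specially chosen $c$-dimensional germ is needed), whence $\tilde{Z}\not\subseteq B^{p-m,q}$, so $p-m\le qt$ and $\ord_Z(s)\ge\ord_{\tilde{Z}}(\tilde{s})=m\ge p-qt$. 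The one step of yours I would not lean on is restricting $D\mathbf{s}$ to the \emph{central} fibre $\{0\}\times X$ and asserting $\ord_Z(s')=0$ because $x$ is very general: the vanishing locus of the leading normal coefficient depends on the arbitrary section $s$ (chosen after $x$), so very generality of $x$ cannot be invoked to avoid it. Your closing sentence already contains the correct repair, which is what the paper does: apply the ``contrapositive'' at a general parameter $\underline{\lambda}$, where nonvanishing of $D\mathbf{s}$ on $Z_{\underline{\lambda}}$ and the bound $\ord_{x_{\underline{\lambda}}}\ge p-m$ are automatic, obtaining $p-m\le qt$, and then specialize the order along the spread-out component back to the fibre over $x$.
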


\begin{proof} Let $p_1,p_2\colon X\times X\to X$ be the natural projections, let $\delta\colon X\to X\times X$ be the
diagonal embedding, let $\Delta\subset X\times X$ be the diagonal. 

Let $p,q\ge 1$ be integers with $p>qt$. 
Let $B^{pq}$ be the locus where the composition
$
p_2^*{p_2}_* \cI_\Delta^p(qp_1^*L) \to \cI_\Delta^p(qp_1^*L)\subset \cO(qp_1^*L)
$ 
is not surjective. There exists an open dense subset $X^{pq}\subseteq X$ such that 
$$
{p_2}_*(\cI_\Delta^p(qp_1^*L))\otimes k(x)\isoto \Gamma(X,\cI_x^p(qL)) \ \forall  x \in X^{pq}.
$$
We deduce that $\Bs|\cI_x^p(qL)|\subseteq B^{pq}\cap p_2^{-1}(x)$ for all $x\in X$, and equality holds for $x\in X^{pq}$.

Define $B=\cap_{p> qt}B^{pq}$, a closed subset of $X\times X$. Denote $X^0=\cap_{p> qt}X^{pq}$,
a countable intersection of open dense subsets of $X$. Then 
$\Bs|\cI_x^{t+} L|_\Q \subseteq B\cap p_2^{-1}(x)$ for all $x\in X$, and equality holds for $x\in X^0$.

We have $\Delta\subseteq B$, so at least one irreducible component of $B$ contains $\Delta$. 
Let $B'$ be the union of irreducible components of $B$ which do not contain $\Delta$. 
Then $\delta^{-1}(\Delta\cap B')$ is a proper closed subset of $X$. 
Its complement $U\subseteq X$ is an open dense subset. For every $x\in U$, an irreducible component of $B$ which
contains $\delta(x)$ must also contain $\Delta$. We may further shrink $U$, so that $U$ is smooth.

Let $x\in U\cap X^0$. Let $Z$ be an irreducible component of $\Bs|\cI_x^{t+} L|_\Q$ which contains $x$,
let $0\ne s\in \Gamma(\cI_x^p(qL))$.
There exists an irreducible component $\tilde{Z}$ of $B$, which contains $\delta(x)$, such that $Z\subseteq \tilde{Z} \cap p_2^{-1}(x)$.
Since $\tilde{Z}$ contains $\delta(x)$ and $x\in U$, $\tilde{Z}$ contains $\Delta$.
Since $x\in X^{pq}$, there exists $\tilde{s}\in \Gamma(X\times T,\cI_\Delta^p(q p_1^*L))$ with 
$\tilde{s}|_{p_2^{-1}(x)}=s$, where $T$ is a neighborhood of $x$ in $U$, which we may suppose affine and smooth.
Let $m=\ord_{\tilde{Z}}\tilde{s}$. Since $\tilde{Z}$ contains $\Delta$, $p_1\colon \tilde{Z}\to X$ is dominant. 
Let $D^m$ be a general differential operator of order at most $m$ on $T$. Then 
$D^m\tilde{s}\in \Gamma(X\times T,\cI_\Delta^{p-m}(q p_1^*L))$ does not vanish along $\tilde{Z}\cap X\times T$,
by the argument of~\cite[Proposition 2.3]{EKL} (the argument goes through even if $X$ is singular, since 
$T$ is affine smooth and $X\times T$ is smooth at the general point of $\Delta \cap X\times T$).
Therefore $\tilde{Z}\cap X\times T$ is not contained $B^{p-m,q}$. Therefore $p-m\le qt$.

We conclude $\ord_Z(s)=\ord_Z(\tilde{s}|_{p_2^{-1}(x)})\ge \ord_Z(\tilde{s})\ge \ord_{\tilde{Z}}(\tilde{s})=m\ge p-qt$.
\end{proof}


\subsection{Seshadri constant as $d$-th minimum}

Recall our notation: $X$ is a proper algebraic variety of dimension $d$, $L$ is a Cartier divisor on $X$, and 
$x\in X$ is a closed point.

\begin{prop}[Seshadri criterion]
The divisor $L$ is ample if and only if $\epsilon_d(L,x)>0$ for any $x \in X$.
\end{prop}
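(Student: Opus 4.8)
The plan is to prove the Seshadri criterion by relating $\epsilon_d(L,x)>0$ to the base-point-freeness properties already established in the excerpt.

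\textbf{The forward direction.} First I would show that if $L$ is ample, then $\epsilon_d(L,x)>0$ for every $x\in X$. By Lemma~\ref{er}, it suffices to produce integers $p,q\ge 1$ with $\Bs|\cI_x^p(qL)|=\{x\}$ near $x$, equivalently with $\codim_x\Bs|\cI_x^p(qL)|\ge d$. Since $L$ is ample, for $p=1$ and $q$ sufficiently large the linear system $|qL|$ separates $x$ from any other point and separates tangent directions at $x$; in fact it is enough that $|qL|$ is very ample, so that $\phi_{|qL|}$ is an embedding and $\phi_{|qL|}^{-1}\phi_{|qL|}(x)=\{x\}$, whence $\Bs|\cI_x(qL)|=\{x\}$ near $x$. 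Then $\epsilon_d(L,x)\ge 1/q>0$.

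\textbf{The converse.} Now suppose $\epsilon_d(L,x)>0$ for every $x\in X$; I want to conclude $L$ is ample. By the Nakai--Moishezon criterion it would suffice to check $(L^{\dim V}\cdot V)>0$ for every subvariety $V\subseteq X$, but a cleaner route is available: fix any $x\in X$. Since $\epsilon_d(L,x)>0$, by Lemma~\ref{er} there are $p,q\ge 1$ with $\Bs|\cI_x^p(qL)|=\{x\}$ near $x$, so in particular $\Bs|\cI_x(qL)|=\{x\}$ near $x$; by Lemma~\ref{ibp} (applied with $L$ replaced by $qL$) we get $x\notin\Bs|nqL|$ for $n\gg 0$, so the stable base locus $\SBs(L)$ avoids $x$. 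As $x$ was arbitrary, $|mL|$ is base-point-free for some $m\ge 1$, and $L$ is \emph{semiample}. Hence the Iitaka map $\phi\colon X\to Y$ is a regular contraction with $mL\sim\phi^*A$ for some very ample $A$ on $Y$. If $\dim Y<d$, then a general fiber $F$ of $\phi$ is positive-dimensional, $mL|_F\sim\cO_F$, and for $x\in F$ we get $\phi^{-1}\phi(x)\supseteq F\not\subseteq\{x\}$, contradicting that (via Lemma~\ref{er} and the existence of $p,q$ with $\Bs|\cI_x^p(qL)|=\{x\}$ near $x$, which forces the fiber through $x$ to be zero-dimensional at $x$). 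Therefore $\phi$ is finite, hence $L\sim_\Q\phi^*A/m$ is ample.

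\textbf{Expected main obstacle.} The delicate point is the implication ``$\epsilon_d(L,x)>0$ for all $x$ $\Rightarrow$ semiampleness'': one must be careful that Lemma~\ref{ibp} only gives $x\notin\Bs|nL|$ for $n\gg 0$ \emph{pointwise} in $x$, and upgrading this to a uniform $m$ with $|mL|$ globally base-point-free requires a Noetherian/quasi-compactness argument on the descending chain of stable base loci $\SBs(L)\supseteq\SBs(2L)\supseteq\cdots$, or equivalently observing that $\SBs(L)$ is a proper closed subset not containing any point, hence empty. Once semiampleness is in hand, the reduction to the Iitaka fibration and the fiber-dimension contradiction are routine given the machinery (Lemma~\ref{er}, the semiample case of the Iitaka map recalled in Section~1). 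I would also remark that the hypothesis ``for any $x\in X$'' can be weakened to ``for $x$ very general'' in the converse, since $\SBs(L)$ closed and avoiding a very general point is already empty, but I would keep the statement as given.
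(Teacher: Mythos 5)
Your argument is correct, but the converse direction takes a genuinely different route from the paper's. The paper verifies the Nakai--Moishezon criterion directly: for a subvariety $Y\ni x$ of dimension $r<d$ one has $0<\epsilon_d(L,x)\le\epsilon_r(L|_Y,x)$, so $L|_Y$ is ample by induction on dimension, and the top intersection number $(L^d)>0$ comes from bigness (Lemma~\ref{ek}) together with the nefness just established. You instead first upgrade the pointwise conclusion of Lemma~\ref{ibp} to emptiness of the stable base locus (the Noetherian point you flag is exactly the right one: $\SBs(L)=\bigcap_i\Bs|m_iL|$ for finitely many $m_i$, and then $\Bs|(\prod_i m_i)L|$ is already empty), so $L$ is semiample; you then show the Iitaka contraction $\phi$ has finite fibers, because for semiample $L$ one has $\phi^{-1}\phi(x)\subseteq\Bs|\cI_x^{p}(qL)|$ (every section of $\cI_x^p(qL)$ has an $m$-th power pulled back from $Y$, hence vanishes along the whole fiber through $x$), which must be zero-dimensional at $x$ whenever $\epsilon_d(L,x)>0$. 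A finite contraction is an isomorphism (Lemma~\ref{icr}), so $L\sim_\Q\frac{1}{m}\phi^*A$ is ample. Your route yields more structural information (emptiness of $\SBs(L)$, finiteness of the Iitaka map) at the cost of invoking the semiample machinery; the paper's route is shorter and stays within the successive-minima formalism plus Nakai--Moishezon.

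One genuine error, though it sits outside the proof proper: your closing claim that the hypothesis could be weakened to ``$\epsilon_d(L,x)>0$ for $x$ very general'' is false. A closed subset avoiding every very general point is merely a \emph{proper} closed subset, not necessarily empty. Concretely, if $X$ is the blow-up of $\bP^2$ at a point and $L$ is the pullback of $\cO(1)$, then $\epsilon_2(L,x)>0$ for every $x$ off the exceptional curve (in particular for very general $x$), yet $L$ is not ample. Positivity at \emph{every} point is essential for the converse.
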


\begin{proof}
Suppose $L$ is ample. There exists $n\ge 1$ such that $nL$ is very ample. That is,
$\cI_x(nL)$ is generated by global sections, for every point $x\in X$. Then 
$\Bs|\cI_x(nL)|=\{x\}$, hence $\epsilon_d(L,x)\ge \frac{1}{n}$ for every $x$.

Conversely, suppose $\epsilon_d(L,x)>0$ for any $x \in X$. To show that $L$ is ample,
by Nakai-Moishezon's criterion, suffices to show that for every irreducible subvariety $Y\subseteq X$,
of dimension $r\ge 1$, we have $(L|_Y)^r>0$. Note that $0<\epsilon_d(L,x)\le \epsilon_r(L|_Y,x)$ for every 
$x\in Y$. 
If $r<d=\dim X$, we conclude by induction that $L|_Y$ is ample, hence $(L|_Y)^r>0$.
It remains to deal with the case $Y=X$. Since $\epsilon_d(L,x)$ is positive at a very general point,
$L$ is big by Lemma~\ref{ek}. Since $L$ is also nef by induction, we deduce $(L^d)>0$.
\end{proof}

\begin{prop}\label{nef}
Suppose $X$ is projective and $L$ is nef.
Then 
$$
\epsilon_d(L,x) = \inf_{C\ni x} \frac{(L\cdot C)}{\mult_x(C)}  
= \sup\{0\} \cup \{\frac{p}{q}; p,q\ge 1,  \Gamma(qL)\to \cO_x/\cI_x^{p+1} \text{ surjective} \}.
$$
\end{prop}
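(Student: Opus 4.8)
We write $\alpha=\epsilon_d(L,x)$, $\beta=\inf_{C\ni x}(L\cdot C)/\mult_x(C)$ and $\gamma=\sup\bigl(\{0\}\cup\{p/q:p,q\ge 1,\ \Gamma(qL)\to\cO_x/\cI_x^{p+1}\text{ surjective}\}\bigr)$. The plan is to prove $\gamma\le\alpha$, then $\alpha\le\beta$, and finally the main inequality $\beta\le\gamma$, which closes the circle. For $\gamma\le\alpha$: if $\Gamma(qL)\to\cO_x/\cI_x^{p+1}$ is surjective then $qL$ generates $p$-jets at $x$, so by the jet-generation criterion of Section~1 the sheaf $\cI_x^p(qL)$ is globally generated at $x$, hence on a neighbourhood $U$ of $x$; as $p\ge 1$ every section vanishes at $x$, so $\Bs|\cI_x^p(qL)|=\{x\}$ near $x$, i.e. $\codim_x\Bs|\cI_x^p(qL)|=d$, and Lemma~\ref{er} gives $\epsilon_d(L,x)\ge p/q$. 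Taking the supremum yields $\gamma\le\alpha$.

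For $\alpha\le\beta$ it suffices to show $p/q\le(L\cdot C)/\mult_x(C)$ whenever $\Bs|\cI_x^p(qL)|=\{x\}$ near $x$ and $C\ni x$ is an irreducible curve (if no such pair $(p,q)$ exists then $\alpha=0\le\beta$, as $L$ is nef). Since $C$ is one-dimensional it is not contained in $\Bs|\cI_x^p(qL)|$, so some $s\in\Gamma(\cI_x^p(qL))$ has $s|_C\ne 0$; pulling $s|_C$ back along the normalisation $\nu\colon\bar C\to C$ gives a nonzero section of a line bundle of degree $q(L\cdot C)$ on $\bar C$ that vanishes at each $\bar x\in\nu^{-1}(x)$ to order at least $p\cdot\ord_{\bar x}(\nu^{*}\cI_x)$ (because $\ord_x(s)\ge p$), and summing over $\nu^{-1}(x)$, using $\mult_x(C)=\sum_{\bar x\mapsto x}\ord_{\bar x}(\nu^{*}\cI_x)$, gives $q(L\cdot C)\ge p\,\mult_x(C)$. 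Taking suprema and infima yields $\alpha\le\beta$. From now on I may assume $\beta>0$, since otherwise $\beta\le\gamma$ holds trivially.

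For $\beta\le\gamma$: let $f\colon Y\to X$ be the blow-up of $\cI_x$ with exceptional divisor $E$ ($\cI_x\cO_Y=\cO_Y(-E)$). Every curve in $Y$ is the strict transform $\widetilde C$ of a curve $C\subseteq X$ or lies in $E$; using $\widetilde C\cdot E=\mult_x(C)$ for $C\ni x$, the relative ampleness of $\cO_Y(-E)|_E$, and nefness of $L$, one checks that the $\Q$-divisor $f^{*}L-tE$ is nef for every $0<t<\beta$. Since a nef divisor on a projective variety has non-negative top self-intersection and $(f^{*}L-tE)^d=\vol(L)-t^d\mult_x(X)$, we get $\beta\le\sqrt[d]{\vol(L)/\mult_x(X)}$; in particular $\beta>0$ forces $L$ big, and then $f^{*}L-tE$ is big and nef for $0<t<\beta$. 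Now fix rational $0<t<\beta$; for $q$ sufficiently divisible the divisor $qf^{*}L-pE$ with $p=\lfloor qt\rfloor$ is big and nef, so on a resolution of singularities of $Y$ a Kawamata--Viehweg-type vanishing theorem and the usual jet-lifting argument show that for $q\gg 0$ the space $\Gamma(Y,\cO_Y(qf^{*}L-pE))$ surjects onto the jets of order $p-1$ along $E$. By Lemma~\ref{bl} this space equals $\Gamma(X,\cI_x^p(qL))$ in large degrees, and the jet-generation criterion of Section~1 turns the surjectivity into surjectivity of $\Gamma(q'L)\to\cO_x/\cI_x^{p'+1}$ with $p'/q'$ arbitrarily close to $t$; hence $\gamma\ge t$, and letting $t\uparrow\beta$ gives $\beta\le\gamma$. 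Combining the three inequalities, $\alpha=\beta=\gamma$.

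I expect the last step to be the main obstacle: making precise the passage between jets on the blow-up $Y$ of the (possibly singular) point $x$ and jets of $L$ at $x\in X$ — in particular handling the singularities of $Y$ by a further resolution and tracking orders of vanishing through the identification of Lemma~\ref{bl} — is where the real work lies, the remainder being formal consequences of nefness together with the Seshadri-type criterion for $f^{*}L-tE$. A cleaner route for this step may be to invoke directly the equivalent description of Seshadri constants of Eckl, or to first dispose of the case $L$ ample (where $f^{*}L-tE$ is genuinely ample for $t<\beta$ and Serre vanishing suffices) and then reduce the nef case to it by perturbing $L$ by a small ample divisor and using monotonicity of $\epsilon_d(\cdot,x)$.
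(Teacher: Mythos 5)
Your first two inequalities are sound and match the paper's Lemma~\ref{ineq}: $\gamma\le\alpha$ via jet generation implying $\Bs|\cI_x^p(qL)|=\{x\}$ near $x$, and $\alpha\le\beta$ via a section of $\cI_x^p(qL)$ not vanishing on $C$ (your computation on the normalization of $C$ is a fine variant of the paper's intersection-theoretic one). The problem is the third step, $\beta\le\gamma$, and it is exactly the step you flag as the ``main obstacle'': as written it does not go through. The blow-up $Y$ of $\cI_x$ is in general singular (indeed $X$ itself is only assumed projective, not normal), so Kawamata--Viehweg does not apply on $Y$; and passing to a resolution $Y'\to Y$ only yields vanishing for divisors of the adjoint form $K_{Y'}+(\text{big and nef})$, which is not the vanishing $H^1(Y,\cO_Y(qf^*L-(p+1)E))=0$ that the jet-lifting argument (Lemma~\ref{lj}) requires; there is no way to absorb the discrepancy between $K_{Y'}$ and the divisor you actually need. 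Moreover, for a merely big and nef divisor on a singular projective scheme there is no Serre/Fujita-type vanishing, so ``big and nef for $t<\beta$'' is not enough. Your proposed fallback also has a gap: perturbing $L$ to the ample $L+\frac{1}{n}A$ and using monotonicity controls $\epsilon_d$ and $\epsilon^s$, but the quantity you must bound from below is $\gamma=\epsilon^j(L,x)$, defined by surjectivity of $\Gamma(qL)\to\cO_x/\cI_x^{p+1}$; jet surjectivity for multiples of $L+\frac1nA$ does not descend to multiples of $L$, because the sections contributed by $A$ do not disappear in the limit.

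The paper closes this gap with the augmented base locus $\bB_+(L)$. If $x\in\bB_+(L)$, there is a subvariety $V\ni x$ with $L|_V$ not big, nefness forces $(L^{\dim V}\cdot V)=0$, and the Seshadri-type computation on the blow-up gives $\epsilon^s(L,x)=0$, so all three invariants vanish. If $x\notin\bB_+(L)$, one writes $lL\sim A+D$ with $A$ ample, $D$ effective and $x\notin\Supp D$; then $f^*(qL-aD)-pE=(af^*A-E)+((q-la)f^*L-(p-1)E)$ is genuinely \emph{ample} on $Y$ for $p-1\le(q-la)\epsilon^s(L,x)$, so plain Serre vanishing (valid on any projective scheme) gives the required $H^1$ vanishing and hence jet surjectivity for $qL-aD$; since $D$ is effective and avoids $x$, this implies jet surjectivity for $qL$ itself. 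This decomposition is the missing idea in your argument: it simultaneously supplies an honest ample divisor on the singular blow-up (replacing the inapplicable Kawamata--Viehweg) and the mechanism ($D$ effective away from $x$) for transferring jet generation from the ample perturbation back to $L$.
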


Denote by $\epsilon^s(L,x)$ the infimum after all integral curves passing through $x$, and by
$\epsilon^j (L,x)$ the supremum in the claim. 
First, we show the following lemma:

\begin{lem}\label{ineq}
For a Cartier divisor $L$ on a proper $X$,
\begin{itemize}
\item[a)] $\epsilon_d(L,x)\ge \epsilon^j(L,x)$,
\item[b)] if $L$ is nef, $\epsilon^s(L,x)\ge \epsilon_d(L,x)\ge \epsilon^j(L,x)$.
\end{itemize}
\end{lem}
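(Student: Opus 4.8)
The plan is to prove both inequalities by translating between base loci and the classical Seshadri/jet pictures.

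For part (a), the inequality $\epsilon_d(L,x)\ge \epsilon^j(L,x)$: I would take $p,q\ge 1$ with $\Gamma(qL)\to \cO_x/\cI_x^{p+1}$ surjective, and show $\epsilon_d(L,x)\ge \frac{p}{q}$. Surjectivity means $\cI_x^p(qL)$ is globally generated at $x$ (indeed $p$-jet generation is equivalent to $\cI_x^n(qL)$ globally generated at $x$ for all $0\le n\le p$, as recalled in the subsection on generation of jets), so $x\notin \Bs|\cI_x^p(qL)|$, and after shrinking a neighborhood we may arrange $\Bs|\cI_x^p(qL)|$ misses $x$ entirely on that neighborhood, hence $\Bs|\cI_x^p(qL)|$ has codimension $d$ at $x$ (it does not contain $x$, so $\dim_x=0$, wait — need $\codim_x \ge d$, i.e.\ the locus near $x$ is empty). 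Actually, if $x\notin \Bs|\cI_x^p(qL)|$ then $\codim_x\Bs|\cI_x^p(qL)|=d$ by convention (codimension of a set not containing $x$), and by Lemma~\ref{er} this gives $\epsilon_d(L,x)\ge \frac{p}{q}$. Taking the supremum over all such $p,q$ (and noting $\{0\}$ is always a valid lower bound) yields (a). This step is routine.

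For part (b), I would establish the two inequalities $\epsilon^s(L,x)\ge \epsilon_d(L,x)$ and $\epsilon_d(L,x)\ge \epsilon^j(L,x)$ under the hypothesis that $L$ is nef. The second is already part (a), which holds without nefness. For the first, $\epsilon^s(L,x)\ge \epsilon_d(L,x)$: I would take any integral curve $C\ni x$ and show $\frac{(L\cdot C)}{\mult_x(C)}\ge \epsilon_d(L,x)$. Given $p,q\ge 1$ with $\codim_x\Bs|\cI_x^p(qL)|=d$, i.e.\ $x\notin\Bs|\cI_x^p(qL)|$ near $x$, I would argue that $C\not\subseteq \Bs|\cI_x^p(qL)|$ (since $x\in C$ and the base locus misses $x$ near $x$), so there is a section $s\in\Gamma(\cI_x^p(qL))$ not vanishing identically on $C$. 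Restricting to $C$ and using $\ord_x(s)\ge p$ together with $\mult_x(C)$, one gets $q(L\cdot C)=\deg(s|_C)\ge \ord_x(s|_C)\ge \ord_x(s)\cdot\mult_x(C)\ge p\cdot\mult_x(C)$ — here I'd invoke the intersection-multiplicity inequality from \cite[page 233]{Fulton} as in the proof of Lemma~\ref{bm}. Hence $\frac{(L\cdot C)}{\mult_x(C)}\ge \frac{p}{q}$; taking the supremum over $p,q$ gives $\epsilon^s(L,x)\ge \epsilon_d(L,x)$. Here nefness (or at least $L\cdot C\ge 0$ for the inequality chain to be meaningful, and that $(L\cdot C)$ behaves well) enters to ensure the degree count is valid even when $s|_C$ could have components, but the main use of nefness is really in Proposition~\ref{nef} itself to close the loop $\epsilon^s=\epsilon_d=\epsilon^j$; for this lemma nefness guarantees $(L\cdot C)\ge 0$ so the infimum $\epsilon^s$ is a sensible non-negative quantity.

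The main obstacle I anticipate is the case $\epsilon_d(L,x)=0$, where Lemma~\ref{er} gives no $p,q$ to work with. In that case (a) is trivial once one checks $\epsilon^j(L,x)=0$ too — but that requires knowing that if $\Gamma(qL)\to\cO_x/\cI_x^{p+1}$ is surjective for some $p,q\ge 1$ then $\epsilon_d(L,x)>0$, which follows from the argument in (a) run in reverse. So I would first dispose of the degenerate case: if there is no pair $(p,q)$ making the jet map surjective, then $\epsilon^j(L,x)=\sup\{0\}=0\le\epsilon_d(L,x)$ trivially, and the curve inequality in (b) is automatic since $\epsilon_d(L,x)=0\le \epsilon^s(L,x)$ (the latter being $\ge 0$ by nefness). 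Then in the main case both suprema are over nonempty sets and the arguments above apply verbatim. A secondary subtlety is matching the precise convention for $\codim_x$ of a set not containing $x$ (the paper sets it to $d$, since $\dim_x\emptyset=-\infty$... actually the paper says $\codim_x(Y)=d-\dim_x(Y)$, so $Y$ not meeting a neighborhood of $x$ gives $\dim_x Y=-\infty$ or one restricts to $Y\cap U=\emptyset$); I would just phrase everything in terms of ``$\Bs|\cI_x^p(qL)|=\emptyset$ near $x$'' to sidestep this, which is exactly the formulation used in Lemma~\ref{er} for $\epsilon_d$.
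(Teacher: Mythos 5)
Your argument is in substance the paper's own proof: part a) via the equivalence between $p$-jet generation and global generation of $\cI_x^n(qL)$ at $x$ for $0\le n\le p$, then Lemma~\ref{er}; part b) via producing, for each curve $C\ni x$, an effective $D\in|qL|$ with $\ord_x(D)\ge p$ and $C\not\subseteq\Supp D$, and applying the inequality $(D\cdot C)\ge \ord_x(D)\cdot\mult_x(C)$ from \cite[page 233]{Fulton}; and nefness is used only to dispose of the degenerate case $\epsilon_d(L,x)=0$, exactly as in the paper. One recurring misstatement should be fixed, though it does not break the proof: for $p\ge 1$ one always has $x\in\Bs|\cI_x^p(qL)|$, since every section of $\cI_x^p(qL)$ vanishes at $x$. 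The correct consequence of global generation of $\cI_x^p(qL)$ at the stalk of $x$ is that the sections generate the sheaf on a whole neighborhood $U$ of $x$, so $\Bs|\cI_x^p(qL)|\cap U=V(\cI_x^p)\cap U=\{x\}$ --- the base locus equals $\{x\}$ near $x$, not the empty set. This still gives $\dim_x\Bs|\cI_x^p(qL)|=0$, hence $\codim_x\ge d$, which is all that Lemma~\ref{er} requires; and in b) the reason $C\not\subseteq\Bs|\cI_x^p(qL)|$ is that $C$ is one-dimensional through $x$ while the base locus is zero-dimensional there, not that the base locus misses $x$. Relatedly, your displayed chain in b) is garbled: the intermediate inequality $\ord_x(s|_C)\ge\ord_x(s)\cdot\mult_x(C)$ is false in general (e.g.\ a line through the node of a nodal cubic); the correct chain, which your Fulton citation does give, is $q(L\cdot C)=(D\cdot C)\ge\ord_x(D)\cdot\mult_x(C)\ge p\cdot\mult_x(C)$.
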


\begin{proof}
a) Let $p,q\ge 1$ such that 
$\Gamma(qL)\to \cO_x/\cI_x^{p+1}$ is surjective. Equivalently, $\cI_x^i(qL)$ is globally generated 
at $x$ for every $0\le i\le p$. Then $\Bs|\cI_x^p(qL)|=\{x\}$ near $x$. Therefore $\frac{p}{q}\le \epsilon_d(L,x)$.
Taking the supremum after all $\frac{p}{q}$, we obtain $\epsilon_d(L,x)\ge \epsilon^j(L,x)$.

b) Suppose $L$ is nef.
If $ \epsilon_d(L,x) =0$, $\epsilon^s(L,x)\ge \epsilon_d(L,x)$ is clear.
Assume $ \epsilon_d(L,x) > 0$ and let $0 <t<\epsilon_d(L,x)$. Then $\Bs|\cI_x^{t+}L|_\Q=\{x\}$
near $x$. Let $C\subseteq X$ be an integral curve passing through $x$. Since $C$ is not contained
in $\Bs|\cI_x^{t+}L|_\Q$, there exists $n\ge 1$ and $D\in |nL|$ such that $\ord_x(D)> nt$ and $C$ is not
contained in the support of $D$. Then 
$$
n(L\cdot C)=(D\cdot C)\ge \ord_x(D)\cdot \mult_x(C)> nt \mult_x(C).
$$
Therefore $t \le \epsilon^s(L,x)$. Taking the supremum after all such $t$, we obtain $\epsilon^s(L,x)\ge \epsilon_d(L,x)$.
\end{proof}

To prove Proposition~\ref{nef}, it is enough to show $\epsilon^s(L,x) \le \epsilon^j(L,x)$ by Lemma~\ref{ineq}.
In fact, $\epsilon^s(L,x) = \epsilon^j(L,x)$ is well known at least for ample $L$ and smooth $x \in X$ (see \cite[Theorem 5.1.17]{Lazbook}).

\begin{proof}[Proof of Proposition~\ref{nef}]
Let $f\colon Y\to X$ be the blow-up of $X$ at $x$, with
exceptional divisor $E$. Note that $f^*L-\epsilon^s(L,x)E$ is nef, and $\epsilon^s(L,x) \ge 0$ is maximal
with this property.

Let $A$ be an ample divisor on $X$.
The augmented base locus $\bB_{+}(L)$ of $L$ is defined by
$$
\bB_{+}(L) =\bigcap_{l \in \N} \Bs|lL- A|_{\Q},
$$
which does not depend on the choice of $A$.
By \cite{Nak00}, \cite{Bi},
$$
\bB_{+}(L) = \bigcup_{V} V
$$
holds,
where the union runs over the subvarieties $V \subseteq X$ such that $L|_V$ is not big.

First,
assume $x \in \bB_{+}(L)$.
Then there exists a subvariety $V \subseteq X$ containing $x$ such that $L|_V$ is not big.
Since $L$ is nef, we have $(L^{\dim V}.V)=0$.
Hence $\epsilon^s(L,x) =0$ follows from 
$$
0 \le  ((f^*L-\epsilon^s(L,x)E)^{\dim V'}. V') =(L^{\dim V}.V) - {\epsilon^s(L,x)}^{\dim V} \cdot \mult_x(V) = - {\epsilon^s(L,x)}^{\dim V} \cdot \mult_x(V) \le 0,
$$
where $V' \subseteq Y$ is the strict transform of $V$.
By Lemma~\ref{ineq} b), all three invariants equal to $0$.

So we may assume $x \not \in \bB_{+}(L)$.
If we replace $A$ with its suitable multiple,
we can take $l \geq 1$ and an effective Cartier divisor $D$ such that $lL \sim A+D$ and $x \not \in \Supp D$.
Since $A$ is ample and $-E$ is $f$-ample, there exists an integer $a\ge 1$
such that $af^*A-E$ is ample. 

By Lemma~\ref{ineq} b), it suffices to show $\epsilon^s(L,x)\le \epsilon^j(L,x)$, which follows from the following claim: 
if $p,q\ge 1$ are integers such that $p-1\le (q-la)\epsilon^s(L,x)$, then
$\Gamma(nqL)\to \cO_x/\cI_x^{np+1}$ is surjective for $n\gg 0$.
Indeed, the divisor
$$
f^*(qL-aD)-pE=(af^*A-E)+((q-la)f^*L-(p-1)E)
$$
is ample. By Serre vanishing, $H^1(Y,\cI_E(f^*(nqL-naD)-npE))=0$ for $n\gg 0$. 
We obtain an exact sequence
$$
\Gamma(Y,f^*(nqL-naD))\to \Gamma((np+1)E,\cO_{(np+1)E})\to H^1(Y,nqf^*L-naD-(np+1)E)=0.
$$
Since $n\gg 0$, the hypothesis of Lemma~\ref{lj} are satisfied. Therefore
$\Gamma(nqL-naD)\to \cO_x/\cI_x^{np+1}$ is surjective for $n\gg 0$. 
Since $D$ is effective and away from $x$, we deduce that $\Gamma(nqL)\to \cO_x/\cI_x^{np+1}$ is surjective.
Therefore $\frac{p}{q}\le \epsilon^j(L,x)$. Since $\frac{p}{q}$ can be arbitrary close to $ \epsilon^s(L,x)$, we obtain $\epsilon^s(L,x)\le \epsilon^j(L,x)$.
\end{proof}

The equalities in Proposition~\ref{nef} hold on proper $X$ if $L$ is semiample:

\begin{lem}\label{sa}
Suppose $X$ is proper and $L$ is semiample. Then 
$$
\epsilon_d(L,x) = \inf_{C\ni x} \frac{(L\cdot C)}{\mult_x (C)}  
= \sup \{0\} \cup \{\frac{p}{q}; p,q\ge 1,  \Gamma(qL)\to \cO_x/\cI_x^{p+1} \text{ surjective} \} .
$$
\end{lem}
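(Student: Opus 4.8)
The plan is to reduce the two equalities to the projective case settled in Proposition~\ref{nef}, by passing through the Iitaka morphism of $L$. Write $\epsilon^s=\epsilon^s(L,x)$ and $\epsilon^j=\epsilon^j(L,x)$ for the infimum and the supremum appearing in the statement. Since $L$ is semiample it is nef: if $rL\sim\phi^*A$ with $A$ ample, then $r(L\cdot C)=(A\cdot\phi_*C)\ge 0$ for every integral curve $C$. Hence Lemma~\ref{ineq}(b) already gives $\epsilon^s\ge\epsilon_d(L,x)\ge\epsilon^j\ge 0$, and it suffices to prove $\epsilon^s\le\epsilon^j$; the three invariants then coincide.

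If $\epsilon^s=0$ there is nothing to prove, so assume $\epsilon^s>0$ and let $\phi\colon X\to Y$ be the Iitaka morphism of $L$, a proper contraction onto a projective variety $Y$ with $rL\sim\phi^*A$ for some $r\ge 1$ and $A$ very ample on $Y$. The first key point is that $\epsilon^s>0$ forces $\phi$ to be an isomorphism near $x$: if $\phi^{-1}\phi(x)$ had a component of positive dimension through $x$, that component would contain a complete curve $C\ni x$, which is then contracted by $\phi$, so $r(L\cdot C)=(A\cdot\phi_*C)=0$ and $\epsilon^s=0$, a contradiction. Thus $\dim_x\phi^{-1}\phi(x)=0$, and Lemma~\ref{icr} provides an open neighborhood $V\ni y:=\phi(x)$ such that $\phi$ restricts to an isomorphism $\phi^{-1}(V)\isoto V$; in particular $\dim Y=d$.

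The second step is to compare the invariants across this isomorphism. Because $\phi$ is a contraction, $\phi_*\cO_X=\cO_Y$, so the projection formula identifies $\Gamma(X,q\phi^*A)=\Gamma(Y,qA)$, and this group equals $\Gamma(X,qrL)$ since $\phi^*A\sim rL$; meanwhile the isomorphism $\phi^{-1}(V)\isoto V$ identifies $\cO_x/\cI_x^{p+1}$ on $X$ with $\cO_y/\cI_y^{p+1}$ on $Y$ compatibly with restriction of sections. Hence whenever $\Gamma(Y,qA)\to\cO_y/\cI_y^{p+1}$ is surjective, so is $\Gamma(X,(qr)L)\to\cO_x/\cI_x^{p+1}$, which gives $\epsilon^j(L,x)\ge\tfrac1r\,\epsilon^j(A,y)$. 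Likewise $\phi$ induces a bijection between integral curves through $x$ and integral curves through $y$: a curve $C\ni x$ maps birationally onto the curve $\phi(C)\ni y$ since $\phi$ is an isomorphism near $x$, so $\phi_*C=\phi(C)$ and $\mult_x(C)=\mult_y(\phi(C))$, and conversely a curve through $y$ is recovered as the closure of its preimage in $\phi^{-1}(V)$; the projection formula for intersection numbers then gives $\epsilon^s(L,x)=\tfrac1r\,\epsilon^s(A,y)$. Finally, $Y$ is projective and $A$ is ample, hence nef, so Proposition~\ref{nef} yields $\epsilon^s(A,y)=\epsilon^j(A,y)$, and therefore $\epsilon^s(L,x)=\tfrac1r\epsilon^s(A,y)=\tfrac1r\epsilon^j(A,y)\le\epsilon^j(L,x)$, as required.

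I expect the main obstacle to be this second, bookkeeping step: verifying that $\epsilon^s$ and $\epsilon^j$ each scale by the factor $1/r$ under pullback along $\phi$, using only that $\phi$ is an isomorphism over a neighborhood of $y$ rather than globally — in particular that the curve correspondence respects multiplicities and intersection numbers, and that the identification of section spaces and of jet quotients is compatible. The conceptual heart of the argument, that positivity of the Seshadri-type invariant $\epsilon^s$ at $x$ makes the semiample morphism locally trivial there and thus reduces everything to the ample case on the projective variety $Y$, is where the real content lies; once that is in place the remaining estimates are of the same elementary type as in Lemma~\ref{ineq} and the scaling property $\epsilon_i(qL,x)=q\epsilon_i(L,x)$.
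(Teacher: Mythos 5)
Your argument is correct and follows essentially the same route as the paper: reduce to the ample case on the target of the semiample (Iitaka) contraction, observe that positivity forces $x$ to lie outside the exceptional locus so that Lemma~\ref{icr} makes $\phi$ an isomorphism near $\phi(x)$, transport $\epsilon^s$ and $\epsilon^j$ across that local isomorphism, and conclude by Proposition~\ref{nef}. The only cosmetic difference is how the degenerate case is dispatched (you split on $\epsilon^s=0$ rather than on $x\in\Exc(\phi)$), which is logically equivalent.
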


\begin{proof}
There exists a proper contraction $f\colon X\to Y$, with $Y$ projective, such that 
$nL\sim f^*A$ for some  integer $n\ge 1$ and some ample divisor $A$ on $Y$. Let $\Exc(f)$
be the exceptional locus of $f$, that is the locus of points $x\in X$ such that $\dim_xf^{-1}f(x)>0$.

If $x\in \Exc(f)$, we see that $\epsilon^s(L,x)=\epsilon_d(L,x)=\epsilon^j(L,x)=0$. Suppose
$x\notin \Exc(f)$. Then $\dim_xf^{-1}f(x)=0$. By Lemma~\ref{icr}, $f$ is an isomorphism over a 
neighborhood of $f(x)$.
It follows that $\epsilon_d(nL,x)=\epsilon_d(A,f(x))$ and $\epsilon^*(nL,x)=\epsilon^*(A,f(x))$ for $*=s,j$. 
We are reduced to the ample case, so we conclude from Proposition~\ref{nef}.
\end{proof}

\begin{rem}
By Lemma \ref{sa}, the Seshadri constant of a semiample divisor $L$ at a point $x\in X$ defined by 
Demailly~\cite[Theorem 6.4]{De92} coincides with $\epsilon_d(L,x)$. 
Combining with Lemma~\ref{moap}, we deduce that if $X$ is normal, projective, $L$ is big, 
and $x$ is a point away from the augmented base locus of $L$,
then $\epsilon_d(L,x)$ coincides with the moving Seshadri constant $\epsilon_m(x,L)$ introduced by Nakamaye~\cite[Definition 0.4]{Nak03} (see also \cite[Section 6]{ELMNP}).
\end{rem}

The interpretation of the Seshadri
constant in terms of jets generation can be sharpened as follows: if $\epsilon_d(L,x)=\epsilon>0$, there 
exist constants $c_1(x)\ge 0$ and $c_2(x)\ge 0$ such that $\Gamma(qL)\to \cO_x/\cI_x^{p+1}$ is surjective for all integers
$p,q\ge 1$ such that $p\ge c_1(x)$ and $q\epsilon-p\ge c_2(x)$. This statement reduces again to the
ample case, which is the following Lemma.

\begin{lem}\label{lem_eff}
Suppose $L$ is ample. Let
$
\epsilon=\inf_{C\ni x}\frac{(L\cdot C)}{\mult_x(C)}
$
the Seshadri constant of $L$ at $x$. Then there exist constants $c_1,c_2\ge 0$ such that for every 
integers $p,q\ge 1$ with $p\ge c_1$ and $q\epsilon-p\ge c_2$, the jet map $\Gamma(qL)\to \cO_X/\cI_x^{p+1}$ 
is surjective.
\end{lem}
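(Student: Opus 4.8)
The plan is to pull the question back to the blow-up $f\colon Y\to X$ of $X$ at $x$, with exceptional divisor $E$ (so that $\cI_x\cdot\cO_Y=\cO_Y(-E)$), and to deduce everything from a single cohomology vanishing $H^1(Y,\cO_Y(f^*(qL)-(p+1)E))=0$ holding uniformly on the region $\{p\ge c_1,\ q\epsilon-p\ge c_2\}$. Note first that $L$ ample forces $X$ projective, hence $Y$ projective. I would record two standard facts. First, $f^*L-\epsilon E$ is nef: on the strict transform $\tilde C$ of a curve $C\ni x$ one has $(f^*L-\epsilon E)\cdot\tilde C=(L\cdot C)-\epsilon\,\mult_x(C)\ge 0$ by the very definition of $\epsilon$, while on a curve $C'\subseteq E$ one has $(f^*L-\epsilon E)\cdot C'=-\epsilon\,(E\cdot C')>0$ because $-E|_E$ is ample on $E$ and $\epsilon>0$. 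Second, since $-E$ is $f$-ample and $L$ is ample, there is a fixed integer $a\ge 1$ such that $H_0:=f^*(aL)-E$ is ample on $Y$.

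The uniformity in $(p,q)$ is supplied by Fujita's vanishing theorem in place of Serre vanishing: applied to $(Y,H_0)$ it yields an integer $m_0$ such that $H^i(Y,\cO_Y(m_0H_0+P))=0$ for all $i>0$ and all nef Cartier divisors $P$ on $Y$. The key point is the identity
\[
f^*(qL)-(p+1)E=m_0H_0+\bigl(f^*((q-m_0a)L)-(p+1-m_0)E\bigr),
\]
whose second summand is numerically equal to $\tfrac{p+1-m_0}{\epsilon}\,(f^*L-\epsilon E)+\bigl(q-m_0a-\tfrac{p+1-m_0}{\epsilon}\bigr)f^*L$. Both coefficients are non-negative as soon as $p\ge m_0-1$ and $q\epsilon-p\ge m_0a\epsilon+1-m_0$, so the second summand is then a non-negative combination of the nef classes $f^*L-\epsilon E$ and $f^*L$, hence nef. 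Fujita's theorem therefore gives $H^1(Y,\cO_Y(f^*(qL)-(p+1)E))=0$ for all integers $p,q$ with $p\ge m_0$ and $q\epsilon-p\ge m_0a\epsilon+1$.

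To turn this into jet generation on $X$, consider the exact sequence $0\to\cI_x^{p+1}(qL)\to\cO_X(qL)\to\cO_X/\cI_x^{p+1}\to 0$, which reduces the claim to $H^1(X,\cI_x^{p+1}(qL))=0$. By Lemma~\ref{bl}, for $p+1\ge c(\cI_x)$ we have $\cI_x^{p+1}=f_*\cO_Y(-(p+1)E)$, so by the projection formula $\cI_x^{p+1}(qL)=f_*\cO_Y(f^*(qL)-(p+1)E)$ and likewise $R^1f_*\cO_Y(f^*(qL)-(p+1)E)=R^1f_*\cO_Y(-(p+1)E)\otimes\cO_X(qL)$; the latter vanishes once $p$ is large, independently of $q$, by relative Serre vanishing (valid because $-E$ is $f$-ample). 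The Leray spectral sequence then identifies $H^1(X,\cI_x^{p+1}(qL))$ with $H^1(Y,\cO_Y(f^*(qL)-(p+1)E))$, which we have just shown vanishes. Enlarging $c_1$ to absorb the bounds coming from Lemma~\ref{bl} and relative vanishing, and taking $c_2:=\max(0,m_0a\epsilon+1)$, completes the argument.

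The substantive step is the decomposition $f^*(qL)-(p+1)E=m_0H_0+(\text{nef})$ together with the observation that the remainder is nef precisely on the stated cone; replacing the non-uniform Serre vanishing used in the proof of Proposition~\ref{nef} by Fujita's vanishing is exactly what makes $c_1,c_2$ independent of $(p,q)$. The only genuine nuisance is that $x$ need not be a smooth (or even normal) point, which is why the passage from $Y$ back to $X$ is routed through Lemma~\ref{bl} and relative vanishing rather than simply through $f_*\cO_Y=\cO_X$; this merely forces $c_1$ to be chosen larger.
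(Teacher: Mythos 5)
Your proof is correct and follows essentially the same route as the paper's: blow up at $x$, write $qf^*L-(p+1)E$ as $m_0\cdot(\text{ample }af^*L-E)$ plus a divisor that is nef exactly on the cone $\{p\ge c_1,\ q\epsilon-p\ge c_2\}$, invoke Fujita's vanishing theorem for the uniformity in $(p,q)$, and descend to $X$ via Lemma~\ref{bl} and the Leray sequence (the paper packages this last descent as Lemma~\ref{lj}, but it is the same computation). The only cosmetic difference is that you verify nefness of $f^*L-\epsilon E$ by intersecting with curves, where the paper cites the Nakai-type characterization; both rest on the same standard fact already used in Proposition~\ref{nef}.
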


\begin{proof}
Let $f \colon Y\to X$ be the blow-up of $X$ at $x$, with exceptional divisor $E$. 
Since $L$ is ample and $-E$ is $f$-ample,
there exists an integer $a\ge 1$ such that $af^*L-E$ is ample. By Nakai's ampleness criterion, $qf^*L-pE$ is 
ample if and only if $q\epsilon>p>0$. We have 
$$
qf^*L-pE=m(af^*L-E)+(q-ma)f^*L-(p-m)E.
$$
For $(q-ma)\epsilon\ge p-m\ge 0$, the divisor $(q-ma)f^*L-(p-m)E$ is nef. By Fujita's extension of Serre vanishing
(see~\cite[Theorem 1.4.35]{Lazbook}), there exists an integer $m=m(\cO_Y,af^*L-E)$ such that $H^1(qf^*L-pE)=0$
for $(q-ma)\epsilon\ge p-m\ge 0$.

We have $\cI_x^{p+1}=f_*(\cI_E^{p+1})$ and $R^1f_*(\cI_E^{p+1})=0$ for $p\ge c'(x)$. Set $c_1=\max(c'(x),m-1)$
and $c_2=ma\epsilon-m+1$.

Let $p,q\ge 1$ be integers with $p\ge c_1$ and $q\epsilon-p\ge c_2$. Then $p\ge c'(x)$ and $(q-ma)\epsilon\ge p+1-m\ge 0$. 
The latter inequality implies that we have a surjection 
$$
\Gamma(qf^*L)\to \Gamma(\cO_{(p+1)E})\to 0.
$$
From the former inequality and Lemma~\ref{lj}, we deduce that $\Gamma(qL)\to \cO_X/\cI_x^{p+1}$ is surjective.
\end{proof}


\subsection{Successive minima in terms of jets generation}

Once again, $X$ is a proper algebraic variety, $L$ is a Cartier divisor on $X$, and $x\in X$ is a closed point.

The following proposition is proved in \cite[Proposition 6.6]{ELMNP} for the moving Seshadri constant on smooth $X$:

\begin{prop}\label{23}
We have $\epsilon_d(L,x)>0$ if and only if there exist $p,q\ge 1$ such that $\Gamma(qL)\to \cO_x/\cI_x^{p+1}$
is surjective. Moreover, in this case we have 
$$
\epsilon_d(L,x) = \sup\{\frac{p}{q}; p,q\ge 1,  \Gamma(qL)\to \cO_x/\cI_x^{p+1} \text{ surjective} \} . 
$$
\end{prop}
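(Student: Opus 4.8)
The plan is to prove Proposition~\ref{23} by reducing it to the semiample (in fact, ample) situation via the ample reduction of Lemma~\ref{rda}, and then invoking Lemma~\ref{sa}. First I would dispatch the easy direction: if $\Gamma(qL)\to \cO_x/\cI_x^{p+1}$ is surjective for some $p,q\ge 1$, then $\cI_x^i(qL)$ is globally generated at $x$ for $0\le i\le p$, so $\Bs|\cI_x^p(qL)|=\{x\}$ near $x$, hence $\epsilon_d(L,x)\ge \frac pq>0$ by Lemma~\ref{er}; this also gives the inequality $\epsilon_d(L,x)\ge \epsilon^j(L,x)$ (compare Lemma~\ref{ineq}a)), so it only remains to prove the reverse inequality under the assumption $\epsilon_d(L,x)>0$.

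For the reverse direction, assume $\epsilon_d(L,x)>0$. By Lemma~\ref{er} there are $p_0,q_0\ge 1$ with $\Bs|\cI_x^{p_0}(q_0L)|=\{x\}$ near $x$; in particular $\Bs|\cI_x(q_0L)|=\{x\}$ near $x$, so we may replace $L$ by $q_0L$ (using $\epsilon_d(q_0L,x)=q_0\epsilon_d(L,x)$ and the analogous scaling of the supremum on the right) and assume $\Bs|\cI_x(L)|=\{x\}$ near $x$. Now apply Lemma~\ref{rda}: there is a blow-up $f\colon X'\to X$ along the base locus subscheme $Z$ of $|L|$ away from $x$, with exceptional divisor $E$, and a proper contraction $g\colon X'\to Y$ onto a projective variety with $f^*L-E\sim g^*A$ for an ample Cartier divisor $A$ on $Y$, where $f$ is an isomorphism over a neighborhood of $x$ and $g$ is an isomorphism over a neighborhood of $g(x)$. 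Identifying $x$ with its images in $X'$ and $Y$, I would show $\epsilon_d(L,x)=\epsilon_d(f^*L-E,x)$: since $f$ is an isomorphism near $x$ and $x\notin Z$, for every $p,q\ge 1$ the identification $\Gamma(\cI_Z^{q}(\text{stuff}))$ of Lemma~\ref{rda} restricts to an isomorphism $\Gamma(\cI_x^p(qL))\isoto\Gamma(\cI_x^p(q(f^*L-E)))$ in large degrees (the $\cI_Z$-part is invisible at $x$), whence the base loci agree near $x$ and the successive minima coincide; likewise the jet maps $\Gamma(qL)\to\cO_x/\cI_x^{p+1}$ and $\Gamma(q(f^*L-E))\to\cO_x/\cI_x^{p+1}$ are identified in large degrees, and since surjectivity in one large degree $q$ forces it for all larger multiples (again via Lemma~\ref{lj}-type arguments, or simply because the right-hand supremum is a sup over all $p,q$), the right-hand suprema for $L$ and for $f^*L-E$ agree.

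Finally, $f^*L-E\sim g^*A$ with $A$ ample, so $f^*L-E$ is semiample, and $g$ is an isomorphism over a neighborhood of $g(x)$, so $x\notin\Exc(g)$. Lemma~\ref{sa} then gives
$$
\epsilon_d(f^*L-E,x)=\sup\{0\}\cup\Bigl\{\tfrac pq;\ p,q\ge 1,\ \Gamma(q(f^*L-E))\to\cO_x/\cI_x^{p+1}\text{ surjective}\Bigr\},
$$
and combining the identifications above yields
$$
\epsilon_d(L,x)=\sup\Bigl\{\tfrac pq;\ p,q\ge 1,\ \Gamma(qL)\to\cO_x/\cI_x^{p+1}\text{ surjective}\Bigr\},
$$
as desired. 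The main obstacle I expect is bookkeeping the "in sufficiently large degrees" caveat from Lemma~\ref{rda} and Lemma~\ref{bl}: one must check that neither the base-locus computation nor the jet-generation statement is corrupted by the finitely many small degrees where $\alpha$ fails to be an isomorphism. This is handled by the observation that both invariants in play ($\epsilon_d$ via Lemma~\ref{er}, and the right-hand supremum) are genuine suprema over \emph{all} pairs $(p,q)$, so discarding a bounded set of degrees does not change them; and by the fact, already used in the proof of Lemma~\ref{ibp}, that once $x\notin\Bs|n(f^*L-E)|$ for one large $n$, jet generation propagates to all large multiples.
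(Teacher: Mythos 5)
Your overall architecture (easy direction via Lemma~\ref{ineq}~a), then ample reduction via Lemma~\ref{rda} followed by the known ample/semiample case) is the same as the paper's, but there is a genuine gap in the transfer step ``$\epsilon_d(L,x)=\epsilon_d(f^*L-E,x)$''. The identification supplied by Lemmas~\ref{bl} and~\ref{rda} is $\Gamma(X',\cO_{X'}(q(f^*L-E)))\simeq\Gamma(X,\cI_Z^q(qL))$ for $q\ge c(\cI_Z)$, where $\cI_Z^q$ is the $q$-th power of the base ideal of the \emph{single} linear system $|L|$. This is in general a proper subspace of $\Gamma(X,\cO_X(qL))$: sections of $qL$ need not vanish along $Z$ to order $q$, since the base (or fixed) part of $|qL|$ can be much smaller than $q$ times that of $|L|$. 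So the claimed isomorphism $\Gamma(\cI_x^p(qL))\isoto\Gamma(\cI_x^p(q(f^*L-E)))$ fails; one only gets an inclusion $\Gamma(\cI_x^p(q(f^*L-E)))\hookrightarrow\Gamma(\cI_x^p(qL))$, which yields $\epsilon_d(f^*L-E,x)\le\epsilon_d(L,x)$ --- the \emph{wrong} direction for your argument. Compare Lemma~\ref{moap}: $\epsilon_d(L,x)$ is the supremum over all $n$ of $\epsilon_d(M_n,x)/n$, and your $f^*L-E$ only captures (essentially) the $n=1$ term after the rescaling by $q_0$; the supremum can be strictly larger whenever the mobile part of $|qL|$ grows faster than $q$ times the mobile part of $|L|$. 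The phrase ``the $\cI_Z$-part is invisible at $x$'' conflates the true statement that $\cI_Z$ is trivial near $x$ with the false statement that imposing vanishing along $\cI_Z^q$ costs nothing globally.

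The paper circumvents exactly this point by proving only the weaker claim that $\Bs|\cI_x^r(L)|=\{x\}$ near $x$ implies $r\le\epsilon^j(L,x)$. There, only degree one is transferred to the ample side, where $\Gamma(L)=\Gamma(\cI_Z(L))$ holds \emph{exactly} by the definition of $Z$; hence $\Bs|\cI_x^r(A)|=\{x\}$ near $x$ and $r\le\epsilon_d(A,x)=\epsilon^j(A,x)$ by Proposition~\ref{nef}, and jets of $A$ in degrees $q\ge c(\cI_Z)$ (available by Lemma~\ref{lem_eff}) push back to jets of $L$ because the inclusion $\Gamma(\cI_Z^q(qL))\subseteq\Gamma(qL)$ goes the right way for surjectivity onto $\cO_x/\cI_x^{p+1}$. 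The claim is then applied to $sL$ for a sequence of fractions $r/s$ approaching $\epsilon_d(L,x)$, with a fresh ample reduction performed for each $s$. Your proof becomes correct if restructured this way; a single reduction for one fixed $q_0$ cannot control all the base loci $\Bs|\cI_x^p(qq_0L)|$ that compute $\epsilon_d(L,x)$.
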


\begin{proof}
Denote by  $\epsilon^j(L,x)$ the supremum in the claim. 
We obtain $\epsilon_d(L,x)\ge \epsilon^j(L,x)$ by Lemma~\ref{ineq} a).
For the converse, 
we show the following claim:

\begin{claim}
If $\Bs|\cI_x^r(L)|=\{x\}$ near $x$ for some integer $r >0$, then $r \leq \epsilon^j(L,x) $.
\end{claim}

\begin{proof}
We use the notation in Lemma~\ref{rda}.
We identify $x$ with the point $g (f^{-1}(x) ) \in Y$.
Recall that
we have a natural homomorphism 
$\Gamma(\cI_Z^q(qL)) \rightarrow \Gamma (q A)$,
which is an isomorphism for $q \geq c(\cI_Z)$.

Since $\Bs|\cI_x^r(L)|=\{x\}$ near $x$,
$\Bs|\cI_x^r(A)|=\{x\}$ near $x \in Y$ and hence $ r \leq \epsilon_d(A,x)$ holds.
By applying Proposition~\ref{nef} to the ample divisor $A$,
$ \epsilon_d(A,x) =\epsilon^j(A,x) $ holds.
Hence, for any $0 < t <r $,
we can take $t < \frac{p}{q} < r$ such that $\Gamma(qA)\to \cO_{x,Y}/\cI_x^{p+1}$ is surjective.
We note that we can take arbitrary large such $p,q$ by Lemma~\ref{lem_eff}.
So we take such $p,q$ with $q \geq c(\cI_Z)$.

Since $q \geq c(\cI_Z)$, we have an  isomorphism $\Gamma(\cI_Z^q(qL)) \stackrel{\sim}{\rightarrow} \Gamma (q A)$.
Hence $\Gamma(\cI_Z^q(qL))\to \cO_{x,X}/\cI_x^{p+1}$ is surjective since
so is $\Gamma(qA)\to \cO_{x,Y}/\cI_x^{p+1}$.
Then $\Gamma(qL)\to \cO_{x,X}/\cI_x^{p+1}$ is also surjective since $x \not \in Z$,
and hence $t < \frac{p}{q} \leq \epsilon^j(L,x)$.
Letting $t$ approach $r$,
we obtain the claim.
\end{proof}

For any $0 \leq t' < \epsilon_d(L,x) $, 
we can take $t' < \frac{r}{s} < \epsilon_d(L,x)$ such that $\Bs|\cI_x^r(sL)|=\{x\}$ near $x$.
Applying the above claim to $sL$,
it holds that
$$
r \leq  \epsilon^j(sL,x) \leq s \cdot \epsilon^j(L,x),
$$
where the last inequality follows from the definition of $\epsilon^j$.
Hence $t' < \frac{r}{s} \leq \epsilon^j(L,x)$ and the proposition follows by letting $t'$ approach $\epsilon_d(L,x) $.
\end{proof}

\begin{lem}
Suppose $\epsilon_1(L,x)>0$. Then 
$$
\epsilon_1(L,x)=\sup\{\frac{p}{q}; p,q\ge 1, \Gamma(\cI_x^p(qL))\to  \cI_x^p/\cI_x^{p+1} \text{ non-zero} \}.
$$
\end{lem}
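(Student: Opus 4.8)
The plan is to reduce the asserted identity to the formula $\epsilon_1(L,x)=\sup\{\ord_x(s)/n;\ 0\ne s\in\Gamma(nL),\ n\ge1\}$ recorded above (valid whenever $\kappa(L)\ge0$, which holds here since $\epsilon_1(L,x)>0$), by reinterpreting the non‑vanishing condition on the jet‑type map.

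First I would spell out the map occurring in the statement. For $p,q\ge1$, tensoring the surjection of sheaves $\cI_x^p\twoheadrightarrow\cI_x^p/\cI_x^{p+1}$ with the invertible sheaf $\cO_X(qL)$ and passing to global sections gives a $k$‑linear map
\[
\Gamma(\cI_x^p(qL))\longrightarrow(\cI_x^p/\cI_x^{p+1})\otimes\cO_X(qL)\cong\cI_x^p/\cI_x^{p+1},
\]
where the last isomorphism depends only on a local trivialization of $\cO_X(qL)$ near $x$. By definition, a section $s\in\Gamma(qL)$ lies in $\Gamma(\cI_x^p(qL))$ exactly when $\ord_x(s)\ge p$, and its image is the class of the germ $s_x$ in $\cI_x^p\cO_X(qL)_x/\cI_x^{p+1}\cO_X(qL)_x$, which is nonzero precisely when $\ord_x(s)=p$. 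Hence, for a given pair $(p,q)$, the map is nonzero if and only if there is a nonzero $s\in\Gamma(qL)$ with $\ord_x(s)=p$. Since such an $s$ witnesses nonvanishing only for the value $p=\ord_x(s)$, the right‑hand side of the claimed formula equals
\[
\sup\bigl\{\tfrac{p}{q};\ p,q\ge1,\ \exists\,0\ne s\in\Gamma(qL)\text{ with }\ord_x(s)=p\bigr\},
\]
which coincides with
\[
\sup\bigl\{\tfrac{\ord_x(s)}{q};\ q\ge1,\ 0\ne s\in\Gamma(qL),\ \ord_x(s)\ge1\bigr\}.
\]

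It then remains to identify this last supremum with $\epsilon_1(L,x)$. Since $\epsilon_1(L,x)>0$ we have $\kappa(L)\ge0$, so the recalled formula gives $\epsilon_1(L,x)=\sup\{\ord_x(s)/n;\ 0\ne s\in\Gamma(nL),\ n\ge1\}$; as $\epsilon_1(L,x)>0$, the sections with $\ord_x(s)=0$ contribute only the value $0$ and can be discarded, which leaves exactly the displayed supremum. (If one prefers not to invoke that formula directly, the same conclusion follows from Lemma~\ref{er} with $i=1$: for $p\ge1$ one has $x\in\Bs|\cI_x^p(qL)|$, so $\codim_x\Bs|\cI_x^p(qL)|\ge1$ if and only if $\Gamma(\cI_x^p(qL))\ne0$, i.e.\ if and only if some $0\ne s\in\Gamma(qL)$ satisfies $\ord_x(s)\ge p$, and then the supremum over such $p/q$ is attained at $p=\ord_x(s)$.) This gives the equality.

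The only genuine point requiring care is the first step — the identification of ``the $p$‑jet evaluation map is nonzero'' with ``there is a section of order exactly $p$'' — but once $\cO_X(qL)$ is trivialized near $x$ this is immediate from the definition of $\ord_x$, and the rest is bookkeeping with suprema; so I do not expect a serious obstacle.
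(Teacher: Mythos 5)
Your proof is correct and takes essentially the same route as the paper's: the key observation that $\Gamma(\cI_x^p(qL))\to\cI_x^p/\cI_x^{p+1}$ is non-zero precisely when some $0\ne s\in\Gamma(qL)$ has $\ord_x(s)=p$, followed by a reduction to $\epsilon_1(L,x)=\sup\{\ord_x(s)/n\}$, equivalently Lemma~\ref{er} with $i=1$. The paper phrases the second step by passing to the maximal $p'\ge p$ with $\Gamma(\cI_x^{p'}(qL))\ne 0$, but this is the same bookkeeping.
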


\begin{proof}
Note that $\Gamma(\cI_x^p(qL))\to \cI_x^p/\cI_x^{p+1}$ is non-zero if and only if there exists $s\in \Gamma(qL)$ such that
$\ord_x(s)=p$. In particular, $\epsilon_1(L,x)\ge \frac{p}{q}$. On the other hand, let $p,q\ge 1$ such that $\Gamma(\cI_x^p(qL))\ne 0$.
Let $p'\ge p$ be maximal such that $\Gamma(\cI_x^{p'}(qL))\ne 0$. There exists $s\in \Gamma(qL)$ such that $\ord_x(s)=p'$.
Therefore $\frac{p}{q}\le \frac{p'}{q}\le \epsilon_1(L,x)$. We conclude by Lemma~\ref{er}.
\end{proof}

\begin{lem}
Suppose $\epsilon_d(L,x)>0$. Then
$$
\epsilon_d(L,x) = \sup\{\frac{p}{q}; p,q\ge 1, \Gamma(\cI_x^p(qL))\to \cI_x^p/\cI_x^{p+1} \text{ surjective} \}.
$$
\end{lem}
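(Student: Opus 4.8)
The plan is to bracket the right-hand supremum, call it $\epsilon^g(L,x)$, between the two descriptions of $\epsilon_d(L,x)$ supplied by Proposition~\ref{23} and Lemma~\ref{er}.

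For the inequality $\epsilon_d(L,x)\le \epsilon^g(L,x)$ I would simply observe that it is the easy half: by Proposition~\ref{23}, $\epsilon_d(L,x)$ is the supremum of those $p/q$ for which $\Gamma(qL)\to \cO_x/\cI_x^{p+1}$ is surjective, and surjectivity of that map forces surjectivity of $\Gamma(\cI_x^p(qL))\to \cI_x^p/\cI_x^{p+1}$ — given a target class, lift it to some $g\in \cI_x^p$, realise $g$ modulo $\cI_x^{p+1}$ by a section of $qL$, and note that such a section automatically lies in $\Gamma(\cI_x^p(qL))$.

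The substantive half is $\epsilon^g(L,x)\le \epsilon_d(L,x)$. Fix $p,q\ge 1$ with $\Gamma(\cI_x^p(qL))\to \cI_x^p/\cI_x^{p+1}$ surjective; by Lemma~\ref{er} it is enough to exhibit one $n\ge 1$ with $\Bs|\cI_x^{np}(nqL)|=\{x\}$ near $x$, since then $\epsilon_d(L,x)\ge np/(nq)=p/q$. I would proceed in two steps. First, bootstrap to all multiples: because $\cI_x^{np}=(\cI_x^p)^n$ as ideals, every class of $\cI_x^{np}/\cI_x^{np+1}$ is a sum of classes $\overline{g_1\cdots g_n}$ with $g_i\in \cI_x^p$, and this class depends only on the classes of the $g_i$ modulo $\cI_x^{p+1}$; choosing global sections of $\cI_x^p(qL)$ realising those classes and multiplying them shows $\Gamma(\cI_x^{np}(nqL))\to \cI_x^{np}/\cI_x^{np+1}$ is surjective for every $n\ge 1$. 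Second, pass to the blow-up $f\colon Y\to X$ at $x$, with exceptional divisor $E$ and $\cO_E(1):=\cO_Y(-E)|_E$ ample on $E$; by Lemma~\ref{bl} and relative Serre vanishing, for $n\gg 0$ one has $\cI_x^{m}=f_*\cO_Y(-mE)$ and $R^1f_*\cO_Y(-mE)=0$, hence $\Gamma(\cI_x^{np}(nqL))=\Gamma(Y,\cO_Y(nqf^*L-npE))$ and $\cI_x^{np}/\cI_x^{np+1}\cong H^0(E,\cO_E(np))$, under which identifications the map $\Gamma(\cI_x^{np}(nqL))\to \cI_x^{np}/\cI_x^{np+1}$ becomes the restriction $\Gamma(Y,\cO_Y(nqf^*L-npE))\to H^0(E,\cO_E(np))$. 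So for $n\gg 0$ this restriction is surjective; since $\cO_E(np)$ is globally generated (for $n$ large, $\cO_E(1)$ being ample) this yields $\Bs|nqf^*L-npE|\cap E=\emptyset$, and as $f$ is an isomorphism over $X\setminus\{x\}$ while $\cI_x=\cO_X$ there, the base locus of $|\cI_x^{np}(nqL)|$ equals $\{x\}$ near $x$.

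I expect the bootstrapping to be the crux. One cannot run the blow-up computation directly for the given pair $(p,q)$: the natural map $\cI_x^p/\cI_x^{p+1}\to H^0(E,\cO_E(p))$ need not be an isomorphism for small $p$ (only once $p\ge c(\cI_x)$), so replacing $(p,q)$ by $(np,nq)$ with $n$ large is unavoidable, and making that legitimate relies on the multiplicativity of the associated graded ring $\bigoplus_m \cI_x^m/\cI_x^{m+1}$ together with the fact that it is generated in degree one.
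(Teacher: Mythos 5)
Your proof is correct, and your easy direction is exactly the paper's (Proposition~\ref{23} plus the observation that jet surjectivity forces surjectivity onto $\cI_x^p/\cI_x^{p+1}$). For the substantive inequality, however, you take a genuinely different and much longer route than the paper, whose argument is worth knowing: after trivializing $qL$ near $x$, the fiber of the coherent sheaf $\cI_x^p(qL)$ at $x$ is $\cI_x^p/\fm_x\cI_x^p=\cI_x^p/\cI_x^{p+1}$, so surjectivity of $\Gamma(\cI_x^p(qL))\to\cI_x^p/\cI_x^{p+1}$ says precisely that the evaluation map is surjective onto the fiber; Nakayama's Lemma then gives that $\cI_x^p(qL)$ is generated by global sections at $x$, hence on a neighborhood of $x$ the common zero locus of its sections is the zero locus of $\cI_x^p$, i.e.\ $\Bs|\cI_x^p(qL)|=\{x\}$ near $x$, and Lemma~\ref{er} yields $p/q\le\epsilon_d(L,x)$ for the \emph{original} pair $(p,q)$. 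In particular your closing claim that replacing $(p,q)$ by $(np,nq)$ with $n\gg 0$ is ``unavoidable'' is mistaken: the blow-up, the associated graded ring, and relative Serre vanishing are all dispensable here. That said, your detour is sound: the bootstrapping via multiplicativity of $\bigoplus_m\cI_x^m/\cI_x^{m+1}$ is valid, the identifications $\Gamma(\cI_x^{np}(nqL))=\Gamma(Y,\cO_Y(nqf^*L-npE))$ and $\cI_x^{np}/\cI_x^{np+1}\cong H^0(E,\cO_E(np))$ do hold for $n\gg 0$ by Lemma~\ref{bl} and relative vanishing, and your deduction that $\Bs|\cI_x^{np}(nqL)|=\{x\}$ near $x$ goes through. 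The trade-off is clear: your approach buys nothing extra at the cost of several asymptotic identifications, while the paper's Nakayama argument is two lines and uniform in $(p,q)$.
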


\begin{proof}
The inequality $\le$ follows from Proposition~\ref{23}. For the opposite inequality, suppose that
$\Gamma(\cI_x^p(qL))\to \cI_x^p/\cI_x^{p+1}$ is surjective. By Nakayama's Lemma, $\cI_x^p(qL)$ 
is generated by global sections at $x$. Therefore $\Bs|\cI_x^p(qL)|=\{x\}$ near $x$. 
Therefore $p/q\le \epsilon_d(L,x)$.
\end{proof}


\section{Volume versus the successive minima}


The following well known statement may be considered as the analogue of 
Minkowski's first main theorem:

\begin{prop}
Let $X$ be proper of dimension $d$, let $L$ be a Cartier divisor on $X$, let $x\in X$ be a closed point. Then 
$\vol(L)\le \mult_x(X)\cdot \epsilon_1(L,x)^d$.
\end{prop}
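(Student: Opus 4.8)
The plan is to bound $\dim_k\Gamma(nL)$ by the Hilbert--Samuel function of the local ring $\cO_{X,x}$, exploiting that no section of $nL$ vanishes at $x$ to order exceeding $n\cdot\epsilon_1(L,x)$. Write $\epsilon=\epsilon_1(L,x)$. By the description of the width recalled just before this statement, for every $n\ge 1$ and every $0\ne s\in\Gamma(nL)$ one has $\ord_x(s)\le n\epsilon$ (if $\kappa(L)=-\infty$ there are no such $s$ and $\vol(L)=0$, so the claim is trivial). Fix $n\ge1$ and set $a_n=\lfloor n\epsilon\rfloor+1$, so that $a_n>n\epsilon$. After trivializing $\cO_X(nL)$ near $x$, the truncation homomorphism
\[
\Gamma(nL)\longrightarrow \cO_{X,x}/\fm_x^{a_n}
\]
is defined, and it is injective: a section $s$ in its kernel has $\ord_x(s)\ge a_n>n\epsilon$, hence $s=0$. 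Therefore $\dim_k\Gamma(nL)\le \length_{\cO_{X,x}}(\cO_{X,x}/\fm_x^{a_n})$.

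Next I would invoke the Hilbert--Samuel theorem. Since $X$ is an irreducible variety of dimension $d$ and $x$ is a closed point, $\dim\cO_{X,x}=d$ and
\[
\length_{\cO_{X,x}}(\cO_{X,x}/\fm_x^{m})=\frac{\mult_x(X)}{d!}\,m^{d}+O(m^{d-1})\qquad(m\to\infty),
\]
where $\mult_x(X)$ is by definition the Hilbert--Samuel multiplicity of $\fm_x$ in $\cO_{X,x}$. Substituting $m=a_n\le n\epsilon+1$ gives
\[
\dim_k\Gamma(nL)\le \frac{\mult_x(X)}{d!}\,(n\epsilon+1)^{d}+O(n^{d-1})=\frac{\mult_x(X)}{d!}\,\epsilon^{d}\,n^{d}+O(n^{d-1}).
\]
Dividing by $n^{d}/d!$ and passing to the $\limsup$ over $n$ yields $\vol(L)=\limsup_n\frac{\dim_k\Gamma(nL)}{n^{d}/d!}\le \mult_x(X)\,\epsilon^{d}$, which is the assertion.

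There is no genuine obstacle here; the two points that need care are (i) the inequality $\ord_x(s)\le n\cdot\epsilon_1(L,x)$ for \emph{all} sections, which is exactly the characterization of the width quoted above (the only nontrivial case being $\kappa(L)\ge 0$), and (ii) the identification of $\mult_x(X)$ in the statement with the Hilbert--Samuel multiplicity controlling the leading coefficient of $m\mapsto\length(\cO_{X,x}/\fm_x^{m})$, so that the harmless shift $a_n=\lfloor n\epsilon\rfloor+1$ is absorbed into the lower-order term. This is the "classical counting of jets" argument alluded to in the introduction; the same argument, run with $\epsilon_d(L,x)$ in place of $\epsilon$ and in the reverse direction (using generation of jets), produces the companion lower bound $\vol(L)\ge\mult_x(X)\cdot\epsilon_d(L,x)^{d}$.
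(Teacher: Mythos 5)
Your argument is correct and is essentially the paper's proof: both bound $h^0(nL)$ by the length of $\cO_{X,x}/\fm_x^{\lfloor nt\rfloor+1}$ via injectivity of the truncation map and then apply Hilbert--Samuel asymptotics. The only cosmetic difference is that you work directly at $t=\epsilon_1(L,x)$ using the characterization of the width as a supremum of $\ord_x(s)/n$, while the paper takes $t>\epsilon_1(L,x)$ and lets $t$ decrease to $\epsilon_1(L,x)$; both are valid.
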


\begin{proof}
Let $t>\epsilon_1(L,x)$ be a real number. Let $n\ge 1$. 
We have $\Gamma(\cI_x^{\lfloor nt\rfloor+1}(nL))=0$, that is the jet map
$$
\Gamma(nL)\to \cO_x/\cI_x^{\lfloor nt\rfloor+1}
$$
is injective. Therefore $h^0(nL)\le \dim_k(\cO_x/\cI_x^{\lfloor nt\rfloor+1})$. For $n\gg 0$,
the right hand side equals $P(\lfloor nt\rfloor)$, where $P(T)$ is a polynomial with leading term
$\frac{e}{d!}T^d$, where $e=\mult_x(X)$. Therefore 
$$
\limsup_{n\to \infty}\frac{h^0(nL)}{n^d/d!}\le e\cdot t^d.
$$
Letting $t$ approach $\epsilon_1(L,x)$, we obtain the claim.
\end{proof}

\begin{lem}\label{mil}
Let $X$ be a proper variety of dimension $d$, let $\epsilon_i=\epsilon_i(L,x)$ be the 
Seshadri successive minima of a Cartier divisor $L$ at a closed point $x$. Then 
$$
\mult_x(X)\cdot \prod_{i=1}^d\epsilon_i(L,x)\le \vol(L).
$$
In particular, $\mult_x(X)\cdot \epsilon_d(L,x)^d\le \vol(L)$.
\end{lem}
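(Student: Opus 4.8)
The plan is to bound $\vol(L)$ below by the local intersection multiplicity at $x$ of $d$ generic divisors in suitable sub-linear-systems of $|L|$, and to estimate that multiplicity by $\mult_x(X)\prod_i\epsilon_i$ via the B\'ezout-type inequality for intersection numbers on a (possibly singular) variety. \emph{Reduction.} We may assume $\prod_{i=1}^d\epsilon_i(L,x)>0$, so $\epsilon_d(L,x)>0$ and $L$ is big. Fix rationals $0\le t_i<\epsilon_i(L,x)$. By Lemma~\ref{er} there are $p_i',q_i'\ge1$ with $p_i'/q_i'>t_i$ and $\codim_x\Bs|\cI_x^{p_i'}(q_i'L)|\ge i$; since $s\in\Gamma(\cI_x^p(qL))$ gives $s^m\in\Gamma(\cI_x^{mp}(mqL))$, putting $q=\lcm_iq_i'$ and $p_i=(q/q_i')p_i'$ preserves these inequalities, and replacing each $p_i$ by $\max_{j\ge i}p_j$ (using that $\Bs|\cI_x^{p}(qL)|$ grows with $p$) we may further assume $p_1\ge\cdots\ge p_d\ge0$. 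With $M:=qL$ it suffices, since $\vol(qL)=q^d\vol(L)$, to prove $\mult_x(X)\prod_ip_i\le\vol(M)$; letting $t_i\uparrow\epsilon_i(L,x)$ then yields the Lemma, and the last assertion follows from $\epsilon_1\ge\cdots\ge\epsilon_d$.

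\emph{A local complete intersection and its multiplicity.} As $\codim_x\Bs|\cI_x^{p_i}(M)|\ge i$ and $k$ is infinite, choose $v_i\in\Gamma(\cI_x^{p_i}(M))$ inductively so that $Z(v_1)\cap\dots\cap Z(v_i)$ has codimension $\ge i$ at $x$; trivializing $\cO_X(M)$ near $x$, the ideal $(v_1,\dots,v_d)\subseteq\cO_{X,x}$ is then $\fm_x$-primary. By the B\'ezout-type bound for intersection multiplicities on a variety (\cite[p.\,233]{Fulton}), the Samuel multiplicity satisfies
$$
e\big((v_1,\dots,v_d);\cO_{X,x}\big)\ \ge\ \mult_x(X)\prod_{i=1}^d\ord_x(v_i)\ \ge\ \mult_x(X)\prod_{i=1}^dp_i\ =:\ \delta .
$$

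\emph{The main step: $\vol(M)\ge\delta$.} Since $\Bs|\cI_x^{p_d}(M)|=\{x\}$ near $x$, we have $x\notin\Bs|nM|$ for $n\gg0$ (Lemma~\ref{ibp}); as $M$ is moreover big I would pass to a model and a large multiple $kM$ so that, combining $v_1^{k},\dots,v_d^{k}$ with a section $w\in\Gamma(kM)$ nonvanishing at $x$ (and, if needed, auxiliary sections and a general linear projection), one obtains a generically finite rational map $\Phi\colon X\dashrightarrow\bP^d$ which is a morphism near $x$, with $\Phi^*\cO_{\bP^d}(1)\sim_\Q kM$ away from $\Phi$-exceptional divisors, and with $\Phi(x)$ an isolated point of its fibre. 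The degree of $\Phi$ dominates the local intersection multiplicity $e\big((\Phi^*z_1,\dots,\Phi^*z_d);\cO_{X,x}\big)=e\big((v_1^{k},\dots,v_d^{k});\cO_{X,x}\big)=k^d\,e\big((v_1,\dots,v_d);\cO_{X,x}\big)\ge k^d\delta$ at $x$ (conservation of number, cf.~\cite{Fulton}, using that $x$ is a proper component of the fibre, together with Step~2). On a resolution $\mu\colon X'\to X$ of the indeterminacy of $\Phi$ — an isomorphism near $x$ — the class $\Phi^*\cO_{\bP^d}(1)$ is nef, hence its volume equals its top self-intersection $\deg(\Phi)\ge k^d\delta$; on the other hand $\Phi^*\cO_{\bP^d}(1)\le\mu^*(kM)$ up to an effective divisor, so $\vol_{X'}(\Phi^*\cO_{\bP^d}(1))\le\vol_{X'}(\mu^*kM)=k^d\vol(M)$ by birational invariance of the volume. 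Dividing by $k^d$ gives $\vol(M)\ge\delta$. An alternative to this step, staying on $X$, is a direct jet count: the ideal sheaf $\mathfrak b\subseteq\cO_X$ equal to $(v_1,\dots,v_d)$ near $x$ and to $\cO_X$ elsewhere has $\dim_k\cO_X/\mathfrak b^{\,n}=\dim_k\cO_{X,x}/(v_1,\dots,v_d)^n=\frac{1}{d!}e\big((v_1,\dots,v_d);\cO_{X,x}\big)\,n^d+O(n^{d-1})$, and $0\to\mathfrak b^{\,n}(nM)\to\cO_X(nM)\to(\cO_X/\mathfrak b^{\,n})(nM)\to0$ gives $h^0(nM)\ge\dim_k\cO_X/\mathfrak b^{\,n}-h^1(X,\mathfrak b^{\,n}(nM))$; one is left to show $h^1(X,\mathfrak b^{\,n}(nM))=o(n^d)$, which (blowing up $\mathfrak b$, Lemma~\ref{bl}, the projection formula and Leray) reduces to the asymptotic vanishing $h^1(nD)=o(n^d)$ for a big divisor $D$ on a smooth proper variety.

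\emph{The main obstacle} is the last step: converting the purely local multiplicity bound of Step~2 into a lower bound for $\vol(M)$. The difficulty is global — the common zero locus of $v_1,\dots,v_d$ on all of $X$ can be strictly larger than $\{x\}$ (it contains $\Bs|M|$), so one must isolate the contribution at $x$, either by the ideal sheaf $\mathfrak b$ (absorbing the cohomology into an $o(n^d)$ error) or by arranging $\Phi$ so that its fibre through $x$ is genuinely finite, which in turn requires first reducing $M$ to the base-point-free case; and throughout one must keep track of the fact that $X$ may be singular and non-projective, so that $\mult_x(X)$ is preserved only under the modifications that are isomorphisms over a neighbourhood of $x$.
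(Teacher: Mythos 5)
Your Steps 1--2 match the paper's argument exactly: the inductive choice of sections $v_i\in\Gamma(\cI_x^{p_i}(M))$ with $\codim_x\bigl(Z(v_1)\cap\cdots\cap Z(v_i)\bigr)\ge i$, and the lower bound $e\bigl((v_1,\dots,v_d);\cO_{X,x}\bigr)\ge\mult_x(X)\prod_i p_i$ via \cite[p.~233]{Fulton}, are precisely what the paper does. But the step you yourself flag as ``the main obstacle'' is genuinely missing, and neither of your two proposed routes closes it as written. Route (b) rests on the assertion that $h^1(nD)=o(n^d)$ for a big divisor $D$ on a smooth proper variety; this is false in general (big divisors can have non-vanishing asymptotic higher cohomology --- this holds for \emph{nef} and big $D$ by Fujita-type vanishing, but the divisor $f^*M-E$ you would obtain by blowing up $\mathfrak b$ is only big, since the common zeros of $v_1,\dots,v_d$ away from $x$ survive). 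Route (a) founders on the same point: no amount of ``auxiliary sections and a general linear projection'' can remove the stable base locus of $|kM|$ away from $x$, so you cannot arrange $\Phi^*\cO_{\bP^d}(1)\sim kM$ minus only exceptional contributions without first killing that base locus.

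The device the paper uses to isolate the contribution at $x$ is the following. Let $Z$ be the base \emph{scheme} $\bigcap_i Z(v_i)\setminus\{x\}$, let $f\colon Y\to X$ be the blow-up of $\cI_Z$ (an isomorphism near $x$), and set $M'=f^*M-E$. The sections $f^*v_i$ descend to sections of $M'$ with no common zeros near $E$, so $\Bs|M'|\subseteq\{x\}$, and Lemma~\ref{ibp} makes $M'$ \emph{semiample}. This is what replaces your missing cohomological or base-point-freeness input: for a semiample divisor $\vol(M')=(M'^d)$, and this top self-intersection is computed by the $d$ divisors $D_i'=Z(f^*v_i)-E\in|M'|$, which now meet \emph{exactly} in $\{x\}$ globally, so $(M'^d)=i(x,D_1'\cdots D_d';Y)\ge\mult_x(X)\prod_i p_i$ by the same Fulton inequality. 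Finally $\vol(M)\ge\vol(M')$ because $\Gamma(Y,lM')\simeq\Gamma(X,\cI_Z^l(lM))\subseteq\Gamma(X,lM)$ for $l\gg0$ by Lemma~\ref{bl}. So the correction to your plan is not a technical refinement of either route but the insertion of this one construction (essentially Lemma~\ref{rda} applied to the ideal generated by your $v_i$); with it, the rest of your argument goes through and coincides with the paper's proof.
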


\begin{proof}
Suffices to show that for any real numbers $0\le t_i<\epsilon_i$, we have 
$$
\mult_x(X)\cdot \prod_{i=1}^dt_i< \vol(L).
$$
To prove this inequality, note first that there exist $D_1,\ldots,D_d\in |L|_\Q$ such that 
$\ord_x(D_i)>t_i$ and $\cap_{i=1}^dD_i=\{x\}$ near $x$. Indeed, there exists 
$D_1\in |L|_\Q$ with $\ord_x(D_1)>t_1$. Since $\codim_x \Bs|\cI_x^{t_2+}L|_\Q\ge 2$,
no irreducible component of $D_1$ through $x$ is contained in $\Bs|\cI_x^{t_2+}L|_\Q$.
There exists $D_2\in |L|_\Q$ with $\ord_x(D_2)>t_2$ and $\codim_x(D_1\cap D_2)=2$. 
Since $\codim_x \Bs|\cI_x^{t_3+}L|_\Q\ge 3$, no irreducible component of $D_1\cap D_2$ through 
$x$ is contained in $\Bs|\cI_x^{t_3+}L|_\Q$.
Therefore there exists $D_3\in |L|_\Q$ with $\ord_x(D_3)>t_3$ and $\codim_x(D_1\cap D_2\cap D_3)=3$.
Iterating the construction, we obtain the chain of divisors in $d$ steps.

There exists an integer $q\ge 1$ such that $qD_i$ are the zero divisors of some non-zero global sections
$s_i\in \Gamma(X,\cI_x^{\lfloor qt_i\rfloor+1}(qL))$. 
Let $Z  $ be the (possibly empty) subscheme $\cap_{i=1}^d qD_i \setminus \{x\} \subset X$.
Let $f\colon Y\to X$ be the 
blow-up of $X$ along $\cI_Z$, let $E$ be the exceptional divisor on $Y$. Then $f^*s_1,\ldots,f^*s_d$ become 
global sections of $\cO_Y(qf^*L-E)$, having no common zeroes near $E$. Therefore 
$\Bs|qf^*L-E|\subseteq \{x\}$ (we identify $x$ with a point of $Y$). By Lemma~\ref{ibp}, 
$M=qf^*L-E$ is semiample.

Set $D'_i=f^* qD_i - E \in |M | $.
By construction,
$\cap_{i=1}^d D'_i = \{x\} \subset Y$ as sets.
Hence we have
$$
(M^d)=(D'_1 \cdot D'_2  \cdots D'_d) = i (x,D'_1 \cdot D'_2  \cdots D'_d ;Y ) \geq \mult_x (Y) \cdot \prod_{i=1}^d (\lfloor qt_i\rfloor+1) > \mult_x(X)\cdot \prod_{i=1}^d qt_i
$$
by~\cite[page 233]{Fulton},
where $ i (x,D'_1 \cdot D'_2  \cdots D'_d ;Y )$ is the intersection multiplicity.
For $l\ge c(\cI_Z)$,
the natural homomorphism $\cI_Z^l\to f_*\cO_Y(-lE)$ is an isomorphism,
hence so is
$$
\Gamma(X,\cI_Z^l(lqL)) \stackrel{\sim}{\rightarrow} \Gamma(Y,\cO_Y(lM)).
$$
Therefore $q^d \vol(L)=\vol(qL)\ge \vol(M)=(M^d)>\mult_x(X)\cdot \prod_{i=1}^d qt_i$. Dividing out by $q$ and letting $t_i$ approach $\epsilon_i$, we obtain the claim. 
\end{proof}

\begin{cor}
Let $X$ be an algebraic variety of dimension $d$, let $x$ be a closed point and $L$ a Cartier divisor on $X$.
Then 
$$
\epsilon_d(L,x) \le \sqrt[d]{\frac{\vol(L)}{\mult_x(X)}} \le \epsilon_1(L,x).
$$
\end{cor}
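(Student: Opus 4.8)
The corollary is an immediate consequence of the two preceding results, so the plan is purely to combine them and take $d$-th roots. For the right-hand inequality, I would invoke the Proposition stating $\vol(L)\le \mult_x(X)\cdot \epsilon_1(L,x)^d$; since $\mult_x(X)\ge 1>0$, dividing and extracting the positive $d$-th root yields $\sqrt[d]{\vol(L)/\mult_x(X)}\le \epsilon_1(L,x)$. For the left-hand inequality, I would invoke Lemma~\ref{mil}, which gives $\mult_x(X)\cdot \epsilon_d(L,x)^d\le \vol(L)$; dividing by $\mult_x(X)$ and taking $d$-th roots (both sides being nonnegative) gives $\epsilon_d(L,x)\le \sqrt[d]{\vol(L)/\mult_x(X)}$. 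Concatenating the two inequalities is the full statement.

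The only points worth a remark are the degenerate cases and the hypotheses. If $\vol(L)=0$ then Lemma~\ref{mil} forces $\epsilon_d(L,x)=0$, and the displayed chain reads $0\le 0\le \epsilon_1(L,x)$, which holds; so the rational exponent causes no trouble. Since $\vol(L)$ is defined here only for proper $X$, and both the Proposition and Lemma~\ref{mil} are stated for proper $X$, the statement is to be read with $X$ proper (as elsewhere in this section); no further reduction is needed. I do not anticipate any obstacle: the entire content has already been carried out in the Proposition and in Lemma~\ref{mil}, and what remains is the monotonicity of $t\mapsto t^{1/d}$ on $[0,\infty)$.
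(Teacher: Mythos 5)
Your proposal is correct and is exactly the derivation the paper intends: the corollary follows immediately by combining the Proposition ($\vol(L)\le \mult_x(X)\cdot\epsilon_1(L,x)^d$) with Lemma~\ref{mil} ($\mult_x(X)\cdot\epsilon_d(L,x)^d\le\vol(L)$) and taking $d$-th roots, which is why the paper gives no separate proof. Your remarks on the degenerate case $\vol(L)=0$ and on the implicit properness hypothesis are accurate.
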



\subsection{Minkowski's second main theorem for Seshadri successive minima}


\begin{lem}\label{gup}
Let $L$ be a Cartier divisor on a proper algebraic variety $X$.
Let $\epsilon_i=\epsilon_i(L)$ be the Seshadri successive minima of $L$ at a very general
point. Then 
$$
h^0(L) \le | \Z^d\cap \square(\epsilon_1,\ldots,\epsilon_d) | .
$$
\end{lem}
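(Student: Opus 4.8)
We may assume $X$ is smooth and projective and $x$ is a very general (hence smooth) point, so that $\epsilon_i(L,x)=\epsilon_i$: if $\mu\colon X'\to X$ resolves the normalization of $X$, then $\mu$ is an isomorphism over a neighbourhood of the very general $x$, $h^0(X,L)\le h^0(X',\mu^*L)$, and $\epsilon_i(\mu^*L,x')=\epsilon_i(L,x)$. If $\kappa(L)\le 0$ then $\epsilon_1=\cdots=\epsilon_d=0$, $\Z^d\cap\square(0,\dots,0)=\{0\}$ and $h^0(L)\le 1$, so assume $\kappa:=\kappa(L)\ge 1$.

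The idea is to produce at $x$ a full flag $X=Y_0\supset Y_1\supset\cdots\supset Y_d=\{x\}$ of irreducible subvarieties, each smooth at $x$ and in normal crossing position there, and to count sections of $L$ by the associated flag (Okounkov-type) valuation $\nu=(\nu_1,\dots,\nu_d)\colon\Gamma(L)\setminus 0\to\Z_{\ge 0}^d$: $\nu_1(s)=\ord_{Y_1}(s)$, and inductively $\nu_j(s)$ is the order along $Y_j$ of the residue of $s$ on $Y_{j-1}$. Sections with distinct $\nu$-values are linearly independent and $\nu$ takes a basis onto its image, so $h^0(L)=\#\{\nu(s)\colon 0\ne s\in\Gamma(L)\}$; hence it suffices to arrange that $\nu(s)\in\square(\epsilon_1,\dots,\epsilon_d)$ for all such $s$, i.e. $\nu_i(s)+\cdots+\nu_d(s)\le\epsilon_i$ for every $i$. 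This is the arbitrary-$X$ analogue of the postulation estimate of Proposition~\ref{post}, to which it reduces in spirit (on $\bP^d$ with a linear flag one recovers exactly that statement).

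The flag is chosen adapted to the loci $\Bs|\cI_x^{t+}L|_\Q$. Let $v_1>\cdots>v_m>0$ be the distinct positive values among $\epsilon_1\ge\cdots\ge\epsilon_d$, put $v_{m+1}=0$, and let $a_l$ be the largest index with $\epsilon_{a_l}=v_l$ (so $a_m=\kappa$ and $\codim_x\Bs|\cI_x^{t+}L|_\Q=a_l$ for $t$ slightly larger than $v_{l+1}$). We require that $Y_{a_l}$ be an irreducible component, through $x$, of $\Bs|\cI_x^{t+}L|_\Q$ for such $t$ (hence of codimension $a_l$), for $l=1,\dots,m$, with the $Y_{a_l}$ nested and the remaining members of the flag taken general subject to the nesting. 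Two inputs then yield the claim. First, since $s\in\Gamma(\cI_x^{\ord_x(s)}(L))$, Lemma~\ref{ve} applied to $Z=Y_{a_l}$ gives $\ord_{Y_{a_l}}(s)\ge\ord_x(s)-t$; because the valuation monomial of $s$ occurs in $s$, $\nu_1(s)+\cdots+\nu_{a_l}(s)\ge\ord_{Y_{a_l}}(s)$, so letting $t\downarrow v_{l+1}$ we get $\nu_1(s)+\cdots+\nu_{a_l}(s)\ge\ord_x(s)-v_{l+1}$. Second, if the flag is in general position with respect to $\Gamma(L)$ then $\nu_1(s)+\cdots+\nu_d(s)=\ord_x(s)$ for every $s$ (in general only "$\ge$" holds). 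Combining, for each $l$,
$$
\nu_{a_l+1}(s)+\cdots+\nu_d(s)=\ord_x(s)-\bigl(\nu_1(s)+\cdots+\nu_{a_l}(s)\bigr)\le v_{l+1}=\epsilon_{a_l+1};
$$
together with $\nu_1(s)+\cdots+\nu_d(s)=\ord_x(s)\le\epsilon_1(L,x)=\epsilon_1$, and the observation that the inequality at an index between two consecutive $a_l$ follows from the one at the jump index just below it, this gives $\nu(s)\in\square(\epsilon_1,\dots,\epsilon_d)$, and hence the asserted bound on $h^0(L)$.

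The hard part is the existence of such a flag: the subvarieties $Y_{a_l}=Y_{a_l}(x)$ are special (components of base loci), so being simultaneously adapted to the base loci and in general position with respect to $\Gamma(L)$ — so that $|\nu(s)|=\ord_x(s)$ for \emph{all} $s$ — is not automatic for a single point. This is exactly where "very general" is used: arguing with the family over $X$ of the loci $\Bs|\cI_x^{t+}L|_\Q$ (in the spirit of the Definition–Proposition defining $\epsilon_i(L)$), one shows that for $x$ outside a countable union of proper closed subsets these components are smooth at $x$, can be arranged into a nested normal crossing flag, and are transverse to the leading form at $x$ of every global section of $L$. I expect the verification of this last transversality — together with the bookkeeping needed when several $\epsilon_i$ coincide — to be the technical core of the proof.
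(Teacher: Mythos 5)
Your flag-valuation strategy is genuinely different from the paper's argument, but it has a gap exactly at the point you yourself flag as the ``technical core'', and that gap is not a routine verification: the properties you require of the flag are in tension with one another. (a) The members $Y_{a_l}$ are forced to be irreducible components of $\Bs|\cI_x^{t+}L|_\Q$ through $x$; nothing guarantees these are smooth at $x$ even for very general $x$ (the relevant loci form a family over $X$ containing the diagonal, and the fibres of a component of that family can perfectly well be singular precisely at the marked point), so the valuation $\nu$ is not even defined as described. (b) The nesting $Y_{a_1}\supset Y_{a_2}\supset\cdots$ with prescribed codimensions is not automatic: a codimension-$a_{l+1}$ component of the smaller base locus is contained in the larger base locus, hence in some component of it, but possibly only in components of codimension strictly greater than $a_l$. (c) Most seriously, the equality $\nu_1(s)+\cdots+\nu_d(s)=\ord_x(s)$ for \emph{all} $s\in\Gamma(L)$ requires the flag to be transverse to the leading forms at $x$ of all sections, while the members at levels $a_l$ are prescribed and cannot be moved; if the equality degenerates to $\nu_1(s)+\cdots+\nu_d(s)>\ord_x(s)$ for some $s$, then both the bound $\nu_1(s)+\cdots+\nu_d(s)\le\epsilon_1$ and your displayed identity for $\nu_{a_l+1}(s)+\cdots+\nu_d(s)$ break down, so the valuation vector may escape $\square(\epsilon_1,\ldots,\epsilon_d)$. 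None of (a)--(c) is addressed, and they are the whole content of the lemma once the formal bookkeeping is set up.

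The paper's proof is built precisely to avoid all three issues. It filters $\Gamma(L)$ by the order of vanishing at $x$ and bounds each graded quotient, i.e.\ the image of $\Gamma(\cI_x^p(L))\to\cI_x^p/\cI_x^{p+1}\simeq\Gamma(\cO_{\bP^{d-1}}(p))$: for each jump index $i$ it chooses, independently and with no nesting, one irreducible component $Z^{i-1}$ of $\Bs|\cI_x^{\epsilon_i+}L|_\Q$ of codimension $i-1$ through $x$, uses Lemma~\ref{ve} to get $\ord_{Z^{i-1}}(s)\ge p-\epsilon_i$, transfers this to vanishing along an irreducible subvariety $W^{i-1}\subset\bP^{d-1}$ (which may be singular and bears no relation to the $W^{j-1}$ for other $j$), and then applies the postulation bound of Proposition~\ref{post}, which is stated for arbitrary --- not nested, not smooth --- irreducible subvarieties via symbolic powers. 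Summing over $0\le p\le\lfloor\epsilon_1\rfloor$ gives the count. In effect, the paper replaces your global flag by a separate, unconstrained choice of one subvariety per codimension inside the exceptional $\bP^{d-1}$, which is why no transversality or smoothness statement is ever needed. To salvage the flag approach you would essentially have to prove a flag-free postulation statement anyway; the economical fix is to drop the flag and argue on the graded pieces as the paper does.
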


\begin{proof}  Let $x\in X$ be a very general point.

Step 1: Let $p\ge 0$ be an integer. Then 
$$
h^0(\cI_x^p(L))-h^0(\cI_x^{p+1}(L))\le | \cap_{i=2}^d \{\alpha\in \N^d(p); \alpha_i+\cdots+\alpha_d\le \epsilon_i \}|.
$$
Indeed, we have an exact sequence 
$$
0\to \Gamma(\cI_x^{p+1}(L)) \to \Gamma(\cI_x^p (L)) \stackrel{r}{\to} \cI_x^p/\cI_x^{p+1}\simeq \Gamma (\cO_{\bP^{d-1}}(p)).
$$
Therefore $h^0(\cI_x^p(L))-h^0(\cI_x^{p+1}(L))=\dim_k \im(r)$. We may suppose $\im(r)\ne 0$.
In particular,  $h^0(\cI_x^p(L))>0$, hence $p\le \epsilon_1$.
If $p\le \epsilon_d$, the desired inequality becomes $\dim \im(r)\le |\N^d(p)|$, which follows from the inclusion
$\im(r)\subseteq \Gamma (\cO_{\bP^{d-1}}(p))$. Therefore we may suppose $\epsilon_d<p \le \epsilon_1$.

Denote by $J$ the set of indices $1< i\le d$ such that $\epsilon_i<p$ and $\epsilon_i<\epsilon_{i-1}$.
Fix $i\in J$. Since $\epsilon_i<\epsilon_{i-1}$, $\Bs|\cI_x^{\epsilon_i+}L|_\Q$ has codimension $i-1$ at $x$.
Choose an irreducible component $Z^{i-1}$, of codimension $i-1$, passing through $x$. On the blow-up of $X$ at $x$, 
the proper transform of $Z^{i-1}$ has in common with the exceptional locus $E\simeq \bP^{d-1}$ at least one irreducible 
subvariety $W^{i-1}\subset \bP^{d-1}$, of codimension $i-1$. 
Let $s\in \Gamma(\cI_x^p (L))$. By Lemma~\ref{ve}, $\ord_{Z^{i-1}}(s)\ge p-\epsilon_i$. Therefore the image of $r$
is contained in $\{P\in\Gamma (\cO_{\bP^{d-1}}(p)); \ord_{W^{i-1}}(P)\ge p-\epsilon_i \}$. We conclude
$$
\im(r)\subseteq \cap_{i\in J} \{P\in\Gamma (\cO_{\bP^{d-1}}(p)); \ord_{W^{i-1}}(P)\ge p-\epsilon_i \}.
$$
By Proposition~\ref{post}, the right hand side has dimension at most 
$$
| \cap_{i\in J} \{\alpha \in \N^d(p); \alpha_i+\cdots+\alpha_d \le \epsilon_i \} |.
$$
From the definition of $J$, we see that 
$$
\cap_{i\in J} \{\alpha \in \N^d(p); \alpha_i+\cdots+\alpha_d \le \epsilon_i \}=
\cap_{i=2}^d \{\alpha \in \N^d(p); \alpha_i+\cdots+\alpha_d \le \epsilon_i \}.
$$
Therefore 
$
\dim_k \im(r)\le | \cap_{i=2}^d \{\alpha \in \N^d(p); \alpha_i+\cdots+\alpha_d \le \epsilon_i \} |.
$

Step 2: Let $p\ge 0$ be an integer. Then 
$$
h^0(L)-h^0(\cI_x^{p+1}(L))\le | \Z^d\cap \square(p,\epsilon_2,\ldots,\epsilon_d) |.
$$
Indeed, $h^0(L)-h^0(\cI_x^{p+1}(L))=\sum_{l=0}^p h^0(\cI_x^l (L))-h^0(\cI_x^{l+1}(L))$.
Applying Step 1 to each term of the sum, we obtain the desired inequality.

Step 3) Set $p=\lfloor \epsilon_1\rfloor$ in Step 2. 
Then $p+1>\epsilon_1$, so $h^0(\cI_x^{p+1}(L))=0$. We obtain
$$
h^0(L) \le | \Z^d\cap \square(\lfloor \epsilon_1\rfloor, \epsilon_2,\ldots,\epsilon_d) | .
$$
Since $\Z^d\cap \square(\lfloor t_1\rfloor,\ldots,\lfloor t_d\rfloor)=\Z^d\cap \square(t_1,\ldots,t_d)$, 
we obtain
$$
h^0(L) \le | \Z^d\cap \square(\epsilon_1,\ldots,\epsilon_d) | .
$$
\end{proof}

\begin{prop}\label{mir}
Let $L$ be a Cartier divisor on a proper algebraic variety $X$, with Iitaka dimension $\kappa(L)=\kappa\ge 1$.
Let $\epsilon_i=\epsilon_i(L)$ be the Seshadri successive minima of $L$ at a very general
point. Then 
$$
\limsup_{n\to \infty}\frac{h^0(nL)}{n^\kappa/\kappa!} \le \kappa!\cdot \vol  \square(\epsilon_1,\ldots,\epsilon_\kappa).
$$
In particular, if $L$ is big, $\vol(L) \le d!\cdot \vol  \square(\epsilon_1,\ldots,\epsilon_d)$.
\end{prop}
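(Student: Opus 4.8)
The plan is to apply Lemma~\ref{gup} not to $L$ itself but to its multiples $nL$, and then pass to the limit using the standard asymptotics for the number of lattice points in a dilated convex body.

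First I would record that $\epsilon_i(nL)=n\,\epsilon_i(L)$ at a very general point: this follows from the homogeneity $\epsilon_i(nL,x)=n\,\epsilon_i(L,x)$, which holds at every closed point, hence on the countable intersection of open dense subsets computing the invariant at a very general point. Applying Lemma~\ref{gup} to $nL$ then gives, for every $n\ge 1$,
$$
h^0(nL)\ \le\ \bigl|\,\Z^d\cap\square(n\epsilon_1,\ldots,n\epsilon_d)\,\bigr|.
$$

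Next I would pin down the convex body on the right. By Lemma~\ref{ek}, $\kappa(L)=\kappa$ forces $\epsilon_{\kappa+1}=\cdots=\epsilon_d=0$. In $\square(t_1,\ldots,t_d)=\cap_{i=1}^d\{x\in\R^d_{\ge 0};\ x_i+\cdots+x_d\le t_i\}$, the vanishing of $t_d,t_{d-1},\ldots,t_{\kappa+1}$ forces in turn $x_d=0,\ x_{d-1}=0,\ \ldots,\ x_{\kappa+1}=0$, while the surviving constraints on $(x_1,\ldots,x_\kappa)$ are exactly those defining $\square(t_1,\ldots,t_\kappa)\subset\R^\kappa$. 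Hence $\square(n\epsilon_1,\ldots,n\epsilon_d)$ is the copy of $n\cdot\square(\epsilon_1,\ldots,\epsilon_\kappa)$ sitting inside $\R^\kappa\times\{0\}^{d-\kappa}=\R^d$, so its lattice-point count equals $|\Z^\kappa\cap n\,\square(\epsilon_1,\ldots,\epsilon_\kappa)|$ (when $L$ is big this reduction is vacuous, since then $\kappa=d$). I would then invoke the classical fact that $\lim_{n\to\infty}n^{-\kappa}\,|\Z^\kappa\cap nK|=\vol(K)$ for any compact convex $K\subset\R^\kappa$; dividing the displayed inequality by $n^\kappa/\kappa!$ and taking $\limsup$ yields
$$
\limsup_{n\to\infty}\frac{h^0(nL)}{n^\kappa/\kappa!}\ \le\ \kappa!\cdot\vol\square(\epsilon_1,\ldots,\epsilon_\kappa),
$$
which is the assertion. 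The ``in particular'' is the case $\kappa=d$: then $L$ is big and the left-hand side is $\vol(L)$ by definition of the volume.

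There is no serious obstacle here; the content is entirely in Lemma~\ref{gup}, and the rest is a limiting procedure. The one point that needs care is the degeneration of $\square$ when $L$ is not big: one must check that the zero successive minima collapse precisely the last $d-\kappa$ coordinates, so that the relevant body is genuinely $\kappa$-dimensional and the lattice-point asymptotics are applied with the correct exponent $\kappa$ — otherwise the bound would reduce to the trivial statement $0\le 0$.
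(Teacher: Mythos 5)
Your proposal is correct and follows essentially the same route as the paper: apply Lemma~\ref{gup} to $nL$ using $\epsilon_i(nL)=n\epsilon_i(L)$, observe that the vanishing of $\epsilon_{\kappa+1},\ldots,\epsilon_d$ collapses the body to $\square(n\epsilon_1,\ldots,n\epsilon_\kappa)\times\{0\}^{d-\kappa}$, and conclude by the standard lattice-point asymptotics in $\R^\kappa$. Your write-up merely spells out the collapse of the last $d-\kappa$ coordinates more explicitly than the paper does.
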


\begin{proof} Suppose $t_{\kappa+1}=\cdots=t_d=0$. Then $\square(t_1,\ldots,t_d)=\square(t_1,\ldots,t_{\kappa})\times 0$,
where $0$ is the origin in $\R^{d-\kappa}$. We have $\epsilon_1\ge \cdots \ge \epsilon_\kappa>0=
\epsilon_{\kappa+1}=\cdots=\epsilon_d$. Since $\epsilon_i(nL)=n\epsilon_i(L)$, we obtain
$$
h^0(nL)\le |\Z^\kappa \cap  \square(n\epsilon_1,\ldots,n\epsilon_\kappa)  | \ \forall n\ge 0
$$
As $n\to\infty$, the right hand side grows like $\vol  \square(\epsilon_1,\ldots,\epsilon_\kappa) \cdot n^{\kappa}+O(n^{\kappa-1})$. 
Therefore the claim holds.
\end{proof}

Proposition~\ref{mir} generalizes the following result of 
Nakamaye~\cite[Proof of Corollary 3]{Nak03b}: if $L$ is an ample
divisor on a smooth projective surface, then $(L^2)\le 2\epsilon_1(L)\epsilon_2(L)-\epsilon_2(L)^2$.

Combining Lemma~\ref{vol}, Propositions~\ref{mil} and~\ref{mir}, we obtain the following equivalent of Minkowski's
second theorem for successive minima:

\begin{thm}\label{m2}
Let $L$ be a big Cartier divisor on a $d$-dimensional proper algebraic variety $X$. Then 
$$
1 \le \frac{\vol(L)}{\prod_{i=1}^d\epsilon_i(L)}\le d! 
$$
So the volume of $L$ is essentially the product of the successive Seshadri minima of $L$ at a 
very general point. 
\end{thm}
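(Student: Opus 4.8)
The plan is to derive the two bounds separately from results already in hand, after a preliminary remark that the displayed ratio is well defined. Since $L$ is big we have $\vol(L)>0$ and $\kappa(L)=d$, so by Lemma~\ref{ek} each $\epsilon_i(L)>0$ for $1\le i\le d$; hence $\prod_{i=1}^d\epsilon_i(L)>0$ and the quotient $\vol(L)/\prod_{i=1}^d\epsilon_i(L)$ makes sense.

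For the lower bound I would invoke Lemma~\ref{mil} at a very general point $x\in X$. The invariant $\epsilon_i(L)$ is by construction the common value of $\epsilon_i(L,x)$ over a countable intersection $V$ of dense open subsets, and we may shrink $V$ to lie inside the open dense smooth locus of $X$, so for such $x$ we have $\mult_x(X)=1$. Lemma~\ref{mil} then reads $\prod_{i=1}^d\epsilon_i(L,x)\le \vol(L)$, which is exactly $\prod_{i=1}^d\epsilon_i(L)\le \vol(L)$, i.e.\ the left-hand inequality.

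For the upper bound I would combine Proposition~\ref{mir} with Lemma~\ref{vol}. Writing $\epsilon_i=\epsilon_i(L)$, Proposition~\ref{mir} applied to the big divisor $L$ gives $\vol(L)\le d!\cdot \vol\square(\epsilon_1,\ldots,\epsilon_d)$, while Lemma~\ref{vol}, applied with the nonnegative reals $t_i=\epsilon_i$, gives $\vol\square(\epsilon_1,\ldots,\epsilon_d)\le \prod_{i=1}^d\epsilon_i$. Chaining these yields $\vol(L)\le d!\prod_{i=1}^d\epsilon_i(L)$, which is the right-hand inequality, and the theorem follows.

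There is essentially no obstacle left at this stage: the real content has already been isolated in Lemma~\ref{mil} (the ``Minkowski first theorem'' direction, proved by counting jets, passing to a blow-up along the base locus away from $x$, and estimating an intersection multiplicity) and in Proposition~\ref{mir} (the upper bound, which reduces via the jet count of Lemma~\ref{ve} to the postulation inequality of Proposition~\ref{post}). The only points requiring a word of care are the reduction to a smooth very general point in the lower bound, and the observation that $L$ big forces all $\epsilon_i(L)$ to be strictly positive, so that the displayed ratio is legitimate.
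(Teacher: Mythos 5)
Your proposal is correct and is essentially identical to the paper's proof, which simply combines Lemma~\ref{mil}, Proposition~\ref{mir}, and Lemma~\ref{vol} in exactly the way you describe (for the lower bound one does not even need to restrict to the smooth locus, since $\mult_x(X)\ge 1$ already gives $\prod_i\epsilon_i(L,x)\le \mult_x(X)\prod_i\epsilon_i(L,x)\le \vol(L)$).
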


\begin{rem}
If $(X,L)=(\bP^d,\cO(1))$, $\epsilon_i(L)=1$ for any $i$. Hence $ \frac{\vol(L)}{\prod_{i=1}^d\epsilon_i(L)} =1$.
On the other hand,
consider $(X,L)=((\bP^1)^d,\cO(w_1,\ldots,w_d))$, where $w_1\ge w_2\ge \cdots\ge w_d$ are positive integers.
Then $\epsilon_i(L)=\sum_{j=i}^d w_j$
and
$\vol(L)= d! \prod_{i=1}^dw_i $
as we will see in Example~\ref{product}.
Hence 
$$ \frac{\vol(L)}{\prod_{i=1}^d\epsilon_i(L)} =d! \prod_{i=1}^d \frac{w_i}{\sum_{j=i}^d w_j}$$ could be arbitrary close to $d!$
if we take $w_1\gg w_2\gg \cdots\gg w_d$.

Thus the lower and upper bounds in Theorem~\ref{m2} are sharp.
We also note that the upper bound is not attained for $d \geq 2$
since $ \vol  \square(\epsilon_1(L),\ldots,\epsilon_d(L)) < \prod_{i=1}^d\epsilon_i(L)$ by $ \square(\epsilon_1(L),\ldots,\epsilon_d(L)) \subsetneq [0,\epsilon_1(L)] \times  \square(\epsilon_2(L),\ldots,\epsilon_d(L))$.
\end{rem}


\section{Succesive minima on toric varieties}


For standard terminology on toric varieties, the reader may consult~\cite{Oda88}.
Let $X=T_N\emb(\Delta)$ be a proper toric variety (normal), of dimension $d$.
Let $L$ be a Cartier divisor on $X$. Modulo linear equivalence, we may suppose $L$
is torus invariant. Due to the torus action,  $\epsilon_i(L,\cdot)$ is constant on $T_N \subset X$. 
Therefore $\epsilon_i(L)=\epsilon_i(L,1)$, where $1$ denotes the unit of the torus $T_N$.
We will estimate $\epsilon_i(L,x)$ when either $x$ is a closed invariant point, or $x=1$.


\subsection{At a closed invariant point}


Let $x\in X$ be a closed invariant point. It corresponds to a top cone $\sigma\in \Delta(top)$,
and we obtain an open affine neighborhood $x\in U_\sigma=\Spec k[M\cap \sigma^\vee]$.
Denote by $S$ the semigroup $M\cap \sigma^\vee\setminus 0$, so that 
$I(x\in U_\sigma)=\oplus_{m\in S}k \cdot \chi^m$.
For $p\ge 1$, denote $S^{(p)}=\{s_1+\cdots+s_p;s_1,\ldots,s_p\in S\}$. Therefore 
$I(x\in U_\sigma)^p=\oplus_{m\in S^{(p)}}k \cdot \chi^m$.
Since $L$ is Cartier, there exists $u\in M$ such that $(\chi^u)+L|_{U_\sigma}=0$. We have 
$\square_L-u\subseteq \sigma^\vee$.

The subspace $\{s\in \Gamma(X,\cO_X(qL)); \ord_x(s)\ge p\}\subseteq  \Gamma(X,\cO_X(qL))$ 
is torus invariant. Therefore a basis over $k$ consists of monomials $\chi^m$ such that 
$m\in M\cap q\square_L$ and $m-qu\in S^{(p)}$.

Let $P=\Conv(S+\sigma^\vee)$ be the Newton polytope associated to $S\subset \sigma^\vee$.
Let $B$ be the closure of the complement $\sigma^\vee\setminus P$. Then $B$ is compact and
contains a relatively open neighborhood of the origin in $\sigma^\vee$. In particular, 
$\sigma^\vee=\cup_{t\ge 0}tB$. Note that $B$ may not be convex, but in case $x$ is a smooth point,
$B$ is a unit simplex. Moreover, $d!\cdot \vol_M(B)=\mult_x(X)$.

We claim that $S^{(p)}\subseteq M\cap pP\subseteq S^{(p-d+1)}$. Indeed, the first inclusion is clear.
For the second, let $m\in M\cap pP$. Then $m=\sum_{i\in I}t_im^i+m'$, where $t_i\ge 0$, $m^i \in S$,
$\sum_it_i=p$ and $m'\in \sigma^\vee$, and the cardinality of $I$ is at most $d$ 
(use Carath\'eodory's theorem). Then $m=\sum_i\lfloor t_i\rfloor m^i+(m'+\sum_i\{t_i\}m^i)$,
and $\sum_i \lfloor t_i\rfloor>\sum_i (t_i-1)=p-d$. Therefore $m\in S^{(p-d+1)}$.
 
We obtain 
$
\Gamma(\cI_x^p(qL))\subseteq \oplus_{m\in M\cap q\square_L\cap pP+qu}k\cdot\chi^m
\subseteq \Gamma(\cI_x^{p-d+1}(qL)).
$
Therefore the two algebras 
$$
\oplus_{p>qt}\Gamma(\cI_x^p(qL))\subseteq \oplus_{p> qt} \oplus_{m\in M\cap q\square_L\cap pP+qu}k\cdot\chi^m
$$
have the same stable base locus. Denote $t^+P=\cap_{\epsilon>0}(t+\epsilon)P$. We deduce that
near $x$, 
$\Bs|\cI_x^{t+}L|_\Q$ is the intersection of $\Supp(\chi^m)$, where 
$m\in (\square_L-u)\cap t^+P \cap M_{\Q}$ and $(\chi^m)$ is the effective $\Q$-divisor defined by $\chi^m$. 

\begin{lem}\label{des}
Near $x$, $\Bs|\cI_x^{t+}L|_\Q$ is the union of invariant closed irreducible subvarieties $x\in Y\subseteq X$
such that $\width_x(R(L)|_Y)\le t$. If $L$ is ample, the latter inequality means $\width_x(L|_Y)\le t$.
\end{lem}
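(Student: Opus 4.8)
The plan is to exploit the torus symmetry of $\Bs|\cI_x^{t+}L|_\Q$ near $x$. As observed just above, the graded ring $\bigoplus_n\{s\in\Gamma(nL);\ \ord_x(s)>nt\}$ has a $k$-basis of monomials $\chi^m$, so its stable base locus is an intersection of supports of invariant effective $\Q$-divisors $(\chi^m)$; hence $\Bs|\cI_x^{t+}L|_\Q\cap U_\sigma$ is a union of orbit closures $V(\tau)\cap U_\sigma$ over faces $\tau\preceq\sigma$, and these $V(\tau)$ are precisely the invariant closed irreducible subvarieties of $X$ through $x$. So the lemma reduces to proving, for each face $\tau\preceq\sigma$,
$$
V(\tau)\cap U_\sigma\subseteq\Bs|\cI_x^{t+}L|_\Q\iff\width_x(R(L)|_{V(\tau)})\le t.
$$

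First I would rewrite both sides in terms of sections. By definition the left side says that every $n\ge1$ and $s\in\Gamma(nL)$ with $s|_{V(\tau)}\ne0$ satisfies $\ord_x(s)\le nt$, i.e. $\sup\{\ord_x(s)/n;\ n\ge1,\ s\in\Gamma(nL),\ s|_{V(\tau)}\ne0\}\le t$; while $\width_x(R(L)|_{V(\tau)})=\epsilon_1(R(L)|_{V(\tau)},x)$ equals $\sup\{\ord_x(s|_{V(\tau)})/n\}$ over the same set of $s$, since the nonzero elements of $(R(L)|_{V(\tau)})_n$ are exactly the restrictions $s|_{V(\tau)}$. As $\ord_x(s)\le\ord_x(s|_{V(\tau)})$ always, one inequality between these two suprema is automatic, and the content is their equality. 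Here toric-ness is used: it suffices to run the comparison over the monomial sections $\chi^m$ ($m\in M\cap n\square_L$), because at the torus-fixed point $x$ the order of a sum of monomials is the minimum of their orders (distinct monomials are linearly independent in each $\fm_x^k/\fm_x^{k+1}$, and likewise on $V(\tau)$). The crux is then the identity
$$
\ord_x(\chi^m)=\ord_x(\chi^m|_{V(\tau)})\quad\text{whenever }\chi^m|_{V(\tau)}\ne0.
$$
On $U_\sigma$ the section $\chi^m$ becomes the monomial function $\chi^{m'}$ with $m'=m-nu\in\sigma^\vee$, and $\chi^m|_{V(\tau)}\ne0$ precisely when $m'\in\tau^\perp$; with $S=M\cap\sigma^\vee\setminus 0$ one has $\ord_x(\chi^{m'})=\max\{k;\ m'\in S^{(k)}\}$ and $\ord_x(\chi^{m'}|_{V(\tau)})=\max\{k;\ m'\in(S\cap\tau^\perp)^{(k)}\}$, so the identity follows from the observation that any $m'=s_1+\cdots+s_k$ with $s_i\in S$ automatically has all $s_i\in\tau^\perp$: pairing with the primitive generator of any ray of $\tau$ gives nonnegative integers summing to zero. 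Granting this, both suprema above equal $\sup_n\frac1n\max\{\ord_x(\chi^m);\ m\in M\cap n\square_L,\ m-nu\in\tau^\perp\}$, and the equivalence follows.

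Finally, when $L$ is ample the restriction $\Gamma(nL)\to\Gamma(V(\tau),nL|_{V(\tau)})$ is surjective for every $n\ge1$: the moment polytope of $L|_{V(\tau)}$ is the face of $\square_L$ corresponding to $\tau$, and restriction identifies the monomials supported on that face with a basis of $\Gamma(V(\tau),nL|_{V(\tau)})$. Hence $R(L)|_{V(\tau)}=R(V(\tau),L|_{V(\tau)})$ and $\width_x(R(L)|_{V(\tau)})=\width_x(L|_{V(\tau)})$, which gives the last sentence. I expect the monomial order identity to be the only real obstacle; the degenerate case where $R(L)|_{V(\tau)}$ vanishes in all degrees causes no trouble, since there both sides of the equivalence hold trivially.
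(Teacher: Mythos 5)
Your proposal is correct and follows essentially the same route as the paper: reduce to torus-invariant subvarieties and monomial sections, and use the key fact that restricting a monomial $\chi^m$ to an invariant subvariety through $x$ does not change its order at $x$, plus surjectivity of restriction in the ample case. The only difference is that you supply an explicit proof (via the semigroup decomposition and pairing with ray generators of $\tau$) of the order-preservation identity, which the paper simply asserts.
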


\begin{proof}
$\Bs|\cI_x^{t+}L|_\Q$ is the intersection of all $\Supp(\chi^m)$,
where $\chi^m \in \Gamma(\cO_X(qL)) $ with $\ord_x (\chi^m) > qt$.
In particular, $\Bs|\cI_x^{t+}L|_\Q$ is torus invariant.

Let $x \in Y \subseteq X$ be a torus invariant subvariety.
By definition,
$\width_x(R(L)|_Y) > t$ if and only if the subspace
$$
\{s|_Y ; s \in \Gamma(X,\cO_X (qL)), s|_Y \not =0, \ord_x(s|_Y) > qt \}\subseteq  \Gamma(Y,\cO_Y(qL|_Y))
$$
contains a non-zero element for some $q$.
This subspace is torus invariant, and hence has a basis consists of monomials.
Thus this subspace has a non-zero element if and only if there exists $\chi^m \in \Gamma(\cO_X (qL))$ with $\chi^m |_{Y}  \not = 0$ and $ \ord_x(\chi^m|_Y) > qt$.
For monomials, the restriction to $Y$ does not change the order,
that is, $ \ord_x(\chi^m|_Y) = \ord_x(\chi^m)$ if  $\chi^m |_{Y}  \not = 0$.
Hence $\width_x(R(L)|_Y) > t$ if and only if there exists $\chi^m \in \Gamma(\cO_X (qL))$ with $\chi^m|_Y \not = 0$ and $ \ord_x(\chi^m) > qt$ for some $q$,
which is equivalent to say that $Y$ is not contained in $\Bs|\cI_x^{t+}L|_\Q$.

 If $L$ is ample, the restriction map $\Gamma(qL) \to \Gamma(qL|_Y)$ is surjective for any $q$, 
and therefore $\width_x(R(L)|_Y)=\width_x(L|_Y)$.
\end{proof}

\begin{lem}
Let $x\in Y\subseteq X$ be an invariant closed subvariety corresponding to a face $\tau \prec \sigma$.
Then $\width_x(R(L)|_Y) = \min\{t\ge 0; (\square_L-u) \cap  \tau^{\perp}  \subseteq tB \cap  \tau^{\perp}  \}\in \Q$.
In particular,
$\width_x(L)=\min\{t\ge 0; \square_L-u\subseteq tB\}\in \Q$.
\end{lem}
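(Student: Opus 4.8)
\emph{Setup and strategy.} The plan is to reduce the statement to a dictionary, through a single gauge function, between orders of monomial sections (governed by the Newton polytope $P$) and the geometry of $B$. Set $\nu\colon\sigma^\vee\to\R_{\ge 0}$, $\nu(v)=\sup\{s\ge 0; v\in sP\}$. Since $sP+s'P\subseteq(s+s')P$ for any convex set, $\nu$ is superadditive, hence (being positively homogeneous) concave on the cone $\sigma^\vee$. Because $P$ is a rational polyhedron with recession cone $\sigma^\vee$, its facets not containing the origin have the form $\{\langle\cdot,u_j\rangle\ge c_j\}$ with $u_j$ in the interior of $\sigma$ and $c_j>0$ rational, and one computes $\nu(v)=\min_j\langle v,u_j\rangle/c_j$ on $\sigma^\vee$; thus $\nu$ is rational, piecewise linear, finite, and positive off the origin. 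The two translations I need are: \emph{(i)} $\nu(v)-d\le\ord_x(\chi^v)\le\nu(v)$ for $v\in M\cap\sigma^\vee$, which follows from $S^{(p)}\subseteq M\cap pP\subseteq S^{(p-d+1)}$ together with the identity $\ord_x(\chi^v)=\max\{p; v\in S^{(p)}\}$ recalled above; and \emph{(ii)} for $v\in\sigma^\vee$, $v\in tB$ if and only if $\nu(v)\le t$, which follows from $B=\mathrm{cl}(\sigma^\vee\setminus P)$ by a short check using that $\nu$ is homogeneous and $tB$ is nondecreasing in $t$ (the case $t=0$ being handled by positivity of $\nu$ off the origin).

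\emph{Main step.} By Lemma~\ref{des} and the monomial description in its proof, $\width_x(R(L)|_Y)>t$ holds if and only if there exist $q\ge 1$ and $m\in M\cap q\square_L$ with $m-qu\in\tau^\perp$ and $\ord_x(\chi^m)>qt$. Trivializing $\cO_X(qL)$ near $x$ by $\chi^{-qu}$ identifies the section $\chi^m$ with the regular function $\chi^{m-qu}$ (so $\ord_x(\chi^m)=\ord_x(\chi^{m-qu})$, with $m-qu\in\sigma^\vee$), while $\chi^m|_Y\ne 0$ means exactly $m-qu\in\tau^\perp$. Putting $\bar m=m/q-u\in(\square_L-u)\cap\tau^\perp\cap M_\Q$ and using homogeneity of $\nu$ and translation \emph{(i)}, this existence statement is equivalent to the existence of $\bar m\in(\square_L-u)\cap\tau^\perp$ with $\nu(\bar m)>t$: one direction is immediate from $\ord_x(\chi^{m-qu})\le\nu(m-qu)=q\nu(\bar m)$; for the other, since $\{\nu>t\}$ is open and the rational points of the rational polytope $(\square_L-u)\cap\tau^\perp$ are dense, one may take $\bar m$ rational, and then, clearing denominators and enlarging $q$, translation \emph{(i)} produces a monomial $\chi^m$ (with $m=q(\bar m+u)\in M\cap q\square_L$ and $m-qu=q\bar m\in\tau^\perp$) satisfying $\ord_x(\chi^m)=\ord_x(\chi^{q\bar m})\ge q\nu(\bar m)-d>qt$. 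By translation \emph{(ii)}, the existence of such a $\bar m$ says precisely that $(\square_L-u)\cap\tau^\perp\not\subseteq tB$; since the left-hand side lies in $\tau^\perp$, this is the same as $(\square_L-u)\cap\tau^\perp\not\subseteq tB\cap\tau^\perp$. Therefore $\width_x(R(L)|_Y)=\inf\{t\ge 0; (\square_L-u)\cap\tau^\perp\subseteq tB\cap\tau^\perp\}$.

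\emph{Minimality, rationality, and the special case.} By translation \emph{(ii)} this infimum equals $\max\{\nu(\bar m); \bar m\in(\square_L-u)\cap\tau^\perp\}$, the maximum of a rational piecewise-linear concave function over a rational polytope; such a maximum is attained at a rational point and is a rational number, so the infimum is in fact a minimum and lies in $\Q$. The ``in particular'' is the case of the trivial face $\tau=\{0\}$, for which $Y=X$, $\tau^\perp=M_\R$ and $R(L)|_Y=R(L)$, giving $\width_x(L)=\min\{t\ge 0;\square_L-u\subseteq tB\}\in\Q$.

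\emph{Main obstacle.} The delicate point is translation \emph{(ii)}: since $B$ need not be convex, containment in $tB$ cannot be tested on vertices, and one must genuinely use that $B$ is star-shaped from the origin with $tB$ nondecreasing, for which the identity $tB=\mathrm{cl}(\sigma^\vee\setminus tP)$ (for $t>0$) and the homogeneity of $\nu$ are essential. A secondary point needing care is the transition from the integral statement about orders of monomials to the limiting statement about the polytope $(\square_L-u)\cap\tau^\perp$, where the additive $O(d)$ gap between $\ord_x(\chi^v)$ and $\nu(v)$ in \emph{(i)} must be absorbed by letting $q\to\infty$.
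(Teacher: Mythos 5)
Your proof is correct and follows essentially the same route as the paper: both reduce, via Lemma~\ref{des}, to a statement about invariant monomial sections and then exploit the sandwich $S^{(p)}\subseteq M\cap pP\subseteq S^{(p-d+1)}$ to compare vanishing orders at $x$ with the polytope $P$ (equivalently, with $B$). The paper packages this through its earlier description of $\Bs|\cI_x^{t+}L|_\Q$ near $x$ in terms of $(\square_L-u)\cap t^{+}P\cap M_\Q$ together with a rationality argument, whereas you make the same content explicit through the gauge function $\nu$ and a $q\to\infty$ limiting argument; the substance is identical.
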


\begin{proof}
By Lemma \ref{des}, 
$\width_x(R(L)|_Y)  \le t$ if and only if $Y \subseteq \Bs|\cI_x^{t+}L|_\Q$, if and only if $(\square_L-u)\cap t^+P \cap  \tau^{\perp} \cap M_{\Q}=\emptyset$.
Since $\square_L, u, P $ are rational, this condition is equivalent to $(\square_L-u)\cap t^+P \cap  \tau^{\perp} =\emptyset$, which is 
equivalent to $\square_L-u \cap \tau^{\perp} \subseteq tB \cap  \tau^{\perp}$.
\end{proof}

We obtain the following proposition which generalizes \cite[Corollary 4.2.2]{BDH+}.

\begin{prop} $\epsilon_i(L,x)$ is the minimum of $\width_x(R(L)|_Y)$, 
after all closed irreducible invariant subvarieties $x\in Y\subseteq X$ of codimension $i-1$.
In particular, $\epsilon_i(L,x)\in \Q$.
\end{prop}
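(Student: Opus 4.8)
The plan is to extract $\epsilon_i(L,x)$ directly from Lemma~\ref{des}, which describes $\Bs|\cI_x^{t+}L|_\Q$ near $x$ as the union of the invariant irreducible subvarieties $x\in Y\subseteq X$ with $\width_x(R(L)|_Y)\le t$. Since $X$ has only finitely many invariant subvarieties, this is a finite union of irreducible closed sets; the components through $x$ are the maximal members, and codimension is monotone decreasing under inclusion, so the codimension at $x$ of the union equals the minimum of the codimensions of \emph{all} the members. Hence
\[
\codim_x\Bs|\cI_x^{t+}L|_\Q=\min\bigl\{\codim Y;\ x\in Y\subseteq X\text{ invariant irreducible},\ \width_x(R(L)|_Y)\le t\bigr\},
\]
and, unwinding the definition of the successive minimum and using that only finitely many $Y$ occur,
\[
\epsilon_i(L,x)=\inf\{t\ge 0;\ \codim_x\Bs|\cI_x^{t+}L|_\Q<i\}=\min\bigl\{\width_x(R(L)|_Y);\ x\in Y\subseteq X\text{ invariant irreducible},\ \codim Y\le i-1\bigr\}.
\]

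It remains to show this minimum is attained on some $Y$ of codimension exactly $i-1$, for which I would combine two observations. First, $Y\mapsto\width_x(R(L)|_Y)$ is monotone under inclusion: if $Y'\subseteq Y$ then $\width_x(R(L)|_{Y'})\le\width_x(R(L)|_Y)$. Indeed, as in the proof of Lemma~\ref{des} the algebra $R(L)|_Y$ has a basis of monomials, a monomial keeps its order at $x$ after restriction to an invariant subvariety on which it does not vanish, and a monomial vanishing on $Y$ also vanishes on the smaller $Y'$; so the supremum computing $\width_x(R(L)|_{Y'})$ runs over a subset of the monomials relevant to $Y$. Second, the cone $\sigma\in\Delta(top)$ of $x$ is strongly convex of dimension $d$, so its face lattice is graded by dimension (it is the face lattice of a polytope); thus every face of $\sigma$ of dimension $c<d$ is contained in a face of dimension $c+1$, and iterating, every invariant $x\in Y$ of codimension $c\le i-1\le d$ contains an invariant $x\in Y'$ of codimension exactly $i-1$. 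Now if the minimum above is attained at $Y_0$ with $\codim Y_0\le i-1$, choose such a $Y_0'\subseteq Y_0$ of codimension $i-1$; by monotonicity $\width_x(R(L)|_{Y_0'})\le\width_x(R(L)|_{Y_0})$, so the minimum of $\width_x(R(L)|_Y)$ over invariant $Y$ of codimension $i-1$ is no larger, and it is trivially no smaller. This gives the asserted formula, and $\epsilon_i(L,x)\in\Q$ follows at once, being the minimum of the finitely many rational numbers $\width_x(R(L)|_Y)$, whose rationality was established in the previous lemma. (The statement is read for $1\le i\le d+1$; for $i=d+1$ the only invariant codimension-$d$ subvariety through $x$ is $\{x\}$, where the width is $0=\epsilon_{d+1}(L,x)$.)

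The main obstacle is the passage from ``codimension $\le i-1$'' to ``codimension $=i-1$'', i.e.\ the monotonicity of $Y\mapsto\width_x(R(L)|_Y)$ together with the fact that the face lattice of a strongly convex cone has no gaps in dimension; everything else is a direct reinterpretation of Lemma~\ref{des} and the definition of the successive minima.
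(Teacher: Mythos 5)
Your proof is correct and follows the route the paper intends: the proposition is read off from Lemma~\ref{des} together with the rationality of the widths established just before it. The one point the paper leaves implicit --- passing from ``codimension $\le i-1$'' to ``codimension exactly $i-1$'' --- you handle correctly via the monotonicity of $Y\mapsto\width_x(R(L)|_Y)$ under inclusion (which also follows from the polytope description $\width_x(R(L)|_{V(\tau)})=\min\{t;(\square_L-u)\cap\tau^{\perp}\subseteq tB\cap\tau^{\perp}\}$, since $\tau'\succ\tau$ gives $\tau'^{\perp}\subseteq\tau^{\perp}$) and the gradedness of the face lattice of $\sigma$.
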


Suppose $L$ is ample. Then $\width_x(R(L)|_Y)=\width_x(L|_Y)$, and therefore
$\epsilon_i(L,x)$ is the minimum of $\width_x(L|_Y)$, after all closed irreducible invariant subvarieties 
$x\in Y\subseteq X$ of codimension $i-1$. In particular, 
$
\epsilon_d(L,x)=\min\{(L\cdot C); x\in C \subseteq X \text{ invariant curve}  \}.
$
Moreover, we have inclusions 
$$
\epsilon_d B\subseteq \square_L-u\subseteq \epsilon_1 B,
$$ 
and $\epsilon_d,\epsilon_1$ are maximal and minimal, respectively, with this property.


\subsection{At a general point}


Let $\square\subset M_\R$ be the moment polytope of $L$. Then $\square-\square\subset M_\R$
is a $0$-symmetric compact convex set, of dimension $\kappa=\kappa(L)$. The Minkowski
successive minima of $(M,\square-\square)$ are
$$
\lambda_i=\lambda_i(M,\square-\square)=\sup\{t\ge 0; \dim M\cap t(\square-\square)<i\}.
$$
We have $0\le \lambda_1\le \lambda_2\le \cdots\le \lambda_\kappa<\lambda_{\kappa+1}=+\infty$.
The polar convex set $(\square-\square)^*\subset N_\R$ coincides with the set of linear functionals
$\varphi\in N_\R$ such that $\length \varphi(\square)\le 1$. It is unbounded if and only if $\kappa<\rank M$.
The Minkowski successive minima of $(N,(\square-\square)^*)$ are
$$
\lambda_i^*=\lambda_i(N,(\square-\square)^*)=\sup\{t\ge 0; \dim N \cap t\cdot (\square-\square)^*<i\}.
$$

\begin{exmp}
Consider $(X,L)=(\bP^d,\cO(w))$, where $w$ is a positive integer. Since the ambient
is homogeneous, $\epsilon_i(L,x)=\epsilon_i(L)$ for all $x\in X$ and $i$. Then 
$$
\epsilon_1(L)=\cdots=\epsilon_d(L)=\sqrt[d]{\vol(L)}=w.
$$
Let $e_1,\ldots,e_d$ be the standard basis of $\Z^d$. The moment polytope of $L$ is 
$$
\square=\{\sum_{i=1}^dx_ie_i; x_i\ge 0, \sum_{i=1}^d x_i\le w\}.
$$
The difference $\square-\square$ is the convex hull of $\pm w e_i (1\le i\le d), \pm w (e_i-e_j)\ (1\le i<j\le d)$,
which is contained in $\{\sum_{i=1}^dx_ie_i; |x_i|\le w, |x_i+x_j| \le w\}$. We compute 
$$
\lambda_i(\Z^d,\square-\square)=\frac{1}{w}\ (1\le i\le d).
$$
Let $e_1^*,\ldots,e_d^*$ be the dual basis of $\check{\Z}^d$. The polar body $(\square-\square)^*$ is 
$\{\sum_{i=1}^dx_i^*e_i^*; |x^*_i|\le 1/w, |x^*_i-x^*_j| \le 1/w\}$. We compute 
$$
\lambda_i(\check{\Z}^d,(\square-\square)^*)=w \ (1\le i\le d).
$$
\end{exmp}

\begin{exmp}\label{product}
Consider $(X,L)=((\bP^1)^d,\cO(w_1,\ldots,w_d))$, where $w_1\ge w_2\ge \cdots\ge w_d$ are positive integers.
Since the ambient is homogeneous, $\epsilon_i(L,x)=\epsilon_i(L)$ for all $x$ and $i$.
Let $x=[1:0]^d \in X$. Then $\Bs|\cI_x^{t+}L|_\Q$ consists of the invariant cycles
$Y$ through $x$ (affine spaces with coordinates $z_i\ (i\in I)$) such that $\sum_{i\in I}w_i\le t$. Therefore 
$
\epsilon_i(L)=\min_{|I|=d-i+1}\sum_{j\in I}w_j.
$
Since we ordered the weights, we obtain 
$$
\epsilon_i(L)=\sum_{j=i}^d w_j.
$$
The volume is 
$$
\vol(L)= d! \prod_{i=1}^dw_i .
$$
The moment polytope of $L$ is $\square=\prod_{i=1}^d[0,w_i]\subset \R^d$. Then 
$\square-\square=\prod_{i=1}^d[-w_i,w_i]$, so 
$$
\lambda_i(\Z^d, \square-\square)=\frac{1}{w_i}.
$$
The polar body $(\square-\square)^*$ is $\{x\in \check{\R}^d; \sum_{i=1}^d|x_i|w_i\le 1\}$. Therefore 
$$
\lambda_i(\check{\Z}^d, (\square-\square)^*)=w_{d-i+1}.
$$
We obtain 
$$
\frac{\epsilon_i(L)}{\lambda_{d-i+1}^*}=\epsilon_i(L) \cdot \lambda_i=\frac{w_i+\cdots+w_d}{w_i}\in [1,d-i+1].
$$
\end{exmp}

\begin{lem}\label{Ss}
$\epsilon_j(L)\cdot \lambda_j\ge 1$.
\end{lem}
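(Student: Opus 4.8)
I plan to prove the equivalent inequality $\epsilon_j(L)\ge 1/\lambda_j$. If $j>\kappa(L)$ then $\square-\square$ has dimension $<j$, so $\lambda_j=+\infty$ and there is nothing to prove; hence assume $j\le\kappa(L)$, so $\lambda_j<\infty$. Recall that $\epsilon_j(L)=\epsilon_j(L,1)$, where $1\in T_N$ is the unit, and that by Lemma~\ref{er} the latter equals the supremum of $p/q$ over integers $p,q\ge 1$ with $\codim_1\Bs|\cI_1^p(qL)|\ge j$. So it suffices, for each rational $t'>\lambda_j$ and each sufficiently divisible $q$, to produce a pair $(p,q)$ with $p/q$ arbitrarily close to $1/t'$ for which this codimension bound holds.

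The construction goes as follows. By definition of $\lambda_j$, the body $t'(\square-\square)$ contains $j$ linearly independent lattice points $v_1,\dots,v_j\in M$; I would write $v_l/t'=a_l-b_l$ with $a_l,b_l\in\square$ chosen rational (possible because the difference map $\square\times\square\to\square-\square$ is a surjection of rational polytopes, so the fibre over the rational point $v_l/t'$ contains rational points). Pick $q_0\ge 1$ with $q_0a_l,q_0b_l\in M$ for all $l$, let $q$ be a large multiple of $q_0$, and set $p=\lfloor q/t'\rfloor\ge 1$. For $0\le i\le p$ the inclusion $q b_l+i v_l=q\bigl((1-\tfrac{it'}{q})b_l+\tfrac{it'}{q}a_l\bigr)\in q\square\cap M$ holds by convexity, so each $\chi^{qb_l+iv_l}$ is a global section of $\cO_X(qL)$, and hence so is the ``binomial'' section
$$
s_l=\sum_{i=0}^p(-1)^{p-i}\binom{p}{i}\chi^{qb_l+iv_l}=\chi^{qb_l}(\chi^{v_l}-1)^p .
$$

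Next I would analyse $s_l$ near the unit $1\in T_N$, where $\cO_X(qL)$ is trivial and $s_l$ is the Laurent function $\chi^{qb_l}(\chi^{v_l}-1)^p$. Here $\chi^{qb_l}$ is a unit and $\chi^{v_l}-1$ vanishes to order exactly one (its differential at $1$ is $v_l\ne 0$), so $\ord_1(s_l)=p$, i.e.\ $s_l\in\Gamma(\cI_1^p(qL))$, and the zero locus of $s_l$ near $1$ is the hypersurface $\{x\in T_N;\ \chi^{v_l}(x)=1\}$. Consequently, near $1$,
$$
\Bs|\cI_1^p(qL)|\subseteq\bigcap_{l=1}^j\{x\in T_N;\ \chi^{v_l}(x)=1\},
$$
and the right-hand side is the kernel of the homomorphism $T_N\to(k^*)^j$ dual to $\Z^j\to M$, $e_l\mapsto v_l$; since this map is injective, the kernel is a closed subgroup of $T_N$ that is smooth and pure of dimension $d-j$ near $1$. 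Hence $\codim_1\Bs|\cI_1^p(qL)|\ge j$, so $\epsilon_j(L)=\epsilon_j(L,1)\ge p/q=\lfloor q/t'\rfloor/q$ by Lemma~\ref{er}. Letting $q\to\infty$ through multiples of $q_0$ gives $\epsilon_j(L)\ge 1/t'$, and letting $t'\downarrow\lambda_j$ gives $\epsilon_j(L)\ge 1/\lambda_j$, as wanted.

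The main point of care is the last dimension count: I must know that $\bigcap_l\{\chi^{v_l}=1\}$ is not merely of dimension $\ge d-j$ but \emph{pure} of dimension $d-j$ near $1$, so that every irreducible component of the base locus through $1$ has codimension $\ge j$. This holds because a closed subgroup of a torus over a field of characteristic zero is smooth, hence equidimensional, and its dimension is $d$ minus the rank of the image of $N\to\Z^j$ dual to $e_l\mapsto v_l$, which is $j$ since the $v_l$ are linearly independent. Everything else (the convexity inclusion, the order computation for $\chi^{v_l}-1$, the rational choice of $a_l,b_l$) is routine.
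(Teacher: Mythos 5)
Your proof is correct and follows essentially the same strategy as the paper's: from $j$ linearly independent lattice vectors in $t'(\square-\square)$ you produce $j$ sections of $|qL|$ vanishing to order $\approx q/t'$ at $1\in T_N$ whose zero loci near $1$ are the codimension-one subgroups $\{\chi^{v_l}=1\}$, intersecting in codimension $j$. The only cosmetic difference is that you write these sections explicitly as $\chi^{qb_l}(\chi^{v_l}-1)^p$, whereas the paper obtains the corresponding divisors $\mu F_i+D_i\in|L|_\Q$ by pulling back from the rational maps $\chi^{u_i}\colon X\dashrightarrow\bP^1$; your version also makes the passage to Lemma~\ref{er} and the equidimensionality of the subgroup cleanly explicit.
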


\begin{proof}
Denote $\mu=1/\lambda_j(M,\square-\square)$. There exist $u_1,\ldots,u_j \in M$, primitive and
linearly independent, such that $\mu u_i=m'_i-m_i$ for some $m_i,m'_i \in \square$.
The inclusion $[m_i,m'_i] \subset \square$ induces a dominant rational map 
$$
\varphi_i \colon (X,L)\dashrightarrow (\bP^1,\cO(\mu)), \overline{L}\ge \overline{\cO_{\bP^1}(\mu)}.
$$
Therefore there exists $\mu F_i+D_i\in |L|_\Q$, where $F_i$ is the fiber of $\varphi_i$ through $1$,
and $D_i$ is an effective invariant divisor on $X$. We have $F_i\cap T_N=T_{N\cap u_i^\perp}$.
Since $u_i$ are linearly independent, we have $\codim_1 \cap_{i=1}^j F_i=j$. 

We obtain $\codim_1 \Bs|\cI_1^\mu L|_\Q \ge j$. Therefore $\mu\le \epsilon_j(L)$.
\end{proof}

\begin{lem}\label{Ss'}
$\epsilon_{d-j+1}(L)\le j\cdot \lambda_j^*$.
\end{lem}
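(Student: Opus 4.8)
The plan is to mimic the proof of Lemma~\ref{Ss}, but with the roles of $M$ and $N$ interchanged: in place of a lattice segment inside $\square$ producing a pencil, I use lattice functionals that are ``short'' on $\square$ to cut out a positive-dimensional piece of the base locus at the unit point, by restricting sections to a subtorus. (For $j=1$ this recovers the inequality $\epsilon_d(L)\le w$ of the width theorem, $w$ the lattice width of $\square$.)

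Fix $t'>\lambda_j^*$ and set $\tau=jt'$. By the definition of $\lambda_j^*$ there are linearly independent $\varphi_1,\dots,\varphi_j\in N$ with $\varphi_i\in t'(\square-\square)^*$, i.e.\ $\length\varphi_i(\square)\le t'$ for all $i$. Let $N_1\subseteq N$ be the saturation of $\Z\varphi_1+\dots+\Z\varphi_j$: this is a rank-$j$ direct summand, so $\pi\colon M\to M_1:=\Hom(N_1,\Z)$ is surjective, and the subtorus $T_{N_1}\subseteq T_N\subseteq X$ has closure $Y:=\overline{T_{N_1}}$, an irreducible subvariety of dimension $j$ through the unit $1\in T_N$. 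I claim $Y\subseteq\Bs|\cI_1^{\tau+}L|_\Q$. Granting this, $\codim_1\Bs|\cI_1^{\tau+}L|_\Q\le d-j<d-j+1$, so $\epsilon_{d-j+1}(L,1)\le\tau=jt'$; letting $t'\downarrow\lambda_j^*$ and using $\epsilon_{d-j+1}(L)=\epsilon_{d-j+1}(L,1)$ gives the lemma.

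To prove the claim I first translate vanishing order at $1$ into the toric language. Writing a section $s\in\Gamma(nL)$ in monomials as $s=\sum_{m\in n\square\cap M}c_m\chi^m$ and expanding $\chi^m=\prod_k(1+y_k)^{m_k}$ in local coordinates $y_k=z_k-1$ at $1$, the coefficient of $y^\alpha$ in $s$ is $\sum_m c_m\prod_k\binom{m_k}{\alpha_k}$, a degree-$|\alpha|$ polynomial expression in $m$; since the functions $m\mapsto\prod_k\binom{m_k}{\alpha_k}$ with $|\alpha|\le p$ span every polynomial of degree $\le p$ on $M_\R$, the inequality $\ord_1(s)>p$ forces $\sum_m c_m P(m)=0$ for all such $P$. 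Now take $s$ with $\ord_1(s)>n\tau$ and put $T=\{m:c_m\ne0\}\subseteq n\square\cap M$. Restriction to $T_{N_1}$ carries $\chi^m$ to $\chi^{\pi(m)}$, and $\pi(m)=\pi(m')$ exactly when $\langle\varphi_i,m\rangle=\langle\varphi_i,m'\rangle$ for all $i$; hence $s|_Y=0$ is equivalent to requiring $\sum_{m\in T,\,\pi(m)=\bar m}c_m=0$ for every $\bar m\in M_1$. Fix such a $\bar m$ with $\pi^{-1}(\bar m)\cap T\ne\emptyset$ (otherwise the sum is $0$), pick $m^0$ in it, and for each $i$ let $V_i=\{\langle\varphi_i,m\rangle:m\in T\}$, a set of integers lying in an interval of length $n\length\varphi_i(\square)\le nt'$, so $|V_i|\le\lfloor nt'\rfloor+1$. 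The Lagrange-type polynomial
\[
P_{\bar m}(m)=\prod_{i=1}^{j}\ \prod_{\substack{v\in V_i\\ v\ne\langle\varphi_i,m^0\rangle}}\frac{\langle\varphi_i,m\rangle-v}{\langle\varphi_i,m^0\rangle-v}
\]
has degree $\sum_i(|V_i|-1)\le j\lfloor nt'\rfloor\le n\tau$, equals $1$ on $T\cap\pi^{-1}(\bar m)$, and vanishes on $T\setminus\pi^{-1}(\bar m)$. Applying the previous paragraph to $P=P_{\bar m}$ gives $\sum_{m\in T,\,\pi(m)=\bar m}c_m=0$, whence $s|_Y=0$. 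As $s$ and $n$ were arbitrary, $Y\subseteq\Bs|\cI_1^{\tau+}L|_\Q$.

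The routine parts are the expansion of $\chi^m$ near $1$ and the bookkeeping with $N_1$, $\pi$, and $Y$. The load-bearing step is the construction of $P_{\bar m}$: although the individual fibers $\pi^{-1}(\bar m)$ need not be thin in $\square$, they are cut out by the $j$ linear forms $\langle\varphi_i,\cdot\rangle$, each of which takes at most $\lfloor nt'\rfloor+1$ values on $T$, so a polynomial separating one fiber from all the others needs degree only $\le n\tau$ --- precisely the vanishing budget supplied by $\ord_1(s)>n\tau$. (Taking instead $\varphi_i$ with $\length\varphi_i(\square)$ close to $\lambda_i^*$, the same argument yields the slightly stronger $\epsilon_{d-j+1}(L)\le\sum_{i=1}^j\lambda_i^*$.)
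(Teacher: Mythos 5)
Your proof is correct, and it shares the paper's geometric skeleton --- choose linearly independent $\varphi_1,\dots,\varphi_j\in N$ with $\length\varphi_i(\square)\le t'$, let $Y$ be the closure of the associated $j$-dimensional subtorus through $1$, and show $Y\subseteq\Bs|\cI_1^{jt'+}L|_\Q$ --- but the key containment is proved by a genuinely different argument. The paper views $Y$ as the fiber through $1$ of the rational map induced by $(\varphi_1,\dots,\varphi_j)\colon M\to\Z^j$, notes that $(Y,L|_Y)$ is dominated by $(\bP^1,\cO(t'))^j$, and quotes the computation $\epsilon_1(\cO(w_1,\dots,w_j))=\sum_i w_i$ from Example~\ref{product} to get $\epsilon_1(L|_Y,1)\le jt'$, after which any $D\in|\cI_1^{jt'+}L|_\Q$ must restrict to zero on $Y$. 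You prove that restriction statement directly: the Taylor expansion of $s=\sum_m c_m\chi^m$ at $1$ shows that $\ord_1(s)>p$ annihilates $\sum_m c_mP(m)$ for every polynomial $P$ of degree $\le p$, and your Lagrange-type polynomials $P_{\bar m}$ --- of degree $\le j\lfloor nt'\rfloor\le\lfloor n\tau\rfloor$ because each $\varphi_i$ takes at most $\lfloor nt'\rfloor+1$ integer values on the support of $s$ --- take the value $1$ on one fiber of $\pi$ and $0$ on the rest, forcing every fiber sum, hence $s|_Y$, to vanish. This is self-contained (it bypasses the width computation at torus-fixed points that underlies Example~\ref{product}) and, as you observe, it yields the slightly sharper bound $\epsilon_{d-j+1}(L)\le\sum_{i=1}^j\lambda_i^*$ by taking each $\varphi_i$ with $\length\varphi_i(\square)$ close to $\lambda_i^*$. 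All the individual steps (the spanning property of the binomial functions, the degree count, the identification of $s|_Y=0$ with the vanishing of the fiber sums) check out.
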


\begin{proof}
Suppose $\lambda_j^*=\lambda>0$. There exist
$\varphi_1,\ldots,\varphi_j\in N\cap \lambda\cdot (\square-\square)^*$, linearly independent.
The property $\varphi_i\in \lambda\cdot (\square-\square)^*$ means that the interval $\varphi_i(\square)$
has length at most $\lambda$. Consider the induced homomorphism of lattices
$$
(\varphi_1,\ldots,\varphi_j)\colon M\to \Z^j,
$$
whose image has rank $j$ since $\varphi_i$ are linearly independent.
Then $\square$ is mapped onto a polytope of dimension $j$, contained in 
$\prod_{i=1}^j[x_i,x_i+\lambda]$ for some $x_1,\ldots,x_j\in \R$. 
The lattice homomorphism induces a dominant rational map $X\dashrightarrow Y^{d-j}$ whose 
fiber $(F,L|_F)$ through $x=1$ has dimension $j$, and is dominated by $(\bP^1,\cO_{\bP^1}(\lambda))^j$. 
Therefore 
$$
\epsilon_1(L|_F,1)\le j\lambda
$$
by Example~\ref{product}.
Let $D\in |\cI^{j\lambda+}_1L|_\Q$. Then $D|_F\in |\cI^{j\lambda+}_1(L|_F)|_\Q$, which is empty since 
$\epsilon_1(L|_F,1)\le j\lambda$. Therefore $D|_F=0$. We deduce 
$$
F\subseteq \Bs |\cI^{j\lambda+}_1L|_\Q.
$$ 
Therefore $\codim_1 \Bs |\cI^{j\lambda+}_1 L|_\Q\le d-j<d-j+1$. Then $\epsilon_{d-j+1}(L)\le j\lambda$.
\end{proof}

\begin{rem}
For $\epsilon_d(L)$,
Lemmas~\ref{Ss}, \ref{Ss'} state that $\lambda_d^{-1} \leq  \epsilon_d(L) \leq \lambda^*_1$.
These inequalities also follow from \cite[Theorem 3.6]{AI}.
\end{rem}

\begin{thm} The invariants $\epsilon_i(L),1/\lambda_i,\lambda^*_{d-i+1}$ are all equivalent. More precisely,
$$
1\le \epsilon_i(L)\cdot \lambda_i \le d\cdot \frac{\epsilon_i(L)}{\lambda_{d-i+1}^*}\le d(d-i+1).
$$
\end{thm}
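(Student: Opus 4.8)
The plan is to read off all three inequalities from Lemmas~\ref{Ss} and~\ref{Ss'} together with the transference theorem of Mahler, after first clearing away the degenerate cases.

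First I would record what bigness of $L$ buys us. Since $L$ is big, $\kappa(L)=d$, so $\epsilon_i(L)>0$ for every $1\le i\le d$ by Lemma~\ref{ek}, and the moment polytope $\square=\square_L$ is $d$-dimensional. Hence $\square-\square\subset M_\R$ is a compact $0$-symmetric convex body of full dimension $d$, its polar $(\square-\square)^*\subset N_\R$ is likewise compact of dimension $d$, and all of $\lambda_1\le\cdots\le\lambda_d$ and $\lambda^*_1\le\cdots\le\lambda^*_d$ are finite and strictly positive. This is exactly what legitimizes the divisions below.

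Next I would dispatch the two outer inequalities. The left one, $1\le\epsilon_i\cdot\lambda_i$, is precisely Lemma~\ref{Ss} with $j=i$. For the right one, I apply Lemma~\ref{Ss'} with $j=d-i+1$: since then $d-j+1=i$, it reads $\epsilon_i\le(d-i+1)\,\lambda^*_{d-i+1}$, and dividing by $\lambda^*_{d-i+1}>0$ and multiplying by $d$ gives $d\cdot\frac{\epsilon_i}{\lambda^*_{d-i+1}}\le d(d-i+1)$.

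It remains to treat the middle inequality, which is the only place where something beyond substitution enters. Dividing through by $\epsilon_i>0$ reduces it to the purely convex-geometric statement $\lambda_i\cdot\lambda^*_{d-i+1}\le d$. Here $\lambda^*_{d-i+1}=\lambda_{d+1-i}((\square-\square)^*)$, so $(\lambda_i,\lambda^*_{d-i+1})$ is the transference pair $(\lambda_i(K),\lambda_{d+1-i}(K^*))$ for $K=\square-\square$, and the desired bound is exactly the upper half of the transference theorem of Mahler (recalled in Section~6) for the $d$-dimensional $0$-symmetric body $\square-\square$ and its polar. Assembling the three parts gives the displayed chain, and the equivalence of $\epsilon_i,\lambda_i^{-1},\lambda^*_{d-i+1}$ is then immediate. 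The only genuine ingredient is the transference input; the point I would watch is bookkeeping — getting the index shift $\lambda^*_{d-i+1}=\lambda_{d+1-i}(K^*)$ right, and confirming that bigness of $L$ excludes every degenerate case (a vanishing or infinite $\lambda$, or $\epsilon_i=0$) that would otherwise break the divisions and the hypotheses of the transference theorem.
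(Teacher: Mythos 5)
Your proposal is correct and follows exactly the paper's own argument: Lemma~\ref{Ss} gives the left inequality, Lemma~\ref{Ss'} with $j=d-i+1$ gives the right one, and the middle inequality reduces (after dividing by $\epsilon_i>0$) to Mahler's transference bound $\lambda_i\lambda_{d-i+1}^*\le d$ with Banaszczyk's constant. The extra care you take with the non-degeneracy coming from bigness of $L$ is a sensible addition but not a different route.
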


\begin{proof}
Use Banaszczyk's bound~\cite{Ban93} in Mahler's  transference theorem $1\le \lambda_i\lambda_{d-i+1}^*\le d$ for the 
second inequality, and the two lemmas above.
\end{proof}

\begin{thm}\label{ew}
The Seshadri constant of $L$ at a very general point 
is proportional to the lattice width of the moment polytope $\square_L$. More precisely,
$$
\frac{\width(\square_L)}{d} \le \epsilon(L) \le \width(\square_L).
$$
\end{thm}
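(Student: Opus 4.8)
The plan is to reduce everything to Lemmas~\ref{Ss} and~\ref{Ss'}, together with the classical transference bound $\lambda_i\lambda_{d-i+1}^*\le d$ of Mahler, once the first minimum $\lambda_1^*$ of the polar body has been identified with the lattice width of $\square_L$. Throughout, $\epsilon(L)$ denotes $\epsilon_d(L)$ at a very general point in the sense of Section~2, which on a toric variety equals $\epsilon_d(L,1)$.

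First I would dispose of the degenerate case. Recall that $\square_L-\square_L$ has dimension $\kappa(L)$, so if $\dim\square_L<d$ then $\kappa(L)<d$, hence $L$ is not big and $\epsilon(L)=\epsilon_d(L)=0$ by Lemma~\ref{ek}. On the other hand $\square_L$ is a rational polytope whose affine hull is then a proper rational affine subspace of $M_\R$, so its orthogonal complement contains a nonzero $u\in N$ with $\length u(\square_L)=0$; therefore $\width(\square_L)=0$ and all three quantities vanish. Thus we may assume $\dim\square_L=d$, equivalently $L$ is big, in which case $\square_L-\square_L$ is a full-dimensional $0$-symmetric body and all the $\lambda_i$, $\lambda_i^*$ are finite and positive.

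Next I would prove $\lambda_1^*=\width(\square_L)$. Since $(\square_L-\square_L)^*=\{\varphi\in N_\R;\ \length\varphi(\square_L)\le 1\}$, for $t>0$ we have $t\cdot(\square_L-\square_L)^*=\{\varphi\in N_\R;\ \length\varphi(\square_L)\le t\}$. Hence $\lambda_1^*$, the least $t\ge 0$ for which $t\cdot(\square_L-\square_L)^*$ contains a nonzero lattice point, equals $\min\{\length u(\square_L);\ u\in N\setminus 0\}=\width(\square_L)$, the minimum being attained because $\square_L$ is full-dimensional and bounded.

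Finally, Lemma~\ref{Ss'} with $j=1$ gives $\epsilon(L)=\epsilon_d(L)\le\lambda_1^*=\width(\square_L)$, which is the upper bound. For the lower bound, Lemma~\ref{Ss} with $j=d$ gives $\epsilon_d(L)\cdot\lambda_d\ge 1$, while the $i=d$ case of Mahler's transference theorem gives $\lambda_d\cdot\lambda_1^*\le d$; combining these yields $\epsilon(L)=\epsilon_d(L)\ge 1/\lambda_d\ge\lambda_1^*/d=\width(\square_L)/d$. I do not anticipate a genuine obstacle: the real content sits in Lemmas~\ref{Ss}, \ref{Ss'} and in the transference theorem, and the only points needing care are the identification $\lambda_1^*=\width(\square_L)$ and the bookkeeping in the non-big degenerate case (together with the observation that $\epsilon(L)$ is to be read as $\epsilon_d(L)$).
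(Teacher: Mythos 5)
Your proof is correct and follows essentially the same route as the paper: dispose of the non-big case, identify $\width(\square_L)$ with $\lambda_1^*$ (which the paper records in its appendix on the geometry of numbers), and combine Lemmas~\ref{Ss} and~\ref{Ss'} with Mahler's transference bound. The paper simply packages this as the $i=d$ case of the preceding theorem comparing $\epsilon_i$, $\lambda_i^{-1}$ and $\lambda_{d-i+1}^*$, but the underlying argument is identical to yours.
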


\begin{proof} If $L$ is not big, both invariants are zero. Suppose $L$ is big.
Then $\epsilon(L)=\epsilon_d(L)$ and $\width(\square_L)=\lambda_1^*$, and we can apply the above theorem.
\end{proof}

\begin{rem} For toric varieties, we may establish the inequalities 
$$
1 \le \frac{\vol(L)}{\prod_{i=1}^d\epsilon_i(L)}\le d!
$$
of Theorem~\ref{m2} as follows: the left hand side inequality is easy to see.
For the right hand side, recall the second main theorem of Minkowski, 
in the stronger form due to Davenport-Estermann:
$$
\frac{1}{d!}\le \vol_M(\square)\cdot \prod_{i=1}^d\lambda_i(M,\square-\square)\le 1.
$$
We have $\vol(L)=d!\vol_M(\square)$. From $\epsilon_i \lambda_i\ge 1$, we obtain 
$\prod_i \epsilon_i\cdot \prod_i \lambda_i\ge 1$. Therefore $\vol(L) \le d! \prod_i \epsilon_i$.
\end{rem}


\section{Adjoint linear systems and the Flatness Theorem of Khinchin}


Recall first results of Demailly~\cite{De92} and Ein, K\"uchle, Lazarsfeld~\cite{EKL}:

\begin{thm}\label{aj} 
Let $X$ be a smooth projective variety of dimension $d$, let $L$ be a nef and big 
$\Q$-Cartier divisor on $X$ whose fractional part has normal crossing support. Let $\epsilon$ be the Seshadri constant of $L$ at a very general
point $x\in X$, which coincides with $\epsilon_d(L,x)$. The following properties hold:
\begin{itemize}
\item[1)] The jet map $\Gamma(\lceil K_X+L \rceil)\to \cO_x/\cI_x^{1+\lceil \epsilon-d-1\rceil}$
is surjective. In particular, 
$$
\dim_k \Gamma(\lceil K_X+L \rceil)\ge \binom{\lceil \epsilon-1\rceil}{d}=\frac{1}{d!}\prod_{i=1}^d\lceil \epsilon-i \rceil.
$$
\item[2)] If $\epsilon>d$, then $\Gamma(\lceil K_X+L \rceil)\ne 0$.
\item[3)] If $\epsilon>d+1$, then $|\lceil K_X+L \rceil|$ maps $X$ onto a variety of dimension $d$.
\item[4)] If $\epsilon>2d$, then $|\lceil K_X+L \rceil|$ maps $X$ birationally onto a variety of dimension $d$.
\end{itemize}
\end{thm}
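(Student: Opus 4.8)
This is the classical argument of Demailly~\cite{De92} and Ein--K\"uchle--Lazarsfeld~\cite{EKL}; I indicate the strategy, which is to extract jet separation at one (resp.\ two) very general points from Kawamata--Viehweg vanishing on a blow-up. First I would prove 1). Set $p=\lceil\epsilon-d-1\rceil$; if $p<0$ the target group $\cO_x/\cI_x^{p+1}$ vanishes and there is nothing to prove, so assume $p\ge 0$, equivalently $\epsilon>d$. Since $\lceil\epsilon-d-1\rceil<\epsilon-d$ we have $0\le p+d<\epsilon$. Let $f\colon Y\to X$ be the blow-up of $X$ at $x$, with exceptional divisor $E$, so $K_Y=f^*K_X+(d-1)E$. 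The $\Q$-divisor $f^*L-(p+d)E$ is nef, because $p+d\le\epsilon$ and $\epsilon$ is maximal with $f^*L-\epsilon E$ nef (Proposition~\ref{nef}); being nef, its volume is $(f^*L-(p+d)E)^d=(L^d)-(p+d)^d$, which is positive since $p+d<\epsilon\le\sqrt[d]{\vol(L)}=\sqrt[d]{(L^d)}$ by Lemma~\ref{mil} and $\mult_x(X)=1$. As $x$ is very general it avoids $\Supp(\{L\})$, so the fractional part of $f^*L-(p+d)E$ equals $f^*(\{L\})$ and has normal crossing support. Kawamata--Viehweg vanishing for $\Q$-divisors therefore gives $H^i(Y,\cO_Y(K_Y+\lceil f^*L-(p+d)E\rceil))=0$ for $i>0$. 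Now $\lceil f^*L-(p+d)E\rceil=f^*\lceil L\rceil-(p+d)E$, hence
$$
K_Y+\lceil f^*L-(p+d)E\rceil=f^*\lceil K_X+L\rceil-(p+1)E,
$$
so by Lemma~\ref{bl} and the projection formula $f_*$ of this sheaf is $\cI_x^{p+1}(\lceil K_X+L\rceil)$, while $R^1f_*$ of it vanishes (blow-up of a smooth point). Leray then gives $H^1(X,\cI_x^{p+1}(\lceil K_X+L\rceil))=0$, so the jet map $\Gamma(\lceil K_X+L\rceil)\to\cO_x/\cI_x^{p+1}$ is surjective. The dimension estimate follows from $\dim_k\cO_x/\cI_x^{p+1}=\binom{p+d}{d}$ together with $p+d=\lceil\epsilon-1\rceil$ and $\lceil\epsilon-1\rceil-i+1=\lceil\epsilon-i\rceil$.

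Statement 2) is the case $p\ge 0$ of 1): $\epsilon>d$ forces $\lceil\epsilon-d-1\rceil\ge 0$, so $\Gamma(\lceil K_X+L\rceil)$ surjects onto $\cO_x/\cI_x\ne 0$. For 3), if $\epsilon>d+1$ then $p=\lceil\epsilon-d-1\rceil\ge 1$, so by 1) the map $\Gamma(\lceil K_X+L\rceil)\to\cO_x/\cI_x^{2}$ is surjective at the very general point $x$; thus $\lceil K_X+L\rceil$ is globally generated at $x$ and the differential of $\phi_{|\lceil K_X+L\rceil|}$ is injective there, so the image has dimension $\ge d$, hence exactly $d$.

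For 4) we have $\epsilon>2d\ge d+1$, so 3) already gives that $\phi=\phi_{|\lceil K_X+L\rceil|}$ is an immersion at a very general point; it remains to separate two very general points $x,y$, i.e.\ to make $\Gamma(\lceil K_X+L\rceil)\to\cO_x/\cI_x\oplus\cO_y/\cI_y$ surjective. Since $\epsilon_d((1-\delta)L,x)=(1-\delta)\epsilon>2d$ for small rational $\delta>0$, we have $\codim_x\Bs|\cI_x^{(2d)+}((1-\delta)L)|_\Q=d$, so there is a section producing an effective $\Q$-divisor $\Delta_x\sim_\Q(1-\delta)L$ with $\mult_x(\Delta_x)>2d$, and likewise $\Delta_y\sim_\Q(1-\delta)L$ with $\mult_y(\Delta_y)>2d$. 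Then $\tfrac12\Delta_x+\tfrac12\Delta_y\sim_\Q(1-\delta)L$ has multiplicity $>d$ at both $x$ and $y$; applying the Angehrn--Siu/Koll\'ar cutting-down procedure yields an effective $\Q$-divisor $D\sim_\Q(1-\delta)L$ for which $x$ and $y$ are isolated points of the non-klt locus of $(X,D+\{L\})$ with the associated multiplier ideal contained in $\cI_x\cap\cI_y$ there. Nadel vanishing for $\cO_X(\lceil K_X+L\rceil)$ — valid because the residual positivity $(1-\delta)L-D\sim_\Q\delta L$ is nef and big — then gives the required surjectivity, and together with the immersion property $\phi$ is birational onto its $d$-dimensional image.

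Statements 1)--3) are a formality once the blow-up computation and Kawamata--Viehweg are set up; the genuine difficulty is in 4), namely producing a divisor whose non-klt locus is exactly $\{x,y\}$ via the cutting-down argument, which must lower the dimension of the non-klt center while keeping the relevant multiplicities above $d$ — this is precisely why the threshold must double from $d$ (or $d+1$) to $2d$. A subordinate but purely bookkeeping issue, both in 1) and again in 4), is matching the round-ups $\lceil f^*L-(p+d)E\rceil$ with $f^*\lceil K_X+L\rceil$ (and more generally interpreting $\lceil K_X+L\rceil$ through multiplier ideals of the $\Q$-divisor $D+\{L\}$); here this is harmless because $x,y$ avoid $\Supp(\{L\})$ and $p+d\in\Z$.
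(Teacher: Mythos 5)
Your treatment of 1)--3) is essentially the paper's argument: blow up at $x$, identify $K_Y+\lceil f^*L-(p+d)E\rceil$ with $f^*\lceil K_X+L\rceil-(p+1)E$, push down, and apply Kawamata--Viehweg; your extra justification of bigness via $(p+d)^d<\epsilon^d\le\vol(L)$ (Lemma~\ref{mil}) is a correct filling-in of a step the paper leaves implicit, and the round-up bookkeeping is handled the same way (the very general $x$ avoids $\Supp\{L\}$).

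For 4), however, you take a genuinely different route, and as written it has a gap. The paper does \emph{not} use multiplier ideals or a cutting-down procedure: it blows up at the two very general points $x,y$ simultaneously and writes
$$
f^*L-dE_x-dE_y=\tfrac12(f^*L-2dE_x)+\tfrac12(f^*L-2dE_y),
$$
which is nef and big because $\epsilon>2d$ makes each summand nef; Kawamata--Viehweg then gives $H^1(Y,f^*\lceil K_X+L\rceil-E_x-E_y)=0$, hence $H^1(X,\cI_x\otimes\cI_y(\lceil K_X+L\rceil))=0$, i.e.\ separation of the two points, and birationality follows together with 3). Your Angehrn--Siu/Nadel substitute does not close: from $\mult_x(\tfrac12\Delta_x+\tfrac12\Delta_y)>d$ at a smooth point you only conclude that $x$ lies \emph{in} the non-klt locus of the pair; the minimal non-klt center through $x$ may well be positive-dimensional, and making $x$ an \emph{isolated} non-klt point is exactly the hard inductive step of Angehrn--Siu, which requires multiplicity inputs on the order of $\binom{d+1}{2}$ along successive centers (or the EKL differentiation argument), not just $>d$ at the point. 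So the sentence ``applying the Angehrn--Siu/Koll\'ar cutting-down procedure yields \dots isolated points of the non-klt locus'' is precisely the unproved claim, and it is not available at the stated multiplicity threshold. I recommend replacing your 4) by the two-point convex-combination argument above, which needs only the hypothesis $\epsilon>2d$ and the same vanishing theorem you already used in 1).
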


\begin{proof} 1) Let $f\colon Y\to X$ be the blow-up at $x$, with exceptional divisor $E$.
Since $L$ has integer coefficients near $x$, $\lceil K_Y+f^*L\rceil=f^*\lceil K_X+L\rceil+(d-1)E$.
We may suppose $p=\lceil \epsilon-d-1\rceil$ is non-negative. Then
$
\lceil K_Y+f^*L-(p+d)E\rceil=f^*\lceil K_X+L\rceil-(p+1)E
$
and
$$
f_*\cO_Y(\lceil K_Y+f^*L-(p+d)E\rceil)=\cI_x^{p+1}(\lceil K_X+L\rceil).
$$
By the Leray spectral sequence, the natural homomorphism
$$
H^1(X,\cI_x^{p+1}(\lceil K_X+L\rceil))\to H^1(Y,\lceil K_Y+f^*L-(p+d)E\rceil)
$$
is injective. 

But $f^*L-(p+d)E=f^*L-(\lceil \epsilon \rceil-1)E$ is nef and big, since $\lceil \epsilon \rceil-1<\epsilon$. 
By Kawamata-Viehweg vanishing, $H^1(Y,\lceil K_Y+f^*L-(p+d)E\rceil)=0$. Therefore $H^1(X,\cI_x^{p+1}(\lceil K_X+L\rceil))=0$.
Then $\Gamma(\lceil K_X+L\rceil )\to \cO_x/\cI_x^{p+1}$ is surjective. Since $x$ is a smooth point, the 
right hand side has dimension $\binom{p+d}{d}$.

2) This follows from 1).

3) The $1$-jet map $\Gamma(\lceil K_X+L\rceil )\to \cO_x/\cI_x^2$ is surjective. 
Therefore $|\lceil K_X+L\rceil |$ moves, and the induced rational map is generically finite.

4) Let $x,y$ be two very general points, let $f\colon Y\to X$ be the blow-up at $x,y$. Then 
$$
f^*L-dE_x-dE_y=\frac{1}{2}(f^*L-2dE_x)+\frac{1}{2}(f^*L-2dE_y)
$$
is nef and big. By Kawamata-Viehweg vanishing, $H^1(Y,\lceil K_Y+f^*L-dE_x-dE_y\rceil)=0$. 
But $\lceil K_Y+f^*L-dE_x-dE_y\rceil=f^*\lceil K_X+L\rceil-E_x-E_y$. Therefore $H^1(X,\cI_x\otimes \cI_y(\lceil K_X+L\rceil))=0$.
\end{proof}

We generalize the flatness theorem of Khinchin. The original statement
says that a convex body which contains no lattice points must have lattice width bounded
above by a constant which depends only on the dimension (see~\cite{KL88} and part a) of the theorem below). 
We show that the same conclusion holds if the lattice points of the convex body are degenerate (part b) 
of the theorem below).

\begin{thm}\label{ft}
Let $M\simeq \Z^d$ be a lattice, let $\square\subset M_\R$ be a compact convex set, of dimension $d$.
Let $w$ be the lattice width of $\square$ with respect to $M$.
\begin{itemize}
\item[a)] If $w>d^2$, then $M\cap \Int\square\ne \emptyset$.
\item[b)] If $w>d(d+1)$, then $\dim(M\cap \Int\square)=d$.
\item[c)] If $w>2d^2$, then $M\cap \Int \square$ spans $M$.
\item[d)] $\sqrt[d]{d! |M\cap \Int \square|}\ge \frac{w}{d}-d$.
\item[e)] $\sqrt[d]{d! \vol_M(\square)}\ge \frac{w}{d}$.
\end{itemize}
\end{thm}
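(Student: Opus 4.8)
The plan is to realize $\square$---after a harmless approximation---as the moment polytope of a nef and big $\Q$-divisor on a smooth projective toric variety, and then to read off (a)--(e) from Theorem~\ref{aj} via the width estimate of Theorem~\ref{ew}. For the reduction to rational polytopes, fix a rational point $c\in\Int\square$ and shrink: $\square_t=c+t(\square-c)$ for $t\in(0,1)$ satisfies $\square_t\subseteq\Int\square$, $\bigcup_{t<1}\square_t=\Int\square$, $\width(\square_t)=t\,\width(\square)$ and $\vol_M(\square_t)=t^d\vol_M(\square)$; moreover $M\cap\square_t=M\cap\Int\square$ once $t$ is close to $1$, since $M\cap\Int\square$ is finite. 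Squeezing, by compactness, a rational polytope $P$ with $\square_t\subseteq\Int P\subseteq P\subseteq\Int\square$, we get $M\cap\Int P=M\cap\Int\square$, $\vol_M(P)\le\vol_M(\square)$ and $t\,\width(\square)\le\width(P)\le\width(\square)$. Letting $t\to1$, each of (a)--(e) for $\square$ follows from its analogue for such rational polytopes $P$. So assume henceforth that $\square$ is a rational polytope of dimension $d$.

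\emph{Toric model and dictionary.} Let $\Delta$ be a smooth projective refinement of the normal fan of $\square$, let $X=T_N\emb(\Delta)$, and set $L=\sum_\rho a_\rho D_\rho$ with $a_\rho=-\min_{m\in\square}\langle m,u_\rho\rangle\in\Q$, the sum over the rays $\rho$ of $\Delta$. Then $X$ is smooth projective; $L$ is the pullback of the ample divisor on the normal-fan toric variety, hence nef and big and $\Q$-Cartier; its fractional part is supported on the simple normal crossing toric boundary; and its moment polytope is $\square_L=\square$. So Theorem~\ref{aj} applies, and if $\epsilon=\epsilon_d(L)$ denotes the Seshadri constant at a very general point then $\epsilon\ge\width(\square)/d$ by Theorem~\ref{ew}. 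The key point is the dictionary: since $K_X+L=\sum_\rho(a_\rho-1)D_\rho$, we have $\lceil K_X+L\rceil=\sum_\rho(\lceil a_\rho\rceil-1)D_\rho$, and a monomial basis of its sections is given by the $\chi^m$ with $\langle m,u_\rho\rangle\ge 1-\lceil a_\rho\rceil=1+\lfloor\min_\square\langle\cdot,u_\rho\rangle\rfloor$ for all $\rho$; for $m\in M$ this says exactly $\langle m,u_\rho\rangle>\min_\square\langle\cdot,u_\rho\rangle$ for all $\rho$, i.e.\ $m\in\Int\square$. Hence $h^0(X,\cO_X(\lceil K_X+L\rceil))=|M\cap\Int\square|$; moreover the rational map $\phi$ defined by the complete linear system $|\lceil K_X+L\rceil|$, spanned by those $\chi^m$, has image of dimension $\dim\Conv(M\cap\Int\square)$ and is birational onto its image iff the differences $m-m'$ ($m,m'\in M\cap\Int\square$) generate $M$. (Also $\vol(L)=d!\,\vol_M(\square_L)$, the sections of $nL$ being the monomials in $M\cap n\square$.)

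\emph{Conclusion.} Each part now follows. (a) $w>d^2$ forces $\epsilon>d$, so $\Gamma(\lceil K_X+L\rceil)\ne0$ by Theorem~\ref{aj}(2), i.e.\ $M\cap\Int\square\ne\emptyset$. (b) $w>d(d+1)$ forces $\epsilon>d+1$, so $\phi(X)$ is $d$-dimensional by Theorem~\ref{aj}(3), i.e.\ $\dim(M\cap\Int\square)=d$. (c) $w>2d^2$ forces $\epsilon>2d$, so $\phi$ is birational onto a $d$-fold by Theorem~\ref{aj}(4), i.e.\ $M\cap\Int\square$ spans $M$. (d) Theorem~\ref{aj}(1) gives $d!\,|M\cap\Int\square|\ge\prod_{i=1}^d\lceil\epsilon-i\rceil$; if $\epsilon\le d$ the claimed inequality is trivial, while if $\epsilon>d$ then $d!\,|M\cap\Int\square|\ge\prod_{i=1}^d(\epsilon-i)\ge(\epsilon-d)^d\ge(w/d-d)^d$. (e) Applying the corollary to Lemma~\ref{mil} to a Cartier multiple of $L$ at a very general (hence smooth, so $\mult_x(X)=1$) point, $w/d\le\epsilon=\epsilon_d(L)\le\sqrt[d]{\vol(L)}=\sqrt[d]{d!\,\vol_M(\square)}$.

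\emph{Main obstacle.} The only steps requiring genuine care are the bookkeeping with the round-up in the second paragraph---verifying that $\lceil K_X+L\rceil$ selects precisely the lattice points interior to $\square$, independently of the chosen smooth refinement $\Delta$, and that image dimension and birationality of $\phi$ translate into the affine span of $M\cap\Int\square$---and the point-set details of the approximation (convergence of the lattice width and stability of the finite set $M\cap\Int\square$ under the shrinking $\square_t$). Everything else is a direct invocation of Theorems~\ref{aj} and~\ref{ew}.
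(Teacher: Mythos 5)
Your proposal is correct and follows essentially the same route as the paper: approximate $\square$ by a rational polytope, realize it as the moment polytope of a nef and big $\Q$-divisor with SNC fractional part on a smooth projective toric variety, identify $\Gamma(\lceil K_X+L\rceil)$ with the interior lattice points, and invoke Theorem~\ref{aj} together with the bound $\epsilon\ge w/d$ of Theorem~\ref{ew} (plus the volume comparison from Lemma~\ref{mil} for part e)). Your shrink-and-squeeze approximation is a slightly more explicit version of the paper's limit $\Conv(\tfrac1n M\cap\square)\to\square$, but the substance is identical.
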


\begin{proof}
The width is continuous with respect to the approximation $\lim_{n\to \infty} \Conv(\frac{1}{n}M\cap \square)= \square$.
Therefore we may suppose $\square$ is the convex hull of finitely many points in $M_\Q$. 

Let $X$ be a toric
desingularization of the projective model of the graded ring $\oplus_{n\ge 0}\oplus_{m\in M\cap n\square}k \cdot \chi^m$.
There exists a $\Q$-Cartier divisor $L$ on $X$, semiample and big, supported by the invariant prime divisors of $X$,
such that $\square$ is the moment polytope of $L$. We compute 
$$
\Gamma(\lceil K_X+L\rceil)=\oplus_{m\in M\cap \Int\square} k\cdot \chi^m.
$$
Therefore the semi-invariant basis of $\Gamma(\lceil K_X+L\rceil)$ is in one-to-one correspondence to the interior
lattice points of $\square$. Denote $A=M\cap \Int\square$. The linear system $|\lceil K_X+L \rceil |$ is non-empty if
and only if $A$ is non-empty, it maps $X$ onto a variety of the same dimension if and only if $A-a$ generates the $\R$-vector space 
$M_\R$, for every $a\in A$, and maps $X$ birationally onto a variety of the same dimension if and only if $A-a$ generates the lattice 
$M$, for every $a\in A$.

Let $\epsilon$ be the Seshadri constant of $L$ at a very general point of $X$. We have
$\epsilon=\epsilon_d(L)$. By Theorem~\ref{ew}, $\epsilon\ge \frac{w}{d}$.
Therefore the claims are just a restatement of Theorem~\ref{aj} for $(X,L)$.
\end{proof}

Part a) of the theorem was proved by Kannan, Lov\'asz with $d^2$ replaced by $c d^2$, for some constant $c$.
It is expected that the optimal bound is linear in $d$ (see~\cite{KL88}). Parts b) and c) are new statements, 
while d) and e) improve similar bounds in~\cite{KL88}.


\section{Appendix on Geometry of Numbers}


We recall the definitions and results from the Geometry of Numbers that we use.
For proofs and more, the reader may consult~\cite{Lek69}.
Let $M\simeq \Z^d$ be a lattice of rank $d$. The dual lattice $N=\check{M}$ is defined
as $\Hom_\Z(M,\Z)$, and we have a duality pairing 
$
N \times M\to \Z,\ \langle \varphi,m\rangle= \varphi(m).
$

A subset $A\subseteq M$ spans the lattice $M$ if the difference set $A-A=\{a'-a;a',a\in A\}$ generates the lattice $M$.
The dimension of a subset $A\subseteq M_\R$, denoted $\dim(A)$, is the dimension of the $\R$-vector space generated by $A-A$. The $\Q$-dimension of a subset $A\subseteq M_\R$, denoted 
$\dim_\Q(A)$, is the dimension of the $\Q$-vector space generated by $A\cap M_\Q-A\cap M_\Q$. 
We have $\dim_\Q(A)\le \dim(A)$, and equality holds if $A\subseteq M_\Q$. 
If $\square\subseteq M_\R$ is a convex set, then $\dim_\Q(\square)=d$ if and only if $\dim(\square)=d$.

For a convex set $\square\subseteq M_\R$, the {\em polar convex set} $\square^*\subseteq N_\R$ is defined as
$$
\square^*=\{\varphi\in N^*_\R; \langle \varphi,m\rangle\ge -1\ \forall m\in \square  \}.
$$
To $\square$ we can associate the difference convex set $\square-\square=\{m'-m;m',m\in \square\}$, which is $0$-symmetric.
The polar convex set $(\square-\square)^*$ consists of the linear functionals
$\varphi\in N_\R$ such that the interval $\varphi(\square)$ has length at most $1$.

The {\em lattice width of a compact convex set} $\square\subset M_\R$ is defined as the smallest
length of any interval $\varphi(\square)$, with all $\varphi\in N\setminus 0$. It coincides with the 
first minimum of Minkowski of $(\square-\square)^*$ with respect to $N$.

Let $\square\subseteq M_\R$ be a closed convex set which contains the origin. For $i\ge 1$, 
the {$i$-th successive minimum of $(M,\square)$} is defined by
$$
\lambda_i(M,\square)=\sup\{t\ge 0;\dim (M\cap t\square)<i \}.
$$
We obtain an increasing chain
$
0\le \lambda_1\le \lambda_2\le \cdots \le \lambda_{d+1}=+\infty.
$
Note that $\lambda_i$ is a finite real number if and only if $i\le \dim_\Q(\square)$, and 
$\lambda_i(M,c\square)=\lambda_i(M,\square)/c$ for every $c>0$. In case $\dim(\square)=d$, we 
have the equivalent (Minkowski's) definition
$$
\lambda_i(M,\square)=\inf\{t\ge 0;\dim (M\cap t\square)\ge i \}.
$$

The second main theorem of Minkowski, in the generalized form due to Davenport-Estermann, 
is

\begin{thm}
Let $\square \subset M_\R$ be a convex set of dimension $d$. Then 
$$
\frac{1}{d!}\le \vol_M(\square)\cdot \prod_{i=1}^d\lambda_i(M,\square-\square)\le 1.
$$
\end{thm}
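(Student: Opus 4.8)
The plan is to reduce the statement to the classical (symmetric) second theorem of Minkowski for the $0$-symmetric convex body $K:=\square-\square\subset M_\R$, namely $\frac{2^d}{d!}\le\vol_M(K)\cdot\prod_{i=1}^d\lambda_i(M,K)\le 2^d$, and then to pass back from $K$ to $\square$ by volume comparisons. Write $\lambda_i=\lambda_i(M,K)=\lambda_i(M,\square-\square)$.

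The easy half is the upper bound $\vol_M(\square)\cdot\prod_{i}\lambda_i\le 1$. By the Brunn-Minkowski inequality, $\vol_M(K)=\vol_M(\square+(-\square))\ge\big(\vol_M(\square)^{1/d}+\vol_M(-\square)^{1/d}\big)^d=2^d\vol_M(\square)$; combining this with the upper half $\vol_M(K)\cdot\prod_i\lambda_i\le 2^d$ of Minkowski's theorem (whose proof I would simply quote from~\cite{Lek69}) and dividing by $2^d$ gives the claim. I would also record the short proof of the symmetric lower bound: choosing linearly independent $v_1,\dots,v_d\in M$ with $v_i\in\lambda_iK$, the points $u_i:=v_i/\lambda_i$ lie in $K$, so by $0$-symmetry and convexity the cross-polytope $\Conv(\pm u_1,\dots,\pm u_d)$ lies in $K$; since $|\det(v_1,\dots,v_d)|\ge 1$ (the $v_i$ span a full sublattice, of covolume a positive integer in the $\vol_M$-normalization) this yields $\vol_M(K)\ge\frac{2^d}{d!}\,|\det(u_1,\dots,u_d)|\ge\frac{2^d}{d!\,\prod_i\lambda_i}$.

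The substantive half is the lower bound $\vol_M(\square)\cdot\prod_i\lambda_i\ge 1/d!$, the Davenport-Estermann refinement; it does not follow from the symmetric statement by a crude volume comparison, since the Rogers-Shephard inequality $\vol_M(K)\le\binom{2d}{d}\vol_M(\square)$ would give only the far smaller constant $2^d/(d!\binom{2d}{d})$. I would prove it by induction on $d$ by slicing. Choose a primitive $v_1\in M$ with $v_1\in\lambda_1K$ and, after translating $\square$, arrange that $0\in\square$ and $v_1/\lambda_1\in\square$. Let $\pi\colon M_\R\to\overline{M}_\R:=M_\R/\R v_1$ be the quotient map, $\overline{M}=\pi(M)$ (a lattice of rank $d-1$, since $v_1$ is primitive), and $\overline{\square}=\pi(\square)$. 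By Fubini, $\vol_M(\square)=\int_{\overline{\square}}\ell(y)\,dy$, where $\ell(y)$ is the length, normalized so that $v_1$ has length one, of the fiber $\pi^{-1}(y)\cap\square$, and $dy$ is $\overline{M}$-normalized. The segment $[0,v_1/\lambda_1]$ lies in the fiber over the origin, so $\ell(0)\ge 1/\lambda_1$; moreover $\ell$ is concave on $\overline{\square}$ by the Brunn concavity principle, so its subgraph contains the cone on $\overline{\square}$ with apex at height $\ell(0)$, whence $\vol_M(\square)\ge\frac1d\,\ell(0)\,\vol_{\overline{M}}(\overline{\square})\ge\frac{1}{d\lambda_1}\,\vol_{\overline{M}}(\overline{\square})$. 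Since $\frac1d\cdot\frac{1}{(d-1)!}=\frac{1}{d!}$, the induction (applied to $\overline{\square}\subset\overline{M}_\R$) closes once one knows $\prod_{j=1}^{d-1}\lambda_j(\overline{M},\pi K)\ge\prod_{j=2}^d\lambda_j$.

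I expect this last inequality to be the main obstacle. Projection along a minimal vector can both raise and lower the individual successive minima, so there is no term-by-term comparison available; what is needed is the sharp inequality for the \emph{products} of the minima, and arranging the induction so that it is at hand (Davenport's device, as sharpened by Estermann) is the real content of the theorem. If this cannot be pushed through cleanly, I would instead invoke the Davenport-Estermann theorem from~\cite{Lek69} for this half, keeping only the reduction and the slicing estimate above.
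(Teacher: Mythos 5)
A preliminary remark: the paper offers no proof of this statement. It is recorded in the appendix as the Davenport--Estermann form of Minkowski's second theorem, with \cite{Lek69} as the reference, so your proposal is being measured against the literature rather than against an argument in the text. Your upper bound is correct: Brunn--Minkowski gives $\vol_M(\square-\square)\ge 2^d\vol_M(\square)$, which combined with the symmetric bound $\vol_M(K)\cdot\prod_i\lambda_i(M,K)\le 2^d$ yields $\vol_M(\square)\cdot\prod_i\lambda_i\le 1$. The slicing set-up for the lower bound is also sound: with $v_1\in M$ primitive, $v_1\in\lambda_1 K$, and $\square$ translated so that $0$ and $v_1/\lambda_1$ lie in $\square$, the concavity of the fibre-length function and the cone bound do give $\vol_M(\square)\ge\frac{1}{d\lambda_1}\vol_{\overline{M}}(\overline{\square})$.

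The gap is the final step, where you have written the required inequality with the wrong orientation. Feeding the inductive hypothesis $\vol_{\overline{M}}(\overline{\square})\ge\bigl((d-1)!\prod_{j=1}^{d-1}\lambda_j(\overline{M},\pi K)\bigr)^{-1}$ into the cone bound gives
$$
\vol_M(\square)\cdot\prod_{i=1}^d\lambda_i\ \ge\ \frac{1}{d!}\cdot\frac{\prod_{i=2}^{d}\lambda_i}{\prod_{j=1}^{d-1}\lambda_j(\overline{M},\pi K)},
$$
so what you need is $\prod_{j=1}^{d-1}\lambda_j(\overline{M},\pi K)\le\prod_{j=2}^{d}\lambda_j$, not $\ge$ as you wrote. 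Fortunately the inequality in the correct direction is not the obstacle you fear: it holds term by term after a shift of index. Choose linearly independent $v_1,\dots,v_d\in M$ with $v_i\in\lambda_i K$ (possible since $K$ is compact; take $v_1$ primitive). For each $2\le j\le d$ the vectors $\pi(v_2),\dots,\pi(v_j)$ are linearly independent in $\overline{M}$, because $\ker\pi=\R v_1$, and they all lie in $\lambda_j\pi(K)$; hence $\lambda_{j-1}(\overline{M},\pi K)\le\lambda_j(M,K)$, and taking the product over $j$ gives exactly what the induction requires. With this observation your argument closes and no appeal to the literature is needed for the nonsymmetric lower bound; as submitted, however, the proposal misstates the key inequality, does not prove it, and explicitly hedges by offering to cite the result instead.
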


The transference theorem of Mahler, with the improved upper bound due to Banaszczyck~\cite[Theorem 2.1]{Ban93}, is 

\begin{thm}
Let $\square \subset M_\R$ be a $0$-symmetric compact convex set of dimension $d$. 
Let $\square ^*\subset N_\R$ be the polar body. Then 
$$
1\le \lambda_i(M,\square)\cdot \lambda_{d-i+1}(N,\square^*)\le d\ (1\le i\le d).
$$ 
\end{thm}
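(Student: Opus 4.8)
The plan is to separate the double inequality into its two halves, which are of completely different natures: the left inequality is elementary and I would prove it directly, while the right one is the substantive statement due to Banaszczyk, which I would import rather than reprove.

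\emph{Lower bound.} Write $\mu=\lambda_i(M,\square)$ and $\mu^*=\lambda_{d-i+1}(N,\square^*)$. Since $\square$ is compact, $0$-symmetric and of dimension $d$, it contains a neighbourhood of the origin, so $\square^*$ is compact as well, and both minima are positive and attained; thus there are linearly independent $x_1,\dots,x_i\in M\cap\mu\square$ and linearly independent $\varphi_1,\dots,\varphi_{d-i+1}\in N\cap\mu^*\square^*$. The $x_j$ span an $i$-dimensional subspace $V\subseteq M_\R$, whose annihilator in $N_\R$ has dimension $d-i$; as $d-i+1>d-i$, not every $\varphi_k$ vanishes on $V$, so $\langle\varphi_k,x_j\rangle\ne 0$ for some $j,k$. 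Because $\square$ is symmetric, $\varphi/\mu^*\in\square^*$ forces $|\langle\varphi/\mu^*,m\rangle|\le 1$ for all $m\in\square$; applied to $m=x_j/\mu$ this gives $|\langle\varphi_k,x_j\rangle|\le\mu\mu^*$. Since $\langle\varphi_k,x_j\rangle$ is a nonzero integer, $\mu\mu^*\ge 1$, which is the claim.

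\emph{Upper bound.} For $\lambda_i(M,\square)\cdot\lambda_{d-i+1}(N,\square^*)\le d$ there is no convex-geometry shortcut; I would simply invoke Banaszczyk's transference theorem~\cite[Theorem 2.1]{Ban93}. Its proof is a harmonic-analysis argument: one puts the Gaussian (theta) weight on the lattice $M$ and on its dual $N$ and transfers information between them by Poisson summation, the key input being that a rank-$d$ lattice carries only a bounded fraction of its Gaussian mass outside the ball of radius $\sqrt d$ (in the normalization where the dual concentrates at radius $1$); a one-dimensional slicing interpolating between the cases $i=1$ and $i=d$ then yields the clean constant $d$. Mahler's classical transference gave only a far larger, dimension-dependent constant, and it is precisely this refinement to $d$ that is used in Section~4. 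The only genuine difficulty in the theorem is this upper bound, and it is a real one: Banaszczyk's Gaussian-mass estimate is the heart of the matter and cannot be replaced by elementary volume or lattice-packing arguments — whereas everything surrounding the lower bound (the compactness of $\square^*$, the attainment of the minima by discreteness, and the index bookkeeping $i+(d-i+1)=d+1$) is routine, and is exactly what the pairing argument above records.
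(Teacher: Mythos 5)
Your proposal is correct and matches the paper's treatment: the theorem appears in the appendix only as a recalled result, with the upper bound imported from Banaszczyk \cite[Theorem 2.1]{Ban93} exactly as you do (the paper itself proves only the sharp two-dimensional bound $3/2$). Your elementary pairing argument for the lower bound $\lambda_i(M,\square)\cdot\lambda_{d-i+1}(N,\square^*)\ge 1$ is the standard one and is sound, including the attainment of the minima for a compact full-dimensional $0$-symmetric body.
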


With the original upper bound of Mahler ($(d!)^2$ instead of $d$), the transference theorem
can be deduced from the second main theorem of Minkowski. The upper bound $d$ is sharp 
in dimension one. In dimension two, Banaszczyck~\cite[Proof of Theorem 2.1]{Ban93} mentions 
the upper bound $2/\sqrt{3}$. We show next that the sharp upper bound in dimension two is in fact $3/2$.


\subsection{Sharp transference theorem in dimension two}


Let $\square \subset \R^2$ be a $0$-symmetric polytope, of dimension $2$, with $\lambda_1(\Z^2,\square)=1$. 
This means that $\Z^2\cap \Int \square=\{0\}$ and $\Z^2\cap \partial \square \ne \emptyset$.

We say that $\square$ is {\em maximal} if every top face of $\square$ contains a lattice point in its relative interior.
We bring $\square$ into maximal position, preserving the initial hypothesis, as follows. Since $\square$
is $0$-symmetric, the top faces come in pairs. Fix a pair $F,-F$. We slide out these top faces until 
they either a) hit a lattice point in their relative interior, or b) these faces collapse to a point (they become 
redundant). Apriori, it is also possible that we may slide out the faces to infinity, but one may easily
check that this is impossible. We repeat the argument for all pairs of top faces.

In finitely many steps, we enlarged $\square\subset \square'$ such that $\square'\subset \R^2$ is a $0$-symmetric polytope, 
of dimension $2$, with $\lambda_1(\Z^2,\square')=1$, and $\square'$ is maximal. It is enough to prove transference for 
$\square'$.

From now on, we suppose $\square$ is maximal.

\begin{lem} On each top face $F$ of $\square$ choose exactly one relative interior lattice point $u_F$, in such a way so that
$-u_F$ is the choice for the opposite face $-F$. Let $Q$ be the convex hull of these lattice points. Then, after possibly 
changing the basis of $\Z^2$, $Q$ is one of the following:
\begin{itemize}
\item[1)] The square with vertices $(1,0),(0,1),(-1,0),(0,-1)$.
\item[2)] The polytope with vertices $(1,0),(0,1),(-1,1),(-1,0),(0,-1),(1,-1)$.
\end{itemize}
\end{lem}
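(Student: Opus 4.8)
The plan is to analyze the polytope $Q$ which is the convex hull of the chosen relative-interior lattice points $u_F$ on the top faces of the maximal $0$-symmetric polytope $\square$. The crucial observations are: (i) $Q \subseteq \square$ and $Q$ is $0$-symmetric by the choice of the $u_F$; (ii) since each $u_F$ lies in the relative interior of a distinct top face of $\square$, the vertices of $Q$ are in convex position and $Q$ has nonempty interior (dimension $2$); (iii) $\Z^2 \cap \Int Q \subseteq \Z^2 \cap \Int \square = \{0\}$, so $Q$ is a $0$-symmetric lattice polygon whose only interior lattice point is the origin. Thus the whole problem reduces to classifying, up to $\mathrm{SL}(2,\Z)$, the $0$-symmetric lattice polygons with exactly one interior lattice point.

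First I would invoke (or reprove) the classification of reflexive polygons, or more directly argue by hand: let $v_1, \dots, v_k$ (with $k$ even, $v_{i + k/2} = -v_i$) be the vertices of $Q$ in cyclic order. Each edge $[v_i, v_{i+1}]$ together with the origin spans a triangle $T_i$ with no interior lattice point and whose only boundary lattice points besides possibly midpoint-type points are its vertices; more precisely, because $0$ is the unique interior lattice point of $Q$, each such triangle must be unimodular or, if the edge $[v_i,v_{i+1}]$ contains an interior lattice point, decompose into unimodular pieces each of lattice-height $1$ over the origin. A short case analysis on the possible lattice lengths of the edges and the angular "turning" then bounds $k$ and pins down the shape. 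Concretely, after an $\mathrm{SL}(2,\Z)$ transformation we may assume $v_1 = (1,0)$ and the next vertex $v_2$ has the form $(a,1)$ with the triangle $0, v_1, v_2$ of height $1$; iterating around the polygon and using $0$-symmetry forces either the square with vertices $(\pm 1, 0), (0, \pm 1)$ (case 1), or the hexagon with vertices $(1,0),(0,1),(-1,1),(-1,0),(0,-1),(1,-1)$ (case 2). These are exactly the two $0$-symmetric reflexive polygons.

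The main obstacle is organizing the case analysis cleanly: a priori $Q$ could have up to $6$ vertices (it cannot have more, since a $0$-symmetric lattice polygon with $\ge 8$ vertices and a single interior lattice point does not exist — this is the heart of the argument and follows from Scott's inequality or a direct area/boundary count via Pick's theorem, $\mathrm{Area}(Q) = 1 + \tfrac12|\partial Q \cap \Z^2| - 1$, combined with the symmetry). I would handle it by first bounding the number of boundary lattice points: Pick's formula gives $\mathrm{Area}(Q) = |\partial Q \cap \Z^2|/2$, and a lower bound on the area of a $0$-symmetric convex body containing $0$ in its interior whose only interior lattice point is $0$ (namely $\mathrm{Area}(Q) \le 4$ by Minkowski's convex body theorem applied to $Q$ scaled, since $\lambda_1(\Z^2, Q) \ge 1$) forces $|\partial Q \cap \Z^2| \le 8$. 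Combined with $0$-symmetry and convexity this leaves only finitely many combinatorial types, which one checks directly reduce to cases 1) and 2).

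A cleaner route, which I would actually write up, avoids Pick entirely: observe that $Q^{*}$ (the polar) is also a lattice polygon precisely when $Q$ is reflexive, and the condition "$0$ is the only interior lattice point" for a $0$-symmetric $Q$ is equivalent to reflexivity here; then cite the well-known fact that there are exactly $16$ reflexive polygons up to $\mathrm{SL}(2,\Z)$, of which exactly two — the ones listed — are centrally symmetric. Either way, the upshot is that $Q$, being a $0$-symmetric lattice polygon of dimension $2$ with $\Z^2 \cap \Int Q = \{0\}$, must be $\mathrm{SL}(2,\Z)$-equivalent to one of the two polytopes in the statement, which is exactly the assertion of the lemma.
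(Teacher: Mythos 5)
There is a genuine gap: your reduction to the intrinsic classification of $0$-symmetric lattice polygons with $\Z^2\cap\Int Q=\{0\}$ loses essential information, and that classification has a \emph{third} member which is not on the lemma's list. The square $[-1,1]^2$ is centrally symmetric, has $0$ as its unique interior lattice point, and is reflexive (its polar is $\Conv(\pm(1,0),\pm(0,1))$), yet it is not $\SL(2,\Z)$-equivalent to either polygon in the statement (its area is $4$, versus $2$ and $3$). So the ``well-known fact'' you invoke — that exactly two of the sixteen reflexive polygons are centrally symmetric — is false (there are three: the two listed and $[-1,1]^2$), and your Pick/Minkowski count ($\mathrm{Area}\le 4$, $B\le 8$) is consistent with $[-1,1]^2$ and does not exclude it. Your hedge that an edge $[v_i,v_{i+1}]$ might contain a lattice point in its relative interior is precisely the case that occurs for $[-1,1]^2$ (the point $(1,0)$ on the edge from $(1,1)$ to $(1,-1)$), and accommodating it rather than ruling it out leaves the classification too large.

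What is missing is the one piece of the construction you never use after your observations (i)--(iii): the relation between $Q$ and $\square$. Two adjacent vertices $u_1,u_2$ of $Q$ lie in the relative interiors of \emph{distinct} top faces of $\square$, and $0\in\Int\square$; hence $\Conv(0,u_1,u_2)\setminus\{u_1,u_2\}\subset\Int\square$, which contains no nonzero lattice point. Therefore the triangle $\Conv(0,u_1,u_2)$ has no lattice points other than its vertices, i.e.\ it is unimodular and $u_1,u_2$ is a basis of $\Z^2$. This is exactly what kills $[-1,1]^2$, where adjacent vertices have determinant $\pm2$ and the offending lattice point $(1,0)$ would be forced into $\Int\square$. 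This step is the heart of the paper's proof; with it, one normalizes $u_1=(1,0)$, $u_2=(0,1)$ and a two-line case analysis in the second quadrant yields exactly cases 1) and 2). Without it, no amount of intrinsic lattice geometry of $Q$ alone can prove the lemma as stated.
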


\begin{proof}
Let $u_1,u_2$ be two adjacent vertices of $Q$. Then $\Conv(0,u_1,u_2)\setminus \{u_1,u_2\}$ is contained in 
$\Int \square$. Therefore $\Conv(0,u_1,u_2)$ contains no other lattice points besides its vertices. Therefore $u_1,u_2$
is a basis of $\Z^2$. We may suppose $u_1=(1,0)$ and $u_2=(0,1)$. Thus $u_1u_2$ is a top face of $Q$.
The opposite is also a top face. The other top faces are contained in the second and fourth quadrant. Enough to
look inside the second quadrant.

If $Q$ has $4$ vertices, we obtain 1). Suppose $Q$ has at least $6$ vertices. Then there are at least two top faces
in the second quadrant. But $(0,1)$ and $(-1,0)$ are vertices of $Q$, and $(-1,1)$ is not contained in the interior of $Q$.
Therefore $Q$ has exactly one vertex, namely $(-1,1)$, in the interior of the second quadrant. We are in case 2).
\end{proof}


\subsection{Case 1) }


 Here $\square=\cap_{i=1}^2 \{(x,y)\in \R^2; |\langle u_i,(x,y)\rangle|\le 1\}$, where $u_1=(1,\beta_1)$,
$u_2=(-\beta_2,1)$ and $\beta_1,\beta_2\in [0,1)$. Indeed, we may suppose $(1,1)$ is separated from $\square$ by the
edge passing through $(1,0)$. The exterior normal to this edge is $u_1=(1,\beta_1)$ for some $\beta_1\ge 0$.
Since $\langle \pm u_1,(0,1)\rangle<1$, we deduce $\beta_1<1$. The lattice point $(-1,1)$ must be separated from 
$\square$ by its edge through $(0,1)$. The exterior normal to this edge is $u_2=(-\beta_2,1)$ for some $\beta_2\ge 0$.
Since $\langle \pm u_1,(1,0)\rangle<1$, we deduce $\beta_1<1$. 
Conversely, any polytope $\square$ as above satisfies $\Z^2\cap \Int \square=\{0\}$. 

Since $u_1,u_2$ are linearly independent, we compute
$$
\square^*=\{\alpha_1 u_1+\alpha_2 u_2;|\alpha_1|+|\alpha_2|\le 1\}=\Conv(\pm u_1,\pm u_2).
$$
Let $z=(z_1,z_2)\in \check{\Z}^2$. Then 
$$
z=\frac{z_1+\beta_2z_2}{1+\beta_1\beta_2}u_1+\frac{z_2-\beta_1z_1}{1+\beta_1\beta_2}u_2
$$
and $z\in h(z)\cdot \square^*$, where 
$$
h(z)=\frac{|z_1+\beta_2z_2|}{1+\beta_1\beta_2}+\frac{|z_2-\beta_1z_1|}{1+\beta_1\beta_2}.
$$
We compute 
$$
h(1,0)=\frac{1+\beta_1}{1+\beta_1\beta_2}, \
h(0,1)=\frac{ 1+\beta_2 }{1+\beta_1\beta_2}, \
h(1,1)=\frac{ 2+\beta_2-\beta_1 }{1+\beta_1\beta_2}, \
h(-1,1)=\frac{ 2+\beta_1-\beta_2 }{1+\beta_1\beta_2}.
$$
We claim that $\lambda_2(\check{\Z}^2,\square^*)\le \frac{3}{2}$. Indeed, suppose $0\le \beta_1\le \beta_2<1$. Then 
$$
1+\beta_1\le 1+\beta_2,2+\beta_1-\beta_2 \ \text{ and } 1+\beta_2,2+\beta_1-\beta_2\le 2-\beta_1+\beta_2.
$$
Therefore
$$
\lambda_2(\check{\Z}^2,K^*)\le \min(\frac{ 1+\beta_2 }{1+\beta_1\beta_2}, \frac{ 2+\beta_1-\beta_2 }{1+\beta_1\beta_2}).
$$
One may check that the right hand side is at most $\frac{3}{2}$, with equality only for $\beta_1=0,\beta_2=\frac{1}{2}$.


\subsection{Case 2) }


Here $\square=\cap_{i=1}^3 \{(x,y)\in \R^2; |\langle u_i,(x,y)\rangle|\le 1\}$, where $u_1=(1,\beta_1)$,
$u_2=(\beta_2,1)$, $u_3=(-\beta_3,1-\beta_3)$, and $\beta_1,\beta_2,\beta_3\in (0,1)$. Indeed, we may 
suppose $(1,1)$ is separated from $\square$ by its edge through $(1,0)$. Let $u_1=(1,\beta_1)$ be the exterior normal,
so that $\beta_1\ge 0$. We have $\langle u_1,(0,1)\rangle<1$ and $\langle u_1,(1,-1)\rangle<1$. That is $0<\beta_1<1$. 
Let $u_2$ and $u_3$ be the normalized exterior normals to $Q$ through $(0,1)$ and $(-1,1)$, respectively. 
We have $\langle u_2,(1,0)\rangle<1$ and $\langle u_1,(-1,1)\rangle<1$, that is $0<\beta_2<1$. Similarly,
$\langle u_3,(0,1)\rangle<1$ and $\langle u_3,(-1,0)\rangle<1$, that is $0<\beta_3<1$.

Conversely, any polytope $\square$ as above satisfies $\Z^2\cap \Int \square=\{0\}$. Indeed, once $u_1$ is chosen,
it is enough to consider the lattice points in the strip $|\langle u_1,\cdot \rangle | \le 1$. By symmetry, we may 
take $y>0$. The lattice points in the strip with $x\le -1$ must be separated from $\square$ by its top face through $(-1,1)$.
We are left with the lattice points $(0,n)\ (n=1,2,3,\ldots)$. But these are separated from $\square$ by its edge through $(0,1)$.

Note that $\square^*$ is the $0$-symmetric polytope with vertices $\pm u_1,\pm u_2,\pm u_3$, and is inscribed in the polytope
$Q^*=\Conv(\pm (1,0),\pm (1,1),\pm (0,1),\pm (-1,1))$. For $e\in \check{\Z}^2\setminus 0$, let 
$h(e)=\inf\{t>0;e\in t\cdot \square^*\}$. Denote $s_e=(1-h(e)^{-1})^{-1}$. We compute
$$
s_{1,0}=\frac{1}{\beta_2}+\frac{1}{\beta_3}, \
s_{1,1}=\frac{1}{1-\beta_1}+\frac{1}{1-\beta_2}, \
s_{0,1}=\frac{1}{\beta_1}+\frac{1}{1-\beta_3}.
$$
For $t_1,t_2>0$, we have $\frac{1}{t_1}+\frac{1}{t_2}\ge \frac{4}{t_1+t_2}$. Thus
$$
s_{1,0}+s_{1,1}=\frac{1}{\beta_2}+\frac{1}{1-\beta_2}+ \frac{1}{1-\beta_1}+\frac{1}{\beta_3}\ge 4+\frac{1}{1-\beta_1}+\frac{1}{\beta_3}>6.
$$
Similarly, $s_{1,0}+s_{0,1}>6$ and $s_{1,1}+s_{0,1}>6$. Therefore two of the $s_{1,0},s_{1,1},s_{0,1}$ are strictly larger than $3$.
Therefore $\lambda_2(\check{\Z}^2,\square^*)< \frac{3}{2}$. We obtained:

\begin{thm}
Let $\square\subset \R^2$ be a $0$-symmetric convex body. Then 
$$
1\le \lambda_1(\Z^2,\square)\cdot \lambda_2(\check{\Z}^2,\square^*)\le \frac{3}{2}.
$$
\end{thm}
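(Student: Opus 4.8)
The plan is to first dispose of the inequality $\lambda_1(\Z^2,\square)\cdot\lambda_2(\check\Z^2,\square^*)\ge 1$, which is the elementary half and holds verbatim in every dimension. Write $\lambda_1=\lambda_1(\Z^2,\square)$ and $\lambda^*_2=\lambda_2(\check\Z^2,\square^*)$. Since $\square$ is a $0$-symmetric body we have $0\in\Int\square$, so $\square^*$ is compact and the infima defining $\lambda_1$ and $\lambda^*_2$ are attained. Choose $m\in\Z^2\setminus 0$ with $m\in\lambda_1\square$, and linearly independent $\varphi_1,\varphi_2\in\check\Z^2$ with $\varphi_j\in\lambda^*_2\square^*$. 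By $0$-symmetry of $\square$, the condition $\varphi_j\in\lambda^*_2\square^*$ is equivalent to $|\langle\varphi_j,x\rangle|\le\lambda^*_2$ for all $x\in\square$; applying this to $x=m/\lambda_1\in\square$ gives $|\langle\varphi_j,m\rangle|\le\lambda_1\lambda^*_2$ for $j=1,2$. As $\varphi_1,\varphi_2$ span $\check\R^2$ and $m\ne 0$, one of the integers $\langle\varphi_j,m\rangle$ is nonzero, hence of absolute value at least $1$, and $\lambda_1\lambda^*_2\ge 1$ follows.

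\textbf{Reductions for the upper bound.} The quantity $\lambda_1(\Z^2,\square)\cdot\lambda_2(\check\Z^2,\square^*)$ is unchanged under $\square\mapsto c\square$ for $c>0$ (one has $\lambda_i(\Z^2,c\square)=\lambda_i(\Z^2,\square)/c$ and $(c\square)^*=c^{-1}\square^*$), so I would rescale to arrange $\lambda_1(\Z^2,\square)=1$, i.e.\ $\Z^2\cap\Int\square=\{0\}$. Next, choosing $0$-symmetric polytopes $\square_n$ with $\square\subseteq\square_n$ and $\square_n\downarrow\square$, a routine continuity argument shows $\lambda_1(\Z^2,\square_n)\to\lambda_1(\Z^2,\square)$ and $\lambda_2(\check\Z^2,\square_n^*)\to\lambda_2(\check\Z^2,\square^*)$ (only finitely many lattice points lie in a fixed dilate of $\square$, resp.\ of $\square^*$, and each eventually settles on the correct side of $t\square_n$, resp.\ of $t\square_n^*$); so it suffices to prove the bound for polytopes. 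Finally, I would enlarge $\square$ to a maximal $0$-symmetric polytope $\square'\supseteq\square$ with $\lambda_1(\Z^2,\square')=1$ by the ``sliding faces'' procedure described above; since $\square\subseteq\square'$ forces $(\square')^*\subseteq\square^*$ and hence $\lambda_2(\check\Z^2,\square^*)\le\lambda_2(\check\Z^2,(\square')^*)$, it is enough to bound $\lambda_2(\check\Z^2,(\square')^*)\le 3/2$ for a maximal $\square'$.

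\textbf{Case analysis.} Now $\square$ is maximal with $\lambda_1=1$, and I would apply the classification Lemma to the convex hull $Q$ of the chosen relative-interior lattice points of the top faces, splitting into its two normal forms. In Case 1), $\square=\bigcap_{i=1}^2\{|\langle u_i,\cdot\rangle|\le 1\}$ with $u_1=(1,\beta_1)$, $u_2=(-\beta_2,1)$, $\beta_1,\beta_2\in[0,1)$, so $\square^*=\Conv(\pm u_1,\pm u_2)$; writing an arbitrary $z\in\check\Z^2$ in the basis $u_1,u_2$ produces the explicit height $h(z)$, and evaluating it at the functionals $(1,0),(0,1),(1,1),(-1,1)$ (any two of which are independent) and minimizing over the range $0\le\beta_1\le\beta_2<1$ gives $\lambda_2(\check\Z^2,\square^*)\le\min\bigl(\frac{1+\beta_2}{1+\beta_1\beta_2},\frac{2+\beta_1-\beta_2}{1+\beta_1\beta_2}\bigr)\le\frac32$, with equality only at $(\beta_1,\beta_2)=(0,\frac12)$, which also pins down the extremal $\square$. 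In Case 2), $\square=\bigcap_{i=1}^3\{|\langle u_i,\cdot\rangle|\le 1\}$ with $u_1=(1,\beta_1)$, $u_2=(\beta_2,1)$, $u_3=(-\beta_3,1-\beta_3)$, $\beta_i\in(0,1)$, and $\square^*$ has vertices $\pm u_1,\pm u_2,\pm u_3$; computing $h$ at $(1,0),(1,1),(0,1)$ and setting $s_e=(1-h(e)^{-1})^{-1}$, the elementary inequality $\frac1{t_1}+\frac1{t_2}\ge\frac4{t_1+t_2}$ shows that $s_{1,0},s_{1,1},s_{0,1}$ have all three pairwise sums $>6$, hence at least two of them exceed $3$, i.e.\ at least two of the corresponding (independent) functionals have height $<3/2$, giving $\lambda_2(\check\Z^2,\square^*)<\frac32$. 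Combining the two cases with the lower bound yields $1\le\lambda_1(\Z^2,\square)\cdot\lambda_2(\check\Z^2,\square^*)\le\frac32$. The main obstacle is precisely this case analysis: extracting the two normal forms of $\square$ (and of $\square^*$), selecting the handful of lattice functionals to test, and carrying out the two scalar optimizations — elementary, but it is where the sharp constant $3/2$ and the extremal body are actually determined.
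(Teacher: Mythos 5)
Your proposal is correct and follows essentially the same route as the paper: rescale so that $\lambda_1=1$, reduce to polytopes by approximation, pass to a maximal polytope via the sliding construction, and then run the two-case analysis on the normal forms of $Q$ exactly as in the preceding subsections. The only difference is cosmetic — you spell out the (standard) lower-bound argument that the paper dismisses as trivial, and you use a direct continuity/monotonicity reduction where the paper phrases the approximation step as a proof by contradiction.
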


\begin{proof}
The first inequality is trivial. For the second, suppose by contradiction that it fails. Then we may approximate $\square$ with a 
polytope, and suppose $\square$ is a polytope. If $\square$ is a polytope, we may scale so that $\lambda_1=1$, and we are
done by the two cases above.
\end{proof}

\begin{exmp}
The upper bound $\frac{3}{2}$ is attained for $\square$ in Case 1) with $\beta_1=0,\beta_2=\frac{1}{2}$.
In this case, $\square = \Conv(\pm(1,\frac32), \pm(1,\frac12))$,  $\square^*=  \Conv(\pm(1,0), \pm(-\frac12,1))$ and 
$  \lambda_1(\Z^2,\square)=1, \lambda_2(\check{\Z}^2,\square^*) = \frac32$.
\end{exmp}


\end{document}